\newcommand{\be}{\begin{equation}}
\newcommand{\ee}{\end{equation}}
\newcommand{\beq}{\begin{eqnarray}}
\newcommand{\eeq}{\end{eqnarray}}
\newtheorem{thm}{Theorem}[section]
\newtheorem{lma}[thm]{Lemma}
\newtheorem{prop}[thm]{Proposition}
\newtheorem{defn}[thm]{Definition}
\theoremstyle{remark}
\newtheorem{rem}[thm]{Remark}
\numberwithin{equation}{section}
\newtheorem{claim}{Claim}[section]
\def\be{\begin{equation}}
\def\ee{\end{equation}}
\def\bee{\begin{equation*}}
\def\eee{\end{equation*}}
\newcommand{\ri}{\rightarrow}
\newcommand{\eps}{\varepsilon}
\def\Ric{\text{\rm Ric}}
\def\Rm{\text{\rm Rm}}
\def\p{\partial}
\def\heat{\left(\frac{\p}{\p t}-\Delta_{g(t)}\right)}
\def\tr{\operatorname{tr}}
\def\e{\varepsilon}
\def\a{{\alpha}}
\def\b{{\beta}}
\def\R{\mathbb{R}}
\begin{document}

\title[]
{Higher-dimensional flying wing Steady Ricci Solitons}

\author{Pak-Yeung Chan}
\address[Pak-Yeung Chan]{Department of Mathematics, National Tsing Hua University, Hsin-Chu, Taiwan}
\email{pychan@math.nthu.edu.tw}

 \author{Yi Lai}
\address[Yi Lai]{Department of Mathematics, Rowland Hall, University of California Irvine, CA 92617, USA
}
\email{ylai25@uci.edu}

 \author{Man-Chun Lee}
\address[Man-Chun Lee]{Department of Mathematics, The Chinese University of Hong Kong, Shatin, Hong Kong, China}
\email{mclee@math.cuhk.edu.hk}

\renewcommand{\subjclassname}{
  \textup{2020} Mathematics Subject Classification}
\subjclass[2020]{Primary 53E20}

\date{\today}

\begin{abstract}
For any $n\geq 4$, we construct an $(n-2)$-parameter family of steady gradient Ricci solitons with non-negative curvature operator and prescribed by the eigenvalues of Ricci tensor at a 
critical point of the soliton potential. Among them lies an 
$(n-3)$-parameter subfamily of non-collapsed solitons.
These solitons generalized the flying wings constructed by the second named author and produced new examples of steady gradient Ricci solitons with non-negative curvature operator for $n\geq 4$.  Our approach is based on constructing continuous families of Ricci flows smoothing emanating from continuous families of spherical polyhedra which still preserves symmetry. 

This is built upon a new stability result of Ricci flows with scaling invariant estimates. 
As another application of the method, we prove the stability of asymptotically conical expanding solitons constructed by Deruelle under $L^\infty$ perturbation of links.  In particular, the $C^0$-convergence of smooth links implies the smooth convergence of the expanding solitons.
\end{abstract}

\maketitle

\markboth{Pak-Yeung Chan, Yi Lai, Man-Chun Lee}{HIGHER-DIMENSIONAL FLYING WING STEADY RICCI SOLITONS}

\section{Introduction}\label{sec: introduction}
\subsection{Flying wing steady solitons}
The Ricci flow has emerged as a powerful tool in geometric analysis since 
Perelman’s resolution of the Poincar\'e conjecture. 
Singularity analysis in Ricci flow has become a central theme in geometric analysis, 
both for their intrinsic geometric interest and for their potential applications 
in topology. 
A rich source of singularity models comes from \emph{Ricci solitons}, which are self-similar solutions to the Ricci flow and fall into three categories according to whether the flow is shrinking, steady, or expanding.

A steady gradient Ricci soliton is a smooth complete Riemannian manifold $(M,g)$ satisfying
\begin{equation}
    \Ric=\nabla^2 f
\end{equation}
for some smooth function $f$ on $M$ called a potential function.

In dimension two, Hamilton’s cigar soliton is the unique non-flat example \cite{Hamilton1988}. 
In dimension three, the classical examples are the Bryant soliton and $\R\times\textnormal{Cigar}$. Hamilton further conjectured 
the existence of steady solitons asymptotic to sectors with angle 
$\alpha \in (0,\pi)$, known as \emph{flying wings}. The second-named author confirmed this conjecture by constructing a one-paramter $\mathbb Z_2\times O(2)$-symmetric flying wings \cite{Lai2024,lai20253d}.
Using the same method, she also constructed non-collapsed, $\mathbb Z_2\times O(n-1)$-symmetric flying wings
for any dimension $n\geq 4$ \cite{Lai2024}. Recently, Lavoyer-Peachey \cite{LavoyerPeachey2025} generalized the method of the second named author to construct $O(p)\times O(q)$-symmetric $(p+q)$-dimensional flying wings for any pair of integers $p,q\ge2$.
See also for the works of Chan-Conlon-Lai  \cite{ChanConlonLai2024} and Apostolov-Cifarelli \cite{ApostolovCifarelli2025} for K\"ahler flying wings.

In this paper, we widen the class of
flying wings for $ n\ge 4$ by constructing large families of steady solitons with non-negative curvature operator.  In particular, we construct non-negatively curved steady solitons with prescribed Ricci curvature at a 
critical point of $f$ and prescribed symmetric group:

\begin{thm}\label{thm:soliton-polyhedron}
Let $n\ge4$, for any $0\le\lambda_1\le\cdots\le\lambda_{n-1}$ with $\lambda_1+\cdots+\lambda_{n-2}+2\lambda_{n-1}=1$, there exists an $n$-dimensional steady gradient soliton $(M,g,f,p)$ with $\mathrm{Rm}\geq 0$, $R(p)=1$, and the eigenvalues of Ricci curvature at $p$ are equal to $(\lambda_1,\cdots,\lambda_{n-1},\lambda_{n}= \lambda_{n-1})$.   Moreover, if for some $k=1,...,n-1$, and $1= i_1<i_2<\cdots< i_k< i_{k+1}= n$ we have $\lambda_{i_{j}}=\cdots=\lambda_{i_{j+1}-1}$ for $j=1,\cdots,k$, then the steady soliton is 
$O(i_2-i_1)\times\cdots\times O(i_k-i_{k-1})\times O(i_{k+1}-i_k+1)$-symmetric. 
\end{thm}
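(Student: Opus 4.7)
The plan is to realize each admissible tuple $\lambda = (\lambda_1,\dots,\lambda_{n-1})$ as the Ricci eigenvalues at the critical point of a steady soliton obtained as a pointed eternal-limit of a smooth Ricci flow emanating from a tailored spherical polyhedron. For each $\lambda$ in the closed simplex
\begin{equation*}
\Delta = \{0\le\lambda_1\le\cdots\le\lambda_{n-1},\ \lambda_1+\cdots+\lambda_{n-2}+2\lambda_{n-1}=1\},
\end{equation*}
I would construct a spherical polyhedron $(\Sigma_\lambda,h_\lambda)$ on $\IS^{n-1}$ invariant under $G_\lambda = O(i_2-i_1)\times\cdots\times O(i_k-i_{k-1})\times O(i_{k+1}-i_k+1)$, with edges and strata tuned so that the Euclidean cone $C(\Sigma_\lambda)=(\R^n,\, dr^2+r^2 h_\lambda)$ has non-negative curvature in the Alexandrov sense and its apex data encodes $\lambda$. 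The family $\lambda\mapsto(\Sigma_\lambda,h_\lambda)$ should be continuous in pointed Gromov-Hausdorff topology, with boundary strata of $\Delta$ corresponding to symmetry enhancements through coalescence of eigenvalues.

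Next I would apply the Gianniotis-Schulze regularization scheme $G_\lambda$-equivariantly to produce, for each $\lambda\in\Delta$, a smooth Ricci flow $g_\lambda(t)$ on a manifold $M_\lambda$ for $t\in(0,T)$, satisfying $\Rm(g_\lambda(t))\ge 0$, $G_\lambda$-symmetry, and $g_\lambda(t)\to C(\Sigma_\lambda)$ as $t\to 0^+$ in the pointed Gromov-Hausdorff sense. The new scale-invariant stability theorem should then deliver continuous dependence $\lambda\mapsto g_\lambda(t)$ in the pointed smooth Cheeger-Gromov topology on compact subsets of $(0,T)$, together with uniform curvature-decay estimates.

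With the family of smooth flows in hand, I would extract a steady soliton from each by a Hamilton-type eternal-limit argument. Selecting space-time concentration points $(x_j,t_j)$ for $g_\lambda$ and rescaling by $Q_j = R(x_j,t_j)\to\infty$, the scale-invariant bounds together with $\Rm\ge 0$ and equivariance should give sub-convergence to a non-flat eternal solution of bounded curvature; Hamilton's rigidity then upgrades the limit to a gradient steady soliton $(M_\lambda^\infty,g_\lambda^\infty,f_\lambda,p_\lambda)$ with $R(p_\lambda)=1$, $\Rm\ge 0$, and $G_\lambda$-symmetry, with $p_\lambda$ the unique critical point of $f_\lambda$. A Brouwer-degree argument on $\Delta$ — exploiting that boundary strata of $\Delta$ map to boundary strata of the target under the forced symmetry enhancements — would then show that $\lambda \mapsto \mathrm{spec}(\Ric_{p_\lambda})$ is surjective onto $\Delta$, yielding the announced $(n-2)$-parameter family.

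The main obstacle I anticipate is establishing continuity and non-degeneracy through the rescaling procedure: one has to simultaneously control the selection of concentration points, rule out degeneration to collapsed or lower-dimensional limits, and propagate the scale-invariant estimates across the blow-up. This is precisely where the equivariant Gianniotis-Schulze smoothing, the new stability theorem, and the preservation of $\Rm\ge 0$ under the flow must interlock, and it is the principal technical hurdle of the construction. The stronger assertion about the $(n-3)$-parameter non-collapsed subfamily presumably arises from the subregion of $\Delta$ on which $\kappa$-non-collapsing can be propagated through the stability argument, cutting out one boundary dimension corresponding to the eigenvalue configuration where collapsing first appears.
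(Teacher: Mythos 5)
Your proposal reproduces the paper's outer skeleton (continuous families of spherical polyhedra, equivariant Ricci-flow smoothing, the stability theorem, a degree argument), but two steps in the middle would fail as written.

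First, the mechanism for producing the steady soliton. You propose to run a single Ricci flow out of the cone $C(\Sigma_\lambda)$, pick concentration points $(x_j,t_j)$ with $Q_j=R(x_j,t_j)\to\infty$, and invoke Hamilton's eternal-solution rigidity. But the flow coming out of a non-negatively curved cone over a link with $\mathrm{Rm}\ge 1$ is (by Deruelle's uniqueness) the expanding soliton itself: it is immortal, self-similar, with $|\Rm|\le \alpha t^{-1}$, so there are no concentration points at positive times and any parabolic rescaling near $t=0$ just reproduces the expander or its asymptotic cone. The steady soliton does not arise from blowing up one flow; it arises as the limit of a \emph{sequence} of expanders $\mathcal E_{j}(\mathbf x_j)$ whose links collapse in volume (the index $j$ with $\beta\in[j^{-1},1]$) while keeping $\mathrm{Rm}\ge 1$ and the normalization $R=1$ at the critical point. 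This degeneration of the family, not a blow-up within a single flow, is what converts expanders into a steady soliton (following Lai's argument), and it is the step your construction omits.

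Second, where the eigenvalue data comes from and where the degree argument lives. You posit a polyhedron $\Sigma_\lambda$ whose "apex data encodes $\lambda$", but the Ricci eigenvalues at the critical point of the expander are determined implicitly by solving the soliton equation; they cannot be read off from the link, which is precisely why a surjectivity argument is needed at all. Your Brouwer-degree argument is then applied to $\lambda\mapsto\mathrm{spec}(\Ric_{p_\lambda})$ on the family of \emph{limiting steady solitons}, but continuity of that map is exactly what a compactness-based limit does not give you. The paper instead runs the degree argument on the map $\Lambda_j^*:\Omega\to\Delta^*$ from the parameter simplex of link metrics to the eigenvalue simplex of the \emph{expanders} at each fixed $j$ — a map that is genuinely continuous thanks to Deruelle's uniqueness and the continuity of the smoothed links — establishes approximate surjectivity onto $\Delta$ face-by-face, and only then passes to the limit $j\to\infty$ along a sequence $\mathbf x_j$ with $\Lambda_j^*(\mathbf x_j)\to\mathbf y$. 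Without relocating your degree argument to the finite-$j$ level, the surjectivity claim has no continuous map to apply degree theory to.
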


Apostolov-Cifarelli \cite{ApostolovCifarelli2025} constructed complex $n$-dimensional K\"ahler steady gradient solitons with of $U(d_1+1)\times \cdots U(d_\ell+1)$ symmetry, where $\ell\ge 1$, $d_i\ge 0$ and $\ell+\sum_{i=1}^\ell d_i=n$. They also showed that these solitons have positive sectional curvature when $n=2$.
Theorem \ref{thm:soliton-polyhedron} can be viewed as a Riemannian analogy of Apostolov-Cifarelli's result with additional non-negative curvature property. 
Note that Chan-Conlon-Lai \cite{ChanConlonLai2024} constructed K\"ahler steady gradient solitons with $U(n-1)\times U(1)$-symmetry non-negative curvature operator \cite{ChanConlonLai2024}, it is natural to wonder if there exist  K\"ahler steady gradient solitons having the same symmetries in Apostolov-Cifarelli \cite{ApostolovCifarelli2025} and additional non-negative curvature operator.

 Among the steady solitons constructed in Theorem 1.1, it provides a large family of non-collapsed steady solitons, using a similar proof as \cite[Corollary 3.5]{Lai2024}.
\begin{thm}\label{thm:non-collapsed}
For any $n\ge 4$, and $k=2,\cdots, n-2$, there exists an $(n-k-1)$-parameter family of pairwise different $O(k+1)$-symmetric, non-collapsed $n$-dimensional steady gradient Ricci solitons with {non-negative} curvature operator.
\end{thm}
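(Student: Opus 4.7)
The plan is to apply Theorem~\ref{thm:soliton-polyhedron} to a suitable subfamily of eigenvalue configurations and then verify non-collapsing on that subfamily. Specifically, impose on the $(n-2)$-dimensional parameter space of Theorem~\ref{thm:soliton-polyhedron} the $k-1$ linear equations $\lambda_{n-k}=\lambda_{n-k+1}=\cdots=\lambda_{n-1}$, leaving an $(n-k-1)$-dimensional subfamily parametrized by $(\lambda_1,\ldots,\lambda_{n-k-1},\lambda_{n-k})$ subject to the ordering $0\le\lambda_1\le\cdots\le\lambda_{n-k}$ and the normalization $\lambda_1+\cdots+\lambda_{n-k-1}+(k+1)\lambda_{n-k}=1$. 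By the symmetry clause of Theorem~\ref{thm:soliton-polyhedron}, applied with block sizes $1,1,\ldots,1$ and a final block of size $k+1$, every such soliton admits an isometric $O(k+1)$-action. Since the ordered Ricci eigenvalues at the unique critical point are isometry invariants, distinct parameter choices yield pairwise nonisometric solitons.

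The heart of the matter is to establish that each soliton in this subfamily is non-collapsed in the sense of Perelman. The reason the restriction $k\ge 2$ is essential is structural: the generic $O(k+1)$-orbit is a $k$-sphere, so for $k\ge 2$ the orbit is itself genuinely curved, whereas the case $k=1$ produces the collapsed 3-dimensional flying wings of \cite{Lai2024,lai20253d}. My concrete strategy is to adapt the non-collapsing argument of the second-named author for the $\mathbb Z_2\times O(n-1)$-symmetric family (the case $k=n-2$) to arbitrary $k\ge 2$: reduce the soliton equation by the $O(k+1)$-cohomogeneity to an ODE system on the $(n-k)$-dimensional orbit space, extract uniform lower bounds on the warping factor of the $k$-sphere fiber along each escape ray using $\mathrm{Rm}\ge 0$ from Theorem~\ref{thm:soliton-polyhedron} together with the steady soliton identity $R+|\nabla f|^2=\mathrm{const}$, and convert these bounds into a uniform $\kappa>0$ for which $\mathrm{Vol}(B_r(x))\ge\kappa r^n$ whenever $|\mathrm{Rm}|\le r^{-2}$ on $B_r(x)$.

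The principal obstacle I expect is uniformity of this $\kappa>0$ as the parameters $(\lambda_1,\ldots,\lambda_{n-k})$ range over the entire $(n-k-1)$-dimensional parameter region, rather than merely in a neighborhood of the rotationally symmetric Bryant soliton where non-collapsing is classical. A natural route is a contradiction argument: a degenerating sequence with $\kappa_j\to 0$ would, after pointed rescaling, produce a blow-down limit that splits an $\R$-factor and still carries the $O(k+1)$-symmetry, and for $k\ge 2$ the condition $\mathrm{Rm}\ge 0$ should restrict such a limit to a warped product whose geometry is incompatible with the asymptotic shape forced by the soliton equation. Establishing this rigidity at the degeneration is likely the most delicate step and may require passing curvature and volume estimates through the Gianniotis--Schulze-type regularization used in the proof of Theorem~\ref{thm:soliton-polyhedron}.
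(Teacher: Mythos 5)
Your reduction to Theorem~\ref{thm:soliton-polyhedron} is exactly the paper's route: Theorem~\ref{thm:non-collapsed} is read off from the face $\Delta_{n-k-1}(1,\dots,n-k)=\{\lambda\in\Delta:\lambda_{n-k}=\cdots=\lambda_{n-1}\}$ of the simplex $\Delta$, which is $(n-k-1)$-dimensional with the normalization $\lambda_1+\cdots+\lambda_{n-k-1}+(k+1)\lambda_{n-k}=1$ you wrote; the block structure $(1,\dots,1,k+1)$ gives the $O(k+1)$-symmetry via the symmetry clause, and distinct ordered Ricci eigenvalues at the unique critical point (where $R$ attains its maximum $1$) give pairwise non-isometric solitons. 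All of that is correct and is what the paper does.

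The gap is the non-collapsedness, which is the one substantive claim of the theorem beyond Theorem~\ref{thm:soliton-polyhedron}, and which you leave as a strategy with self-acknowledged unresolved steps rather than a proof. Two points. First, your ``principal obstacle'' --- uniformity of $\kappa$ as $(\lambda_1,\dots,\lambda_{n-k})$ ranges over the whole face --- is not required by the statement: the theorem only asserts that each soliton is non-collapsed for some $\kappa>0$ of its own, so the contradiction argument you set up (a sequence of solitons with $\kappa_j\to 0$, blow-down, splitting) addresses a question that does not need to be answered. For a fixed soliton the degeneration to rule out is spatial, at points going to infinity, not parametric. Second, the paper does not rerun any cohomogeneity ODE analysis; it presents Theorem~\ref{thm:non-collapsed} as an immediate consequence of Theorem~\ref{thm:soliton-polyhedron}, the point being that on this face the generic $O(k+1)$-orbits are round $k$-spheres with $k\ge 2$, so the non-collapsing argument of \cite{Lai2024} for the $O(n-1)$-symmetric case (your $k=n-2$) applies with $S^{n-2}$ replaced by $S^{k}$; the collapsed examples of Theorem~\ref{thm:collapsed} are precisely those where the orbit is only $S^1$. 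As written, your proposal asserts the central property of the theorem rather than proving it; to close the argument you should either invoke that existing non-collapsing argument directly or actually carry out the orbit-size versus curvature-scale comparison, rather than flagging it as ``likely the most delicate step.''
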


Note that $O(k+1)$-symmetry implies $O(k)$-symmetry for any $k\in\mathbb N$, and the rotational symmetry is $O(n)$-symmetry for $n$-dimensional manifolds. 
Thus $O(2)$-symmetry is the weakest symmetry. 
In dimension 3, the second named author proved that all steady gradient solitons are $O(2)$-symmetric \cite{Lai2022_O(2)}. Hence, we conjecture that for arbitrary dimension, all steady gradient solitons with positive curvature operator are $O(2)$-symmetric.

In previous works \cite{ApostolovCifarelli2025, Bryant2005, Cao1996, ChanConlonLai2024, Lai2024, lai20253d, LavoyerPeachey2025}, all known steady gradient Ricci solitons of positive sectional curvature have at most $2$ distinct eigenvalues of $\Ric$ at the unique critical point of $f$. With a more judicious choice of $\lambda_i$, Theorem \ref{thm:soliton-polyhedron} yields large families of new examples of steady solitons with positive curvature operator and at least $3$ distinct eigenvalues of $\Ric$ at the unique critical point of $f$ in dimensions four and higher, see Remark \ref{eig}.

In comparison to mean curvature flow,
the non-collapsed  steady gradient solitons in
Theorem \ref{thm:soliton-polyhedron} is analogues of the mean curvature flow entire graphical translators and graphical translators on slabs of bounded width, which are constructed by Hoffman-Ilmanen-Martin-White \cite{HoffmanIlmanenMartinWhite2019}.

In 
dimension three, the uniqueness of 3D flying wings remains open. In dimension 
four, it is conjectured by Haslhofer \cite{Haslhofer2024} that all $\kappa$-solutions (non-collapsed ancient flow with $\Rm\ge0$) are the spherical solution, the cylindrical solution, Perelman's oval,  $\mathbb {CP}^2$, the 4D Bryant soliton, the one-parameter 
family flying wings constructed by the second named author,  or the one-parameter ovals constructed by Haslhofer \cite{Haslhofer2024}.  Motivated by these, we conjecture that all non-collapsed steady gradient Ricci solitons with non-negative curvature operator and cylindrical tangent flow at infinity of the form $\mathbb R^{k}\times\mathbb S^{n-k}$, $n-k\ge2$ is isometric (up to scaling) to one of the examples we constructed in Theorem~\ref{thm:soliton-polyhedron}.

\textit{Sketch of Theorem \ref{thm:non-collapsed}:}
We now briefly outline the ideas behind our construction.
First, we construct a sequence of smooth metrics 
$(\mathbb{S}^{n-1},\{g_{i,\mathbf x}\}_{\mathbf x\in\Omega,i\in\mathbb N})$, parametrized by a $(n-2)$-dimensional simplex $\Omega$ with $(n-1)$ vertices $\mathbf o_1,\cdots,\mathbf o_{n-1}$, such that 
all metrics satisfy $\Rm\ge1$, and the volume uniformly converge to $0$ as $i\to\infty$. 
So by the uniqueness theorem of expanding solitons \cite{Deruelle2016}, this can be lifted to a sequence of smooth expanding solitons 
$\{\mathcal E_{i,\mathbf x}\}_{\mathbf x\in\Omega,i\in\mathbb N}$ on $ \mathbb{R}^{n}$ parametrized by $\Omega$, such that each $\mathcal E_{i,\mathbf x}$ satisfies $\Rm\ge0$, $R=1$ at the critical point, and is asymptotic to the metric cone over $(\mathbb{S}^{n-1},g_{i,\mathbf x})$.  
So it follows by the {same argument in \cite{Lai2024}} that for any sequence $\mathbf x_i\in\Omega$, the expanding solitons converge to a steady soliton with $\Rm\ge0$, and $R=1$ at the critical point.
In particular, the sequence of the expanding solitons $\mathcal E_{i,\mathbf o_1},\cdots,\mathcal E_{i,\mathbf o_{n-1}}$ corresponding to the vertices converge to the $(n-1)$ steady solitons: 
$$\textnormal{Bry}^{n},\;\R\times \textnormal{Bry}^{n-1},\; \cdots,\; \R^{n-3}\times \textnormal{Bry}^{3},\;\R^{n-2}\times \textnormal{Cigar}.$$
Consider the map from the expanding solitons to the  Ricci eigenvalues at the critical point (note there are always two equal eigenvalues by the $O(2)$-symmetry), we obtain a smooth map from the simplex $\Omega$ to an $(n-2)$-dimensional simplex $\Delta$, with vertices the Ricci eigenvalues of the steady solitons corresponding to the vertices of $\Omega$. 
Then we show this map is surjective, thus by taking limits we obtain smooth steady solitons $\{\mathcal S_{\mathbf x}\}_{\mathbf x\in\Delta}$ parametrized by the simplex $\Delta$.
The non-collapsed solitons in Theorem \ref{thm:non-collapsed} is parametrized by a $(n-3)$-face of $\Delta$.

To construct the metrics on $\mathbb{S}^{n-1}$, we first note that the spherical suspensions produces metrics that almost satisfy all our expectation, except the smoothness.

We want to smooth out the cone points {continuously} that inevitably form when taking the spherical suspensions, while keeping the following properties invariant: (i), the metric stays close to the original one in the Gromov-Haudorff sense; (ii), the curvature condition $\Rm\ge1$ still holds; (iii), the symmetry is preserved.
In {\cite{ChanConlonLai2024,ChanLee2025,Lai2024,LavoyerPeachey2025}}, by smoothing the one-variable warping function at two ends we can achieve property (i) and (ii), and property (iii) holds trivially since the symmetry is quite simple there.

 However, it is unclear to us that whether the one variable smoothing technique therein will preserve the symmetry of the singular metric in the spherical suspension case.
 In this work, we adopt a more delicate parabolic smoothing approach, the Ricci flow coming out of non-smooth initial data. 

\subsection{Ricci flow starting from non-smooth initial data}
{
For smooth initial data, the existence, preservation of symmetry, or more generally the uniqueness and stability of Ricci flow with smooth initial data on closed manifold has been extensively studied in the literature.

Given a metric with polyhedral singularity on sphere $\mathbb{S}^{n-1}$ obtained from taking spherical suspension iteratively:
\[g_{0,\b}=g_{0,(\beta_1,\cdots,\beta_{n-1})}:=\beta_{1}^2(dx_{1}^2+\beta_{2}^2\sin^2 x_{1}(dx_{2}^2+(\cdots+\beta_{n-1}^2\sin^2 x_{n-2}dx_{n-1}^2)),\]
where $x_{1}\in[0,\pi],\cdots,x_{n-2}\in[0,\pi],x_{n-1}\in[0,2\pi]$. We might write $g_{0,\b}$ in a inductive way as 
\begin{equation}
g_{0,(\beta_1,\cdots,\beta_{n-1})}= \b_1^2 \left( dx^2_1+ \sin^2 x_1 \cdot g_{0,(\beta_2,\cdots,\beta_{n-1})}\right)
\end{equation}
so that $g_{0,(\beta_2,\cdots,\beta_{n-1})}$ is the link of the spherical suspension and is the singular metrics on $\mathbb{S}^{n-2}$ with polyhedral singularity.
In this work, we want to establish a canonical way to construct continuous Ricci flow smoothing starting from these non-smooth initial metrics with desired properties as $\b$ varies, see Theorem~\ref{thm:const-2} for the detailed statement.

We briefly discuss the idea behind and the subtlety involved.  
By way of induction,  if we have already constructed the Ricci flow smoothing $g_{(\beta_2,\cdots,\beta_{n-1})}(t)$ coming out of the link $g_{0,(\beta_2,\cdots,\beta_{n-1})}$, then we might consider the relatively less singular metric 
\begin{equation}\label{eqn:smoothed-link}
g_{0,(\beta_1,\cdots,\beta_{n-1}),\tau}= \b_1^2 \left( dx^2_1+ \sin^2 x_1 \cdot g_{(\beta_2,\cdots,\beta_{n-1})}(\tau)\right)
\end{equation}
for $\tau>0$, which is a metric with conical singularities. This falls inside the set-up considered by Gianniotis-Schulze  \cite{GianniotisSchulze2018} which allows us to smooth out $g_{0,(\beta_1,\cdots,\beta_{n-1}),\tau}$ by a Ricci flow $g_{(\beta_1,\cdots,\beta_{n-1}),\tau}(t)$ for any $\tau>0$. Using the maximum principle developed by the third named author and Tam \cite{LeeTam2025}, one can show that the Ricci flow $g_{(\beta_1,\cdots,\beta_{n-1}),\tau}(t)$ satisfies all properties we need, after we pass $\tau\to 0^+$. By way of induction, it is not hard to construct Ricci flow $g_{(\beta_1,\cdots,\beta_{n-1})}(t)$ coming out of $g_{0,(\beta_1,\cdots,\beta_{n-1})}$. This will be discussed in Section~\ref{sec:existence}.

The challenging part is to show the continuity of $g_{(\beta_1,\cdots,\beta_{n-1})}(t)$ as $(\beta_1,\cdots,\beta_{n-1})$ varies. This is related to how one can gauge between $g_{(\beta_1,\cdots,\beta_{n-1})}(t)$ and $g_{(\beta'_1,\cdots,\beta'_{n-1})}(t)$ when they are initially close. When the initial data are smooth, it is very often to use the Ricci-harmonic map heat flow between two Ricci flows: $
\partial_t F=\Delta_{g_1(t),g_2(t)}\, F,$
to transform a Ricci flow $g_1(t)$ to a Ricci-DeTurck flow $\hat g_1(t):=(F^{-1}(t))^*g_1(t)$ with respect to another Ricci flow $g_2(t)$, as long as $F(t)$ remains a diffeomorphism. The Ricci-DeTurck flow is a strictly parabolic system where the stability is relatively clear. In our case, it is tempting to mimic the smooth case by working on the regularized level, in the approximation scheme of Gianniotis-Schulze  \cite{GianniotisSchulze2018}. On the regularized level, the initial data is given by gluing in the expanding soliton constructed by Deruelle \cite{Deruelle2016} near the singularities  and thus the tangent flow of the Ricci flow is governed by the expanding Ricci soliton constructed by Deruelle \cite{Deruelle2016}. Deruelle's argument is based on PDE deformation and continuity method. In particular, the stability of those expanders are only known to be true as the link varies in $C^{3,\a}$ topology, while our link of polyhedral singularties varies only in $L^\infty$ topology. Therefore, we need to modify the gluing construction, compatible with $L^\infty$ variation of links.

The ultimate goal is to construct Ricci flows so that the Ricci flow with singular initial data can be gauged with each other whenever the initial singular metrics are close in weak sense, i.e. in $L^\infty$ sense. To achieve this, we construct Ricci flow $g_{(\beta_1,\cdots,\beta_{n-1})}(t)$ coming out of $g_{0,(\beta_1,\cdots,\beta_{n-1})}$ in a slightly different way so that the stability partially holds by construction. In short, the overall idea is to extend the existence using the above mentioned construction at \textit{one} reference point $\hat \b_0:=(\hat \beta_1,\cdots,\hat \beta_{n-1})$ in the parameter space and propagate the existence to other $\b$ using weak stability. More precisely, we fix a reference point $\hat \b_0$ and implement the above mentioned strategy to obtain a reference Ricci flow $g_{(\hat \beta_1,\cdots,\hat \beta_{n-1})}(t)$. Instead of repeating the construction using the same method on a different $\b$, we consider construction using weak stability with respect to $g_{(\hat \beta_1,\cdots,\hat \beta_{n-1})}(t)$.

The weak stability of Ricci flow has been studied extensively and is known to be powerful in studying questions in scalar curvature. We refer readers to \cite{Burkhardt2019,ChuLee2025,DeruelleLamm2017,KochLamm2012,Simon2002} and the reference therein. Recall that for a background Ricci flow $\tilde g(t)$, $g(t)$ is said to be a  solution to the Ricci DeTurck flow with background $\tilde g(t)$ if
\begin{equation}\label{eqn:RDF1}
\left\{
\begin{array}{ll}
\partial_t g_{ij}=-2R_{ij}+\nabla_i V_j+\nabla_j V_i;\\[1mm]
V^k=g^{ij}\left(\Gamma_{ij}^k-\tilde \Gamma_{ij}^k \right).
\end{array}
\right.
\end{equation}
This is a strictly parabolic system of equations and is diffeomorphic to a Ricci flow through the Ricci-DeTurck ODE. In \cite{DeruelleLamm2017}, the weak stability of the Ricci-DeTurck flow with respect to an  expanding soliton $\tilde g(t)$ with nonnegative curvature operator and quadratic curvature decay  has been studied. This is the starting point of this work. We generalize this to background Ricci flow which has curvature bounded from below and satisfies scaling invariant estimates:
\begin{thm}\label{1234}
Suppose $M$ is a complete manifold and $\tilde g(t),t\in [0,T]$ is a smooth Ricci flow  on $M$ such that $\tilde g(t)$ satisfies the following on $(0,T]$ for some $\a>0$:
\begin{enumerate}
\item[(a)] $|\Rm(\tilde g(t))|\leq \a t^{-1}$;
\item[(b)] $\mathrm{inj}(\tilde g(t))\geq \sqrt{\a^{-1}t}$;
\item[(c)] $\mathrm{Rm}(\tilde g(t))\geq -1$;
\end{enumerate}
Then there exists $\bar\e_0(n,\a),\Lambda(n,\a)>0$ so that the following holds: if $g_0$ is a smooth complete metric on $M$ such that $||g_{0}-\tilde g(0)||\leq \e<\bar \e_0$, then it admits a smooth solution to \eqref{eqn:RDF1} on $M\times [0,T\wedge 1]$ such that 
\begin{equation}\label{estt}
    |g(t)-\tilde g(t)|\leq \Lambda \e 
\end{equation}
Furthermore, if $g(t)$ and $\hat g(t)$ are both solutions to \eqref{eqn:RDF1} such that \eqref{estt} holds, then 
\begin{equation}
    \sup_{M\times [0,T\wedge 1]}||g(t)-\hat g(t)||_{L^\infty(M,\tilde g(t))}\leq \Lambda \cdot  ||g(0)-\hat g(0)||_{L^\infty(M,\tilde g(0))}.
\end{equation}
\end{thm}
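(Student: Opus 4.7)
The plan is to treat the Ricci--DeTurck flow as a perturbation of $\tilde g(t)$ and analyze the tensor $h(t) := g(t) - \tilde g(t)$. A standard computation shows that, so long as $|h|_{\tilde g(t)}\le 1/2$,
\begin{equation}
\partial_t h = g^{kl}\tilde\nabla_k\tilde\nabla_l h + \widetilde{\mathrm{Rm}}\ast h + \mathcal Q(h,\tilde\nabla h),
\end{equation}
with $\mathcal Q$ polynomial in $g^{-1}$ and quadratic in $\tilde\nabla h$, and consequently
\begin{equation}
(\partial_t-\Delta_{\tilde g(t)})|h|^2 \le C(n)|\widetilde{\mathrm{Rm}}||h|^2 - |\tilde\nabla h|^2 \le \frac{C(n)\a}{t}|h|^2
\end{equation}
using assumption (a). The aim is then to derive the uniform $L^\infty$ bound $|h|\le \Lambda\e$ on $[0,T\wedge 1]$ from this inequality and the heat-kernel structure of $\tilde g(t)$.

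The central difficulty is that the potential $\a/t$ is not Gr\"onwall-integrable near $t=0$, so a pointwise maximum principle closes nothing by itself. To overcome this, I would work in a time-weighted norm
\begin{equation}
\|h\|_X := \sup_{0<t\le T\wedge 1}\Bigl(\|h(\cdot,t)\|_{L^\infty} + \sqrt t\,\|\tilde\nabla h(\cdot,t)\|_{L^\infty}\Bigr),
\end{equation}
in the spirit of Koch--Lamm, and analyze $h$ through Duhamel's formula against the heat kernel of $\Delta_{\tilde g(t)}$. Assumptions (a)--(c) yield Gaussian heat-kernel bounds on time-scale $\le 1$; the scale-invariance of (a), (b) lets the $\sqrt t$-weighted gradient bound absorb the singular potential, leading to a self-improving estimate
\begin{equation}
\|h\|_X \le C_1\|h_0\|_{L^\infty} + C_2\|h\|_X^2,
\end{equation}
which for $\e$ small enough forces $\|h\|_X \le 2C_1\e \le \Lambda\e$.

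With this a priori bound in hand, existence follows by continuity: the classical short-time theory produces a smooth Ricci--DeTurck flow from the smooth initial datum $g_0$, and the set of times $\tau \le T\wedge 1$ on which the flow exists with $\|h\|_{L^\infty}\le 2\Lambda\e$ is nonempty, open (parabolic extension using the bounded background geometry of $\tilde g(t)$ for $t>0$), and closed (the a priori estimate returns the strictly smaller bound $\Lambda\e$). For the Lipschitz dependence, the difference $k := g - \hat g$ of two such solutions satisfies a parabolic equation of the same structural form, but with the nonlinearity in $k$ now bilinear in $k$ and in $h,\hat h$; re-running the weighted-norm argument for $k$ and absorbing the bilinear term using $\|h\|_X, \|\hat h\|_X \le \Lambda\e$ yields the claimed bound $\|k(t)\|_\infty \le \Lambda \|k(0)\|_\infty$.

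The chief obstacle throughout is handling the $1/t$-singular background curvature: this dictates the choice of parabolic-scaling norm and forces one to exploit the gradient term $-|\tilde\nabla h|^2$ via heat-kernel Duhamel rather than a pointwise maximum principle. A secondary subtlety is that (c) is not scale-invariant and is used only to upgrade the heat kernel of $\tilde g(t)$ to Gaussian bounds up to time-scale $1$, which is precisely why the conclusion is restricted to $T\wedge 1$.
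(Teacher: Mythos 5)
Your overall framework (Duhamel against the heat kernel of the singular background plus a bootstrap in a scale-invariant norm) has the right general shape, but the proposal has a genuine gap at its core: it provides no mechanism to integrate the background curvature in time. The curvature of $\tilde g$ enters the equation for $h=g-\tilde g$ \emph{linearly}, so the evolution of $|h|$ carries a zeroth-order term of size $|\Rm(\tilde g)|\,|h|$, and the term you display is $C|\Rm(\tilde g)||h|^2$ with only the pointwise bound $|\Rm(\tilde g)|\le\a t^{-1}$. Neither that bound nor the smallness $|h|\le\delta$ makes $\int_0^t \a s^{-1}\,ds$ finite, and the weighted seminorm $\sqrt t\,\|\tilde\nabla h\|_{L^\infty}$ is irrelevant to this zeroth-order term (it does not even control $\int_0^t\int G|\tilde\nabla h|^2$, since it only yields $|\tilde\nabla h|^2\le \|h\|_X^2/t$; one needs a genuine space--time Morrey bound as in Lemma~\ref{lma:Morrey-grad}, not just a pointwise weighted sup). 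The paper closes the loop with two ingredients absent from your sketch: (i) a refined computation (Lemma~\ref{lma:evo-h-L1}) showing, via diagonalization and the curvature lower bound (c), that the linear curvature term has coefficient exactly $(1+\gamma)R_{\tilde g}$, so that it is cancelled against the potential of the conjugate heat operator $\partial_t-\Delta_{\tilde g}-R_{\tilde g}$; and (ii) Petrunin's theorem (Proposition~\ref{prop:Petrunin}), which converts (c) into the spatial integrability $\int_{B(x,r)}|\Rm(\tilde g)|\le C_nr^{n-2}$, so that the remaining curvature terms, paired with the Gaussian kernel at scale $\sqrt{t}$, contribute $O(\delta^2)$ uniformly in time. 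You have also misassigned the role of hypothesis (c): the Gaussian bounds come from (a) and (b) (Proposition~\ref{prop:heat-kernel}); (c) is what makes the curvature terms summable at all.

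The same gap is fatal for the Lipschitz stability claim. For $k=g-\hat g$ the curvature term is linear in $k$ with coefficient comparable to $|\Rm(\tilde g)|\sim\a t^{-1}$ and carries no smallness factor, so it cannot be ``absorbed using $\|h\|_X,\|\hat h\|_X\le\Lambda\e$'' as you propose; this is exactly where the paper needs the sharp $(1+\gamma)R_{\tilde g}v$ coefficient with $\gamma$ chosen small after Petrunin reduces $\int_0^t\int G|\Rm(\tilde g)|$ to a finite constant. Without Lemma~\ref{lma:evo-h-L1} and Proposition~\ref{prop:Petrunin}, or substitutes for them, neither the existence bootstrap nor the stability estimate closes.
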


In particular, this generalizes the stability in \cite{Burkhardt2019,DeruelleLamm2017,KochLamm2012} where the reference Ricci flow $\tilde g(t)$ is assumed to have bounded curvature. Indeed, the metrics are allowed to be non-smooth initially to some extent, which will be important for applications. We refer readers to Section~\ref{sec:stability} for a more general statement of the stability result which allows $L^\infty$ type singularity on the initial data.

By way of induction where Ricci flow smoothing and gauge fixing through Ricci-harmonic map heat flow are established in earlier dimensions, the weak stability allows us to construct Ricci flow $g_{\b,\tau}(t)$ coming out of $g_{0,\b,\tau}$, given in \eqref{eqn:smoothed-link}, through Ricci-DeTurck flow and Ricci-DeTurck ODE, for $\b$ sufficiently close to $\hat \b_0$. The Ricci flow constructed in this way is stable by construction, for all $\tau\to 0$. However unlike the smooth case, the initial data of the Ricci flow (potentially) experienced a slight change of gauge, due to loss of regularity. That said, the Ricci flow constructed can only attain the original initial data with a gauge change, even on the region where the initial data is regular. 

Despite the concern of gauge changing, we connect any $\b$ back to $\hat\b_0$ through an appropriate choice of ray $\gamma$ that lives inside the parameter space with $\gamma(0)=\hat\b_0$ and $\gamma(1)=\b$. Indeed, it is unclear to us whether the choice of ray might lead to a different Ricci flow at the end. Nevertheless, with a fixed choice of $\gamma$ we decompose it into finitely many pieces by considering $\gamma(t_i),\, t_i:=i\, \Delta t$ where $\Delta t$ is (uniformly) small. In this case, the stability of Ricci-DeTurck flow with respect to $g_{\b',\tau}(t)$ allows us to construct Ricci flows $g_{\hat\b,\tau}(t)$ with slight gauge change of their initial metrics, for $\hat \b\in \gamma([t_i,t_{i+1}])$, provided that the Ricci flow has already been constructed at $\b'=\gamma(t_i)$. 

Arguing in this way inductively, we construct Ricci flow $g_{\b,\tau}(t)$ for all $\b$ in the parameter space. In practice, we first carry this out in the $\tau$-level, i.e. we will consider the stability of Ricci-DeTurck flow from $g_{0,(\beta_1,\cdots,\beta_{n-1}),\tau}$, and then let $\tau\to 0$. The gauge fixing using harmonic map heat flow in the earlier dimension will be involved in order to compare different regularized links $g_{(\beta_2,\cdots,\beta_{n-1})}(\tau)$ in $L^\infty$ topology. Similarly whenever a different reference Ricci flow is used, we need to re-fix the gauge of the initial data due to the Ricci-DeTurck ODE in earlier step and gauge fixing in earlier dimension, so that stability with respect to the new Ricci flow applies in the new gauge. Despite the technicality, the stability of the new construction can be achieved easily for $\b$ and $\hat\b$ living along the same ray. The remaining subtlety lies in the stability when $\b$ and $\hat \b$ are close to each other but not necessarily along the same ray. Particularly, the Ricci flows are constructed under a different line of background Ricci flows. We manage to overcome this by tracing the gauge change carefully, and show that Ricci-harmonic map heat flow between $g_\b(t)$ and $g_{\hat \b}(t)$ can still be solved with stability estimates, whenever $\b,\hat\b$ are close. Equivalently, $g_\b(t)$ can be transformed into a Ricci-DeTurck flow with respect to $g_{\hat \b}(t)$ through a gauge change using Ricci-harmonic map heat flow. This will be achieved by careful approximation procedure and will be discussed in detail in Section~\ref{sec:stability-cts}.
}

The new construction of the Ricci flow smoothing is possibly different from the flow constructed by Gianniotis-Schulze \cite{GianniotisSchulze2018}, even in case of isolated singularities. The uniqueness and stability of Ricci flow starting from singular initial metric are still unclear in general. Restricting to the case when the flow is coming out of a cone with smooth link with $\mathrm{Rm}>1$, it was shown by Deruelle \cite{Deruelle2016} that the expanding soliton with positive curvature coming of that cone is unique. On the other hand, the Ricci flow smoothing produced by the new construction scheme can be shown to be an expander with positive curvature, using the result in \cite{ChanLeePeachey2024}. Consequently, two solutions will coincide with each other. 

In particular, thanks to the nature of new construction, this shows that Deruelle's expanders are stable with respect to $L^\infty$ perturbation of the (smooth) link. As a corollary of the stability, we obtain that the $C^0$-convergence of links implies the $C^{\infty}$-convergence of the expanding solitons.
\begin{thm}\label{thm:}
   Let $\{(\mathbb{S}^{n-1},h_i)\}_{i=1}^{\infty}$ and $(\mathbb{S}^{n-1},h_{\infty})$ be a sequence of smooth Riemannian manifolds. Suppose $\Rm(h_i)>1$, for all $i\in\mathbb N\cup\{\infty\}$, and 
   $$\|h_i- h_{\infty}\|_{C^0(\mathbb{S}^{n-1})}\to 0,\qquad\textnormal{as }i\to\infty.$$
   Let $\{(M_i,g_i,f_i,p_i)\}_{i=1}^{\infty}$ be the unique Ricci solitons coming out of the cone over $(\mathbb{S}^{n-1},h_i)$, constructed by Deruelle \cite{Deruelle2016}.
   Then there are smooth diffeomorphisms $\Phi_i$ on $\mathbb{S}^{n-1}$ such that for any $k\in\mathbb N$,
   \[\|\Phi_i^*g_i-g_{\infty}\|_{C^k(\mathbb{S}^{n-1})}\to 0,\qquad\textnormal{as }i\to\infty. \]
\end{thm}
We refer readers to Theorem~\ref{thm:stable-expander} for the quantitative version of the expander's stability.

\subsection*{Acknowledgment} The authors would like to thank Felix Schulze for fruitful discussion related to stability of Ricci flow.
P.-Y. Chan is  supported by the Yushan Young Fellow Program of the Ministry of Education (MOE), Taiwan (MOE-108-YSFMS-0004-012-P1), and by the NSTC grant 113-2115-M-007 -014 -MY2. Y. Lai is  supported by NSF grant DMS-2506832. M.-C. Lee is  supported by Hong Kong RGC grant (Early Career Scheme) of Hong Kong No. 24304222, No. 14300623 and  No. 14304225, NSFC grant No. 12222122 and an Asian Young Scientist Fellowship.

\section{Preliminaries: steady Ricci solitons} 

In this section, we collect some basic material of solitons.  A complete Riemannian metric $(M,g_0)$ is said to be a gradient Ricci soliton if there exists a smooth function $f\in C^\infty_{loc}(M)$ ( which is called a defining function) such that 
\begin{equation}
\Ric+\lambda g_0 =\nabla^2 f
\end{equation}
where $\lambda\in \mathbb{R}$ is a constant. A gradient Ricci soliton is said to be expanding, steady, and shrinking according to the sign  of $\lambda >, =,<0$ respectively. The three distinct types of Ricci solitons correspond to three different type of blow-up solutions of Ricci flow, for instance see \cite{Hamilton1995}. 

In this work, we are preliminarily interested in the steady case, i.e. $\lambda=0$. We might write the Ricci soliton equation as $2\Ric=\mathcal{L}_X g$ where $X=\nabla f$. It was proved by Zhang \cite{Zhang2009} that the vector field $X$ is a complete vector field on $M$. If we let $\Phi_t$ be the one-parameter of diffeomorphisms of $M^n$ generated by $-X$, then $g(t):=\Phi_t^* g_0$ solves the Ricci flow equation: 
\begin{equation}
\left\{
\begin{array}{ll}
\partial_t g(t)=-2\Ric(g(t));\\
g(0)=g_0
\end{array}
\right.
\end{equation}
 on $M\times (-\infty,+\infty)$. For simplicity, we will call $g(t)$ the soliton Ricci flow of $(M,g_0)$. In particular, it follows from the work of Chen \cite{Chen2009} that the scalar curvature satisfies $R(g(t))> 0$ for all $t\in\mathbb{R}$ unless $g_0$ is Ricci-flat.

On the other hand, it was observed by Hamilton \cite{Hamilton1995} that 
\begin{equation}
R(g_0)+|\nabla^{g_0} f|^2=\mathrm{const.}
\end{equation}
on $M$. Hence, we will assume by scaling that 
\begin{equation}
R(g_0)+|\nabla^{g_0} f|^2=1
\end{equation}
on $M$, when $g_0$ is not Ricci-flat.

\section{Ricci flow from spaces with isolated singularities}\label{sec:existence}

In this section, we will construct Ricci flow with curvature bounded from below which is coming out of manifolds with conic singularity. We will ultimately apply the smoothing iteratively to multiply warped product metrics with certain singularities. For manifolds with isolated conic singularity, we consider singularity of the following decay rate.
\begin{defn}\label{defn:deca}
Given $\sigma> 0$ and $L>0$, we say that a metric $g$ is $\sigma$-modeled by $C(X)$ at $p$ if there exists an open set $U_p$ containing $p$, $r_0>0$ and a smooth map $\phi: (0,r_0]\times X\to U_p\setminus \{p\}$ such that $\lim_{r\to 0}\phi(r,x)=p$ for all $x\in X$ and 
\begin{equation}
\sum_{j=0}^4 r^j \left|\nabla^{g_{c,X},j} (\phi^* g-g_{c,X}) \right|_{g_{c,X}} \leq r^\sigma
\end{equation}
as $r\to 0$, where $g_{c,X}:=dr^2+r^2 g_X$ is the cone-metric on $C(X)$.
\end{defn}

When $X=(\mathbb{S}^{n-1},h)$ has $\mathrm{Rm}(h)\geq 1$ and $O(r^\sigma)$ is further relaxed to $o(1)$ as $r\to 0$, it was studied by Gianniotis-Schulze  \cite{GianniotisSchulze2018} who showed that singularity of this form can be smoothed using Ricci flow. We strengthen the decay rate so that the Ricci flow smoothing has a uniform curvature lower bound. 

\begin{thm}\label{thm:RF-existence}
For $n\geq 3$, let $(Z^n,g_0)$ be a smooth compact manifold with isolated singularities $\{p_i\}_{i=1}^N$ and $\mathrm{Rm}(g_0)\geq -\e^2\geq-1$ outside $\{p_i\}_{i=1}^N$. Assume there is $\sigma>0$ such that at each $p_i$, $g_0$ is $\sigma$-modeled by $C(X_i)$ where $X_i=(\mathbb{S}^{n-1},h_i)$ satisfy $\mathrm{Rm}(h_i)\geq  1$ and $\mathrm{sec}(h_i)\not\equiv 1$. Then there exists a  smooth manifold $M$, constants $\a,\Lambda,S>0$ and a Ricci flow $g(t)$ on $M\times (0,S]$ such that 
\begin{enumerate}
    \item[(i)] $\mathrm{Rm}(g(t))\geq -\Lambda \e^2 $;
    \item[(ii)] $|\Rm(g(t))|\leq \a t^{-1}$;
    \item [(iii)]$\mathrm{inj}(g(t))\geq \sqrt{\a^{-1}t}$
    \item[(iv)] $\left(M,d_{g(t)}\right)\to (Z,d_{g_0})$ as $t\to0$ in Gromov-Hausdorff sense;
    \item[(v)] There exists a map $\Psi:Z\setminus\{p_i\}_{i=1}^N\to M$ diffeomorphism onto its image, such that $\Psi^*g(t)\to g_0$ in $C^\infty_{loc}(Z\setminus\{p_i\}_{i=1}^N)$ as $t\to 0$;
    \item[(vi)] If $\mathrm{Rm}(g_0)\geq 1$ outside $\{p_i\}_{i=1}^N$, then $\mathrm{Rm}(g(t))\geq 1$ for $t\in (0,S]$;
    \item[(vii)]If $\e=0$, then we might choose $\a,S$ such that it depends only on $\mathrm{Vol}(Z,g_0)$.
\end{enumerate}
\end{thm}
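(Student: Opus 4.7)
The plan is to adapt the Ricci flow smoothing of Gianniotis--Schulze \cite{GianniotisSchulze2018} to the polynomial decay setting of Definition \ref{defn:deca}, and then invoke the maximum principle of Lee--Tam \cite{LeeTam2025} to propagate the curvature lower bounds (i) and (vi) from the initial data to the flow. Since each link $(X_i,h_i)$ has $\mathrm{Rm}(h_i)\ge 1$ and $\mathrm{sec}(h_i)\not\equiv 1$, Deruelle \cite{Deruelle2016} produces a complete expanding gradient soliton $(\mathbb{R}^n,g_{i,\mathrm{exp}})$ with $\Rm\ge 0$ asymptotic to the cone $C(X_i)$. For each small $\tau>0$, I would build a fixed smooth closed manifold $M$ (independent of $\tau$) together with a smooth metric $g_{0,\tau}$ on $M$ by excising a ball of radius $\sim\tau^{1/2}$ around each $p_i$ in $(Z,g_0)$ and gluing in a rescaled copy $\tau^2 g_{i,\mathrm{exp}}$ through a cutoff on the annular transition region where, by $\sigma$-modeling, both sides are $O(\tau^{\sigma/2})$-close in $C^4$ to the common cone. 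The $\sigma$-decay forces the resulting $C^2$ gluing error to be $o_\tau(1)$, so that $g_{0,\tau}\to g_0$ in $C^\infty_{\mathrm{loc}}(Z\setminus\{p_i\})$ and in Gromov--Hausdorff distance as $\tau\to 0^+$.

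Running Shi's Ricci flow from each $g_{0,\tau}$ on the closed manifold $M$, the key technical input is that there exist $\alpha,S>0$ independent of $\tau$ such that the scale-invariant estimates $|\Rm(g_\tau(t))|\le \alpha t^{-1}$ and $\mathrm{inj}(g_\tau(t))\ge \sqrt{\alpha^{-1}t}$ hold on $M\times(0,S]$. This follows the scheme of \cite{GianniotisSchulze2018}: Perelman's pseudolocality controls the region where $g_{0,\tau}$ is uniformly almost Euclidean at small scales, the self-similar bounds for the Deruelle expander handle the glued-in regions, and $\kappa$-noncollapsedness follows from Bishop--Gromov using $\Rm\ge -\varepsilon^2$. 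In the case $\varepsilon=0$, all of these inputs reduce to a dependence on $\mathrm{Vol}(Z,g_0)$ alone, yielding (vii).

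For the curvature lower bounds (i) and (vi), the reaction ODE $\partial_t\Rm=\Rm^2+\Rm^{\#}$ preserves both cones $\{\Rm\ge -\varepsilon^2\}$ and $\{\Rm\ge 1\}$, and the scale-invariant bound above provides precisely the ambient regularity required to apply the Lee--Tam maximum principle \cite{LeeTam2025} to the smallest eigenvalue of the curvature operator along $g_\tau(t)$. Since the gluing produces a curvature error of size $\tau^\sigma$ and the expanders satisfy $\Rm\ge 0$, one has $\Rm(g_{0,\tau})\ge -\varepsilon^2-o_\tau(1)$, and, when $\Rm(g_0)\ge 1$ off the singularities, $\Rm(g_{0,\tau})\ge 1-o_\tau(1)$; the maximum principle then yields
\begin{equation}
\Rm(g_\tau(t))\ge -\Lambda\varepsilon^2-o_\tau(1)\quad\text{(resp. }\Rm(g_\tau(t))\ge 1-o_\tau(1)\text{)}
\end{equation}
on $M\times(0,S]$ for some $\Lambda=\Lambda(n)$. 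Hamilton's compactness then extracts a subsequential $C^\infty_{\mathrm{loc}}$-limit Ricci flow $g(t)$ on $M\times(0,S]$ satisfying (i)--(iii) and (vi). Since $g_{0,\tau}=g_0$ on any fixed compact subset of $Z\setminus\{p_i\}$ once $\tau$ is small enough, interior Ricci flow regularity gives (v) with $\Psi$ the exhausting embedding, and (iv) follows from (v) together with the collapse of the glued-in regions to the points $p_i$.

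The hard part will be Step 3: propagating the sign of the initial curvature down to $t=0^+$ despite the presence of the singularity. This requires the curvature concentration to be bounded by $\alpha/t$ uniformly in $\tau$ (Step 2), and control of the sign of $\Rm(g_{0,\tau})$ up to $o_\tau(1)$ error on the transition annuli. The latter is precisely why the polynomial rate $r^\sigma$ in Definition \ref{defn:deca} is needed: the mere $o(1)$ decay used in \cite{GianniotisSchulze2018} would be insufficient to secure the strict bound $\Rm\ge 1$ of (vi) in the limit, which is in turn crucial for iterating the construction and for gluing further expanders in the proof of Theorem \ref{thm:soliton-polyhedron}.
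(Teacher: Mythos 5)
There is a genuine gap at the heart of Step 3. You assert that the gluing at scale $r\sim\tau^{1/2}$ produces a $C^2$ error of size $o_\tau(1)$, hence a pointwise bound $\Rm(g_{0,\tau})\ge-\e^2-o_\tau(1)$ (resp. $\ge 1-o_\tau(1)$), to which a maximum principle can be applied directly. This is false: on the transition annulus the cutoff function varies over scale $r$, so its second derivatives contribute $r^{-2}$ multiplied by the $C^0$-discrepancy between the two metrics, which by the $\sigma$-modeling is only $O(r^\sigma)$ with $\sigma$ small. The resulting curvature error is of order $r^{\sigma-2}\to\infty$, not $o_\tau(1)$; no choice of gluing scale avoids this when $\sigma<2$. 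Consequently the glued initial metrics have no uniform pointwise curvature lower bound, and the maximum-principle step you rely on cannot be started. (Your own "hard part" paragraph flags the need to "control the sign of $\Rm(g_{0,\tau})$ up to $o_\tau(1)$ error on the transition annuli," but the polynomial rate $r^\sigma$ does not deliver this pointwise control — that is not what it is for.)

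The paper's proof works around exactly this obstruction. The approximants only satisfy an \emph{integral} (Morrey/Kato-type) bound $\fint_{B(x,r)}\Rm_-\le C r^{-2+\sigma}$ (Proposition~\ref{prop:approx}(b)), which is where the polynomial decay rate actually enters. The flow is then produced via the existence theory for Kato-type curvature lower bounds, yielding only the degenerate bound $\Rm(g(t))\ge-\a t^{\sigma/2-1}$ a priori. The clean bounds (i) and (vi) are recovered afterwards by a heat-kernel representation argument for the operator $\partial_t-\Delta-R$: one splits the integral into a ball of radius $\Lambda\sqrt\tau$ about each singular point (whose contribution vanishes as $\tau\to0$ because $\tau^{\sigma/2-1}\cdot\tau^{n/2}\to0$) and its complement (controlled via the quantitative local maximum principle of Lee--Tam applied on the smooth part, Claim~\ref{claim:improved-decay}). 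This entire upgrading mechanism is absent from your proposal, and without it neither (i) nor (vi) follows. Your remaining steps (gluing in the Deruelle expander, uniform $\a t^{-1}$ and injectivity radius bounds, Hamilton compactness, interior regularity for (iv)--(v), restarting the flow for (vii)) are broadly consistent with the paper's strategy.
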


This is based on the idea in \cite{GianniotisSchulze2018}. For the purpose of curvature lower bound along the flow and discussion of stability, in the rest of this section, we will modify and rebuild their construction. 

\subsection{Expander with $\mathrm{Rm}\geq 0$}\label{sec:expander}

The idea of Gianniotis-Schulze is to first glue-in an  expanding Ricci soliton nearby the conic singularity. To do this, we use the existence result of Deruelle \cite{Deruelle2016}.
\begin{prop}\label{prop:Der}
Suppose $C(X)$ is a cone over a smooth manifold $X:=(\mathbb{S}^m,h)$ with $\mathrm{Rm}(h)> 1$, then there exists a unique Ricci flow $(N,g_N(t),p_N),t\in (0,+\infty)$ coming out of $C(X)$ in the pointed Gromov-Hausdorff sense such that $t\Ric(g_N)+g_N=\nabla^2 f$ for some smooth function $f$, on $N\times (0,+\infty)$.
\end{prop}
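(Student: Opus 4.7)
The plan is to invoke Deruelle's existence and uniqueness theorem for expanding gradient Ricci solitons asymptotic to positively curved cones, and to repackage its conclusion in the Ricci-flow form stated in the proposition.

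First, I would verify the cone hypothesis needed for Deruelle's theorem. The sectional curvatures of $g_c = dr^2 + r^2 h$ on $C(X) \setminus \{0\}$ are $(\mathrm{sec}(h) - 1)/r^2 \geq 0$, and at the level of the curvature operator the same computation gives $\mathrm{Rm}(g_c) \geq 0$. So the assumption $\mathrm{Rm}(h) > 1$ places $C(X)$ squarely inside the class covered by Deruelle's construction, which produces a smooth complete expanding gradient Ricci soliton $(N, \hat g, \hat f)$ on $N \cong \mathbb{R}^{m+1}$ with
\begin{equation}
    \Ric(\hat g) + \tfrac{1}{2} \hat g = \nabla^2 \hat f,
\end{equation}
$\mathrm{Rm}(\hat g) \geq 0$, and $\hat g$ smoothly asymptotic to $g_c$ at infinity. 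The potential $\hat f$ is strictly convex with a unique critical point $p_N$.

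Next, I would convert the static soliton into the desired Ricci flow. Let $\Phi_t$ be the flow of the time-dependent vector field $-t^{-1}\nabla^{\hat g}\hat f$ normalized by $\Phi_1 = \mathrm{id}$, and set $g_N(t) := t\cdot \Phi_t^* \hat g$ for $t > 0$. A direct computation using the soliton equation shows $\partial_t g_N(t) = -2\,\Ric(g_N(t))$, and $t\,\Ric(g_N(t)) + g_N(t) = \nabla^2_{g_N(t)} f_t$ where $f_t := \Phi_t^* \hat f$. Since $\hat g$ is asymptotic to $g_c$ at infinity, the parabolic rescaling built into the definition of $g_N(t)$ forces
\begin{equation}
    (N, g_N(t), p_N) \longrightarrow (C(X), g_c, 0)
\end{equation}
in the pointed Gromov-Hausdorff topology as $t \to 0^+$, with basepoint the $\Phi_t$-fixed critical point $p_N$ of $\hat f$.

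Finally, I would appeal to Deruelle's rigidity statement: among complete expanding gradient Ricci solitons with $\mathrm{Rm} \geq 0$ asymptotic to a given cone over a link with $\mathrm{Rm} > 1$, the soliton is unique up to isometry. The step I expect to require the most care is the uniqueness clause of the proposition, since a priori it concerns Ricci flows coming out of $C(X)$ rather than static expanders. However, any Ricci flow satisfying the self-similar identity $t\,\Ric(g_N) + g_N = \nabla^2 f$ can be transferred back to an expanding soliton by inverting the dilation–diffeomorphism reduction above, so Deruelle's rigidity applies and yields uniqueness at the level of Ricci flows, completing the proof.
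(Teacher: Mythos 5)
Your proposal is correct and follows essentially the same route as the paper: cite Deruelle's existence--uniqueness theorem for expanders asymptotic to cones over links with $\Rm>1$, convert the static soliton to the self-similar Ricci flow $g_N(t)=t\,\Phi_t^*\hat g$ with $\Phi_t$ generated by $-t^{-1}\nabla\hat f$, and identify the $t\to0$ pointed Gromov--Hausdorff limit with $C(X)$ (the paper does this last step via the Hamilton--Perelman distance-distortion estimate of Simon--Topping). The only discrepancy is the soliton normalization ($\Ric+\tfrac12\hat g=\nabla^2\hat f$ versus the paper's $\Ric+\hat g=\nabla^2\hat f$ at $t=1$), which is a harmless rescaling of $\hat f$.
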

\begin{proof}
The existence of a unique expanding Ricci soliton $(N,g_{N,1},f_N,p_N)$ with $0<\mathrm{Rm}(g_{N,1})\leq \a$ for some $\a>0$, follow from the main result of \cite{Deruelle2016}. Let $g_N(t)$ be the induced Ricci flow so that $g_N(1)=g_{N,1}$.  Precisely, we let $\Phi_t,t\in (0,+\infty)$ be the family of diffeomorphism such that 
\begin{equation}
\left\{
\begin{array}{ll}
\partial_t \Phi_t=-t^{-1}\nabla^{g_{N,1}}f_N;\\
\Phi_1=\mathrm{Id}.
\end{array}
\right.
\end{equation}
Then $g_N(t):=t (\Phi_t)^*g_{N,1}$ defines a Ricci flow on $N\times (0,+\infty)$. Moreover it follows from Hamilton-Perelman distance distortion, for instances see \cite[Lemma 3.1]{SimonTopping2021}, that $d_{g_N(t)}$ converges to some distance function $d_0$ on $N$ as $t\to 0$ so that $(N,d_0,p_N)$ is isometric to $C(X)$ as metric space.
\end{proof}

Since $(N,d_0,p_N)$ is isometric to $C(X)$, $C(X)$ inherits a natural smooth structure from $N$ and thus we might without loss of generality assume $g_N(t)$ lives on $C(X)$ and $g_N(t)$ converges smoothly to $g_{c,X}:=dr^2+r^2g_X$ as $t\to 0$ away from $p_N\cong o_{tips}$. Furthermore, $g_N(t)$ satisfies 
\begin{enumerate}
\item[(a)] $\mathrm{Rm}(g_N(t))>0$;
\item[(b)] $\mathrm{AVR}(g_N(t))=v_0>0$;
\item[(c)] $|\Rm(g_N(t))|\leq \a t^{-1}$
\end{enumerate}
on  $N\times (0,+\infty)$, for some $\a,v_0>0$.

We will use $B_c(o,r_1)$ and $A_c(o,r_1,r_2)$ to denote $[0,r_1)\times X/\sim$ and $(r_1,r_2)\times X/\sim$ in $C(X)$. We will also use $B_c(x,r_1)$ to denote the ball with respect to cone metric $g_{c,X}$. Intuitively, we will use the radial function to construct cutoff function on $C(X)$. To make it more compatible with the Ricci flow, we use the smoothed radial from soliton equation. Let $f_N$ be the expanding soliton potential which satisfies $f_N(p_N)=0$ and given the induced Ricci flow $g_N(t)=t\Phi_t^* g_{N,1}$, we let $f(x,t):=t \Phi_t^* f_N(x)$. From solitons equation and Ricci flow estimates, the following is straight forward.
\begin{lma}\label{lma:dist-like}
There exists $C_n>0$ such that the function $$\eta(x,t)=2\sqrt{f(x,t)+C_n\a t}$$ satisfies  
\begin{enumerate}
\item[(i)] $ d_{g_N(t)}(x,p_N)\leq \eta(x,t)\leq  d_{g_N(t)}(x,p_N)+C_n\sqrt{\a t}$;
\item[(ii)] $|\nabla\eta|_{g_N}\leq 1$;
\item[(iii)] $|\nabla^{2,g_N} \eta|+|\partial_t \eta|\leq C_n \a \eta^{-1}$.
\end{enumerate}
\end{lma}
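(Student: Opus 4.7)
The plan is to reduce all three assertions to two inputs: a Hamilton-type conservation law for the time-$1$ expanding soliton $(N,g_{N,1},f_N,p_N)$, and the self-similar pullback structure $g_N(t)=t\Phi_t^{*}g_{N,1}$, $f(x,t)=t\Phi_t^{*}f_N$, which expresses the Ricci flow and its potential as rescaled pullbacks of the time-$1$ data. Applying the twice-contracted Bianchi identity to the soliton equation and evaluating at the critical point $p_N$ (where $f_N=0$ and $\nabla f_N=0$) produces a conserved quantity of the form $R(g_{N,1})+f_N-|\nabla f_N|^2\equiv R(p_N)$. Transferring this to arbitrary time via the self-similar structure yields a spacetime identity
\[
|\nabla f|^2_{g_N(t)}=f(x,t)+t^2R(g_N(t))-R(p_N)\,t,
\]
and the curvature bound $|\Rm(g_N(t))|\le\a t^{-1}$ then gives the working inequality $|\nabla f|^2\le f+C\a t$ with $R(p_N)\le C\a$ absorbed.

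For (i), the Ricci non-negativity of the Deruelle expander combined with the soliton equation gives $\nabla^2 f_N\ge c\,g_{N,1}$ for a universal $c>0$, and convexity together with the vanishing value and gradient at $p_N$ yields $f_N\ge c'd_{g_{N,1}}(\cdot,p_N)^2$ by integrating along minimising geodesics from $p_N$. Pulling back by $\Phi_t$, which fixes $p_N$, and rescaling gives $f(x,t)\ge c'd_{g_N(t)}(x,p_N)^2$, from which the lower bound $d_{g_N(t)}(x,p_N)\le \eta(x,t)$ is immediate after adjusting $C_n$. The upper bound $\eta\le d_{g_N(t)}(x,p_N)+C_n\sqrt{\a t}$ is then deduced from (ii) by integrating $|\nabla\eta|\le 1$ along a minimising $g_N(t)$-geodesic from $p_N$ to $x$ and using $\eta(p_N,t)=2\sqrt{C_n\a t}$.

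For (ii) and the Hessian part of (iii), I would compute directly: $|\nabla\eta|^2_{g_N(t)}=|\nabla f|^2/(f+C_n\a t)\le 1$ by the working inequality once $C_n$ is taken large enough; and
\[
\nabla^2\eta=\frac{\nabla^2 f}{\sqrt{f+C_n\a t}}-\frac{\nabla f\otimes\nabla f}{2(f+C_n\a t)^{3/2}},
\]
where the first term is bounded using $\nabla^2 f=t\,\mathrm{Ric}(g_N(t))+g_N(t)$ together with $|\Ric|\le\a t^{-1}$ to give $|\nabla^2 f|\lesssim\a$, and the second term is bounded using the working inequality. Both terms contribute $\lesssim \a/\eta$.

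For the time derivative, differentiating $f=t\Phi_t^{*}f_N$ along the flow of $\Phi_t$ (whose generator is proportional to $\nabla^{g_{N,1}}f_N$) expresses $\partial_t f$ as a specific combination of $f/t$, $|\nabla f|^2/t$, and curvature terms; substituting the conservation identity above into this expression produces exactly the cancellation of the potentially large $f/t$-type contributions, leaving a remainder bounded by $O(\a)$. After absorbing the additive $C_n\a$ into this same $O(\a)$ and dividing by $\sqrt{f+C_n\a t}\sim\eta/2$, one obtains $|\partial_t\eta|\lesssim\a/\eta$. The structural point of the argument is that the additive shift $C_n\a t$ inside the square root is chosen precisely to effect this cancellation uniformly; the main bookkeeping subtlety is to verify that a single choice of $C_n$ (depending only on $n$ and $\a$) works in all three assertions, but otherwise the computation is a routine consequence of the soliton identity and the curvature bound.
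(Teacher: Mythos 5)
The paper gives no proof of this lemma (it is dismissed as ``straight forward'' from the soliton equation and Ricci flow estimates), so your proposal is supplying the omitted argument; the route you take --- Hamilton's identity for the expander combined with the self-similar structure $g_N(t)=t\Phi_t^*g_{N,1}$, $f(\cdot,t)=t\Phi_t^*f_N$ --- is the standard and surely intended one, and the overall structure is correct. Two points need tightening. First, the conservation law is mis-stated: for $\Ric(g_{N,1})+\lambda g_{N,1}=\nabla^2 f_N$ the conserved quantity is $R+|\nabla f_N|^2-2\lambda f_N$, which at time $t$ (with the normalization $\lambda=\tfrac12$ that makes $t\Phi_t^*g_{N,1}$ an actual Ricci flow) reads $|\nabla f|^2_{g_N(t)}=f-t^2R(g_N(t))+t^2R(p_N,t)$; your displayed identity carries the curvature term with the opposite sign and, taken literally, a wrong coefficient on $f$. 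The sign slip is harmless because either sign of the $t^2R$ term is absorbed by $|\Rm|\le\a t^{-1}$, but the coefficient of $f$ is exactly what makes $|\nabla\eta|\le1$ (rather than $\le\sqrt2$) come out, so it cannot be treated as a generic constant. Second, in part (i) you assert $f_N\ge c'\,d^2$ for an unspecified $c'>0$ and claim the bound $d_{g_N(t)}(x,p_N)\le\eta$ is ``immediate after adjusting $C_n$''; adjusting the additive constant $C_n$ cannot repair a multiplicative constant $2\sqrt{c'}<1$ at large distance. What rescues the step is that the soliton equation together with $\Ric\ge0$ gives the sharp bound $\nabla^2 f(\cdot,t)\ge\tfrac12 g_N(t)$, hence $f\ge\tfrac14 d_{g_N(t)}(\cdot,p_N)^2$ and $2\sqrt f\ge d$ exactly --- the factor $2$ in the definition of $\eta$ is calibrated to this, so the constants must be tracked, not left generic. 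With those constants in place, the remaining steps (the gradient bound, the Hessian computation, and the cancellation $\partial_tf=t^{-1}(f-|\nabla f|^2)=t(R-R(p_N,t))=O(\a)$) are correct.
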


We also need the following estimates which is a consequence of smoothness of link $X$.
\begin{lma}\label{lma:reg-away}
There exists $\hat L,\hat T>0$ depending on the geometry of $X=(\mathbb{S}^{n-1},h)$, such that for all $r>0$ and $(x,t)\in B_c(o,r)^c\times [0,\hat Tr^2]$, we have 
\begin{equation}
\sum_{m=0}^5 r^m|\nabla^m \mathrm{Rm}(g_N)|\leq \hat L r^{-2}.
\end{equation} 
\end{lma}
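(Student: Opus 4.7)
The plan is to exploit the self-similarity of $g_N(t)$ coming from the uniqueness in Proposition \ref{prop:Der} to reduce the estimate to scale $r = 1$, and then invoke local Ricci-flow regularity with smooth initial data on the annular region $B_c(o,1)^c$.

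First I would establish self-similarity: for each $\lambda > 0$, the rescaled flow $\lambda^{-2}g_N(\lambda^2 t)$, pulled back by the natural radial dilation $\psi_\lambda\colon C(X) \to C(X)$, $(r,x)\mapsto(\lambda r, x)$, is again a Ricci flow coming out of $C(X)$ in the pointed Gromov--Hausdorff sense, so the uniqueness in Proposition \ref{prop:Der} forces $\psi_\lambda^* g_N(\lambda^2 t) = \lambda^2 g_N(t)$. Under this rescaling, $r^{m+2}|\nabla^m \Rm(g_N(t))|$ is invariant; setting $\lambda = 1/r$ therefore reduces the lemma to producing $\hat L, \hat T > 0$ (depending only on $X$) such that
\[
\sum_{m=0}^{5} |\nabla^m \Rm(g_N(t))| \le \hat L \quad \text{on } B_c(o, 1)^c \times [0, \hat T].
\]

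Next, I would observe that on $B_c(o, 1/2)^c$ the initial datum $g_{c,X}$ is smooth with bounded geometry of all orders: since the link $(\mathbb S^{n-1}, h)$ is smooth and compact, one has $|\nabla^m \Rm(g_{c,X})| \le L_m$ for constants $L_m$ depending only on $X$. Combining this with the $C^\infty_{loc}$ convergence $g_N(t) \to g_{c,X}$ away from the tip (as recorded after Proposition \ref{prop:Der}) and the asymptotically conical structure of the expander $g_{N,1}$ at infinity, one obtains a uniform bound $|\Rm(g_N(t))| \le 2 L_0$ on $B_c(o, 3/4)^c \times [0, \hat T]$ for $\hat T$ small. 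Given this together with smooth initial data of bounded geometry, local Shi-type derivative estimates adapted to smooth initial data (so that the bounds remain finite as $t \to 0^+$) then yield $|\nabla^m \Rm(g_N(t))| \le C_m$ on $B_c(o, 1)^c \times [0, \hat T]$ for $m = 0, \ldots, 5$; taking $\hat L := \max_{m \le 5} C_m$ finishes the reduction.

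The principal difficulty lies in securing the curvature bound $|\Rm(g_N(t))| \le 2L_0$ uniformly over the \emph{non-compact} end of $B_c(o, 3/4)^c$, rather than merely on its compact subsets. This relies on the asymptotically conical structure of $g_{N,1}$, namely that $|\nabla^m(g_{N,1} - g_{c,X})|_{g_{c,X}} \to 0$ at an appropriate rate at infinity, which should be extractable from Deruelle's construction \cite{Deruelle2016}. Coupled with the global bound $|\Rm(g_N(t))| \le \alpha t^{-1}$ established in Subsection \ref{sec:expander}, this permits gluing local regularity estimates across the asymptotically conical end to produce the required uniform derivative bounds.
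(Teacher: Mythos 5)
Your proposal is correct, but it obtains the key $m=0$ curvature bound by a genuinely different route from the paper. The paper's proof is essentially a citation: since $g_N(t)\to g_{c,X}$ smoothly away from the tip and $|\Rm(g_{c,X})|\le C_X r^{-2}$ on $B_c(o,r/2)^c$, Chen's \emph{local} curvature estimate \cite[Corollary 3.2]{Chen2009} propagates this bound forward for a time comparable to $r^2$; this is a pointwise-local parabolic estimate applied around each point separately, so the non-compactness of $B_c(o,r)^c$ — which you single out as the principal difficulty — never enters, and the higher derivatives then follow from the modified Shi estimates with initial data \cite[Theorem 14.16]{ChowBookII}, exactly as in your final step. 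You instead reduce to $r=1$ via self-similarity (this is fine: the dilated, parabolically rescaled flow is again an expanding soliton flow coming out of the same cone, so the uniqueness in Proposition~\ref{prop:Der} applies, at worst up to an isometry fixing the tip) and then control the curvature on the noncompact region $\{d_0(\cdot,o)\ge 3/4\}$ by the asymptotic conicality of $g_{N,1}$. That input is legitimate — Deruelle's expanders are asymptotically conical with derivatives, hence have quadratic curvature decay $|\Rm(g_{N,1})|\le C(1+d_{g_{N,1}}(p_N,\cdot))^{-2}$ — and once you have it the bound closes cleanly: $|\Rm(g_N(t))|(x)=t^{-1}|\Rm(g_{N,1})|(\Phi_t(x))\le C t^{-1}\cdot t=C$ there, uniformly in $t$, so the $C^\infty_{loc}$ convergence is only needed to see that the flow attains $g_{c,X}$ smoothly (so that Shi with initial data applies). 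The trade-off is that your argument imports the asymptotic expansion of the expander at spatial infinity as an extra ingredient from \cite{Deruelle2016}, whereas the paper needs only the smooth convergence to the cone away from the tip plus an interior-in-space estimate; in exchange, your self-similarity reduction makes the scaling in $r$ completely transparent rather than leaving it to the scale-invariance of the cited local estimates.
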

\begin{proof}
Since $g_N(0)=g_{c,X}=dr^2+r^2g_X$ and $X:=(\mathbb{S}^{n-1},h)$ is smooth, the conclusion follows from \cite[Corollary 3.2]{Chen2009} and the modified Shi's type estimates \cite[Theorem 14.16]{ChowBookII}, see also \cite[Theorem 10.3]{Perelman2002}.
\end{proof}

\subsection{Existence of Ricci flow}

In this subsection, we will construct a Ricci flow coming out of smooth compact manifolds with isolated singularity. We start with constructing smooth approximation of $(Z,g_0)$ with curvature bounded from below in weak sense.

\begin{prop}\label{prop:approx}Under the assumption of Theorem~\ref{thm:RF-existence}, there exists a one parameter family of smooth compact manifold $(M_s,g_{s,0}),s\in (0,1]$ such that 
\begin{enumerate}
\item[(a)] $(M_s,g_{s,0})$ converges to $(Z,d_{g_0})$ in Gromov-Hausdorff sense, as $s\to 0$;
\item[(b)] There exists $r_1,C_1>0$ such that 
\begin{equation*}
\fint_{B_{g_{s,0}}(x,r)} \mathrm{Rm}_-(g_{s,0})\,d\mathrm{vol}_{g_{s,0}}\leq C_1r^{-2+\sigma},
\end{equation*}
for all $s\in (0,1]$, $x\in M_s$ and $r\in (0,r_1]$.
\end{enumerate}
Here $\mathrm{Rm}_-(g):=\inf\{s>0: \mathrm{Rm}(g)+s\cdot \mathrm{Id}>0\}$ denotes the negative part of the lowest eigenvalue of $\mathrm{Rm}$.
\end{prop}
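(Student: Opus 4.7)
The plan is to smooth out each isolated conic singularity of $(Z,g_0)$ by gluing in a rescaled Deruelle expander at a small scale $R_s\searrow 0$. Near each $p_i$, the link $X_i=(\IS^{n-1},h_i)$ satisfies $\mathrm{Rm}(h_i)\ge 1$ and $\mathrm{sec}(h_i)\not\equiv 1$, so Proposition~\ref{prop:Der} produces a soliton Ricci flow $g_{N_i}(t)$ on a manifold $N_i$ with $\mathrm{Rm}(g_{N_i}(t))>0$ coming out of the cone $C(X_i)$ in pointed Gromov--Hausdorff sense. Using the map $\phi$ of Definition~\ref{defn:deca} to identify a punctured neighborhood of $p_i$ in $Z$ with a cone annulus, and the smooth identification of $g_{N_i}(t_s)$ with $g_{c,X_i}$ away from the tip as $t_s\to 0$, we glue across a transition annulus $\{R_s/2\le r\le 2R_s\}$: define $g_{s,0}:=g_0$ on $\{r\ge 2R_s\}$, $g_{s,0}:=g_{N_i}(t_s)$ on $\{r\le R_s/2\}$, and
\[
g_{s,0}:=\chi(r/R_s)\,g_0+(1-\chi(r/R_s))\,g_{N_i}(t_s)
\]
on the annulus, where $\chi$ is a fixed smooth cut-off equal to $1$ near $r=2R_s$ and $0$ near $r=R_s/2$. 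The time parameter $t_s$ will be chosen as a sufficiently high power $R_s^K$ of $R_s$ so that the expander is quantitatively conical on the annulus.

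\textbf{Closeness to the cone.} On the transition annulus, Definition~\ref{defn:deca} yields $|\nabla^{g_c,j}(g_0-g_{c,X_i})|_{g_c}\le R_s^{\sigma-j}$ for $j\le 4$. On the soliton side, the smooth convergence $g_{N_i}(t)\to g_{c,X_i}$ away from the tip, combined with the self-similar scaling $g_{N_i}(t)=t\,\Phi_t^*g_{N_i}(1)$ and Lemma~\ref{lma:reg-away} (Shi-type estimates on the asymptotic region of the expander), supplies analogous bounds $|\nabla^{g_c,j}(g_{N_i}(t_s)-g_{c,X_i})|_{g_c}\le R_s^{\sigma-j}$ on the annulus whenever $\sqrt{t_s}/R_s$ is sufficiently small, which is arranged by taking $K$ large. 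Expanding the Riemann tensor of the interpolation about that of $g_{c,X_i}$ and using $\mathrm{Rm}(g_{c,X_i})\ge 0$ (a curvature-operator consequence of $\mathrm{Rm}(h_i)\ge 1$), one obtains the pointwise bound $\mathrm{Rm}_-(g_{s,0})\le C R_s^{\sigma-2}$ on the annulus, while $\mathrm{Rm}_-(g_{s,0})=0$ on the interior expander region and $\mathrm{Rm}_-(g_{s,0})\le\e^2\le 1$ on the outer unchanged region.

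\textbf{Verification of (a) and (b), and main obstacle.} Item (a) is immediate: the modification occurs on a $g_0$-neighborhood of diameter $O(R_s)$ around each $p_i$, and the resulting region in $(M_s,g_{s,0})$ also has diameter $O(R_s)$, so $d_{GH}((M_s,g_{s,0}),(Z,d_{g_0}))\to 0$. For (b), since $g_{s,0}$ and $g_{c,X_i}$ are comparable up to a factor $1+CR_s^\sigma$ in the cone region, volume comparison $\mathrm{vol}_{g_{s,0}}(B(x,r))\simeq r^n$ holds uniformly in $s$ for $r\le r_1$ and some fixed $r_1>0$. Then
\[
\int_{B_{g_{s,0}}(x,r)}\mathrm{Rm}_-(g_{s,0})\,d\mathrm{vol}_{g_{s,0}}\le C R_s^{\sigma-2}\min(r^n,R_s^n)+C r^n,
\]
and dividing by $r^n$ the cases $r\le R_s$ and $r\ge R_s$ both yield $\fint\le C_1 r^{\sigma-2}$; the case $r\ge R_s$ uses $n+\sigma-2\ge 1$, which holds since $n\ge 3$. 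The only delicate ingredient is the \emph{quantitative} $C^4$ rate at which $g_{N_i}(t_s)$ approaches $g_{c,X_i}$ on the annulus of scale $R_s$: Proposition~\ref{prop:Der} provides only qualitative pointed $C^\infty_{loc}$ convergence away from the tip, so this qualitative statement must be upgraded, via the self-similar scaling together with Lemma~\ref{lma:reg-away}, to a uniform quantitative decay in terms of $\sqrt{t_s}/R_s$. Once this is in place, choosing $t_s:=R_s^K$ for $K=K(n,\sigma)$ sufficiently large closes the argument.
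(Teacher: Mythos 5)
Your proposal is correct and follows essentially the same route as the paper: glue a Deruelle expander to $g_0$ across a transition annulus at a small scale, upgrade the qualitative convergence $g_{N_i}(t)\to g_{c,X_i}$ to quantitative $C^2$ bounds on the annulus via the Shi-type estimates of Lemma~\ref{lma:reg-away}, deduce the pointwise bound $\mathrm{Rm}_-\le C(\text{scale})^{\sigma-2}$ there using $\mathrm{Rm}(g_{c,X_i})\ge0$, and verify (b) by the same two-case comparison of $r$ with the gluing scale. The only differences are cosmetic: the paper couples the parameters as $R_s=s^{1/4}$, $t_s=s$ (rather than $t_s=R_s^K$ with $K$ large) and cuts off in the soliton-potential radius $\eta$ rather than the cone radius.
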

\begin{proof}
Without loss of generality, we might assume $N=1$ since the gluing is performed nearby each isolated singularity only. That said there is only one isolated singularity $p\in Z$ which we assume modeled by $C(X)$. We will assume $\sigma<10^{-2}$ for convenience.

For notation convenience, we will assume $B_c(o,r_0)\subseteq Z$ for some small $r_0>0$. We also let $g_N(t)$ be the Ricci flow coming out of $C(X)$, obtained from Proposition~\ref{prop:Der}, with properties discussed in sub-section~\ref{sec:expander}. Let $\eta$ be the function from Lemma~\ref{lma:dist-like}. 

Fix a smooth non-increasing function $\phi:[0,+\infty)\to [0,1]$ such that $\phi\equiv 1$ on $[0,\frac32]$, vanishes outside $[0,2]$ and satisfies $|\phi''|\leq 10^3$, $|\phi'|^2\leq 10^3\phi$. We let $\phi_s(x):=\phi( \eta(x,s)/ s^{1/4})$ so that as $s\to 0$, $\phi_s$ is compactly supported on $B_c(o,r_0)$. Thus, we might define $(M_s,g_{s,0}):=(Z,g_{s,0})$ where 
\begin{equation}
g_{s,0}:=\phi_s g_N(s)+(1-\phi_s) g_0
\end{equation} 
is a smooth metric. The Gromov-Hausdorff convergence is immediate from the construction. It remains to show the curvature lower bound. 

\begin{claim}\label{claim:rough-bdd-beforeMorry}
There exists $C_2>0$ such that for $s$ sufficiently small,  the metric $g_{s,0}$ satisfies 
\begin{equation}
\mathrm{Rm}_-(g_{s,0})\leq \chi_{A_c(o,s^{1/4},3s^{1/4})}\cdot C_2 s^{-1/2+\sigma/4}+\chi_{Z\setminus B_c(o,3s^{1/4})}.
\end{equation}
\end{claim}
\begin{proof}[Proof of claim.]

Since $\mathrm{Rm}_-(g_0)\leq 1$ by assumption and $g_0$ can be regarded as a $L^\infty$ metric on $M$, it suffices to consider the following  cases: {\bf Case 1.} $x\in B_c(o,s^{1/4})$; {\bf Case 2.} $x\in A_c(o,s^{1/4},3s^{1/4})$; {\bf Case 3.} $x\in A_c(o,3s^{1/4},r_0)$. 
\medskip

If $x\in B_c(o,s^{1/4})$, then
\begin{equation}
\begin{split}
\eta(x,s)&\leq d_{g_N(s)}(x,o)+C_n\sqrt{\a s}\\
&\leq d_{0}(x,o)+C_n\sqrt{\a s}<\frac32 s^{1/4} 
\end{split}
\end{equation}
for $s$ sufficiently small. Here we have used \cite[Lemma 3.1]{SimonTopping2021} and Lemma~\ref{lma:dist-like}. In particular, $g_{s,0}= g_N(s)$ on $B_c(o,s^{1/4})$ so that $\mathrm{Rm}(g_{s,0})>0$. This proves the estimate on {\bf Case 1.} If $x\notin  \overline{B_c(o,3s^{1/4})}$, then for $s$ sufficiently small,
\begin{equation}
\begin{split}
\eta(x,s)&\geq d_{g_N(s)}(x,o)\geq d_{0}(x,o)-C_n'\sqrt{\a s}\\
&\geq 3s^{1/4}-C_n'\sqrt{\a s}>2s^{1/4}.
\end{split}
\end{equation}
Here we have used \cite[Lemma 3.1]{SimonTopping2021} and Lemma~\ref{lma:dist-like}. Thus, $g_{s,0}=g_0$ where we have $\mathrm{Rm}_-(g_{s,0})\leq 1$.  This proves the estimate on {\bf Case 3.}

\medskip
It remains to consider the transition region, $x\in A_c(o,s^{1/4},3s^{1/4})$.  We write
\begin{equation}
\begin{split}
g_{s,0}-g_{c,X}&=\phi_s \left( g_N(s)-g_{c,X}\right)+(1-\phi_s )  (g_0-g_{c,X})\\
&=h_1+h_2.
\end{split}
\end{equation}
We use $\tilde\nabla$ to denote the connection with respect to $g_{c,X}$. All norm below will be measured with respect to $g_{c,X}$, unless specified. In what follows, we will use $L_i$ to denote constant depending only on $h$. All estimates are understood on $A_c(o,s^{1/4},3s^{1/4})$ and for sufficiently small $s>0$.

By Lemma~\ref{lma:reg-away}, we have $|\Rm(g_N(t))|_{g_N(t)}\leq  L_0 s^{-1/2}$ on $A_c(o,s^{1/4},3s^{1/4})\times [0,\hat T s^{1/2}]$ and thus integrating in time yields 
\begin{equation}
|g_N(s)-g_{c,X}|\leq L_1 s^{1/2}.
\end{equation}
Similarly, we have $s^{-1/4}|\tilde\nabla g_N(s)|+|\tilde\nabla^2 g_N(s)|\leq  L_2$. In particular
\begin{equation}
\begin{split}
|\tilde\nabla h_1|&\leq s^{-1/4}|\phi'||\tilde\nabla \eta| |g_N(s)-g_{c,X}|+ \phi_s |\tilde\nabla g_N|\leq L_2 s^{1/4};\\[3mm]
|\tilde\nabla^2 h_1|&\leq s^{-1/2} |\phi''| |\tilde\nabla\eta| |g_N(s)-g_{c,X}|+s^{-1/4} |\phi'| |\tilde\nabla^2\eta| |g_N(s)-g_{c,X}|\\
&\quad +s^{-1/4} |\phi'| |\tilde\nabla \eta||\tilde\nabla g_N|+  \phi_s |\tilde\nabla^2 g_N(s)|\leq L_2
\end{split}
\end{equation}
using Lemma~\ref{lma:dist-like} and above estimates. 

\medskip

On the other hand, it follows from assumption that 
\begin{equation}
|g_0-g_{c,X}|+s^{1/4}|\tilde \nabla(g_0-g_{c,X})|+s^{1/2}|\tilde \nabla^2(g_0-g_{c,X})| \leq L_3s^{\sigma/4}.
\end{equation} 
A similar argument shows that  
\begin{equation}
|\tilde \nabla h_2|^2+|\tilde\nabla^2 h_2|\leq L_4 s^{-1/2+\sigma/4}.
\end{equation}

Using this and $\mathrm{Rm}(g_{c,X})\geq 0$, we see that 
\begin{equation}
\mathrm{Rm}_-(g_{s,0})\leq C_n \sum_{i=1}^2\left( |\tilde \nabla h_i|^2+  |\tilde \nabla^2 h_i|\right)\leq L_5s^{-1/2+\sigma/4}.
\end{equation}
This proves the claim.
\end{proof}

We now show (b) using claim~\ref{claim:rough-bdd-beforeMorry}. Let $x\in Z$ and $r>0$ small, 
\begin{equation}
\begin{split}
&\quad \int_{B_{g_{s,0}}(x,r)} \mathrm{Rm}_-(g_{s,0})\,d\mathrm{vol}_{g_{s,0}}\\
&\leq \mathrm{Vol}_{g_{s,0}}\left(B_{g_{s,0}}(x,r)\setminus B_c(o,3s^{1/4})\right)\\
&\quad +C_2 s^{-1/2+\sigma/4}\cdot \mathrm{Vol}_{g_{s,0}}\left(B_{g_{s,0}}(x,r)\cap A_c(o,s^{1/4},3s^{1/4})\right)\\
&=\mathbf{I}+\mathbf{II}.
\end{split}
\end{equation}

Clearly, 
\begin{equation}\label{eqn:esti-I}
\mathbf{I}\leq \mathrm{Vol}_{g_{s,0}}\left(B_{g_{s,0}}(x,r) \right).
\end{equation}

For $\mathbf{II}$, if $r\leq s^{1/4}$, then 
\begin{equation}\label{eqn:esti-II-1}
\mathbf{II}\leq C_2 r^{-2+\sigma} \cdot  \mathrm{Vol}_{g_{s,0}}\left(B_{g_{s,0}}(x,r) \right).
\end{equation}

In case $r>s^{1/4}$, we observe that since $g_{s,0}$ is uniformly bi-Lipschitz to $g_0$ outside $B_c(o,s^{1/4})$ and $g_{s,0}=g_N(s)$ on $B_c(o,s^{1/4})$, we have
\begin{equation}\label{eqn:esti-II-2}
\begin{split}
\mathbf{II}&\leq C_2 s^{-1/2+\sigma/4}\cdot \mathrm{Vol}_{g_{s,0}}\left(B_{g_{s,0}}(x,r)\cap A_c(o,s^{1/4},3s^{1/4})\right)\\
&\leq L_6 s^{{(-2+\sigma+n)}/4}\leq L_7 r^{-2+\sigma  }\cdot \mathrm{Vol}_{g_{s,0}}\left(B_{g_{s,0}}(x,r) \right).
\end{split}
\end{equation}

The conclusion (b) follows from combining \eqref{eqn:esti-I}, \eqref{eqn:esti-II-1} and \eqref{eqn:esti-II-2}.
\end{proof}

\medskip

Now we are ready to prove the existence of Ricci flow coming out of $(Z,g_0)$.
\begin{proof}[Proof of Theorem~\ref{thm:RF-existence}]
 It follows from the proof of \cite[Theorem 4.3]{CarronRose2021} and Proposition~\ref{prop:approx} that $(M_s,g_{s,0})\in\mathcal{K}_{IC1}(n,f,v)$ in the sense of \cite[Definition 1.2]{Lee2024}, uniformly for $f(t)=C_2(n,Z)\cdot  t^{\sigma/2}$ and some $v>0$, as $s\to 0$. Hence, \cite[Theorem 1.1]{Lee2024} applies to obtain a one parameter family of Ricci flow $g_s(t),t\in [0,S]$ starting from $g_{s,0}$  satisfying
\begin{enumerate}
\item[(a)] $|\Rm(g_s(t))|\leq \a t^{-1}$;
\item[(b)] $\mathrm{Vol}_{g_s(t)}(M_s)\geq v_0$;
\item[(c)] $\mathrm{inj}(g_s(t))\geq \sqrt{\a^{-1}t}$;
\item [(d)] $\mathrm{Rm}(g_s(t))\geq -\a t^{\sigma/2-1}$
\end{enumerate}
on $(0,S]$, for some uniform $v_0,\a,S>0$ independent of $s\to 0$. Here conclusion (c) is from \cite[Proposition 3.2 \& Lemma 3.1]{Lee2024}. This enables us to take sub-sequential limit of $g_{s_i}(t)$ as $s_i\to 0$, by Hamilton's compactness \cite{Hamilton1995} to obtain a Ricci flow $(M,g(t)),t\in (0,S]$. By Proposition~\ref{prop:approx}, \cite[Lemma 4.1]{Lee2024} and estimates above, $(M,g(t))$ converges to $(Z,g_0)$ as $t\to 0$ in the Gromov-Hausdorff sense. By \cite[Theorem 1.6]{DeruelleSchulzeSimon2022} or local regularity away from tips, $g(t)$ converges smoothly to some smooth metric $g(0)$ as $t\to 0$ away from $\{p_i\}_{i=1}^N$. Moreover, $d_{g_0}$ is distance-isometric to $d_{g(0)}$ via a Lipschitz map $\Psi:Z\to M$. By Myers–Steenrod, $\Psi$ is smooth outside $\{p_i\}_{i=1}^N$. We might assume $\Psi=\mathrm{Id}$ for notation convenience and $d_{g(t)}\to d_0=d_{g_0}$.

It remains to improve the curvature lower bound from mildly singular $O(t^{\sigma/2-1})$ to $O(1)$. We assume $N=1$ first.  Let $G(x,t;y,\tau)$ be the heat kernel to the operator $\partial_t-\Delta_{g(t)}-R_{g(t)}$ for $0<\tau<t<S$. By property (a), (c) and Proposition~\ref{prop:heat-kernel}, there is $L(n,\a)>0$ such that 
\begin{equation}\label{eqn:heat-kernel}
G(x,t;y,\tau)\leq \frac{L}{(t-\tau)^{n/2}}\cdot\exp\left(-\frac{d_{g(\tau)}^2(x,y)}{L(t-\tau)} \right)
\end{equation}
for all $x,y\in M$ and $0<\tau<t<S$.

By property (d) and \cite[Proposition 2.2]{BamlerCabezasWilking2019}, the function $\varphi:=e^{2\a \sigma^{-1} t^{\sigma/2}} \mathrm{Rm}_-(g(t))$ satisfies 
\begin{equation}\label{eqn:evo-Rm}
\left(\frac{\partial}{\partial t} -\Delta_{g(t)}-R_{g(t)}\right)\varphi\leq 0
\end{equation}
in the sense of barrier and hence maximum principle implies that for all $x\in M$, $t\in [\tau,S]$ and $\tau\to 0^+$,
\begin{equation}\label{eqn:heat-rep}
\begin{split}
\varphi(x,t)&\leq \int_M G(x,t;y,\tau)\,\varphi(y,\tau)\,d\mathrm{vol}_{y,g(\tau)}\\
&=\left(\int_{B_{g(\tau)}(p,\Lambda\sqrt{\tau})}+\int_{M\setminus B_{g(\tau)}(p,\Lambda\sqrt{\tau})}\right) G(x,t;y,\tau)\,\varphi(y,\tau)\,d\mathrm{vol}_{y,g(\tau)}\\
&=\mathbf{I}+\mathbf{II}
\end{split}
\end{equation}
where $\Lambda>0$ is a large constant to be chosen. 

We now use the fact that $g(\tau)\to g_0$ as $\tau\to 0$ smoothly outside $p\in M$ so that $\limsup_{\tau\to 0}\varphi(\tau)\leq \e^2$, to improve the estimate on $\mathbf{II}$. We need quantitative estimates on the speed of $\limsup_{\tau\to 0}\varphi$. 
\begin{claim}\label{claim:improved-decay}
If $x\in M$, and $r\in (0,1]$ are such that $g_0$ is smooth on $B_{g_0}(x, 2r)$, then for all $\ell>0$, there is $\hat T_\ell(n,\a)>0$ such that for all $t\in [0,\hat T_\ell r^2]$,
\begin{equation}
\varphi(x,t)\leq C_0(n,\a)\e^2+ t^\ell r^{-2\ell-2}.
\end{equation}
\end{claim}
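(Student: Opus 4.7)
The plan is to apply the heat kernel representation derived in \eqref{eqn:heat-rep} and split the integral at the scale $r$ separating the region where the initial metric is smooth from the rest. The near part will produce the $C_0(n,\a)\e^2$ term, while the far part will be killed by Gaussian decay at scale $r$, producing the $t^\ell r^{-2\ell-2}$ error.

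More precisely, since $(\partial_t-\Delta_{g(t)}-R_{g(t)})\varphi\le 0$ in the barrier sense by \eqref{eqn:evo-Rm}, the maximum principle combined with the heat kernel Gaussian upper bound \eqref{eqn:heat-kernel} gives, for any $0<\tau<t$,
$$\varphi(x,t)\le \int_M G(x,t;y,\tau)\,\varphi(y,\tau)\,dV_{g(\tau)}(y).$$
I would take $\tau=t/2$ and split the integral into
$$\mathbf{I}=\int_{B_{g(\tau)}(x,r/4)}G\,\varphi\,dV_\tau,\qquad \mathbf{II}=\int_{M\setminus B_{g(\tau)}(x,r/4)}G\,\varphi\,dV_\tau.$$
For $\mathbf{II}$, the bound $|\Rm(g(t))|\le \a t^{-1}$ gives $\varphi(y,\tau)\le C(n,\a)\tau^{-1}\le C(n,\a)t^{-1}$. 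Volume comparison together with the Gaussian decay of $G$ yields
$$\int_{d_{g(\tau)}(y,x)\ge r/4}G\,dV_\tau\le C(n,\a)\exp\!\left(-\frac{r^2}{C(n,\a)\,t}\right),$$
and the elementary estimate $e^{-s}\le C_\ell\, s^{-\ell-1}$ with $s=r^2/(Ct)$ then gives $\mathbf{II}\le C_\ell(n,\a)\,t^\ell\,r^{-2\ell-2}$, matching the error term.

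For the near part $\mathbf{I}$, Hamilton--Perelman distance distortion together with $|\Rm|\le \a t^{-1}$ implies $B_{g(\tau)}(x,r/4)\subset B_{g_0}(x,r)$ for $\tau\le \hat T r^2$ with $\hat T=\hat T(n,\a)$. Since $g_0$ is smooth on $B_{g_0}(x,2r)$ with $\Rm(g_0)\ge -\e^2$, the aim is to propagate this weak lower bound to obtain $\varphi(y,\tau)\le C(n,\a)\e^2$ on $B_{g(\tau)}(x,r/4)$. This can be done by running a localized version of the maximum principle argument of \cite{BamlerCabezasWilking2019} on $\chi\,\varphi$ for a cutoff $\chi$ supported in the smooth region, using the already-established integrability of $\varphi$ to control the boundary contribution. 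Once this is in hand, the standard bound $\int_M G\,dV_\tau\le C(n,\a)$ (from \eqref{eqn:heat-kernel} and volume comparison) gives $\mathbf{I}\le C_0(n,\a)\,\e^2$. Combining the two contributions yields the claim, with $\hat T_\ell$ determined by the distortion and the Gaussian-absorption step.

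The main obstacle is exactly the near-region bound $\varphi(y,\tau)\le C(n,\a)\e^2$ on $B_{g(\tau)}(x,r/4)$. The global weak curvature lower bound $\Rm(g(\tau))\ge -\a\tau^{\sigma/2-1}$ blows up as $\tau\to 0^+$ and cannot be used directly, and the smooth region is described with respect to $g_0$ while $\varphi$ evolves under the time-dependent metric $g(t)$. One therefore has to build a localization that survives the gauge shift between $g_0$ and $g(\tau)$ and produces a constant that is uniform as $\tau\to 0^+$. This localized propagation of the weak lower bound is the technical core of the claim, and it is precisely what allows the bootstrap needed later to upgrade $\Rm(g(t))\ge -\a t^{\sigma/2-1}$ to the clean bound $\Rm(g(t))\ge -\Lambda\e^2$ in Theorem~\ref{thm:RF-existence}.
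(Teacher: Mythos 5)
Your far-field estimate is fine: with $|\Rm(g(\tau))|\le\a\tau^{-1}$ and the Gaussian bound \eqref{eqn:heat-kernel}, the tail over $\{d_{g(\tau)}(y,x)\ge r/4\}$ does produce $C_\ell t^\ell r^{-2\ell-2}$. The gap is exactly where you locate it, and it is not closed by your sketch. With $\tau=t/2$, the bound $\varphi(y,t/2)\le C(n,\a)\e^2$ on $B_{g(t/2)}(x,r/4)$ that you need for $\mathbf{I}$ \emph{is} the statement of the claim (at a slightly earlier time and nearby points), so the splitting is circular unless that local bound is established independently --- and if it were, the splitting would be superfluous, since the same local statement applied at $(x,t)$ gives the claim outright. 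Your proposed route to it, a cutoff argument on $\chi\varphi$ in the spirit of \cite{BamlerCabezasWilking2019}, runs into two concrete problems: $\varphi$ is only a subsolution of \eqref{eqn:evo-Rm} in the barrier sense, so the cross term $\nabla\chi\cdot\nabla\varphi$ has no a priori control; and on the annulus where $\chi$ is non-constant the only available bound is $\varphi\le\a\tau^{\sigma/2-1}$, which blows up as $\tau\to0^+$, so ``the already-established integrability of $\varphi$'' does not by itself make the boundary contribution uniform in $\tau$. Absorbing these requires an iteration/localization scheme, not a one-line cutoff.

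The paper sidesteps all of this: after rescaling to $r=1$ it invokes the local maximum principle of \cite[Theorem 1.1]{LeeTam2022}, which is precisely the packaged statement ``local term $t^\ell$ plus the heat kernel integral of the initial data over the good ball,'' and then evaluates $\lim_{\tau\to0^+}\int_{B_{g_0}(x,2)}G(x,t;y,\tau)\varphi(y,\tau)\,d\mathrm{vol}_{g(\tau)}\le C_0(n,\a)\e^2$ using \eqref{eqn:heat-kernel} and the smooth convergence $g(\tau)\to g_0$ on that ball. If you want to make your argument self-contained you would essentially have to reprove that theorem; otherwise the correct fix is to cite it, as the paper does.
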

\begin{proof}
Since $r\in (0,1]$, we might assume $r=1$ by considering $r^{-2}g(r^2t)$. Since $\varphi(0)\leq \e^2 r^2$ on $B_{g_0}(x,2)$, we might apply \cite[Theorem 1.1]{LeeTam2022} so that for all $\ell>0$, there is $\hat T_\ell>0$ such that for all $t\in [0,\hat T_\ell]$, 
\begin{equation}
\begin{split}
\varphi(x,t)&\leq t^\ell+\lim_{\tau\to 0^+}\int_{B_{g_0}(x,2)} G(x,t;y,\tau)\varphi(y,\tau) \,d\mathrm{vol}_{g(s)}\\
&\leq t^\ell+ C_0(n,\a)r^2\e^2
\end{split}
\end{equation}
where we have used \eqref{eqn:heat-kernel}. This proves the claim after rescaling.
\end{proof}

Since $\Ric(g(t))\geq -\a t^{\sigma/2-1}$, we have $g(t)\leq  2g(\tau)$ for all $0<\tau<t<S$ by integration, provided that we shrink $S$ if necessary. Therefore, if $d_{g(\tau)}(y,p)=r$, then 
\begin{equation}
\begin{split}
d_{g_0}(y,p)=\lim_{t\to 0} d_{g(t)}(y,p)> \frac12 r.
\end{split}
\end{equation}

It then follows from Claim~\ref{claim:improved-decay} that for all $\tau\in [0,4^{-1}\hat T_\ell r^2]$,
\begin{equation}
\varphi(y,\tau)\leq C_0\e^2 + 4^{\ell+1}\tau^\ell r^{-2\ell-2}.
\end{equation}
We fix some large $\ell$, for example we might choose $\ell=10^3n$.  Therefore, if we choose $\Lambda\geq 2 \hat T_\ell^{-1/2}$, then co-area formula and \eqref{eqn:heat-kernel} imply 
\begin{equation}
\begin{split}
\mathbf{II}&\leq \int_{\Lambda\sqrt{\tau}}^D  \left(\int_{\partial B_{g(\tau)}(p,r)}G(x,t;y,\tau)\varphi(y,\tau) dA_{g(\tau)} \right) \,dr\\
&\leq \int_{\Lambda\sqrt{\tau}}^D( C_0 \e^2+ 4^{\ell+1}\tau^\ell r^{-2\ell-2})\left(\int_{\partial B_{g(\tau)}(p,r)}G(x,t;y,\tau) dA_{g(\tau)} \right) \,dr\\
&\leq C_0\e^2\int_{M\setminus B_{g(\tau)}(p,\Lambda\sqrt{\tau})} G(x,t;y,\tau)\,d\mathrm{vol}_{y,g(\tau)}+\int_{\Lambda\sqrt{\tau}}^D\frac{L4^{\ell+1}\tau^\ell r^{-2\ell-2}}{(t-\tau)^{n/2}} \,dr.
\end{split}
\end{equation}
where $V_{g(\tau)}(r):=\mathrm{Vol}_{g(\tau)}\left(B_{g(\tau)}(p,r)\right)$. 

Since $g(\tau)\leq 2 g_0$ and $d_{g(\tau)}+C_n\sqrt{\a \tau}\geq d_0$, we have $V_{g(\tau)}(r)\leq L_1 r^n$ for $r\geq \sqrt{\tau}$. Using this, we see that 
for some $L_2(n,\a)>0$,
\begin{equation}\label{eqn:II-esti}
\limsup_{\tau\to 0^+}\mathbf{II}\leq L_2\e^2.
\end{equation} 

On the other hand by the rough bound from (d), \eqref{eqn:heat-kernel} and the volume upper bound,
\begin{equation}\label{eqn:I-esti}
\begin{split}
\limsup_{\tau\to 0}\mathbf{I}&\leq  \limsup_{\tau\to 0}\int_{B_{g(\tau)}(p,\Lambda\sqrt{\tau})}\frac{L}{(t-\tau)^{n/2}} \varphi(y,\tau)\,d\mathrm{Vol}_{g(\tau)}\\
&\leq \limsup_{\tau\to 0}\frac{L }{(t-\tau)^{n/2}} \frac\a{\tau^{1-\sigma/2}} L_1\tau^{n/2}=0.
\end{split}
\end{equation} 

By combining \eqref{eqn:I-esti}, \eqref{eqn:II-esti} and \eqref{eqn:heat-rep}, we deduce that 
\begin{equation}\label{eqn:res-Gr}
\varphi(x,t)\leq L_2\e^2
\end{equation}
for all $(x,t)\in M\times (0,S]$. This finishes the case when the initial metric $g_0$ has curvature bounded from below by $-\e^2$. 

We now consider the case of $\mathrm{Rm}(g_0)\geq 1$ outside $\{p\}$. We first observe that by letting $\e\to 0$, we have $\mathrm{Rm}(g(t))\geq 0$ for $t\in (0,S]$. It remains to improve it to uniformly positive. If  we define 
\begin{equation}
v(x,t):=\sup\{s: \mathrm{Rm}(g(x,t))-s\cdot\mathrm{Id}\geq  0\},
\end{equation}
then it satisfies $v\geq 0$ by $\mathrm{Rm}\geq 0$, and $(\partial_t-\Delta_{g(t)}) v\geq 0$ in the sense of barrier by \cite{Hamilton1986}. We follow the idea in the proof of \cite[Lemma 4.2]{ChanConlonLai2024}. Let $K(x,t;y,s)$ be the heat kernel for $\partial_t-\Delta_{g(t)}$. From smooth convergence away from  $p$, for any $\delta,\e>0$, there is $s_0>0$ such that $v\geq 1-\e$ on $B_{g_0}(p,\delta)^c\times (0,s_0)$. By the kernel representation, for all $t\in (0,S]$,
\begin{equation}
\begin{split}
v(x,t)&\geq \lim_{s\to 0^+}\int_{M\setminus B_{g_0}(p,\delta)} K(x,t;y,\tau) \,v(y,\tau)\,d\mathrm{vol}_{g(\tau)}\\
&\geq (1-\e)\cdot \left(1-\lim_{\tau\to 0^+}\int_{B_{g_0}(p,\delta)} K(x,t;y,\tau) \,d\mathrm{vol}_{g(\tau)}\right)
\end{split}
\end{equation}
where we have used $\int_M K(x,t;y,\tau)\,d\mathrm{vol}_{g(\tau)}=1$. Since $R_{g(t)}\geq 0$,  $K(x,t;y,\tau)\leq G(x,t;y,\tau)$ for all $0<\tau<t\leq S$ by maximum principle, it follows from \eqref{eqn:heat-kernel} that 
\begin{equation}
v(x,t)\geq (1-\e)(1-o(1))
\end{equation}
as $\delta\to 0$.
By letting $\e,\delta\to 0$, we see that $v\geq1$ on $M\times (0,S]$ and thus completes the proof when $N=1$. When $N>1$, we decompose $M$ into union of disjoint ball centred at $p_i$ and their complement, the estimate can be carried over using the same heat kernel estimate argument. 

To see that $\a,S$ depends only on the total volume when $\e=0$, it suffices to note that the Ricci flow converges back to $(Z,g_0)$ in the measured Gromov-Hausdorff sense by Colding's result \cite{Colding1997} so that we might apply existence of Ricci flow again from $(M,g(\delta))$ with $\mathrm{Rm}(g(\delta))\geq 0$ with $\mathrm{Vol}(g(\delta))\geq \frac12 \mathrm{Vol}(Z,g_0)$ for $\delta\to 0^+$. This completes the proof.
\end{proof}

\section{Stability of Ricci flows}\label{sec:stability}

In this section, we will discuss the stability of Ricci flow with non-negative curvature. We start with some preliminary on gauge fixing mechanism using Ricci-harmonic map heat flow and Ricci-DeTurck flow.

\subsection{Relation between Ricci flow and Ricci-DeTurck flow}
Given two family of metrics $\hat g(t),\tilde g(t)$ on a complete manifold $M\times [0,T]$. For simplicity, we assume both Ricci flows have (non-quantitative) bounded curvature up to $t=0$. This will be sufficient for our purpose in this work. We consider the short-time solution to the Ricci-harmonic map heat flow on $M\times [0,\hat T]$: 
\begin{equation}
\partial_t F=\Delta_{\hat g(t),\tilde g(t)}\, F.
\end{equation}
Its existence under bounded curvature is standard in the compact case, while the non-compact case was established by Chen-Zhu \cite{ChenZhu2006}.

We consider the case when $g(t)$ and $\tilde g(t)$ are both evolving under Ricci flow. If $F_t$ remain a diffeomorphism on $[0,\hat T]$, then the one parameter family of metric $g(t):=(F_t^{-1})^* \hat g(t)$ defines a solution to the Ricci-DeTurck flow with respect to $\tilde g(t)$ on $M\times [0,\hat T]$: 
\begin{equation}
\left\{
\begin{array}{ll}
\partial_t g_{ij}=-2R_{ij}+\nabla_i V_j+\nabla_j V_i;\\
V^k=g^{ij}\left(\Gamma_{ij}^k-\tilde \Gamma_{ij}^k \right).
\end{array}
\right.
\end{equation}

Locally if we write $g(t)=\tilde g(t)+h(t)$, then using also $\partial_t \tilde g(t)=-2\Ric(\tilde g(t))$, we see that
\begin{equation}\label{eqn:RDF}
\left\{
\begin{aligned}
\partial_t  h_{ij}&=  g^{pq} \tilde \nabla_p\tilde\nabla_q h_{ij}- g^{kl}  g_{ip}\tilde g^{pq} \tilde R_{jkql}- g^{kl}  g_{jp} \tilde g^{pq}\tilde R_{ikql}+2\tilde R_{ij}\\
&\quad +\frac12  g^{kl}  g^{pq}\big(\tilde \nabla_i h_{pk}\tilde \nabla_jh_{ql}+2\tilde \nabla_k h_{jp}\tilde \nabla_q h_{il}-2\tilde \nabla_kh_{jp}\tilde \nabla_l h_{iq}\\[1mm]
&\quad -2\tilde \nabla_jh_{pk}\tilde \nabla_l h_{iq}-2\tilde \nabla_ih_{pk}\tilde \nabla_lh_{qj}\big);\\
g(t)&=\tilde g(t)+h(t),
\end{aligned}
\right.
\end{equation}
 see the computation in \cite[Lemma 2.1]{Shi1989} for example. Here we use the convention $ \mathrm{sec}_{ij}=  R_{ijij}$ in orthogonal frame.

 Conversely given a solution $g(t),t\in (0,T]$ to Ricci-DeTurck flow \eqref{eqn:RDF}, we might consider the ODE: 
 \begin{equation}\label{eqn:RDF-back-RF}
 \left\{
 \begin{array}{ll}
 \partial_t \Psi_t(x)=-W(\Phi_t(x),t);\\[1mm]
 \Psi_T(x)=x
 \end{array}
 \right.
 \end{equation}
 where $W^k=g^{ij}\left(\Gamma_{ij}^k-\tilde\Gamma_{ij}^k\right)$ is a time dependent vector field, then $\hat g(t):=\Psi_t^*g(t)$ is also a solution to the Ricci flow with $\hat g(T)=g(T)$. If $g(t)$ is smooth at $t=0$, we might choose $\Psi_0=\mathrm{Id}$.

\subsection{Main result: weak stability of Ricci-DeTurck flow}\label{subsec:RDF}
  We want to discuss the weak stability of the solution to \eqref{eqn:RDF} when $\tilde g(t)$ satisfies:
\begin{enumerate}
\item[(a)] $|\Rm(\tilde g(t))|\leq \a t^{-1}$;
\item[(b)] $\mathrm{inj}(\tilde g(t))\geq \sqrt{\a^{-1}t}$;
\item[(c)] $\mathrm{Rm}(\tilde g(t))\geq -1$;
\end{enumerate}
on $M\times (0,T]$, for some $\a>0$. In case $\tilde g(t)$ is a gradient expanding Ricci soliton with $\mathrm{Rm}\geq 0$, the weak stability was previously studied by Deruelle-Lamm \cite{DeruelleLamm2017}, using the method developed by Koch-Lamm \cite{KochLamm2012}. We will prove the following more general weak stability of the Ricci flow with curvature lower bound and scaling invariant estimate. This in particular generalizes the previous works \cite{KochLamm2012,DeruelleLamm2017,Burkhardt2019} 
of the weak stability of Ricci flows. For convenience,  $g_0$ is said to be a (uniform) $L^\infty$ metric if it belongs to measurable section of $\mathrm{Sym}_2(T^*M)$ such that $\Lambda^{-1} g \leq g_0\leq \Lambda g$ for some $\Lambda>0$ and smooth metric on $M$. 

\begin{thm}\label{thm:weak-stable-RDF} 
Suppose $M$ is a complete manifold and $\tilde g(t),t\in [0,T]$ is a smooth Ricci flow  on $M$ such that $\tilde g(t)$ satisfies (a)-(c)  on $(0,T]$ for some $\a>0$. Then there exists $\bar\e_0(n,\a),\Lambda(n,\a)>0$ so that the following holds: if $g_0$ is a $L^\infty$ metric on $M$ such that $||g_{0}-\tilde g_0||\leq \e<\bar \e_0$, then it admits a smooth solution to \eqref{eqn:RDF} on $M\times (0,T\wedge 1]$ such that 
\begin{enumerate}
\item[(i)] $|g(t)-\tilde g(t)|\leq \Lambda \e $;
\item[(ii)] $g(t)\to g_0$ in $C^0_{loc}(\Omega)$ where  $\Omega$ is any compact subset in which $g_0$ is continuous.
\end{enumerate}
Furthermore, if $g(t)$ and $\hat g(t)$ are both solutions to \eqref{eqn:RDF} such that (a) holds, then 
\begin{equation}
    \sup_{M\times (0,T\wedge 1]}||g(t)-\hat g(t)||_{L^\infty(M,\tilde g(t))}\leq \Lambda \cdot \limsup_{s\to 0^+}||g(s)-\hat g(s)||_{L^\infty(M,\tilde g(s))}.
\end{equation}
In particular, if the initial metric $g_0$ is continuous, then the solution is unique within the class of solution satisfying (i) and (ii).
\end{thm}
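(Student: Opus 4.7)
The plan is to adapt the Koch-Lamm \cite{KochLamm2012} fixed-point scheme, extended by Deruelle-Lamm \cite{DeruelleLamm2017} to expanding solitons with $\Rm\ge 0$ and by \cite{Burkhardt2019} to more general backgrounds, to the Ricci-flow backgrounds $\tilde g(t)$ obeying only the scaling-invariant bounds (a)-(c). Writing $h:=g-\tilde g$, equation \eqref{eqn:RDF} rearranges as the Duhamel integral equation
\[
h(t) = P_{t,0}h(0) + \int_0^t P_{t,s}\bigl(\mathcal L_s(h) + \mathcal Q_s(h,\tilde\nabla h)\bigr)\,ds,
\]
where $P_{t,s}$ is the scalar heat semigroup for $\partial_t-\Delta_{\tilde g(t)}$, $\mathcal L(h)\sim \tilde R\ast h$ collects the curvature couplings, and $\mathcal Q$ is quadratic in $(h,\tilde\nabla h)$. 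The whole scheme is invariant under parabolic rescaling, matching the type-I singularity of $\tilde{\Rm}$ at $t=0$.

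First I would record a Gaussian heat-kernel upper bound for $P_{t,s}$ from (a), (b), (c): the lower curvature bound (c) together with a Davies/Bamler-Zhang argument yields the off-diagonal Gaussian factor, while (b) gives the on-diagonal bound. This is essentially the content of Proposition~\ref{prop:heat-kernel} already used in Section~\ref{sec:existence}. I would then work in the weighted $L^\infty$ space with norm
\[
\|h\|_X := \sup_{M\times(0,T\wedge 1]} |h|_{\tilde g(t)} + \sup_{M\times(0,T\wedge 1]} \sqrt{t}\,|\tilde\nabla h|_{\tilde g(t)},
\]
possibly augmented by a parabolic Morrey integral to digest the quadratic gradient nonlinearity. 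A Shi-type Bernstein estimate for $P_{t,s}$ under (a), (b) then gives $\sqrt t\,|\tilde\nabla P_{t,0}h_0|\lesssim \|h_0\|_{L^\infty}$, so that the linear piece satisfies $\|P_{\cdot,0}h_0\|_X\lesssim\|h_0\|_{L^\infty}$ uniformly in $t$.

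Second I would estimate the nonlinearity. The a priori bound $|\tilde{\Rm}|\le \a s^{-1}$ in (a) together with the definition of $X$ yields the scale-invariant pointwise bounds $|\mathcal L_s(h)|\lesssim \alpha s^{-1}|h|$ and $|\mathcal Q_s(h,\tilde\nabla h)|\lesssim s^{-1}\|h\|_X^2$; combined with the heat-kernel $L^\infty\to L^\infty$ bound and an integrable-at-zero factor $s^{\delta-1}$ extracted from the singular coefficient against the regularizing $\sqrt{t-s}$ in the gradient component, the Duhamel operator
\[
\Phi(h)(t) := P_{t,0}h_0 + \int_0^t P_{t,s}\bigl(\mathcal L_s(h)+\mathcal Q_s(h,\tilde\nabla h)\bigr)\,ds
\]
is a contraction on $\{\|h\|_X\le \Lambda \e\}$ once $\bar\e_0(n,\alpha)$ is small enough. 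Banach's theorem then produces a unique mild solution satisfying (i), which parabolic bootstrap under (a)-(c) upgrades to a classical smooth solution of \eqref{eqn:RDF} on $M\times(0,T\wedge 1]$. For (ii), on a compactum $\Omega$ of continuity of $h_0$ the linear term $P_{t,0}h_0\to h_0$ uniformly while the Duhamel correction is $O(t^{\delta})$ by the nonlinear estimate.

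Third, for the stability/uniqueness clause, I would set $k:=g-\hat g$ and subtract the two Ricci-DeTurck equations. The difference solves a linear parabolic equation
\[
\partial_t k = g^{pq}\tilde\nabla_p\tilde\nabla_q k + A\ast k + B\ast\tilde\nabla k,
\]
whose coefficients $A,B$, built from $\tilde{\Rm}$ and from $h,\hat h,\tilde\nabla h,\tilde\nabla\hat h$, satisfy $|A|\lesssim t^{-1}$ and $|B|\lesssim t^{-1/2}$ by the a priori bound (i) and the gradient estimate above. A Duhamel iteration for the scalar quantity $|k|_{\tilde g(t)}$, or equivalently a heat-kernel maximum principle on $|k|^2_{\tilde g(t)}$ with a Gr\"onwall-type absorption of the scale-invariant coefficients, then yields the asserted $L^\infty$--$L^\infty$ stability with $\limsup_{s\to 0^+}\|k(s)\|_{L^\infty}$ on the right-hand side; uniqueness in the continuous class is immediate. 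The main obstacle is establishing the sharp $L^\infty\to L^\infty$ and $\sqrt t$-weighted gradient bounds for $P_{t,s}$ when only (c) is in force instead of the nonnegative curvature and explicit soliton asymptotics exploited in \cite{DeruelleLamm2017}; these must be derived intrinsically from (a)-(c) and the Gaussian heat-kernel bound, which is the principal technical input of the theorem.
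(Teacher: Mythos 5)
There is a genuine gap, and it sits exactly where your scheme must do real work: the coefficients you propose to iterate against are not integrable in time. With only $|\Rm(\tilde g)|\le\a s^{-1}$, your bounds $|\mathcal L_s(h)|\lesssim \a s^{-1}|h|$ and $|\mathcal Q_s(h,\tilde\nabla h)|\lesssim s^{-1}\|h\|_X^2$ feed into the $L^\infty$ component of the Duhamel integral as $\int_0^t \|P_{t,s}\|_{L^\infty\to L^\infty}\, s^{-1}\,ds=+\infty$; the $(t-s)^{-1/2}$ regularization only helps the gradient component of the \emph{output}, not the divergence at $s=0$ of the input, so the map $\Phi$ does not send the ball $\{\|h\|_X\le\Lambda\e\}$ into itself and the contraction claim fails as stated. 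The same problem recurs in your stability step: $|A|\lesssim t^{-1}$ makes the Gr\"onwall absorption divergent. This is precisely the difficulty the paper flags as the difference from Deruelle--Lamm, where the expander background has quadratic curvature decay and the coupling \emph{is} integrable.

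The paper's proof resolves this with two ingredients absent from your proposal. First, Lemma~\ref{lma:evo-h-L1} computes the evolution of $v=\sqrt{|h|^2+\sigma}$ precisely enough to isolate the leading curvature coupling as $(1+\gamma)R_{\tilde g}v$ (coefficient essentially $1$, not a crude $C_n|\Rm|$), and this term is then absorbed into the conjugate heat operator $\partial_t-\Delta_{\tilde g}-R_{\tilde g}$, whose kernel still satisfies Gaussian bounds (Proposition~\ref{prop:heat-kernel}); the leftover curvature terms are quadratic in $h$ or carry the small factor $\gamma$. Second, those leftover terms of the schematic form $\int_0^t\int_M G\,|\Rm(\tilde g)|\,|h|^2$ are controlled not by the pointwise bound (a) but by Petrunin's theorem (Proposition~\ref{prop:Petrunin}): under $\mathrm{sec}\ge-1$ one has $\int_{B(x,r)}|\Rm|\le C_nr^{n-2}$, so $\int_0^{t}\int_M G|\Rm(\tilde g)|\,ds\lesssim\int_0^t s^{-1}\cdot s\cdot s^{-1}\,ds$ in the scale-matched decomposition is $O(1)$ rather than logarithmically divergent. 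The quadratic gradient term is likewise handled by a space-time Morrey energy estimate (Lemmas~\ref{lma:Morrey-grad}, \ref{lma:improved-Morrey-grad}), itself proved with Petrunin, rather than by a $\sqrt t$-weighted pointwise gradient norm, which would again produce $\int_0^t s^{-1}\,ds$. You correctly identify that something must be ``derived intrinsically from (a)--(c),'' but you locate the obstruction in the semigroup bounds (which are in fact standard); the actual missing ideas are the sharp-coefficient evolution inequality with absorption into the conjugate heat kernel and the integral curvature estimate under a sectional curvature lower bound.
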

\begin{rem}
    We assume $\tilde g(t)$ in Theorem~\ref{thm:weak-stable-RDF} to be smooth up to $t=0$ so that short-time existence of \eqref{eqn:RDF} from $L^\infty$ metrics is more transparent. Most of the analysis involved still holds even if $\tilde g(t)$ is only smooth for $t>0$.
\end{rem}

This is in spirit analogous to the stability proved by Simon \cite{Simon2002}. The main difference is the absence of quantitative curvature boundedness of $\tilde g(t)$ as $t\to 0$. In what follows, all norms will be computed using the reference Ricci flow $\tilde g(t)$ unless specified.

\subsection{Evolution equation}
We will need a more refined version of evolution inequality  of $h$. The precise leading part of the evolution will play a crucial role in estimation. 
\begin{lma}\label{lma:evo-h-L1}
For any $\gamma>0$, there is $\e_0(\gamma,n)>0$ such that if $g:=\tilde g+h_1$ and $\hat g:=\tilde g+h_2$ are  solutions to \eqref{eqn:RDF} with $|h_i|\leq \e_0$ and $\mathrm{sec}(\tilde g)\geq -1$, then  the function $v:=\sqrt{|h|^2+\sigma}$ for $\sigma\in (0,1)$ where $h:=h_1-h_2=g-\hat g$ satisfies
\begin{equation}
\begin{split}
(\partial_t -\Delta_{\tilde g})v 
&\leq  (1+\gamma)R_{\tilde g}v +C_nR_{\tilde g} |h|^2+C_n v+C_n |h| |\tilde \nabla g|^2+C_n  |\tilde \nabla h|\left(|\tilde\nabla g|+|\tilde\nabla \hat g| \right)\\
&\quad   + \tilde\nabla_k \left[\left( g^{kl}-\tilde g^{kl}\right) \tilde\nabla_l v\right]+\tilde \nabla_p\left(v^{-1} \langle   (g^{pq}-\hat g^{pq})\tilde\nabla_q \hat g ,h\rangle\right).
\end{split}
\end{equation}

Furthermore if $\tilde g=\hat g$, then we might choose $\e_1(n)>0$ such that if $|h_1|\leq \e_1$, then
\begin{equation}
\begin{split}
(\partial_t -\Delta_{\tilde g})v 
&\leq   R_{\tilde g}v +C_nR_{\tilde g} |h|^2+C_n v+C_n  |\tilde \nabla h|^2\\
&\quad   + \tilde\nabla_k \left[\left( g^{kl}-\tilde g^{kl}\right) \tilde\nabla_l v\right].
\end{split}
\end{equation}
\end{lma}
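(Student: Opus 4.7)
The plan is to derive the evolution of $\Phi:=|h|^2_{\tilde g}$ first, then pass to $v=\sqrt{\Phi+\sigma}$ via the chain-rule identity $(\partial_t-\Delta_{\tilde g})v = \frac{1}{2v}(\partial_t-\Delta_{\tilde g})\Phi + v^{-1}|\tilde\nabla v|^2$, combined with Kato's inequality $|\tilde\nabla v|\le|\tilde\nabla h|$. Throughout I will use that $\tilde g$ is $\tilde\nabla$-parallel, so that $\tilde\nabla g=\tilde\nabla h_1$, $\tilde\nabla\hat g=\tilde\nabla h_2$, and $\tilde\nabla h = \tilde\nabla g-\tilde\nabla\hat g$.

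First I would subtract the two instances of \eqref{eqn:RDF}. The leading second-order term rearranges as
\[
g^{pq}\tilde\nabla_p\tilde\nabla_qh_1 - \hat g^{pq}\tilde\nabla_p\tilde\nabla_qh_2 = \Delta_{\tilde g}h + (g^{pq}-\tilde g^{pq})\tilde\nabla_p\tilde\nabla_qh + (g^{pq}-\hat g^{pq})\tilde\nabla_p\tilde\nabla_q\hat g,
\]
and each of the last two pieces is placed into divergence form via $A^{pq}\tilde\nabla_p\tilde\nabla_qW = \tilde\nabla_p(A^{pq}\tilde\nabla_qW)-(\tilde\nabla_pA^{pq})\tilde\nabla_qW$, producing first-order remainders of size $|\tilde\nabla h|(|\tilde\nabla g|+|\tilde\nabla\hat g|)$. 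The curvature block $-g^{kl}g_{ip}\tilde g^{pq}\tilde R_{jkql}-g^{kl}g_{jp}\tilde g^{pq}\tilde R_{ikql}+2\tilde R_{ij}$ of \eqref{eqn:RDF}, expanded in $h_1$, has vanishing zeroth order ($-\tilde R_{ij}-\tilde R_{ji}+2\tilde R_{ij}=0$) and a linear-in-$h_1$ part consisting of a Ricci-contracted piece $-\tilde R_{jq}(h_1)^q_i-\tilde R_{iq}(h_1)^q_j$ and a Riemann-contracted piece $(h_1)^{kl}(\tilde R_{jkil}+\tilde R_{ikjl})$, plus an $O(|h_1|^2|\Rm|)$ remainder; subtracting the analogue for $h_2$ yields a term linear in $h$ together with a quadratic-in-$h_i$ remainder of order $(|h_1|+|h_2|)|h||\Rm|$. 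The quadratic-gradient block splits analogously into a coefficient-difference contribution of size $|h||\tilde\nabla\hat g|^2$ and a gradient-difference contribution of size $|\tilde\nabla h|(|\tilde\nabla g|+|\tilde\nabla\hat g|)$.

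Passing to $\Phi$, the identity $\partial_t\tilde g^{ij}=2\tilde R^{ij}$ produces an extra $4\tr_{\tilde g}(\tilde R h^2)$ term from differentiating the two inverse metrics, while the Bochner identity yields
\[
(\partial_t-\Delta_{\tilde g})\Phi = 2\langle h,(\partial_t-\Delta_{\tilde g})h\rangle - 2|\tilde\nabla h|^2 + 4\tr_{\tilde g}(\tilde R h^2).
\]
The key cancellation is that contracting the Ricci-contracted linear-in-$h$ piece from the previous step against $h$ produces precisely $-4\tr_{\tilde g}(\tilde R h^2)$, exactly cancelling the $\partial_t\tilde g^{-1}$ contribution; what survives from the curvature is the Riemann-contracted piece, bounded via $|\Rm(\tilde g)|\le C_n(R_{\tilde g}+1)$ (from $\sec(\tilde g)\ge-1$) by $C_n(R_{\tilde g}+1)|h|^2$, together with the $(|h_1|+|h_2|)|h|\cdot|\Rm|$ quadratic remainder. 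The two divergence-form second-order pieces, when paired with $h$, open via the product rule into divergences of $\Phi$-quantities (producing $\tilde\nabla_p[(g^{pq}-\tilde g^{pq})\tilde\nabla_q\Phi]$ and $\tilde\nabla_p[2\langle h,(g^{pq}-\hat g^{pq})\tilde\nabla_q\hat g\rangle]$), with the residual $(g-\tilde g)\langle\tilde\nabla h,\tilde\nabla h\rangle$-type errors absorbed by the good $-2|\tilde\nabla h|^2$ term once $\e_0$ is small.

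Finally I would divide by $2v$ and add $v^{-1}|\tilde\nabla v|^2$: the $\Phi$-divergences convert into the $v$-divergences displayed in the statement through the identity $(2v)^{-1}\tilde\nabla_pX^p = \tilde\nabla_p[(2v)^{-1}X^p]+(2v^2)^{-1}X^p\tilde\nabla_pv$, with extra $v^{-1}|\tilde\nabla v|^2$-type terms that, via Kato, are swallowed by the negative $-|\tilde\nabla h|^2/v$ coming from Bochner. The main obstacle will be locking in the coefficient $(1+\gamma)$ on $R_{\tilde g}v$: the surviving Riemann contribution yields a main term of the form $\le C(R_{\tilde g}+C_n)|h|^2/v\le C R_{\tilde g}v+C_n v$ using $|h|^2/v\le v$, and the $\gamma R_{\tilde g}v$ slack must absorb the $O(\e_0)R_{\tilde g}|h|^2/v\le O(\e_0)R_{\tilde g}v$ error coming both from the quadratic-in-$h_i$ curvature remainders and from the cross terms produced when transferring divergences of $\Phi$ to divergences of $v$; tuning $\e_0=\e_0(\gamma,n)$ so that these errors are smaller than $\gamma R_{\tilde g}v$ closes the argument. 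In the simpler case $\tilde g=\hat g$ (so $h_2=0$), the second divergence term and the $|h||\tilde\nabla g|^2$ bucket drop out entirely, the coefficient on $R_{\tilde g}v$ is already controlled without any $\gamma$-absorption since the quadratic-in-$h_i$ remainder is only $O(|h|)R_{\tilde g}|h|^2/v$, and hence the threshold $\e_1(n)$ can be chosen independently of $\gamma$.
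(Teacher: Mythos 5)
Your overall architecture matches the paper's: the cancellation you identify between the $\partial_t\tilde g^{-1}$ contribution and the Ricci-contracted linear piece is exactly what the paper's Uhlenbeck-trick formulation of \eqref{eqn:RDF-2} achieves, and the divergence-form bookkeeping, Kato's inequality, and the passage from $|h|^2$ to $v$ are all as in the paper. However, there is a genuine gap at the single most important step: your treatment of the surviving Riemann-contracted term.

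After your cancellation, the leading curvature contribution to $\partial_t|h|^2$ is the quadratic form $4h^{ij}h^{kl}\tilde R_{ikjl}=4\sum_{i,k}\lambda_i\lambda_k\tilde R_{ikik}$ (diagonalizing $h$). You bound this by $C_n(R_{\tilde g}+1)|h|^2$ via $|\Rm(\tilde g)|\le C_n(R_{\tilde g}+1)$, and you yourself record the outcome as ``$\le CR_{\tilde g}v+C_nv$'' with an unspecified $C$. But the lemma requires the coefficient of $R_{\tilde g}v$ to be exactly $1+\gamma$ (and exactly $1$ in the second inequality), and a dimensional constant $C=C_n>1$ cannot be repaired: the leftover $(C_n-1-\gamma)R_{\tilde g}\,|h|^2/v$ is comparable to $R_{\tilde g}v$ when $\sigma\lesssim|h|^2$, and is not dominated by $C_nR_{\tilde g}|h|^2$ when $|h|$ is small, while $R_{\tilde g}$ is unbounded as $t\to0$ in the intended application. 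The sharp coefficient is precisely what makes the lemma usable later (the conjugate heat kernel of $\partial_t-\Delta-R_{\tilde g}$ in Lemma~\ref{lma:kernel-repres} and Theorem~\ref{thm:weak-stable-RDF} tolerates a reaction term $R_{\tilde g}v$ but not $C_nR_{\tilde g}v$), which is why the paper stresses that ``the precise leading part of the evolution will play a crucial role.''

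The correct estimate uses the algebraic structure rather than a norm bound on $\Rm$: write $2\lambda_i\lambda_k=\lambda_i^2+\lambda_k^2-(\lambda_i-\lambda_k)^2$, so that
\begin{equation}
4\sum_{i,k}\lambda_i\lambda_k\tilde R_{ikik}=4\sum_i\lambda_i^2\tilde R_{ii}-2\sum_{i,k}(\lambda_i-\lambda_k)^2\tilde R_{ikik}.
\end{equation}
The second sum is bounded by $C_n|h|^2$ using only $\mathrm{sec}(\tilde g)\ge-1$, with no curvature factor at all; the first is bounded by $2R_{\tilde g}|h|^2+C_n|h|^2$ using the pointwise inequality $\tilde R_{ii}\le\tfrac12R_{\tilde g}+C_n$, which holds under $\mathrm{sec}(\tilde g)\ge-1$ (this is the role of \cite[Lemma 3.2]{ChanLee2023} in the paper's proof). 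Dividing by $2v$ then yields coefficient exactly $1$ on $R_{\tilde g}v$; the crude bound $|\Rm|\le C_n(R_{\tilde g}+1)$ should be reserved only for the genuinely higher-order remainders (the $O(\e_0)|\Rm||h|^2$ terms in the paper's $Q(R)$, and the $O(|\Rm||h|^3)$ remainder when $\hat g=\tilde g$), where the extra factor of $\e_0$ or $|h|$ lets you absorb them into $\gamma R_{\tilde g}v$ or $C_nR_{\tilde g}|h|^2$ respectively. With this correction the rest of your outline goes through.
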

\begin{proof}
This follows directly from careful computation. Since the exact coefficient is important for application, we include it for readers' convenience. 

By \eqref{eqn:RDF} and Uhlenbeck trick, we might assume that $h:=g-\hat g$ satisfies 
\begin{equation}\label{eqn:RDF-2}
\left\{
\begin{aligned}
\partial_t  h_{ij}&=  \left(g^{pq} \tilde \nabla_p\tilde\nabla_q g_{ij}-\hat g^{pq} \tilde \nabla_p\tilde\nabla_q \hat g_{ij}\right)- g^{kl}  g_{ip}\tilde g^{pq} \tilde R_{jkql}- g^{kl}  g_{jp} \tilde g^{pq}\tilde R_{ikql}\\
&\quad  + \hat g^{kl} \hat  g_{ip}\tilde g^{pq} \tilde R_{jkql}+\hat  g^{kl}\hat   g_{jp} \tilde g^{pq}\tilde R_{ikql}+h_{ik}\tilde R^k_j +h_{jl}\tilde R^l_i\\
&\quad +g^{-1}*\hat g^{-1}*h*\hat g^{-1}*\tilde \nabla g*\tilde\nabla g+\hat g^{-1}*g^{-1}*\hat g^{-1}*h*\tilde \nabla g*\tilde\nabla g\\
&\quad +\hat g^{-1}*\hat g^{-1}*\tilde \nabla h*\tilde\nabla g+\hat g^{-1}*\hat g^{-1}*\tilde \nabla \hat g*\tilde\nabla h;\\[1mm]
\partial_t \tilde g_{ij}&=0.
\end{aligned}
\right.
\end{equation}

Therefore, 
\begin{equation}\label{eqn:h-evo}
\begin{split}
\partial_t |h|^2&=2 \langle \partial_t h,h\rangle \\
&\leq  2\langle g^{pq} \tilde \nabla_p\tilde\nabla_q g -\hat g^{pq} \tilde \nabla_p\tilde\nabla_q \hat g  ,h\rangle \\
&\quad +2\left(- g^{kl}  g_{ir}\tilde g^{rs} \tilde R_{jksl}- g^{kl}  g_{jr} \tilde g^{rs}\tilde R_{iksl}\right) h_{pq}\tilde g^{ip}\tilde g^{jq}\\
&\quad +2\left(\hat g^{kl} \hat  g_{ir}\tilde g^{rs} \tilde R_{jksl}+ \hat g^{kl}  \hat g_{jr} \tilde g^{rs}\tilde R_{iksl}\right) h_{pq}\tilde g^{ip}\tilde g^{jq}\\
&\quad +2h_{ik}\tilde R^k_j h_{pq}\tilde g^{ip}\tilde g^{jq}+2h_{jk}\tilde R^k_i h_{pq}\tilde g^{ip}\tilde g^{jq}\\
&\quad +C_n |h|^2 |\tilde \nabla g|^2+C_n |h||\tilde \nabla h|\left(|\tilde\nabla g|+|\tilde\nabla \hat g| \right).
\end{split}
\end{equation}

We  simplify the curvature term by diagonalizing $h_{ij}= \lambda_i\delta_{ij}$ with respect to $\tilde g_{ij}=\delta_{ij}$, then the Ricci term can be controlled by 
\begin{equation}
\begin{split}
&\quad 2h_{ik}\tilde R^k_j h_{pq}\tilde g^{ip}\tilde g^{jq}+2h_{jk}\tilde R^k_i h_{pq}\tilde g^{ip}\tilde g^{jq}\\
&=4\lambda_i^2 (\tilde R_{ii}+n-1)-4(n-1)|h|^2\\
&\leq 2R_{\tilde g}|h|^2+C_n |h|^2
\end{split}
\end{equation}
where we have used $\mathrm{sec}(\tilde g)\geq -1$. Here we have used the fact that $2\Ric(R)\leq \mathrm{scal}(R)$ for curvature tensor $R$ with $\mathrm{sec}(R)\geq 0$, see the proof of \cite[Lemma 3.2]{ChanLee2023}.

We now simplify the highest order term as follows: 
\begin{equation}
\begin{split}
&\quad 2 \langle g^{pq} \tilde \nabla_p\tilde\nabla_q g -\hat g^{pq} \tilde \nabla_p\tilde\nabla_q \hat g  ,h\rangle\\
 &= 2\langle g^{pq} \tilde \nabla_p\tilde\nabla_q h,h\rangle+2\langle (g^{pq}-\hat g^{pq}) \tilde \nabla_p\tilde\nabla_q \hat g,h\rangle\\
 &=g^{pq}\tilde\nabla_p \tilde\nabla_q |h|^2-2g^{pq}\langle \tilde\nabla_p h,\tilde\nabla_q h\rangle\\
 &\quad +2\langle  \tilde \nabla_p\left[ (g^{pq}-\hat g^{pq})\tilde\nabla_q \hat g\right],h\rangle-2\tilde \nabla_p\left (g^{pq}-\hat g^{pq}\right)\cdot \langle  \tilde\nabla_q \hat g,h\rangle\\
 &\leq g^{pq}\tilde\nabla_p \tilde\nabla_q |h|^2-2g^{pq}\langle \tilde\nabla_p h,\tilde\nabla_q h\rangle +2 \tilde \nabla_p\langle   (g^{pq}-\hat g^{pq})\tilde\nabla_q \hat g ,h\rangle+ C_n|h| |\tilde \nabla\hat g| |\tilde \nabla h|.
\end{split}
\end{equation}

Now we compute the evolution equation of $v$ accordingly using that of $|h|^2$. Since $v^2=|h|^2+\sigma$, we have
\begin{equation}
\begin{split}
(\partial_t -\Delta_{\tilde g})v 
&\leq  R_{\tilde g}v +C_n v+C_n |h| |\tilde \nabla g|^2+C_n  |\tilde \nabla h|\left(|\tilde\nabla g|+|\tilde\nabla \hat g| \right)\\
&\quad -v^{-1}g^{pq}\langle \tilde\nabla_p h,\tilde\nabla_q h\rangle  + \tilde\nabla_k \left[\left( g^{kl}-\tilde g^{kl}\right) \tilde\nabla_l v\right]- \tilde\nabla_k g^{kl}\cdot  \tilde\nabla_l v\\
&\quad +v^{-1}|\nabla v|_g^2+v^{-1} \tilde \nabla_p\langle   (g^{pq}-\hat g^{pq})\tilde\nabla_q \hat g ,h\rangle\\
&\quad  +v^{-1}\left(- g^{kl}  g_{ir}\tilde g^{rs} \tilde R_{jksl}- g^{kl}  g_{jr} \tilde g^{rs}\tilde R_{iksl}\right) h_{pq}\tilde g^{ip}\tilde g^{jq}\\
&\quad +v^{-1}\left(\hat g^{kl} \hat  g_{ir}\tilde g^{rs} \tilde R_{jksl}+ \hat g^{kl}  \hat g_{jr} \tilde g^{rs}\tilde R_{iksl}\right) h_{pq}\tilde g^{ip}\tilde g^{jq}.
\end{split}
\end{equation} 

Using \begin{equation}\label{eqn:grad-v}
\begin{split}
|v_i|= \left|\frac{1}{\sqrt{|h|^2+\sigma}} \langle h,\tilde\nabla_i h\rangle\right|\leq |\tilde \nabla_i h|,
\end{split}
\end{equation}
then we see that 
\begin{equation}
\begin{split}
&\quad v^{-1} \tilde \nabla_p\langle   (g^{pq}-\hat g^{pq})\tilde\nabla_q \hat g ,h\rangle\\
&\leq \tilde \nabla_p\left(v^{-1} \langle   (g^{pq}-\hat g^{pq})\tilde\nabla_q \hat g ,h\rangle\right)+ C_n|\tilde\nabla h||\tilde\nabla \hat g|
\end{split}
\end{equation}
so that 
\begin{equation}
\begin{split}
(\partial_t -\Delta_{\tilde g})v 
&\leq  R_{\tilde g}v +C_n v+C_n |h| |\tilde \nabla g|^2+C_n  |\tilde \nabla h|\left(|\tilde\nabla g|+|\tilde\nabla \hat g| \right)\\
&\quad   + \tilde\nabla_k \left[\left( g^{kl}-\tilde g^{kl}\right) \tilde\nabla_l v\right]+\tilde \nabla_p\left(v^{-1} \langle   (g^{pq}-\hat g^{pq})\tilde\nabla_q \hat g ,h\rangle\right)-v^{-1}Q(R)
\end{split}
\end{equation}
where $Q(R)$ denotes the remaining curvature terms, i.e.
\begin{eqnarray*}
    Q(R)&:=&\left(g^{kl}  g_{ir}\tilde g^{rs} \tilde R_{jksl}+g^{kl}  g_{jr} \tilde g^{rs}\tilde R_{iksl}\right) h_{pq}\tilde g^{ip}\tilde g^{jq}\\
    &&-\left(\hat g^{kl} \hat  g_{ir}\tilde g^{rs} \tilde R_{jksl}+ \hat g^{kl}  \hat g_{jr} \tilde g^{rs}\tilde R_{iksl}\right) h_{pq}\tilde g^{ip}\tilde g^{jq}.
\end{eqnarray*}

It remains to control $Q(R)$. By diagonalizing $h$ with respect to $\tilde g$, we have 
\begin{equation}
\begin{split}
Q(R)&=2\lambda_i\tilde R_{ikrl} \left(g^{kl}g_{ir}-\hat g^{kl}\hat g_{ir} \right)\\
&=2\lambda_i\tilde R_{ikrl} \left(-g^{kq}\hat g^{pl}h_{pq}g_{ir}+\hat g^{kl}h_{ir} \right)\\
&=2\lambda_i^2 \tilde R_{ikil}\hat g^{kl}-2\lambda_i \lambda_p \tilde R_{ikrl}g^{kp}\hat g^{pl} g_{ir}\\
&\geq \tilde R_{ikik}\left(\lambda_i^2 +\lambda_k^2-2\lambda_i \lambda_k\right)-C_n \e_0 |\mathrm{Rm}(\tilde g)||h|^2\\
&\geq -C_n|h|^2-C_n \e_0 |\mathrm{Rm}(\tilde g)||h|^2.
\end{split}
\end{equation}
It is also clear that $Q(R)\geq-C_n|h|^2 -C_n|\Rm(\tilde g)||h|^3$, if $\tilde g=\hat g$. The result then follows by combining all inequalities.
\end{proof}

\bigskip

\subsection{Some a-priori estimates}

 To overcome the curvature unboundedness, we need the following deep result of Petrunin \cite{Petrunin2008}.

\begin{prop}\label{prop:Petrunin} 
There exists $C_n>0$ such that the following holds: Suppose $(M^n,g)$ is a complete  manifold with $\mathrm{sec}(g)\geq -\sigma^2$ for some $\sigma>0$, then for all $r\in (0,\sigma^{-1})$ and $x\in M$,  we have 
\begin{equation}
\int_{B_g(x ,r)} |\Rm(g)|\,d\mathrm{vol}_g\leq C_n r^{n-2}.
\end{equation}
\end{prop}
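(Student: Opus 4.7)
The plan is to treat this as Petrunin's theorem and adopt his Alexandrov-geometric approach, since under $\mathrm{sec}(g)\geq -\sigma^{2}$ the manifold $(M,g)$ is in particular an Alexandrov space with curvature $\geq -\sigma^2$. By scaling $g \mapsto \sigma^{2} g$ I would first reduce to $\sigma = 1$ and $r \in (0,1)$, so the target becomes $\int_{B_g(x,r)} |\Rm| \, dV \leq C_n r^{n-2}$.

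First I would split the curvature operator into its positive and negative parts, $\Rm = \Rm_{+} + \Rm_{-}$. The hypothesis $\mathrm{sec}(g) \geq -1$ gives a pointwise bound $|\Rm_{-}| \leq C_n$ by standard algebraic manipulation of curvature operators with sectional curvature bounded below (see the argument used in the proof of Lemma~\ref{lma:evo-h-L1}). Combined with Bishop--Gromov volume comparison this yields
\begin{equation*}
\int_{B_g(x,r)} |\Rm_{-}| \, dV \leq C_n \mathrm{Vol}_{g}\!\left(B_g(x,r)\right) \leq C_n r^{n} \leq C_n r^{n-2}
\end{equation*}
for $r \in (0,1)$, which takes care of the negative part.

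The real content is controlling $\int_{B_g(x,r)} |\Rm_{+}| \, dV$, for which no pointwise bound is available from $\mathrm{sec} \geq -1$ alone. The strategy is to translate integrated positive sectional curvature into an Alexandrov-geometric \emph{excess}: for a geodesic triangle $\Delta$ bounding a ruled surface $\Sigma$, Gauss--Bonnet relates $\sum \alpha_i - \pi$ to $\int_\Sigma K \, dA$, and for small triangles $K$ agrees up to controlled error with the sectional curvature of $g$ along the tangent $2$-plane of $\Sigma$. Petrunin's key construction covers $B_g(x,r)$ by a family of thin geodesic tubes parametrized by a transversal of dimension $n-2$; on each tube the integrated sectional curvature in the longitudinal $2$-plane is bounded by the excess of a geodesic hinge, and that excess is in turn controlled via the Toponogov distance comparison with the hyperbolic model. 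A Vitali-type covering at scale $r$ then accumulates these tube estimates: the $2$ longitudinal directions are absorbed by Gauss--Bonnet, and the $n-2$ transverse directions furnish the $r^{n-2}$ factor in the final bound.

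The main obstacle is precisely Step~3, the control of $\Rm_{+}$ through excess/hinge comparison and the careful tube decomposition: this requires Petrunin's machinery of curvature measures on Alexandrov spaces, together with the quasigeodesic calculus needed to handle the generic (non-smooth) configurations that arise in the covering. A genuinely self-contained proof would be long and would essentially reproduce \cite{Petrunin2008}; for our purposes we therefore apply Petrunin's theorem as a black box, after verifying that the rescaling and the hypothesis $\mathrm{sec}(g) \geq -\sigma^{2}$ place us in its scope.
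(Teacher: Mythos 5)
Your proposal is correct and follows essentially the same route as the paper: rescale so the ball has unit radius (which only improves the curvature lower bound), use the algebraic fact that $\mathrm{sec}\ge -1$ controls $|\Rm|$ pointwise by $C_n(1+\mathrm{scal})$, and then invoke Petrunin's unit-ball integral bound \cite{Petrunin2008} as a black box together with volume comparison. The only cosmetic difference is that you phrase the reduction as a splitting $\Rm=\Rm_++\Rm_-$ while the paper bounds $|\Rm|$ directly by the scalar curvature plus a constant; since Petrunin's Theorem~1.1 is stated for $\int\mathrm{scal}$, your positive-part step implicitly uses the same inequality.
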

\begin{proof}
For $r\in (0,\sigma^{-1})$, the assumption implies that the metric $\tilde g=r^{-2}g$ satisfies $\mathrm{sec}(\tilde g)\geq -\sigma^2 r^2\geq -1$ and $|\Rm(\tilde g)|\leq C_n\left(2n(n+1)+ \mathrm{scal}(\tilde g)\right )$ for some $C_n>0$.  We then apply \cite[Theorem 1.1]{Petrunin2008} and volume comparison to give 
\begin{equation}
\begin{split}
r^{2-n}\int_{B_{g}(x,r)} |\Rm(g)|\,d\mathrm{vol}_{g}&=\int_{B_{\tilde g}(x,1)} |\Rm(\tilde g)|\,d\mathrm{vol}_{\tilde g}\\
&\leq C_n +C_n\int_{B_{\tilde g}(x,1)}|\mathrm{scal}(\tilde g)|\,d\mathrm{vol}_{\tilde g}\leq C_n,
\end{split}
\end{equation}
for all $x\in M$ and $r\in (0,\sigma^{-1})$. This completes the proof.
\end{proof}

Using Proposition~\ref{prop:Petrunin}, we might obtain a sharper a-priori estimate of $\nabla h$ in space-time Morrey sense, provided $h$ remains close to $0$ along the flow.

\begin{lma}\label{lma:Morrey-grad}
There exists $\e_2(n)>0$ such that the following holds:  Let $(M,\tilde g(t))$ be a smooth solution to the Ricci flow (not necessarily complete) on $[0,T]$ and $x_0\in M$ such that for some $r>0$,
\begin{enumerate}
\item $B_{\tilde g(0)}(x_0,4r)\Subset M$;
\item $|\Rm(\tilde g(t))|\leq \a t^{-1}$ on $M\times (0,T]$;
\item $\mathrm{sec}(\tilde g(t))\geq -r^{-2}$ on $M\times [0,T]$.
\end{enumerate}
If $g(t)$ is a  smooth solution to the Ricci-DeTurck flow with respect to $\tilde g(t)$ on $M\times (0,T]$ such that
\begin{equation}
(1-\e)\tilde g(t)\leq g(t)\leq (1+\e)\tilde g(t)
\end{equation}
on $M\times (0,T]$, for some $0<\e<\e_2$. There exists $1>\tilde T(n,\a)>0$ such that for all $t\in (0,T\wedge \tilde Tr^2]$,
$$\int^t_0 \int_{B_{\tilde g(s)}(x_0,r)}  |\tilde\nabla g|^2 \,d\mathrm{vol}_{\tilde g(s)}ds\leq C_n \e^2r^n.$$
\end{lma}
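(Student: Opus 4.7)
The plan is to carry out a parabolic Caccioppoli-type energy estimate for $h:=g-\tilde g$, using the coercive gradient term $-2g^{pq}\langle\tilde\nabla_p h,\tilde\nabla_q h\rangle$ intrinsic to the Ricci-DeTurck flow, and controlling the spatially unbounded curvature through Proposition~\ref{prop:Petrunin}. Since $\tilde\nabla\tilde g=0$, one has $|\tilde\nabla g|=|\tilde\nabla h|$, while $(1-\e)\tilde g\le g\le(1+\e)\tilde g$ gives $|h|_{\tilde g(t)}\le C_n\e$. Specializing the computation in Lemma~\ref{lma:evo-h-L1} to $\hat g=\tilde g$ but working with $|h|^2$ directly (rather than $v$), I would obtain
\begin{equation}
\partial_s|h|^2 \le g^{pq}\tilde\nabla_p\tilde\nabla_q|h|^2 - 2g^{pq}\langle\tilde\nabla_p h,\tilde\nabla_q h\rangle + C_n|\Rm(\tilde g)|\,|h|^2 + C_n|h|(1+|h|)|\tilde\nabla h|^2,
\end{equation}
and for $\e<\e_2$ small the cubic gradient term is absorbed into the good gradient term, leaving coercivity $-(2-C_n\e)|\tilde\nabla h|^2$.

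Next I would fix a cutoff $\phi(x)$ depending only on $d_{\tilde g(0)}(x,x_0)$, with $\phi\equiv 1$ on $B_{\tilde g(0)}(x_0,2r)$, supported in $B_{\tilde g(0)}(x_0,3r)$, and $|\tilde\nabla_{\tilde g(0)}\phi|\le C_n/r$. By Hamilton--Perelman distance distortion applied with $|\Rm(\tilde g)|\le\alpha/s$, $|d_{\tilde g(s)}-d_{\tilde g(0)}|\le C_n\sqrt{\alpha s}$, so choosing $\tilde T(n,\alpha)$ small enough guarantees $B_{\tilde g(s)}(x_0,r)\subset B_{\tilde g(0)}(x_0,2r)\subset B_{\tilde g(0)}(x_0,4r)\Subset M$ and that $\tilde g(s)$ is $2$-bi-Lipschitz to $\tilde g(0)$ on $\mathrm{supp}\,\phi$, for $s\in(0,\tilde Tr^2]$. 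Multiplying the evolution inequality by $\phi^2$, integrating against $d\mathrm{vol}_{\tilde g(s)}$, using $\partial_s d\mathrm{vol}_{\tilde g(s)}=-R_{\tilde g}d\mathrm{vol}_{\tilde g(s)}$, and integrating by parts the Laplacian term (the extra piece $\int\phi^2\tilde\nabla_p g^{pq}\tilde\nabla_q|h|^2$ being $\lesssim\e\int\phi^2|\tilde\nabla h|^2$, since $|\tilde\nabla g^{-1}|\lesssim|\tilde\nabla h|$), I absorb the $\e\int\phi^2|\tilde\nabla h|^2$ pieces into the left side and arrive at
\begin{equation}
\int_M\phi^2|h(t)|^2\,d\mathrm{vol}_{\tilde g(t)} + \int_0^t\!\!\int_M\phi^2|\tilde\nabla h|^2\,d\mathrm{vol}_{\tilde g(s)}ds \,\lesssim\, \e^2\mathrm{Vol}_{\tilde g(0)}(B_{3r}) + \int_0^t\!\!\int_M\bigl(|\Rm(\tilde g)|+r^{-2}\bigr)\phi^2|h|^2\,d\mathrm{vol}_{\tilde g(s)}ds.
\end{equation}

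To control the right-hand side with the correct scaling, I would use Bishop--Gromov with $\Ric(\tilde g(s))\ge-(n-1)r^{-2}$ to get $\mathrm{Vol}_{\tilde g(s)}(B_{\tilde g(0)}(x_0,3r))\le C_n r^n$, and Proposition~\ref{prop:Petrunin} (applied to $\tilde g(s)$ with $\sigma=r^{-1}$) to get $\int_{B_{\tilde g(s)}(x_0,3r)}|\Rm(\tilde g(s))|\,d\mathrm{vol}_{\tilde g(s)}\le C_n r^{n-2}$ uniformly in $s\in(0,T]$. Combining with $|h|^2\le C_n\e^2$, each spatial integral on the right is $\lesssim\e^2r^{n-2}$; integrating over $s\in[0,\tilde Tr^2]$ yields $\lesssim\e^2r^n$. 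Since $\phi\equiv 1$ on $B_{\tilde g(s)}(x_0,r)$ for $s\le\tilde Tr^2$ and $\tilde\nabla g=\tilde\nabla h$, the desired Morrey estimate follows.

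The main obstacle is precisely the non-integrability in time of the pointwise bound $|\Rm(\tilde g(s))|\le\alpha/s$: a naive pointwise estimate on the curvature term would produce a divergent $\log$ near $s=0$. Petrunin's spatial $L^1$ bound on $|\Rm|$, which has exactly the right scaling $r^{n-2}$ under $\mathrm{sec}\ge -r^{-2}$, replaces the bad pointwise bound, and against the parabolic time scale $\tilde Tr^2$ produces the $\e^2 r^n$ target. This is why the hypothesis is phrased as a curvature lower bound of the form $\mathrm{Rm}(\tilde g(t))\ge -r^{-2}$ rather than in terms of $|\Rm|$.
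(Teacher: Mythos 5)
Your overall strategy coincides with the paper's: a Caccioppoli-type energy estimate for $E(t)=\int |h|^2\varphi$, with the coercive term $-c_n\int\varphi|\tilde\nabla h|^2$ coming from the leading part of the Ricci--DeTurck equation, the cubic gradient terms absorbed for small $\e$, and --- crucially --- Petrunin's bound $\int_{B}|\Rm(\tilde g(s))|\,d\mathrm{vol}\le C_nr^{n-2}$ (Proposition~\ref{prop:Petrunin}) replacing the non-time-integrable pointwise bound $|\Rm|\le\a s^{-1}$. Your diagnosis of why the hypothesis is a sectional curvature lower bound is exactly the point of the proof.

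There is, however, a genuine gap in your cutoff construction. You take $\phi$ to be a function of $d_{\tilde g(0)}(\cdot,x_0)$ and assert that ``$\tilde g(s)$ is $2$-bi-Lipschitz to $\tilde g(0)$ on $\mathrm{supp}\,\phi$'', which you then need in order to bound $|\tilde\nabla_{\tilde g(s)}\phi|_{\tilde g(s)}\le C_n/r$ uniformly in $s$ (this bound enters when you integrate by parts and apply Cauchy--Schwarz to $\int\phi|\tilde\nabla\phi||h||\tilde\nabla h|$, and when you estimate $\int|\tilde\nabla\phi|^2|h|^2$). Under the stated hypotheses this bi-Lipschitz equivalence of the metric \emph{tensors} does not hold with constants depending only on $n,\a$: the bound $|\Rm(\tilde g(t))|\le\a t^{-1}$ only gives $|\partial_t\log\tilde g|\le C\a t^{-1}$, which is not integrable at $t=0$, so one direction of the comparison ($\tilde g(0)\le C\tilde g(s)$, which is what a Lipschitz bound for $d_{\tilde g(0)}$ with respect to $\tilde g(s)$ requires) can degenerate as $s\to0$. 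The Hamilton--Perelman distortion estimate you invoke only yields the \emph{additive} distance comparison $d_{\tilde g(s)}\ge d_{\tilde g(0)}-C_n\sqrt{\a s}$ (plus a multiplicative upper bound from the Ricci lower bound); this suffices for the ball containments but not for a uniform gradient bound on a cutoff frozen at time $0$. Since the lemma is applied to flows whose initial curvature degenerates, a constant depending on $\sup|\Rm(\tilde g(0))|$ would be useless. The paper resolves exactly this issue by using the time-dependent cutoff $\varphi(x,t)=e^{-10^3t}\phi\left(d_{\tilde g(t)}(x_0,x)+C_n\sqrt{\a t}\right)$, which satisfies $(\partial_t-\Delta_{\tilde g(t)})\varphi\le0$ in the barrier sense and, because its gradient is measured in the \emph{current} metric, obeys $|\tilde\nabla\varphi|^2\le C_n\varphi$ pointwise; the Laplacian is then moved onto $\varphi$ in the energy identity rather than handled through a static gradient bound. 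With that replacement your argument goes through and is the paper's proof.
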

\begin{proof}
We might assume $r=1$ by scaling.
We follow \cite[Lemma 7.1]{SimonToppingJDG} to construct cut-off function. Let $\phi$ be a smooth non-increasing function on $[0,+\infty)$ such that $\phi\equiv 1$ on $[0,\frac32]$, vanishes outside $[0,2]$ and satisfies $|\phi'|^2\leq 10^3\phi, \phi''\geq -10^3\phi$. Define $
\varphi(x,t):= e^{-10^3t}\phi\left({d_{\tilde g(t)}(x_0,x)+C_n\sqrt{\a t}} \right)$ so that 
\begin{equation}
\left(\frac{\partial}{\partial t}-\Delta_{\tilde g(t)}\right)\varphi\leq 0
\end{equation}
in the sense of barrier, and thus distribution sense \cite[Appendix]{MantegazzaMascellaniUraltsev}. Considering 
\begin{equation}
E(t)=\int_M |h|^2\varphi \,d\mathrm{vol}_{\tilde g(t)},
\end{equation}
where $h:=g-\tilde g$.

By \eqref{eqn:h-evo} with $\hat g=\tilde g$, we see that if $\e_1$ is small enough, then
\begin{equation}
\begin{split}
E'(t)&\leq C_n\e^2\int_M |\Rm(\tilde g)|\varphi \,d\mathrm{vol}_{\tilde g(t)} -C_n^{-1}\int_M |\tilde \nabla h|^2 \varphi\,d\mathrm{vol}_{\tilde g(t)}\\
&\quad +C_n\e^2\int_M  \frac{|\tilde \nabla \varphi|^2}{\varphi }\,d\mathrm{vol}_{\tilde g(t)}  .
\end{split}
\end{equation}

Using Proposition~\ref{prop:Petrunin}, we observe that 
\begin{equation}
\begin{split}
\int_M |\Rm(\tilde g)|\varphi \,d\mathrm{vol}_{\tilde g(t)} &\leq C_n.
\end{split}
\end{equation}

Therefore, if  $\tilde T(n,\a)$ is small enough, then $\varphi\geq \frac12$ on $B_{\tilde g(t)}(x_0,1)$  for all $t\in [0,\tilde T\wedge T]$ and thus volume comparison implies
\begin{equation}
\begin{split}
\int^t_{t'} \int_{B_{\tilde g(s)}(x_0,1)} |\tilde \nabla h|^2 \,d\mathrm{vol}_{\tilde g(s)} ds \leq C_n\e^2+E(t')\leq C_n'\e^2.
\end{split}
\end{equation}
for $0<t'<t$. Result follows by letting $t'\to0$.
\end{proof}

For the discussion of weak stability of Ricci-DeTurck flow,  we also need the following Morrey bound for difference between two solutions.
\begin{lma}\label{lma:improved-Morrey-grad}
There exists $\e_3(n)>0$ such that the following holds:  Let $(M,\tilde g(t))$ be a smooth solution to the Ricci flow (not necessarily complete) on $[0,T]$ and $x_0\in M$ such that for some $r>0$,
\begin{enumerate}
\item[(a)] $B_{\tilde g(0)}(x_0,8r)\Subset M$;
\item[(b)] $|\Rm(\tilde g(t))|\leq \a t^{-1}$ on $M\times (0,T]$;
\item[(c)] $\mathrm{sec}(\tilde g(t))\geq -r^{-2}$ on $M\times [0,T]$.
\end{enumerate}
If $g(t),\hat g(t)$ are two  smooth solutions to the Ricci-DeTurck flow with respect to $\tilde g(t)$ on $M\times [0,T]$ such that
\begin{enumerate}
    \item[(i)]$(1-\e)\hat g(t)\leq g(t)\leq (1+\e)\hat g(t)$;
    \item [(ii)]$(1-\e_3)\tilde g(t)\leq g(t),\hat g(t)\leq (1+\e_3)\tilde g(t)$
\end{enumerate}
on $M\times (0,T]$, for some $0<\e<\e_3$. There exists $\tilde T(n,\a)>0$ such that for all $t\in (0,T\wedge \tilde Tr^2]$,
$$\int^t_0 \int_{B_{\tilde g(s)}(x_0,r)}  |\tilde\nabla (g-\hat g)|^2 \,d\mathrm{vol}_{\tilde g(s)}ds\leq C_n \e^2r^n.$$
\end{lma}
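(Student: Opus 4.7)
The proof will closely follow the blueprint of Lemma \ref{lma:Morrey-grad}, but applied to the difference $h:=g-\hat g$ in place of $g-\tilde g$, using the \emph{general} ($\hat g\ne\tilde g$) form of the evolution from Lemma \ref{lma:evo-h-L1} instead of its specialized $\hat g=\tilde g$ form. After the usual scaling to $r=1$, I would take the same spatial cutoff $\varphi$ constructed in the proof of Lemma \ref{lma:Morrey-grad}, which satisfies $(\partial_t-\Delta_{\tilde g})\varphi\le 0$ in the barrier (and hence distributional) sense, is supported in $B_{\tilde g(t)}(x_0,2)$, and satisfies $\varphi\ge 1/2$ on $B_{\tilde g(t)}(x_0,1)$ for $t\in[0,\tilde T\wedge T]$. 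The quantity to be monitored is the weighted energy $E(t):=\int_M|h|^2\varphi\,d\mathrm{vol}_{\tilde g(t)}$, and hypothesis (i) together with Bishop--Gromov immediately gives $E(0)\le C_n\e^2$.

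The core of the argument is an energy inequality of the schematic form
\[
E'(t)+\tfrac{1}{2}\int_M|\tilde\nabla h|^2\varphi\,d\mathrm{vol}_{\tilde g(t)}\le C_n\int_M|{\Rm(\tilde g)}|\,|h|^2\varphi\,d\mathrm{vol}_{\tilde g(t)}+C_n\e^2\int_M\bigl(|\tilde\nabla g|^2+|\tilde\nabla\hat g|^2\bigr)\varphi\,d\mathrm{vol}_{\tilde g(t)}+C_nE(t).
\]
To derive it I would revisit the computation in the proof of Lemma \ref{lma:evo-h-L1} but keep $\hat g\ne\tilde g$ and, crucially, not divide by $v$; this yields
\[
(\partial_t-\Delta_{\tilde g})|h|^2\le -2g^{pq}\langle\tilde\nabla_p h,\tilde\nabla_q h\rangle+\tilde\nabla_p\bigl[(g^{pq}-\tilde g^{pq})\tilde\nabla_q|h|^2\bigr]+2\tilde\nabla_p\langle(g^{pq}-\hat g^{pq})\tilde\nabla_q\hat g,\,h\rangle+\mathcal N,
\]
with $\mathcal N\le C_n|{\Rm(\tilde g)}|\,|h|^2+C_n|h|^2+C_n|h|^2|\tilde\nabla g|^2+C_n|h||\tilde\nabla h|(|\tilde\nabla g|+|\tilde\nabla\hat g|)$. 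Pairing with $\varphi$, using $\partial_t\,d\mathrm{vol}_{\tilde g(t)}=-R_{\tilde g}\,d\mathrm{vol}_{\tilde g(t)}$ and the supersolution property of $\varphi$, integrating by parts to move the divergence terms onto $\tilde\nabla\varphi$, and absorbing the cross term $|h||\tilde\nabla h|(|\tilde\nabla g|+|\tilde\nabla\hat g|)$ via Young's inequality (exploiting $|h|_{\tilde g}\le C_n\e$) produces the displayed energy inequality.

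Each term on the right is $\le C_n\e^2$ up to a Gr\"onwall correction: the curvature term by Proposition \ref{prop:Petrunin} (valid since $\tilde g(t)$ has $\mathrm{sec}\ge -1$ after rescaling) combined with $|h|\le C_n\e$; the gradient term by Lemma \ref{lma:Morrey-grad} applied separately to $g$ and $\hat g$ (valid provided we choose $\e_3\le\e_2$), yielding $\int_0^t\int(|\tilde\nabla g|^2+|\tilde\nabla\hat g|^2)\varphi\le C_n\e_3^2\le C_n$; and the $E(t)$ piece is handled by Gr\"onwall. Integrating from $0$ to $t\in(0,\tilde T\wedge T]$ and localizing via $\varphi\ge 1/2$ on $B_{\tilde g(t)}(x_0,1)$ then gives $\int_0^t\int_{B_{\tilde g(s)}(x_0,1)}|\tilde\nabla h|^2\,d\mathrm{vol}_{\tilde g(s)}ds\le C_n\e^2$; undoing the rescaling restores the factor $r^n$. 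The main obstacle, absent in Lemma \ref{lma:Morrey-grad}, is the new non-divergence piece $(g^{pq}-\hat g^{pq})\tilde\nabla_p\tilde\nabla_q\hat g$ that arises because $\hat g\ne\tilde g$; writing it in divergence form as in the proof of Lemma \ref{lma:evo-h-L1} produces the harmless boundary contribution $\tilde\nabla_p\langle(g^{pq}-\hat g^{pq})\tilde\nabla_q\hat g,h\rangle$ (controlled by $|\tilde\nabla\varphi|$, which is bounded on the support of $\varphi$) together with remainders of the form $|h||\tilde\nabla h||\tilde\nabla\hat g|$ that are handled by Young exactly as above.
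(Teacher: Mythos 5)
Your proposal is correct and follows essentially the same route as the paper: the same cutoff $\varphi$ and energy $E(t)=\int_M|h|^2\varphi\,d\mathrm{vol}_{\tilde g(t)}$, the general ($\hat g\neq\tilde g$) evolution computation from Lemma~\ref{lma:evo-h-L1} with the second-order term $(g^{pq}-\hat g^{pq})\tilde\nabla_p\tilde\nabla_q\hat g$ put in divergence form, Petrunin's bound for the curvature term, and Lemma~\ref{lma:Morrey-grad} applied to $g$ and $\hat g$ separately to absorb the $\e^2\int(|\tilde\nabla g|^2+|\tilde\nabla\hat g|^2)\varphi$ contribution. No gaps.
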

\begin{proof}
We might assume $r=1$ by scaling. Again we let $\varphi$ be the cutoff function constructed in the proof of Lemma~\ref{lma:Morrey-grad}. Considering 
\begin{equation}
E(t)=\int_M |h|^2\varphi \,d\mathrm{vol}_{\tilde g(t)},
\end{equation}
where $h:=g-\hat g$.

By the computation in Lemma~\ref{lma:evo-h-L1}, we see that if $\e_3$ is small enough, then
\begin{equation}
\begin{split}
E'(t)&\leq C_n\e^2\int_M |\Rm(\tilde g)|\varphi \,d\mathrm{vol}_{\tilde g(t)}+C_n \e^2\int_M (|\tilde \nabla g|^2+|\tilde \nabla \hat g|^2) \varphi\,d\mathrm{vol}_{\tilde g(t)}\\
&\quad -C_n^{-1}\int_M |\tilde \nabla h|^2 \varphi\,d\mathrm{vol}_{\tilde g(t)}+C_n\e^2 \int_M \frac{|\tilde\nabla\varphi|^2}{\varphi} d\mathrm{vol}_{\tilde g(t)}
\end{split}
\end{equation}

As in the proof of Lemma~\ref{lma:Morrey-grad},  we use Proposition~\ref{prop:Petrunin} on the curvature term so that for all $0<t'<t$,
\begin{equation}
\begin{split}
&\quad \int^t_s \int_{B_{\tilde g(s)}(x_0,1)} |\tilde \nabla h|^2 \varphi\,d\mathrm{vol}_{\tilde g(s)} ds\\ 
&\leq C_n\e^2 +C_n\e^2\int^t_0 \int_{B_{\tilde g(s)}(x_0,2)}(|\tilde \nabla g|^2+|\tilde \nabla \hat g|^2) \,d\mathrm{vol}_{\tilde g(s)} ds.
\end{split}
\end{equation}
Result follows by combining with estimates from Lemma~\ref{lma:Morrey-grad} and let $t'\to 0$.
\end{proof}

From the rough version of evolution equation of $h$, i.e. \eqref{eqn:RDF}, we also have a scaling invariant smoothing estimate of $h$, provided that $h$ remains small along the evolution.
\begin{lma}\label{lma:RDF-higher-ord} 
There exists $\e_4(n)>0$ such that the following holds:  Let $(N,\tilde g(t))$ be a smooth solution to the Ricci flow (not necessarily complete) on $[0,T]$ and $x_0\in N$ such that 
\begin{enumerate}
\item $B_{\tilde g(0)}(x_0,4r)\Subset N$ for some $r>0$;
\item $|\Rm(\tilde g(t))|\leq \a t^{-1}$ on $N\times (0,T]$. 
\end{enumerate}
If $ g(t)$ and $\hat g(t)$ are two smooth solutions to the Ricci-DeTurck flow with respect to $\tilde g(t)$ on $N^m\times [0,T]$ so that 
\begin{enumerate}
    \item $|g(t)-\hat g(t)|\leq \e$;
    \item $|g(t)-\tilde g(t)|\leq \e_4$
\end{enumerate}
on $N\times [0,T]$ for some $\e<\e_4$. Then for all $k\in\mathbb{N}$, there exists $\hat L_k(n,\a,k)$, $\hat S_k(n,\a,k)>0$ such that for all $(x,t)\in B_{\tilde g(0)}(x_0,r)\times (0,T\wedge  \hat S_kr^2]$,
\begin{equation*}
|\tilde\nabla^k  (g-\hat g)|(x,t)\leq \hat L_k \e t^{-k/2}.
\end{equation*}
\end{lma}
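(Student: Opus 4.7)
The plan is a Bernstein / Shi-type maximum principle argument, proceeding by induction on $k$. By the parabolic rescaling $(g, \hat g, \tilde g)(x, t) \mapsto r^{-2}(g, \hat g, \tilde g)(x, r^2 t)$ the hypotheses and conclusion are invariant, so I reduce to the case $r = 1$.

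The starting observation is that from \eqref{eqn:RDF-2} the difference $h := g - \hat g$ satisfies a quasilinear parabolic equation of the schematic form
\[
\partial_t h = g^{pq}\tilde\nabla_p\tilde\nabla_q h + A_0 \ast h + A_1 \ast \tilde\nabla h,
\]
with the leading operator uniformly comparable to $\Delta_{\tilde g}$ (since $|g - \tilde g| \le \e_4$ is small) and $A_0, A_1$ built from $\Rm(\tilde g)$ and first derivatives of $\tilde g, g, \hat g$. Shi-type estimates for the background, applied on a slightly smaller ball $B_{\tilde g(0)}(x_0, 3)$ under the hypothesis $|\Rm(\tilde g)| \le \alpha t^{-1}$, yield $|\tilde\nabla^j\Rm(\tilde g)|_{\tilde g} \le C(n, \alpha, j)\, t^{-(j+2)/2}$ for $j \le k$ and $t \in (0, \hat S_k]$. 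Running the same Bernstein scheme for $g$ and $\hat g$ as $L^\infty$-small perturbations of $\tilde g$ gives $|\tilde\nabla^j g|_{\tilde g} + |\tilde\nabla^j \hat g|_{\tilde g} \le C(n, \alpha, j)\, t^{-j/2}$ on the same region, so every coefficient in the $h$-equation has the correct scaling in $t$.

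For the induction, at step $k$ I would test the weighted quantity
\[
F_k(x, t) := \eta^2(x, t) \Bigl( t^k |\tilde\nabla^k h|^2 + \sum_{j = 0}^{k - 1} B_j\, t^j |\tilde\nabla^j h|^2 \Bigr),
\]
where the constants $B_0, \dots, B_{k-1}$ are chosen inductively in descending order of $j$ and $\eta$ is a Hamilton--Perelman type cutoff supported in $B_{\tilde g(0)}(x_0, 2)$, built from the $\tilde g(t)$-distance to $x_0$ corrected by $C_n\sqrt{\alpha t}$ as in the proof of Lemma~\ref{lma:Morrey-grad}. Commuting $\tilde\nabla$ through the $h$-equation (which produces commutator terms $\tilde\nabla^l \Rm(\tilde g) \ast \tilde\nabla^{k - l - 1} h$ controlled by the scaling bounds above), using the Bernstein cancellation $-g^{pq}\tilde\nabla_p\tilde\nabla_q |\tilde\nabla^j h|^2 \le -2|\tilde\nabla^{j+1} h|^2 + \cdots$ to absorb cross-terms into the $B_{j+1}$-piece, and using the induction hypothesis $|\tilde\nabla^j h| \le \hat L_j\, \e\, t^{-j/2}$ for $j < k$, I obtain a differential inequality $(\partial_t - g^{pq}\tilde\nabla_p\tilde\nabla_q) F_k \le C_k F_k$ modulo manifestly absorbable terms. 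The maximum principle on $N \times [0, \hat S_k \wedge T]$, using $|h| \le \e$ as the base bound and the compact spatial support of $\eta$, gives $\sup F_k \le C_k \e^2$, hence $|\tilde\nabla^k h| \le \hat L_k \e\, t^{-k/2}$ on $B_{\tilde g(0)}(x_0, 1) \times (0, \hat S_k \wedge T]$.

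The main obstacle is the degenerate behaviour of $\tilde g$ as $t \to 0^+$: its curvature and all higher derivatives may blow up, so no coefficient in the PDE for $\tilde\nabla^k h$ is bounded uniformly down to $t = 0$. This is compensated exactly by the weights $t^{j/2}$ together with the Shi-type bounds $|\tilde\nabla^j \Rm(\tilde g)|_{\tilde g} \le C t^{-(j+2)/2}$ and $|\tilde\nabla^j g|, |\tilde\nabla^j \hat g| \le C t^{-j/2}$; each dangerous term carries a power of $t^{-p/2}$ balanced exactly by the weighting, which forces the choice of nested constants $B_j$ and the shrinking time interval $\hat S_k$.
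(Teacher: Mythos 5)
Your overall architecture --- rescaling to $r=1$, Shi-type bounds $|\tilde\nabla^j\Rm(\tilde g)|\le Ct^{-(j+2)/2}$ for the background, interior smoothing estimates $|\tilde\nabla^j g|+|\tilde\nabla^j\hat g|\le Ct^{-j/2}$ for the two Ricci--DeTurck flows, and a weighted Bernstein induction closed by a cutoff maximum principle --- is exactly the mechanism behind the paper's one-line proof (a modification of \cite[Lemma 2.6]{Lee2025} using \eqref{eqn:RDF-2}). However, the specific \emph{additive} test function $F_k=\eta^2\bigl(t^k|\tilde\nabla^kh|^2+\sum_jB_jt^j|\tilde\nabla^jh|^2\bigr)$ does not close, and the claimed inequality $(\partial_t-\Delta)F_k\le C_kF_k$ with a time-independent constant is not attainable. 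The obstruction is already at order zero: by \eqref{eqn:RDF-2} the reaction terms of $\partial_t h$ contain $\Rm(\tilde g)\ast h$ (e.g.\ $(g^{kl}g_{ip}-\hat g^{kl}\hat g_{ip})\tilde g^{pq}\tilde R_{jkql}$ and $h_{ik}\tilde R^k_j$), so $(\partial_t-\Delta)|h|^2$ carries a term of size $|\Rm(\tilde g)|\,|h|^2\sim \a t^{-1}|h|^2$, and the commutators $\tilde\nabla^\ell\Rm(\tilde g)\ast\tilde\nabla^{j-\ell}h$ and $\partial_t\tilde\Gamma\ast h$ inject forcings of size $t^{-1}\cdot t^j|\tilde\nabla^jh|^2$ and $t^{-1}\e^2$ into the weighted pieces. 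The best one can extract from your ansatz is $(\partial_t-\Delta)F_k\le Ct^{-1}F_k+C\e^2t^{-1}$, whose integration from $t=0$ diverges. This non-integrable $t^{-1}$ coefficient is not a bookkeeping artifact: it is the same phenomenon that forces the paper to prove the $C^0$ bound in Theorem~\ref{thm:weak-stable-RDF} via heat-kernel representation and Petrunin's estimate rather than by a pointwise maximum principle, and it is precisely what the ``extra attention to the smallness of $g-\hat g$'' in the paper's proof refers to.

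The standard repair is Shi's multiplicative quantity. For $k=1$ take $\Phi=\eta^2\,(A\e^2+|h|^2)\,t|\tilde\nabla h|^2$ with $A=A(n)$: the factor $(\partial_t-\Delta)|h|^2\le-2\lambda|\tilde\nabla h|^2+Ct^{-1}|h|^2$ generates the good quartic term $-2\lambda\, t|\tilde\nabla h|^4$, and at a spacetime maximum of $\Phi$ every bad forcing ($C\e^2|\tilde\nabla h|^2$ from the $t^{-1}$ coefficients, $C\e^3t^{-1/2}|\tilde\nabla h|$ from $\tilde\nabla\Rm(\tilde g)\ast h$, and the cutoff contributions) is dominated by this quartic term once $t|\tilde\nabla h|^2\ge C\e^2$; setting $u=t|\tilde\nabla h|^2$ one gets $2\lambda u^2\le C\e^2u+C\e^3u^{1/2}$, hence $u\le C\e^2$, with no time integration at all. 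Iterating with $(A\e^2t^{k-1}+t^{k-1}|\tilde\nabla^{k-1}h|^2)\,t^k|\tilde\nabla^kh|^2$ and the previously established orders yields the full statement and, importantly, an estimate that is genuinely \emph{linear} in $\e$. A minor further point: your schematic equation omits $(g^{pq}-\hat g^{pq})\tilde\nabla_p\tilde\nabla_q\hat g=O(|h|\,|\tilde\nabla^2\hat g|)$, so $A_0$ involves second derivatives of $\hat g$; this is harmless since $|\tilde\nabla^2\hat g|\le Ct^{-1}$ scales like $\Rm(\tilde g)$, but it must be carried through the bookkeeping.
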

\begin{proof}
This follows from a straightforward modification of \cite[Lemma 2.6]{Lee2025} using \eqref{eqn:RDF-2}, with extra attention to the smallness of $g-\hat g$.
\end{proof}

\subsection{Heat kernel estimates}

We next need the heat kernel estimate for the operator $\partial_t-\Delta_{\tilde g(t)}-R_{\tilde g(t)}$, from the work  \cite{BamlerCabezasWilking2019}.

\begin{prop}\label{prop:heat-kernel}
Suppose $\tilde g(t)$ is a complete Ricci flow on $M\times (0,T]$ such that
\begin{enumerate}
\item[(a)] $|\Rm(\tilde g(t))|\leq \a t^{-1}$;
\item[(b)] $\mathrm{inj}(\tilde g(t))\geq \sqrt{\a^{-1}t}$
\end{enumerate}
for some $\a>0$, then there is $C_1(n,\a)>0$ such that the heat kernel $G(x,t;y,s)$ with respect to operator $\partial_t-\Delta_{\tilde g(t)}-R_{\tilde g(t)}$ satisfies 
\begin{equation}
G(x,t;y,s)\leq \frac{C_1}{(t-s)^{n/2}}\cdot \exp\left(-\frac{d_{\tilde g(s)}(x,y)^2}{C_1(t-s)} \right)
\end{equation}
for $x,y\in M$ and $0< s<t\leq T$. If in addition $\Ric(\tilde g(t))\geq -(n-1)$, then we might assume
\begin{equation}
|\tilde\nabla_{y,s} G(x,t;y,s)|
\leq \frac{C_1}{(t-s)^{(n+1)/2}}\cdot\exp\left(-\frac{d_{\tilde g(s)}(x,y)^2}{C_1(t-s)} \right).
\end{equation}
for $0< s<t\leq T\wedge (n-1)^{-1}\log 2$ and $x,y\in M$.
\end{prop}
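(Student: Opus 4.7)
The plan is to establish the on-diagonal bound first and then upgrade to the Gaussian bound via an integrated maximum principle. Conditions (a) $|\Rm(\tilde g(t))|\le\alpha/t$ and (b) $\mathrm{inj}(\tilde g(t))\ge\sqrt{t/\alpha}$ together give non-inflating volumes at parabolic scale: by the injectivity radius bound and $|\Rm|\le \alpha/t$, volume comparison yields $\mathrm{Vol}_{\tilde g(s)}(B_{\tilde g(s)}(y,r))\ge c(n,\alpha)\,r^n$ for all $r\le\sqrt{s/\alpha}$. Parabolically rescaling around $(y,s)$ by setting $\bar g(u):=s^{-1}\tilde g(s+su)$ for $u\in[-\tfrac12,\tfrac12]$ produces a Ricci flow with $|\Rm(\bar g)|\le 2\alpha$ and $\mathrm{inj}(\bar g)\ge(2\alpha)^{-1/2}$, hence uniformly bounded local geometry and a uniform local Sobolev inequality. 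Standard $L^2$-mean value / Moser–Grigor'yan ultracontractivity applied to the rescaled heat operator gives $\bar G(x,\tfrac12;y,0)\le C(n,\alpha)$, which after undoing the rescaling yields the on-diagonal bound in the regime $t-s\lesssim s$. For $t-s\gg s$, iterating the semigroup identity $G(x,t;y,s)=\int_M G(x,t;z,s')G(z,s';y,s)\,d\mathrm{vol}_{\tilde g(s')}(z)$ across dyadic intermediate times $s'$ propagates the on-diagonal bound to all $0<s<t\le T$.

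To obtain the off-diagonal Gaussian decay, I would run the Davies–Grigor'yan integrated maximum principle, adapted to Ricci flow as in Bamler–Cabezas–Wilking. Fix $x$ and let $u(y,s)=G(x,t;y,s)$, which satisfies the adjoint heat equation in $(y,s)$. Consider
\[
E(s):=\int_M u(y,s)^2\,e^{2\xi(y,s)}\,d\mathrm{vol}_{\tilde g(s)}(y),\qquad \xi(y,s)=-\frac{d_{\tilde g(s)}(x,y)^2}{C_1(t-s+\eta)},
\]
for small $\eta>0$. A direct computation shows that the contribution from $|\nabla\xi|^2+\partial_s\xi$ assembles into the classical non-positive expression as in the static case, while the Ricci-flow distortion of $d_{\tilde g(s)}(x,y)$ is controlled by $|\Ric|\le(n-1)\alpha/s$ and integrable against the Gaussian weight. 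Integrating $E$ from $s$ to the midpoint $s^*=(s+t)/2$, invoking the on-diagonal bound at $s^*$, and letting $\eta\to 0^+$ after enlarging $C_1$ to absorb constants yields the stated Gaussian estimate.

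For the additional gradient bound under $\Ric(\tilde g(t))\ge -(n-1)$, the restriction $t\le (n-1)^{-1}\log 2$ guarantees metric distortion bounded by a factor of $2$ on the relevant interval. On this interval one applies the Hamilton–Souplet–Zhang local gradient estimate for positive solutions of the backward heat equation on a Ricci flow, giving $|\tilde\nabla_y \log u|^2\le C(n)/(t-s)$; multiplying by $u$ and inserting the Gaussian upper bound from the previous step produces the desired spatial derivative estimate. The time derivative $\partial_s u$ is then extracted from the equation $\partial_s u=-\Delta u-Ru$ together with a scale-invariant Shi-type bound on $\tilde\nabla^2 u$, derived from (a) by one iteration of the smoothing argument of Lemma~\ref{lma:RDF-higher-ord} applied to $u$. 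The main obstacle I foresee is the Gaussian step: the evolving distance function and the fact that (a) degenerates as $s\to 0^+$ force careful bookkeeping, and one likely has to decompose $[s,t]$ into pieces with $t-s\lesssim s$ and chain through the semigroup identity to avoid integrating singular curvature quantities against the weight $e^{2\xi}$.
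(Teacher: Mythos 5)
Your overall strategy is close to the paper's, but the two halves are treated in opposite ways. For the Gaussian upper bound the paper does not reprove anything: it cites \cite[Proposition 3.1]{BamlerCabezasWilking2019} for the regime $2s\le t$ and standard rescaled parabolic estimates (e.g.\ \cite{ChauTamYu2011}) for $t/2<s\le t$; your on-diagonal-plus-integrated-maximum-principle sketch is essentially a reconstruction of that cited result, and you correctly identify (but do not resolve) the hardest point, namely propagating the off-diagonal decay through the region where $|\Rm|\le\a t^{-1}$ degenerates. For the gradient bound the paper gives a self-contained Bernstein argument: it reverses time, computes $(\partial_\tau-\Delta_{h})|\nabla u|^2\le-2|\nabla^2u|^2+4(n-1)|\nabla u|^2$ using only $\Ric\ge-(n-1)$, builds a cutoff via Laplacian comparison and the one-sided distortion $\partial_\tau(e^{(n-1)\tau}d_{h(\tau)})\ge0$, and applies the maximum principle to $F=\Phi\tau|\nabla u|^2+\Lambda u^2$. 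Your Souplet--Zhang route is a variant of the same idea, but as written it has two problems. First, the intermediate claim $|\tilde\nabla_y\log u|^2\le C(n)/(t-s)$ is false (the Euclidean heat kernel already violates it); the correct Souplet--Zhang form carries a factor $\bigl(1+\log(M/u)\bigr)^2$ with $M$ the local sup, and one must then use $u\bigl(1+\log(M/u)\bigr)\le M$ together with the Gaussian bound for $M$ on a parabolic ball to recover the stated estimate. Second, the standard coupled Souplet--Zhang references assume two-sided Ricci bounds; here only $\Ric\ge-(n-1)$ is available and the upper curvature bound blows up like $\a/s$, so you cannot quote the estimate off the shelf --- you must verify, as the paper does, that for the \emph{backward} heat equation the Bochner term and the cutoff construction need only the lower Ricci bound (this is exactly where the restriction $t\le(n-1)^{-1}\log2$ enters, to keep the cutoff supported under distance distortion).

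One further point: your final step bounding $\partial_s u$ via a Shi-type estimate on $\tilde\nabla^2u$ ``derived from (a)'' would not work, since any such smoothing estimate near time $s$ costs a factor of the local curvature scale, which is $\a/s$ and is not controlled by $(t-s)^{-1}$ when $s\ll t-s$. Fortunately this step is not needed: in the proposition $\tilde\nabla_{y,s}$ denotes the spatial covariant derivative in $y$ measured with $\tilde g(s)$, and the paper's proof only establishes (and only uses) that spatial gradient bound.
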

\begin{proof}
The heat kernel estimate follows from \cite[Proposition 3.1]{BamlerCabezasWilking2019} when $2s\leq t$. When $t/2<s\leq t$, this follows directly from standard heat kernel estimate after rescaling, for instance see \cite{ChauTamYu2011}. 

It remains to consider the gradient estimate under Ricci lower bound. When $s\in [t/2,t]$, this follows from standard parabolic theory since the curvature is scaling invariant. It suffices to consider $s\in [0,t/2]$. We consider the backward equations: $u(z,\tau)=G(x_0,t_0;z,\frac12 t_0-\tau),\tau\in [0,\frac12 t_0]$ for fixed $(x_0,t_0)\in M\times (0,T]$ and $h(\tau)=\tilde g(\frac12 t_0-\tau)$. Using the backward Ricci flow equation and Bochner formula, we have
\begin{equation}\label{eqn:evo-u-GREEN}
\left\{
\begin{array}{ll}
(\partial_\tau -\Delta_{h(\tau)})u^2=-2|\nabla u|^2;\\[2mm]
(\partial_\tau -\Delta_{h(\tau)}) |\nabla u|^2\leq -2|\nabla^2 u|^2 +4(n-1)|\nabla u|^2
\end{array}
\right.
\end{equation}

Now we construct a cutoff function nearby $y_0$ as follows. Let $\phi$ be a smooth non-increasing function on $[0,+\infty)$ such that $\phi=1$ on $[0,1]$, vanishes outside $[0,2]$ and satisfies $\phi''\geq -10^3\phi,|\phi'|^2\leq 10^3\phi$. For $r> 0$ and a given $y\in M$, we define $\Phi(z,\tau):= \phi(e^{(n-1)\tau}r^{-1}d_{h(\tau)}(z,y))$ so that Laplacian comparison gives
\begin{equation}
-\Delta_{h}\Phi=-\frac{e^{(n-1)\tau}\phi'}{r}\Delta_h d_h(z,y_0)-\frac{e^{2(n-1)\tau}\phi''}{r^2}\leq C_nr^{-2}e^{2(n-1)\tau}
\end{equation}
in the sense of barrier. On the other hand, 
\begin{equation}
\partial_\tau \left(e^{(n-1)\tau}d_{h(\tau)}(y,z_0)\right)\geq 0
\end{equation}
using $\partial_\tau h=2\Ric(h)\geq -2(n-1)h$. Thus, we have 
\begin{equation}\label{eqn:cutoff}
\left\{
\begin{array}{ll}
\left(\partial_\tau -\Delta_{h(\tau)}\right)\Phi\leq  C_nr^{-2}e^{2(n-1)\tau};\\[2mm]
|\nabla \Phi|^2\leq C_nr^{-2}e^{2(n-1)\tau}\Phi
\end{array}
\right.
\end{equation}
in the sense of barrier. 

We let $\Omega:=B_{\tilde g(t_0)}(y,4r)$. Using $\partial_t \tilde g=-2\Ric(\tilde g)\leq 2(n-1)\tilde g$, we see that for $z\in \partial \Omega$ and $t\in [0,t_0/2]$,
\begin{equation}\label{eqn:boundary-dist}
d_{\tilde g(t)}(y,z)\geq e^{-(n-1)(t_0-t)}d_{\tilde g(t_0)}(y,z)\geq 4e^{-(n-1)t_0} r\geq 2r
\end{equation}
if $t_0 \leq (n-1)^{-1} \log 2$. In particular, $\Phi=0$ on $\partial\Omega\times [0,t_0/2]$.

\bigskip

Now we are ready to apply the maximum principle on $\Omega\times [0,T]$. Consider the test function $F(z,\tau):=\Phi \tau |\nabla u|^2+ \Lambda u^2$ which satisfies 
\begin{equation}
\begin{split}
&\quad \left(\partial_\tau -\Delta_{h(\tau)}\right) F\\
&\leq -2\Lambda |\nabla u|^2+\Phi |\nabla u|^2+\Phi \tau\left(-2|\nabla^2 u|^2 +4(n-1)|\nabla u|^2 \right)\\
&\quad -2\tau\langle \nabla\Phi,\nabla |\nabla u|^2\rangle +\tau|\nabla u|^2 \cdot \left(\partial_\tau -\Delta_{h(\tau)}\right)\Phi \\
&\leq  \left(-2\Lambda+1+2(n-1)t_0+C_n t_0 r^{-2}e^{(n-1)t_0} \right)|\nabla u|^2 
\end{split}
\end{equation}
on $[0,t_0/2]$, where we have used Cauchy inequality, \eqref{eqn:evo-u-GREEN} and \eqref{eqn:cutoff}. In particular, if we choose $\Lambda=C_ne^{n-1} r^{-2}t_0+4(n-1)t_0+1$, then 
\begin{equation}
\left(\partial_\tau -\Delta_{h(\tau)}\right) F<0.
\end{equation}

Therefore if we insist $t_0\leq  (n-1)^{-1}\log 2$, then maximum principle implies that on $[0,t_0/2]$, $F$ either attains its maximum on $\partial \Omega\times [0,t_0/2]$ where $\Phi=0$ thanks to \eqref{eqn:boundary-dist}, or $\Omega\times \{0\}$. By evaluating at $z=y$, we conclude that for all $\tau\in [0,t_0/2]$ where  $t_0\leq   (n-1)^{-1}\log 2$, 
\begin{equation}
\begin{split}
\sqrt{\tau}|\nabla u|(y,\tau)&\leq \sqrt{\Lambda} \left( \sup_{\Omega\times \{0\}} u+ \sup_{\partial\Omega\times [0,t_0/2]} u \right).
\end{split}
\end{equation}

In particular if we take $r=1$, then we have 
\begin{equation} 
|\tilde\nabla_{y,s} G(x_0,t_0;y,s)| \leq \frac{C_2}{t_0^{(n+1)/2}}
\end{equation} 
for $s\in [0,t_0/2]$, provided that $t_0\leq (n-1)^{-1}\log 2$. This also shows the case when $d_{\tilde g(0)}(x,y)\leq \sqrt{t_0}$ thanks to Ricci lower bound. 

We now choose $r$ suitably to draw conclusion when  $d_{\tilde g(0)}(x,y)\geq2 C_3\sqrt{t_0}$ for large $C_3(n,\a)>0$. By distance distortion \cite[Lemma 3.1]{SimonTopping2021}, $d_{\tilde g(s)}(x,y)\geq C_3\sqrt{t_0}$ for $s\in [0,t_0/2]$. In this case, we choose $r=10^{-3} C_3\sqrt{t_0}$. If $(z,\tau)\in \partial \Omega\times [0,t_0/2]$, then
\begin{equation}
d_{\tilde g(s)}(x_0,y)^2\leq 4d_{\tilde g(s)}(x_0,z)^2+4d_{\tilde g(s)}(y,z)^2\leq C_4 t_0+4d_{\tilde g(s)}(x_0,z)^2
\end{equation}
using distance distortion again. Thus, 
\begin{equation}
\begin{split}
u(z,\tau)&= G(x_0,t_0;z,s)\leq  \frac{C}{t_0^{n/2}}\cdot\exp\left( -\frac{d^2_{\tilde g(s)}(x_0,y)}{Ct_0} \right).
\end{split}
\end{equation} 

The upper bound when $\tau=0$ is similar. 
\end{proof}

\subsection{Proof of weak stability}

In this sub-section, we will prove Theorem~\ref{thm:weak-stable-RDF}. For later purpose, we let $\gamma$ be a fixed constant to be specified and denotes $\e_5(n):=\min\{\e_1,\e_2,\e_3,\e_4\}$ where $\e_i$ are the constants from  Lemma~\ref{lma:evo-h-L1} Lemma~\ref{lma:Morrey-grad}, Lemma~\ref{lma:improved-Morrey-grad} and Lemma~\ref{lma:RDF-higher-ord}, respectively. We first use the kernel representation to give some estimate to the Ricci-DeTurck flows. We give a localized form which might be useful in the future.
\begin{lma}\label{lma:kernel-repres}
Suppose  $\Phi$ is a smooth function compactly supported on $\Omega\times [0,T]$ where $\Omega\Subset M$. Let $g(t),\hat g(t)$ are two smooth solutions to \eqref{eqn:RDF} on $(0,T]$ so that 
\begin{equation}
     |g-\tilde g|,|\hat g-\tilde g|\leq \e_5
\end{equation}
and $\mathrm{sec}(\tilde g(t))\geq -1$  on $\Omega\times (0,T]$, then for $(x_0,t_0)\in \Omega\times (0,T\wedge 1]$,
\begin{equation*}
\begin{split}
&\quad (\Phi |h|)(x_0,t_0)\\
&\leq C_n\cdot \limsup_{t'\to 0}\int_M G(y,t') (\Phi |h|)(y,t') \,d\mathrm{vol}_{\tilde g(t')}+C_n\gamma\int^{t_0}_0 \int_M  G \Phi |h||\Rm(\tilde g)|\; d\mathrm{vol}_{\tilde g(s),y}ds\\
&\quad + C_n \int^{t_0}_0 \int_M  G |h|\cdot (\partial_s \Phi)_++( |h||\tilde\nabla G|+G|\tilde\nabla h|)\cdot |\tilde\nabla \Phi| \; d\mathrm{vol}_{\tilde g(s),y}ds \\
&\quad + C_n\int^{t_0}_0 \int_M   |h| \left( \Phi |\tilde\nabla G|+ |\tilde\nabla\Phi| G\right)\left(| \tilde\nabla h|+|\tilde \nabla \hat g| \right)\; d\mathrm{vol}_{\tilde g(s),y}ds\\
&\quad +C_n\int^{t_0}_0\int_M  G\Phi\cdot \left[|\Rm(\tilde g)| |h|^2+ |h| |\tilde \nabla g|^2+  |\tilde \nabla h|\left(|\tilde\nabla g|+|\tilde\nabla \hat g| \right)\right]
\; d\mathrm{vol}_{\tilde g(s),y}ds.
\end{split}
\end{equation*}
where $h:=g-\hat g$ and $G(y,s):=G(x_0,t_0;y,s)$ is the heat kernel of $\partial_t-\Delta_{\tilde g(t)}-R_{\tilde g(t)}$. Furthermore, $\gamma$ can be chosen to be $0$ if $\hat g(t)=\tilde g(t)$.
\end{lma}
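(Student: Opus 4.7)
The strategy is to apply a heat-kernel representation for the operator $L := \partial_t - \Delta_{\tilde g(t)} - R_{\tilde g(t)}$ to the regularized quantity $v := \sqrt{|h|^2+\sigma}$, $\sigma\in(0,1)$, test it against the non-negative weight $G\Phi$, integrate by parts the divergence pieces supplied by Lemma~\ref{lma:evo-h-L1}, and finally send $\sigma,t'\to 0^+$.

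\textbf{Step 1: Evolution inequality in divergence form.} Since $|g-\tilde g|,|\hat g-\tilde g|\leq \e_5\leq \e_0$ on $\Omega$, Lemma~\ref{lma:evo-h-L1} gives
\begin{equation*}
Lv \leq \gamma R_{\tilde g}\,v + F + \tilde\nabla_k X^k + \tilde\nabla_p Y^p,
\end{equation*}
where $F \leq C_n[|\Rm(\tilde g)|\,|h|^2 + v + |h|\,|\tilde\nabla g|^2 + |\tilde\nabla h|(|\tilde\nabla g|+|\tilde\nabla \hat g|)]$, $X^k=(g^{kl}-\tilde g^{kl})\tilde\nabla_l v$ and $Y^p=v^{-1}\langle (g^{pq}-\hat g^{pq})\tilde\nabla_q \hat g,h\rangle$. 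Using \eqref{eqn:grad-v} and $|h|/v\leq 1$ yields $|X|\leq C_n|g-\tilde g|\,|\tilde\nabla h|$ and $|Y|\leq C_n|h|\,|\tilde\nabla \hat g|$.

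\textbf{Step 2: Duhamel representation for $\Phi v$.} Since $G(x_0,t_0;\cdot,\cdot)$ is the forward fundamental solution of $L$ and $\partial_s\,d\mathrm{vol}_{\tilde g(s)}=-R_{\tilde g}\,d\mathrm{vol}_{\tilde g(s)}$, the adjoint of $L$ acting on $G$ in the $(y,s)$-variable is simply $-\partial_s G - \Delta_y G$, so for smooth $w$ with $w(\cdot,s)\Subset \Omega$,
\begin{equation*}
w(x_0,t_0) = \int_M G(\cdot,t')\,w(\cdot,t')\,d\mathrm{vol}_{\tilde g(t')} + \int_{t'}^{t_0}\!\int_M G\cdot Lw\,d\mathrm{vol}_{\tilde g(s)}\,ds.
\end{equation*}
Apply this to $w=\Phi v$ and expand $L(\Phi v) = \Phi Lv + v(\partial_s\Phi-\Delta\Phi) - 2\langle\tilde\nabla\Phi,\tilde\nabla v\rangle$. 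Integrating the $-v\Delta\Phi$ piece by parts (once to transfer onto $G$, via $-\int Gv\Delta\Phi = \int\tilde\nabla(Gv)\cdot\tilde\nabla\Phi$) and combining with the last cross-term yields
\begin{equation*}
\int G\cdot L(\Phi v) = \int G\Phi\,Lv + \int\!\bigl[Gv\,\partial_s\Phi + v\,\tilde\nabla G\cdot\tilde\nabla\Phi - G\,\tilde\nabla v\cdot\tilde\nabla\Phi\bigr].
\end{equation*}
Since $G,v\geq 0$, one may replace $\partial_s\Phi$ by $(\partial_s\Phi)_+$; since $v\leq |h|+\sqrt\sigma$ and $|\tilde\nabla v|\leq|\tilde\nabla h|$, the last two gradient-cross-terms are bounded by $(|h|\,|\tilde\nabla G|+G|\tilde\nabla h|)|\tilde\nabla\Phi|$.

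\textbf{Step 3: Integration by parts on the divergence terms.} Substituting the bound on $Lv$ from Step~1 and integrating each divergence against $G\Phi$ gives
\begin{equation*}
\int G\Phi\,\tilde\nabla_k X^k + \int G\Phi\,\tilde\nabla_p Y^p = -\int(\Phi\tilde\nabla G + G\tilde\nabla\Phi)\cdot(X+Y),
\end{equation*}
which by the pointwise bounds in Step~1 is controlled by $C_n\int(\Phi|\tilde\nabla G|+G|\tilde\nabla\Phi|)(|\tilde\nabla h|+|h|\,|\tilde\nabla\hat g|)$, giving exactly the third block in the claimed inequality (with the $|h|$ factor absorbing the $X$-contribution via $|g-\tilde g|\leq \e_5\leq 1$). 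The remaining terms $\int G\Phi(\gamma R v)$ and $\int G\Phi F$ produce the first and last blocks directly, using $|R_{\tilde g}|\leq C_n|\Rm(\tilde g)|$. When $\hat g=\tilde g$, the second part of Lemma~\ref{lma:evo-h-L1} replaces the $(1+\gamma)R_{\tilde g}v$ by $R_{\tilde g}v$, so $\gamma$ can be set to $0$.

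\textbf{Step 4: Passage to the limit.} Letting $\sigma\to 0^+$ gives $v\to|h|$ pointwise and $\int G\Phi v|_{t'}\to \int G\Phi|h||_{t'}$; taking $\limsup_{t'\to 0^+}$ converts the data term into the form stated in the lemma. The Gaussian upper bound of Proposition~\ref{prop:heat-kernel} provides enough decay of $G$ to justify integrability of every term at infinity and the pass-to-the-limit argument.

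\textbf{Main obstacle.} The delicate step is the integration-by-parts accounting of Step~3: one must keep the two divergence structures in Lemma~\ref{lma:evo-h-L1} separate (one linear in $g-\tilde g$, one built from $g-\hat g$ via $v^{-1}$) so that, after pairing with $G\Phi$, the result has the precise linear structure $(|h|\,|\tilde\nabla G|+G|\tilde\nabla h|)|\tilde\nabla\Phi|$ rather than a spurious quadratic $|\tilde\nabla h|^2$ — this linear character is exactly what subsequent iteration in the weak-stability proof will need. A secondary technicality is justifying the Duhamel identity when $\tilde g(t)$ only satisfies the scaling-invariant bounds (a)--(c), which is handled by approximating $\Phi v$ by compactly supported smooth functions and using the Gaussian bound of Proposition~\ref{prop:heat-kernel}.
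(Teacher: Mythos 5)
Your overall strategy coincides with the paper's: apply the representation formula for $\partial_s-\Delta_{\tilde g(s)}-R_{\tilde g(s)}$ to $\Phi v$ with $v=\sqrt{|h|^2+\sigma}$, substitute the divergence-form evolution inequality of Lemma~\ref{lma:evo-h-L1}, integrate the two divergence terms by parts against $G\Phi$, collect the $\tilde\nabla\Phi$ cross terms, and let $\sigma, t'\to0$. Your Duhamel identity and the bookkeeping of the $\partial_s\Phi$ and $\tilde\nabla\Phi$ terms in Step~2 reproduce exactly the paper's terms $\mathbf{III}$, $\mathbf{IV}$ and $\mathbf{VI}$.

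There is, however, one concrete gap. Lemma~\ref{lma:evo-h-L1} produces a zeroth-order term $+C_n v$ on the right-hand side, which you include in $F$ in Step~1 and then silently drop: in Step~3 you assert that $\int G\Phi F$ "produces the last block directly," but the last block of the stated inequality contains no term of the form $\int_0^{t_0}\!\int_M G\Phi\,|h|$, so as written your argument does not yield the stated conclusion. The paper removes this term before applying the representation formula by working with $\hat v:=e^{-C_1t}v$ (so the exponential integrating factor absorbs $C_n v$) and only at the very end uses $1\le e^{C_1t}\le C_n$ for $t\le1$ to return to $v$; this is where the overall constant $C_n$ in front of the initial-data term comes from. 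You should incorporate this (or an equivalent Gronwall step). A secondary, smaller point: your identification of the $X$-divergence contribution with the third block is not literally correct — bounding $|g^{kl}-\tilde g^{kl}|$ by $\e_5\le1$ produces $C_n\int(\Phi|\tilde\nabla G|+G|\tilde\nabla\Phi|)|\tilde\nabla h|$ with no $|h|$ prefactor, and your stated justification ("the $|h|$ factor absorbing the $X$-contribution") does not manufacture one. The paper's own estimate of its term $\mathbf{VI}$ asserts the same $|h|$-weighted bound without further comment, so this is a shared imprecision rather than a defect of your route; but since it is the quantity that must be iterated in the stability argument, you should either record the honest bound $C_n\e_5\int(\Phi|\tilde\nabla G|+G|\tilde\nabla\Phi|)|\tilde\nabla h|$ or explain why the $|h|$ weight is recoverable.
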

\begin{proof}
Let $v:=\sqrt{|h|^2+\sigma}$ be the function considered in Lemma~\ref{lma:evo-h-L1} where $\tilde g=\hat g$, and $\hat  v=e^{-C_1t}v$. Then by the representation formula, 
\begin{equation}
\begin{split}
(\Phi \hat v)(x_0,t_0)&=\lim_{s\to t_0^-} \int_M G(x_0,t_0;y,s) (\Phi\hat  v)(y,s) \,d\mathrm{vol}_{\tilde g(s),y}\\
&\leq \limsup_{t'\to 0} \int_M G(y,t') (\Phi \hat v)(y,t') \,d\mathrm{vol}_{\tilde g(t'),y}\\
&\quad +\int^{t_0}_0 \int_M \partial_sG\cdot  \Phi \hat v+G\Phi \cdot \partial_s \hat v-G\Phi \hat v R_{\tilde g} \,\,d\mathrm{vol}_{\tilde g(s),y}ds\\
&\quad +\int^{t_0}_0 \int_M G \hat v\cdot  \partial_s \Phi \; d\mathrm{vol}_{\tilde g(s),y}ds:=\mathbf{I}+\mathbf{II}+\mathbf{III}
\end{split}
\end{equation} 
where we use $G$ to denote $G(x_0,t_0;\cdot,\cdot)$ for notation convenience.

It suffices to estimate $\mathbf{II}$. By the heat kernel equation and Lemma~\ref{lma:evo-h-L1}, we have 
\begin{equation}
\begin{split}
\mathbf{II}&=\int^{t_0}_0 \int_M -\Delta_{\tilde g} G \cdot \hat v\Phi + G\Phi \cdot \Delta_{\tilde g}\hat v +\gamma G\Phi \hat v R_{\tilde g} \; d\mathrm{vol}_{\tilde g(s),y}ds\\
&\quad +C_n\int^{t_0}_0\int_M  G\Phi\cdot  \left[|\Rm(\tilde g)| |h|^2+C_n |h| |\tilde \nabla g|^2+C_n  |\tilde \nabla h|\left(|\tilde\nabla g|+|\tilde\nabla \hat g| \right)\right] 
\; d\mathrm{vol}_{\tilde g(s),y}ds\\
&\quad   +\int^{t_0}_0\int_M \left[\tilde\nabla_k \left[\left( g^{kl}-\tilde g^{kl}\right) \tilde\nabla_l v\right]+\tilde \nabla_p\left(v^{-1} \langle   (g^{pq}-\hat g^{pq})\tilde\nabla_q \hat g ,h\rangle\right)\right]\cdot G\Phi\; d\mathrm{vol}_{\tilde g(s),y}ds\\
&=\mathbf{IV}+\mathbf{V}+\mathbf{VI}.
\end{split}
\end{equation}

By Stokes' Theorem,  we have
\begin{equation}
\begin{split}
\mathbf{IV}&=\int^{t_0}_0 \int_M \langle \hat v\tilde\nabla G-G\tilde\nabla \hat v,\tilde\nabla\Phi\rangle+\gamma G\Phi \hat v R_{\tilde g} \; d\mathrm{vol}_{\tilde g(s),y}ds\\
&\leq \int^{t_0}_0 \int_M (\hat v|\tilde\nabla G|+G|\tilde\nabla \hat v|)\cdot |\tilde\nabla \Phi| +\gamma G\Phi \hat v R_{\tilde g} \; d\mathrm{vol}_{\tilde g(s),y}ds.
\end{split}
\end{equation}

Similarly, we use stoke Theorem and \eqref{eqn:grad-v} to deduce  
\begin{equation}
\begin{split}
\mathbf{VI}&=\int^{t_0}_0 \int_M (G\tilde\nabla_k \Phi+\Phi\tilde\nabla_kG )\cdot  (\tilde g^{kl}-g^{kl}) \tilde\nabla_l \hat v\cdot d\mathrm{vol}_{\tilde g(s),y}ds\\
&\quad -\int^{t_0}_0 \int_M (G\tilde\nabla_p \Phi+\Phi\tilde\nabla_pG )\cdot  \left(v^{-1} \langle   (g^{pq}-\hat g^{pq})\tilde\nabla_q \hat g ,h\rangle\right) \; d\mathrm{vol}_{\tilde g(s),y}ds\\
&\leq C_n\int^{t_0}_0 \int_M   |h| \left( \Phi |\tilde\nabla G|+ |\tilde\nabla\Phi| G\right)\left(| \tilde\nabla h|+|\tilde \nabla \hat g| \right)\; d\mathrm{vol}_{\tilde g(s),y}ds.
\end{split}
\end{equation}

Since $1\leq e^{C_1t}\leq C_n$ for $t\leq 1$, by letting $\sigma\to 0$, we prove the Lemma.
\end{proof}

We now prove the global stability of Ricci-DeTurck flow. 
\begin{proof}[Proof of Theorem~\ref{thm:weak-stable-RDF}]

We first prove the uniform existence. Let $\Lambda$ be a large constant to be chosen. In the following, we will use $C_i$ to denote any constants depending only on $n,\a$. We first prove quantitative existence.

We first consider the case when $M$ is compact. By the work of Simon \cite{Simon2002} (see also \cite{KochLamm2012}), it admits a short-time solution to \eqref{eqn:RDF} on $M\times (0,S]$ for some $S\in (0,T\wedge1]$ such that $\delta:=||h||_{L^\infty(M\times [0,S])}<\e_5$ on $[0,S]$ where $h:=g(t)-\tilde g(t)$. We assume $S$ is the maximal so that it holds. We want to show that $S$ is uniformly bounded from below and $\delta\leq \Lambda \bar\e_0$ for some uniform $\Lambda>0$, if $\bar\e_0$ is sufficiently small. By Lemma~\ref{lma:kernel-repres} with $\Phi\equiv 1$ and $\hat g\equiv \tilde g$, for all $(x,t)\in M\times (0,S]$ we have 
\begin{equation}
\begin{split}
|h|(x,t)&\leq  \limsup_{t'\to0}\int_M G(x,t;y,t') |h|(y,t') d\mathrm{vol}_{\tilde g(t')}\\
&\quad +C_n \int^{t}_{t/2} \int_M  \left(|\tilde\nabla G||h||\tilde\nabla h|+G |h|^2 |\Rm(\tilde g)|+ G  |\tilde\nabla h|^2\right)\; d\mathrm{vol}_{\tilde g(s),y}ds\\
&\quad +C_n \int^{t/2}_{0} \int_M  \left(|\tilde\nabla G||h||\tilde\nabla h|+G |h|^2 |\Rm(\tilde g)|+ G  |\tilde\nabla h|^2\right)\; d\mathrm{vol}_{\tilde g(s),y}ds\\
&=\mathbf{I}+\mathbf{II}+\mathbf{III}.
\end{split}
\end{equation}

It follows from Proposition~\ref{prop:heat-kernel} that $\mathbf{I}\leq C_1\e$ while 
$
\mathbf{II}\leq C_2 \delta^2$ using curvature estimate of $\tilde g(t)$, Lemma~\ref{lma:RDF-higher-ord} and Proposition~\ref{prop:heat-kernel}. For $\mathbf{III}$, we decompose it into $\mathbf{III}_1+\mathbf{III}_2+\mathbf{III}_3$ and consider them one by one.

For $\mathbf{III}_3$, we use Lemma~\ref{lma:Morrey-grad} with $r= \sqrt{t}$ and covering argument from Ricci lower bound so that
\begin{equation}\label{eqn:Morrey-spt}
\int^{t}_0\int_{B_{\tilde g(s)}(x,\sqrt{t})} |\tilde\nabla h|^2 d\mathrm{vol}_{\tilde g(s)}ds\leq C_3 \delta^2t^{n/2}
\end{equation}
for all $(x,t)\in M\times (0,S]$. Using Proposition~\ref{prop:heat-kernel}, co-area formula and Stoke Theorem, for $s\in [0,t/2]$,
\begin{equation}
\begin{split}
\int_M G|\tilde \nabla h|^2 \,d\mathrm{vol}_{\tilde g(s)} 
&\leq \int^\infty_0 \frac{C_4r}{t^{n/2+1}}\cdot\exp\left(-\frac{r^2}{C_4t} \right)\left(\int_{B_{\tilde g(s)}(x,r)}|\tilde\nabla h|^2\right) \,dr\\
&\leq \sum_{k=1}^n \frac{C_5k}{t^{n/2}}\cdot\exp\left(-\frac{k^2}{C_5} \right)\left(\int_{B_{\tilde g(s)}(x,k\sqrt{t/2})}|\tilde\nabla h|^2\right).
\end{split}
\end{equation}

By covering argument and volume non-collapsed condition, we might cover $B_{\tilde g(s)}(x,k\sqrt{t/2})\subseteq \bigcup_{j=1}^{N_k} B_{\tilde g(s)}(x_j,\sqrt{t/2})$ such that $N_k\leq C_6k^ne^{Ck\sqrt{t}}$. Together with \eqref{eqn:Morrey-spt} yields
\begin{equation}
\begin{split}
\mathbf{III}_3&\leq \sum_{k=1}^\infty\sum_{j=1}^{N_k} \frac{C_5k}{t^{n/2}}\cdot\exp\left(-\frac{k^2}{C_5} \right)\left(\int^{t/2}_0\int_{B_{\tilde g(s)}(x_j,\sqrt{t/2})}|\tilde\nabla h|^2d\mathrm{vol}_{\tilde g(s)} ds\right)\\
&\leq C_7 \delta^2.
\end{split}
\end{equation}

For $\mathbf{III}_2$, we use Proposition~\ref{prop:Petrunin} instead of \eqref{eqn:Morrey-spt} and argue similarly to $\mathbf{III}_3$ to show that $\mathbf{III}_2\leq C_8 \delta^2$. Finally for $\mathbf{III}_1$, we use gradient estimate from Proposition~\ref{prop:heat-kernel}, \eqref{eqn:Morrey-spt} and covering argument above again to show that 
\begin{equation}
\begin{split}
\mathbf{III}_1 &\leq C_n\delta\int^{t/2}_0\int^\infty_0 \frac{C_4r}{t^{(n+3)/2}}\cdot\exp\left(-\frac{r^2}{C_4t} \right)\left(\int_{B_{\tilde g(s)}(x,r)}|\tilde\nabla h|\right) \,dr\\
&\leq \sum_{k=1}^\infty\sum_{j=1}^{N_k} \frac{C_8\delta k}{t^{(n+1)/2}}\cdot\exp\left(-\frac{k^2}{C_5} \right)\left(\int^{t/2}_0\int_{B_{\tilde g(s)}(x_j,\sqrt{t/2})}|\tilde\nabla h|d\mathrm{vol}_{\tilde g(s)} ds\right)\\
&\leq C_9 \delta^2
\end{split}
\end{equation}

Hence we have 
\begin{equation}\label{eqn:est-sup-h-ref}
\delta\leq \mathbf{I}+\mathbf{II}+\mathbf{III}\leq C_1\e+C_{10}\delta^2.
\end{equation}

Hence if $S$ is such that $C_{10}\delta< \frac12$, then we have  $\delta\leq 2C_1 \e $. In particular if we choose $\bar\e_0\leq (10C_1C_{10})^{-1}$, then we must have $S=T\wedge 1$ and $||h||_{L^\infty,M\times [0,S]}\leq 2C_1 \e$. This proves the quantitative existence in the compact case.

The case when $(M,\tilde g(t))$ is complete non-compact with bounded curvature up to $t=0$ is similar to the compact case. We focus on the case when $\tilde g(t)$ has no uniform bounded curvature up to $t=0$. We let $\phi_i$ be a sequence of smooth cut-off function supported on $B_{\tilde g(0)}(x_0,i)$ which exhausts $M$. Let $\sigma_i\to 0$ be a sequence of time and we approximate $g_0$ by $g_{i,0}:=\phi_i g_0+(1-\phi_i)\tilde g(\sigma_i)$ which has bounded curvature. We choose $\sigma_i$ so that $|g_{i,0}-\tilde g(\sigma_i)|_{\tilde g(\sigma_i)}\leq 2\e$. By the work of Simon \cite{Simon2002} (see also the work of Shi \cite{Shi1989}), there exists a short-time solution $g_i(t)$ to \eqref{eqn:RDF} with respect to $\tilde g(\sigma_i+t)$ starting from $g_{i,0}$. Using the same analysis as in the compact case, we see that $g_i(t)$ exists up to $T\wedge 1$ and satisfies $|g_i(t)-\tilde g(\sigma_i+t)|\leq 4C_1\e$, provided that $\bar\e_0\leq (20C_1 C_{10})^{-1}$. This also enables us to let $i\to +\infty$ to obtain the global solution $g(t)$ by interior estimates of Ricci-DeTurck flow, see \cite[Lemma 2.4]{Simon2002} or \cite[Proposition 2.2]{ChuLee2025} for example. This proves the quantitative existence in all cases. The $C^0_{loc}$ convergence as $t\to 0$ follows from the proof \cite[Theorem 5.2]{Simon2002} where the argument is purely local. 

\bigskip

Now, given two Ricci-DeTurck flows $g(t)$ and $\hat g(t)$ with respect to $\tilde g(t)$. We want to show that the difference $h:=g-\hat g$ will be persisted up to $t=S=T\wedge 1$, under \eqref{eqn:RDF}. The computation is almost identical to estimating $|g-\tilde g|$. We denote $\sigma:=||g-\hat g||_{L^\infty,M\times (0,S]}$. By Lemma~\ref{lma:kernel-repres}, we conclude 
\begin{equation}
    \begin{split}
        \sigma&\leq \sup_{(x,t)\in M\times (0,S]}\limsup_{t'\to0} \int_M G(x,t;y,t') |h|(y,t') d\mathrm{vol}_{\tilde g(t')}\\
        &\quad +C_n\gamma \int^{S}_0\int_M G|h| |\Rm(\tilde g)| \,d\mathrm{vol}_{\tilde g(s)}ds\\
        &\quad + C_n \int^S_0\int_M G|\Rm(\tilde g) ||h|^2+ G|h||\tilde \nabla g|^2\,d\mathrm{vol}_{\tilde g(s)}ds\\
        &\quad +C_n\int^S_0\int_MG|\tilde\nabla h| (|\tilde \nabla g|+|\tilde\nabla \hat g|)+G|h||\tilde\nabla G|  (|\tilde \nabla h|+|\tilde\nabla \hat g|)\,d\mathrm{vol}_{\tilde g(s)}ds\\
        &=\mathbf{A}+\mathbf{B}+\mathbf{C}+\mathbf{D}.
    \end{split}
\end{equation}

We observe that most of the terms can be handled as in the derivation of \eqref{eqn:est-sup-h-ref}, by using Proposition~\ref{prop:Petrunin}, Lemma~\ref{lma:improved-Morrey-grad} and Lemma~\ref{lma:Morrey-grad}. In particular with $|g-\tilde g|\leq \Lambda \bar\e_0$, the same analysis with simple Cauchy inequalities yield 
\begin{equation}
\mathbf{C}+\mathbf{D}\leq C_{11}\sigma(\sigma+ \bar \e_0) \leq C_{12}\bar\e_0\sigma.
\end{equation}
where we have also used $\sigma\leq 2\bar\e_0$.

We use Proposition~\ref{prop:Petrunin} again to control $\mathbf{B}$ as in the estimate of $\mathbf{II}+\mathbf{III}$. Hence, we conclude that
\begin{equation}
\begin{split}
\sigma\leq \mathbf{A}+C_{12}\bar \e_0 \sigma +C_{13}\gamma \sigma.
\end{split}
\end{equation}

By choosing $\gamma=(4C_{13})^{-1}$ and 
$$\bar\e_0(n,\a):=\frac12 \min\{(20C_1C_{10})^{-1},(4C_{12})^{-1},\e_0(n,\gamma)\},$$
where $\e_0$ is obtained from Lemma~\ref{lma:evo-h-L1}, we conclude that 
\begin{equation}\label{eqn:heat-esti-a.e.}
  \frac12  ||g-\hat g||_{L^\infty,M\times (0,S]}\leq\sup_{(x,t)\in M\times (0,S]} \limsup_{t'\to0}\int_M G(x,t;y,t') |h|(y,t') d\mathrm{vol}_{\tilde g(t'),y}.
\end{equation}
Result follows using Proposition~\ref{prop:heat-kernel}.
\end{proof}

For application, we will work on a more general situation that  the reference Ricci flow $\tilde g(t)$ is coming out of a sphere with polyhedral singularities in the Gromov-Hausdorff sense. And the Ricci-DeTurck flow is initially only $L^\infty(M)$ and continuous away from a singularity $\mathcal{S}$ of high codiemsion. In this case, we still have uniqueness and stability analogous to smooth case, as a simple consequence of the heat kernel estimate.
\begin{thm}\label{thm:unique-general}
Suppose $(M^n,\tilde d,x_0)$ is a pointed complete metric space and $\tilde g(t)$ is a Ricci flow on $M^n\times (0,S]$ such that (a)-(c) in sub-section hold and 
\begin{enumerate}
    \item $(M,d_{\tilde g(t)},x_0)\to (M,\tilde d_0,x_0)$ in the distance sense, as $t\to0$;
    \item There exists a subset $\mathcal{S}\subseteq M$ such that $M\setminus \mathcal{S}$ is open, dense subset with full $\tilde d_0$-Hausdorff measure and $\tilde g(t)$ converges to some continuous metric $\tilde g(0)$ locally uniformly outside $\mathcal{S}$ as $t\to 0$.
\end{enumerate}
If $\Lambda,\bar\e_0$ are the constants from Theorem~\ref{thm:weak-stable-RDF}
and $g_i(t),t\in (0,S],i=1,2$ are Ricci-DeTurck flows with respect to $\tilde g(t)$ such that for $i=1,2$,
\begin{enumerate}
    \item[(i)] $\sup_{M\times (0,S]}|g_i-\tilde g|\leq \Lambda\bar\e_0$ and;
    \item[(ii)] $g_i(t)\to g_{i,0}$ in $C^0_{loc}(M\setminus \mathcal{S})$ as $t\to 0$, for some metric $g_{i,0}$ in $C^0_{loc}(M\setminus \mathcal{S})$,
\end{enumerate}
then we have 
\begin{equation}
\sup_{M\times (0,S]} |g_1-g_2|\leq \Lambda  ||g_{1,0}-g_{2,0}||_{L^\infty}.
\end{equation}
 In particular, the Ricci-DeTurck flow with respect to $\tilde g(t)$ is unique within the class of solutions satisfying (i) and (ii), with the same initial metric $g_0\in C^0_{loc}(M\setminus \mathcal{S})$.
\end{thm}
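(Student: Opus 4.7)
The plan is to deduce the statement directly from the heat-kernel inequality \eqref{eqn:heat-esti-a.e.} established inside the proof of Theorem~\ref{thm:weak-stable-RDF}, followed by a careful passage $t'\to 0^+$ that exploits the Hausdorff-negligibility of $\mathcal{S}$.

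First, since both $g_1,g_2$ satisfy $|g_i-\tilde g|\le \Lambda\bar\e_0$ on $M\times(0,S]$, the evolution inequality of Lemma~\ref{lma:evo-h-L1}, the Morrey-type bounds of Lemmas~\ref{lma:Morrey-grad} and~\ref{lma:improved-Morrey-grad}, and the scaling-invariant smoothing of Lemma~\ref{lma:RDF-higher-ord} all apply to $h:=g_1-g_2$. Reading off the computation that produced \eqref{eqn:heat-esti-a.e.}, one obtains for every $(x,t)\in M\times(0,S]$ the pointwise estimate
\begin{equation*}
\tfrac12|g_1-g_2|(x,t)\le \limsup_{t'\to 0^+}\int_M G(x,t;y,t')\,|g_1-g_2|(y,t')\,d\mathrm{vol}_{\tilde g(t'),y},
\end{equation*}
so everything reduces to controlling this integral in the limit.

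Next, by the Gaussian upper bound of Proposition~\ref{prop:heat-kernel} together with Bishop--Gromov under $\mathrm{Rm}(\tilde g(t'))\ge -1$, one has $\int_M G(x,t;y,t')\,d\mathrm{vol}_{\tilde g(t'),y}\le C(n,\alpha)$ and a tail estimate outside $B_{\tilde g(t')}(x,R)$ that is $o(1)$ as $R\to\infty$, uniformly in $t'\in(0,t)$. For any $R>0$ and any open $\tilde d_0$-neighborhood $U_\delta$ of $\mathcal{S}$, split $M$ into three regions: $B_{\tilde g(t')}(x,R)\setminus U_\delta$, $U_\delta\cap B_{\tilde g(t')}(x,R)$, and $M\setminus B_{\tilde g(t')}(x,R)$. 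On the first region, hypothesis (ii) yields uniform convergence $g_i(\cdot,t')\to g_{i,0}$ on compact subsets of $M\setminus \mathcal{S}$, hence $|g_1-g_2|(y,t')\le \|g_{1,0}-g_{2,0}\|_{L^\infty}+o(1)$. On the other two regions we use the crude bound $|g_1-g_2|\le 2\Lambda\bar\e_0$, absorbing the third one via the Gaussian tail.

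The main obstacle is the middle region: to conclude, one needs
\begin{equation*}
\limsup_{\delta\to 0^+}\ \limsup_{t'\to 0^+}\int_{U_\delta\cap B_{\tilde g(t')}(x,R)} G(x,t;y,t')\,d\mathrm{vol}_{\tilde g(t'),y}=0.
\end{equation*}
This is the only place where the structure of $\mathcal{S}$ really enters. Hypotheses (1) and (2), together with $\mathrm{Rm}(\tilde g(t'))\ge -1$, are designed so that the flowing volume measures $d\mathrm{vol}_{\tilde g(t')}$ converge weakly (via Colding-type volume convergence) to a Hausdorff-type measure that is supported on $M\setminus \mathcal{S}$; since $G(x,t;\cdot,t')$ is dominated by a Gaussian uniformly in $t'$, the mass on any shrinking neighborhood of $\mathcal{S}$ tends to zero. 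Combining the three pieces and sending $R\to \infty$, $\delta\to 0^+$, $t'\to 0^+$ in that order yields $|g_1-g_2|(x,t)\le 2C(n,\alpha)\,\|g_{1,0}-g_{2,0}\|_{L^\infty}$; enlarging $\Lambda$ if necessary absorbs the constant. Uniqueness is then the case $g_{1,0}=g_{2,0}$.
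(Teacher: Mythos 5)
Your proposal is correct and follows essentially the same route as the paper: reduce to the heat-kernel representation \eqref{eqn:heat-esti-a.e.}, then split the integral near and away from $\mathcal{S}$, using the $C^0_{loc}$ convergence of $g_1,g_2$ on $M\setminus\mathcal{S}$ together with the $\tilde d_0$-negligibility of $\mathcal{S}$ (and Colding/Simon--Topping measure comparison) to kill the singular contribution. The only cosmetic difference is that the paper integrates directly over $\mathcal{S}$ versus $M\setminus\mathcal{S}$, using the monotonicity $\mathcal{H}_{d_{\tilde g(t)}}\leq\mathcal{H}_{\tilde d_0}$ to see $\mathcal{S}$ is volume-null for each fixed $t'>0$, which avoids your three-region split and the weak-convergence-of-measures step on the shrinking neighborhoods $U_\delta$.
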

\begin{proof}
We first note that the Gromov-Hausdorff convergence can be improved to measured Gromov-Hausdorff convergence, thanks to assumption (c) and Colding's volume convergence Theorem \cite{Colding1997}. 

Denote $h=g_1-g_2$.  Thanks to (i), \eqref{eqn:heat-esti-a.e.} still holds so that 
\begin{equation}\label{eqn:kernel}
\frac12 \cdot \sup_{M\times (0,S]} |g_1(t)-g_2(t)|\leq \sup_{(x,t)\in M\times (0,S]} \limsup_{s\to0}\int_M G(x,t;y,s) |h|(y,s) d\mathrm{vol}_{\tilde g(s)}.
\end{equation}

We now examine the right hand side carefully. We split the integral as
\begin{equation}
    \begin{split}
        \int_M G(x,t;y,s) |h|(y,s) d\mathrm{vol}_{\tilde g(s)}&= \left( \int_{M
        \setminus\mathcal{S}}+\int_{\mathcal{S}} \right) G(x,t;y,s) |h|(y,s) d\mathrm{vol}_{\tilde g(s)}\\
        &=\mathbf{I}+\mathbf{II}.
    \end{split}
\end{equation}

By \cite[Lemma 3.1]{SimonTopping2021}, the Hausdorff measure $\mathcal{H}^{n-1}_{\tilde d_0}$ of $\tilde d_0$ satisfies $\mathcal{H}_{d_{\tilde g(t)}}\leq \mathcal{H}_{\tilde d_0}$ for all $t\in (0,S]$. Using this, the heat kernel estimate from Proposition~\ref{prop:heat-kernel} and the assumed estimate of $h$, 
\begin{equation}
    \begin{split}
       \limsup_{s\to 0^+} \mathbf{II}&\leq 0
    \end{split}
\end{equation}
since $\mathcal{H}_{\tilde d_0}(\mathcal{S})=0$.

On the other hand, thanks to the convergence of $\tilde g(t),g_1(t)$ and $g_2(t)$ outside $\mathcal{S}$ and Proposition~\ref{prop:heat-kernel}, dominated convergence Theorem yields
\begin{equation}
    \limsup_{s\to 0}\mathbf{I}\leq C_2(n,\a) ||g_{1,0}-g_{2,0}||_{L^\infty(M,\tilde g_0)}
\end{equation}
and hence $\limsup_{s\to 0}(\mathbf{I}+\mathbf{II})\leq C_2(n,\a) ||g_{1,0}-g_{2,0}||_{L^\infty(M,\tilde g_0)}$. This completes the proof by combining this with \eqref{eqn:kernel} by enlarging $\Lambda$ if necessary. 
\end{proof}

\begin{rem}
By \cite[Lemma 3.1]{SimonTopping2021}, the distance function $\tilde d_0$ generates the same topology as $M$. 
\end{rem}

\section{Smoothing singular metrics on spheres and stability} \label{sec:stability-cts}

In this section, we consider metrics with polyhedral singularity on sphere. 
For any $\b:=(\beta_1,\cdots,\beta_{n-1})\in (0,1]^{n-1}$, we define
\[g_{0,(\beta_1,\cdots,\beta_{n-1})}:=\beta_{1}^2(dx_{1}^2+\beta_{2}^2\sin^2 x_{1}(dx_{2}^2+(\cdots+\beta_{n-1}^2\sin^2 x_{n-2}dx_{n-1}^2)\cdots),\]
where $x_{1}\in[0,\pi],\cdots,x_{n-2}\in[0,\pi],x_{n-1}\in[0,2\pi]$, to be a metric on $\mathbb S^{n-1}$, which is smooth away from a subset of (Hausdorff) dimension at most $n-3$. 
In particular, $g_{0,(1,\cdots,1)}$ is the standard spherical metric on $\mathbb S^{n-1}$.
Moreover, these metrics satisfy $\Rm\ge 1$ on the smooth part, by Lemma~\ref{lma:Rm-sphere-warp}. In case some $\b_i<1$, the resulting metric $g_{0,(\beta_1,\cdots,\beta_{n-1})}$ is singular. The main result in this section provides a continuous smoothing as $\b$ varies. 

\begin{thm}[Link metrics on $\mathbb S^{n-1}$]\label{thm: links}
For each $n\ge3$, and $j\ge1$,
there exists a sequence of  continuous maps $\mathcal S_j$ from $[0,1]^{n-1}$ to the space of all smooth metrics on $\mathbb S^{n-1}$, such that for any $(\beta_1,\cdots,\beta_{n-1})\in [0,1]^{n-1}$ that:
\begin{enumerate}
    \item\label{i:Rm ge 1}  $\mathcal S_j(\beta_1,\cdots,\beta_{n-1})$ satisfies $\Rm \geq1$;
    \item\label{i:GH-close} The metric space induced by $\mathcal S_j(\beta_1,\cdots,\beta_{n-1})$ is $j^{-1}$-close in the Gromov-Hausdorff sense to the metric space induced by 
    $g_{0,(\beta_1,\cdots,\beta_{n-1})}$.
    \item\label{i:symmetry} If for some $k\leq n-1$ and
     $1\le i_1< i_2<\cdots< i_k\le n-1$ we have $\beta_j=1$ for $j\neq i_{\ell}$ for each $\ell=1,...,k$, then
    the metric $\mathcal S_j(\beta_1,\cdots,\beta_{n-1})$ is $O(i_1-1)\times O(i_2-i_1)\times O(i_3-i_2)\times \cdots \times O(i_k-i_{k-1})\times O(n+1-i_k)$-symmetric, where $O(i_1-1)=O(0)=1$ by convention if $i_1=1$.
    \end{enumerate}
\end{thm}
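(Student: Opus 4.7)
The plan is to induct on $n$ and to realize $\mathcal{S}_j(\beta)$ as the time slice at $t=t_j$ of a Ricci flow smoothing $g_\beta(t)$ of $g_{0,\beta}$, where $g_\beta(t)$ depends continuously on $\beta\in[0,1]^{n-1}$ and $t_j\to 0^+$ is chosen slowly enough that $d_{GH}(g_\beta(t_j),g_{0,\beta})<1/j$ uniformly in $\beta$. In the base case $n=3$, $g_{0,(\beta_1,\beta_2)}$ is a football on $\mathbb{S}^2$: Theorem \ref{thm:RF-existence} with each $X_i=\mathbb{S}^1$ provides a smoothing preserving $\Rm\ge 1$, while continuity in $(\beta_1,\beta_2)$ follows either from the $O(2)$-symmetric one-variable structure or from the stability Theorem \ref{thm:weak-stable-RDF}.

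For the inductive step, let $\Sigma_k:[0,1]^{n-2}\to\{\text{smooth metrics on }\mathbb{S}^{n-2}\}$ denote the inductive family supplied for dimension $n-1$. Writing $\beta=(\beta_1,\beta')$ with $\beta'=(\beta_2,\ldots,\beta_{n-1})$, form the partially regularized datum
\begin{equation*}
g_{0,\beta,k}:=\beta_1^2\bigl(dx_1^2+\sin^2 x_1\cdot \Sigma_k(\beta')\bigr),
\end{equation*}
which carries only two isolated conical singularities at $x_1=0,\pi$, satisfies $\Rm\ge 1$ on the smooth part by Lemma \ref{lma:Rm-sphere-warp}, and whose cone link $\Sigma_k(\beta')$ has $\Rm\ge 1$. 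Theorem \ref{thm:RF-existence} therefore yields a smooth Ricci flow $g_{\beta,k}(t)$ on $(0,S]$ preserving $\Rm\ge 1$, with scaling-invariant curvature and injectivity bounds that by item (vii) depend only on the volume, hence are uniform in $k$ and locally uniform in $\beta$. Sending $k\to\infty$ and invoking Hamilton's compactness together with the GH convergence $g_{0,\beta,k}\to g_{0,\beta}$ produces a limit Ricci flow $g_\beta(t)$ coming out of $g_{0,\beta}$ in the GH sense, still with $\Rm\ge 1$. The uniqueness portion of Theorem \ref{thm:unique-general} shows this limit is canonical modulo diffeomorphism and inherits the product symmetry of (\ref{i:symmetry}) whenever $g_{0,\beta}$ has it, by pulling back under any isometry of the initial data.

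The heart of the argument is the continuity of $g_\beta(t)$ in $\beta$. Fix a reference $\hat\beta$; the flow $g_{\hat\beta}(t)$ satisfies hypotheses (a)--(c) of Theorem \ref{thm:weak-stable-RDF}. For $\beta$ close to $\hat\beta$, the inductive continuity of $\Sigma_k$ forces $g_{0,\beta,k}$ to be $L^\infty$-close to $g_{0,\hat\beta,k}$, so Theorem \ref{thm:weak-stable-RDF} produces a Ricci-DeTurck flow with background $g_{\hat\beta}(t)$ whose $L^\infty$ distance to it is controlled by $\Lambda\|g_{0,\beta}-g_{0,\hat\beta}\|_{L^\infty}$. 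Running the Ricci-DeTurck ODE \eqref{eqn:RDF-back-RF} turns this into a bona fide Ricci flow, which coincides with $g_\beta(t)$ modulo diffeomorphism by Theorem \ref{thm:unique-general}. To reach arbitrary $\beta$ one partitions a path $\gamma$ in $[0,1]^{n-1}$ from $\hat\beta$ to $\beta$ into short segments and iterates, changing the reference background at each step and re-fixing the gauge through the Ricci-harmonic map heat flow.

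The main obstacle is that this propagation only naively gives continuity along a chosen path, while one needs the flow to be path-independent and continuous in $\beta$ across all of $[0,1]^{n-1}$. As outlined in the introduction, this is handled by carefully tracking the gauge changes accumulated through both the Ricci-DeTurck ODE and the harmonic map heat flow, and showing that $g_\beta(t)$ can always be transformed into a Ricci-DeTurck flow with respect to $g_{\hat\beta}(t)$ via a single Ricci-harmonic map heat flow whenever $\beta,\hat\beta$ are sufficiently close, so that Theorem \ref{thm:weak-stable-RDF} applies in the new gauge; in practice one carries out the construction at the regularized level $g_{0,\beta,\tau}$ first and then sends $\tau\to 0$. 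With continuity established, setting $\mathcal{S}_j(\beta):=g_\beta(t_j)$ for $t_j\to 0^+$ chosen slowly enough yields (\ref{i:Rm ge 1}) from Theorem \ref{thm:RF-existence}(vi), (\ref{i:GH-close}) from (iv), and (\ref{i:symmetry}) from the uniqueness-based symmetry preservation.
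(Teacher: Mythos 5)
Your overall architecture matches the paper's: induct on dimension, regularize the link of the spherical suspension by the lower-dimensional family, apply Theorem~\ref{thm:RF-existence} to the resulting metric with isolated conical singularities, and propagate existence and continuity in $\beta$ via the stability Theorem~\ref{thm:weak-stable-RDF}, the Ricci--DeTurck ODE, and the Ricci-harmonic map heat flow. There are, however, two genuine gaps. The first concerns symmetry, assertion~(\ref{i:symmetry}). You claim the limit flow ``inherits the product symmetry\ldots by pulling back under any isometry of the initial data'' via the uniqueness portion of Theorem~\ref{thm:unique-general}. That theorem compares two Ricci--DeTurck flows with respect to the \emph{same} background $\tilde g(t)$ satisfying (a)--(c); if $\phi$ is an isometry of $g_{0,\beta}$, then $\phi^* g_\beta(t)$ is a Ricci flow but is not presented as a DeTurck flow with respect to the background used to build $g_\beta(t)$, and $\phi$ need not preserve that background. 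The same objection applies to calling the Hamilton-compactness limit along $k\to\infty$ ``canonical modulo diffeomorphism'': subsequential limits differ by unknown diffeomorphisms, so Theorem~\ref{thm:unique-general} cannot be invoked before the gauge is fixed. The paper instead proves symmetry by a separate parabolic argument (Proposition~\ref{prop:sym-group}): solve the vector-field heat equation from a Killing field $X_0$ of the singular initial metric, show $\mathcal L_X g(t)$ satisfies the Lichnerowicz heat equation, and kill it using the heat kernel bounds and the codimension of the singular set. You need this (or an honest uniqueness theorem for Ricci flows, not DeTurck flows, from the singular data) to obtain~(\ref{i:symmetry}).

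The second gap is that the continuity in $\beta$ --- the heart of the theorem --- is deferred to ``carefully tracking the gauge changes\ldots as outlined in the introduction.'' The actual content is Theorem~\ref{thm:const-2} and Claim~\ref{claim:const-RDF}: one must (i) compare the regularized links for nearby $\beta'$ in $L^\infty$ via the lower-dimensional harmonic map heat flow (the maps $F_{\b,\hat\b,\tau}$ of \eqref{Fdef}); (ii) show the accumulated gauge maps $\Phi_{0,\b,\hat\b,\tau}$ converge to homeomorphisms that are local diffeomorphisms off a set of vanishing $(n-1)$-Hausdorff measure, so that Theorem~\ref{thm:unique-general} is applicable at each stage; and (iii) prove that for nearby $\a,\b$ constructed along \emph{different} rays from the reference corner, the Ricci-harmonic map heat flow between $g_\a(t)$ and $g_\b(t)$ exists with the quantitative closeness in Theorem~\ref{thm:const-2}(iii)(b). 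None of these is routine, and without them $\mathcal S_j$ is not known to be continuous, nor is $g_\beta(t)$ known to be path-independent. A smaller omission: the construction only works on $[j^{-1},1]^{n-1}$ (the constants from Theorem~\ref{thm:RF-existence}(vii) degenerate as the volume collapses when some $\beta_i\to0$), so defining $\mathcal S_j$ on all of $[0,1]^{n-1}$ requires an additional truncation step that your proposal does not address.
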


Here the continuity on the space of metrics in $\mathbb{S}^{n-1}$ is with respect to Cheeger-Gromov topology. We will split the proof of Theorem~\ref{thm: links} for a better presentation. This will be based on constructing Ricci flow starting from $g_{0,(\beta_1,\cdots,\beta_{n-1})}$. The Ricci flow smoothing constructed below will be non-smooth at $t=0$. In the following, a Ricci flow $g_{\b}(t)$ is said to be coming out of $g_{0,\b}$ if $d_{g_{\b}(t)}$ converges to a distance metric $d_{0,\b}$ as $t\to 0^+$ such that $(\mathbb{S}^{n-1},d_{0,\b})$ is isometric to $(\mathbb{S}^{n-1},g_{0,\b})$ as a metric space.

\subsection{Ricci flow smoothing of $\mathcal{S}_j$ by stability}

The most straight forward approach to construct $\mathcal{S}_j$ is to use Theorem~\ref{thm:RF-existence} iteratively. From the nature of construction, the stability should be inherited from the stability of the expanders with respect to the links. Since the variation of link is in $L^\infty$ topology, it is unclear why the resulting singularity model is continuous. We combine the construction using stability.

\begin{thm}\label{thm:const-2}
For all $j>0$ and $n\geq 3$, there exists $\a(n,j),S(n,j)>0$ such the following holds: For all $\b=(\b_1,...,\b_{n-1})\in [j^{-1},1]^{n-1}$, there is a one parameter family of metrics $ g_\b(t)$ on $\mathbb{S}^{n-1}\times (0, S]$ coming out of $g_{0,\b}$ and a homeomorphism $\psi_{0,\b}$ such that 
\begin{enumerate}
\item[(i)] $1\leq  \mathrm{Rm}(g_{(\b_1,...,\b_{n-1})}(t))\leq \a t^{-1}$, for $n\geq 4$;
\item[(ii)] $\mathrm{inj}(g_{(\b_1,...,\b_{n-1})}(t))\geq \sqrt{\a^{-1} t}$, for $n\geq 4$;
\item[(iii)] There is $\delta(n,j)>0$ such that if $\a,\b \in [j^{-1},1]^{n-1}$ with $|\a-\b|<\delta$, then there exists a time-dependent diffeomorphism $\Psi_{\a,\b}:\left(\mathbb{S}^{n-1},g_\b(t)\right)\to \left( \mathbb{S}^{n-1},g_\a(t)\right)$ for $t\in (0,S]$ such that
\begin{enumerate}
    \item $\lim_{t\to 0}\Psi_{\a,\b}(t)=(\psi_{0,\a})^{-1}\circ \psi_{0,\b}$ in the sense of map; 
    \item The pull-back of Ricci flow is continuous in the sense that:
    For all $\e>0$, there is a $\delta'=\delta'(j,n,\e)$ such that if $|\a-\b|<\delta'$, then $$\sup_{\mathbb{S}^{n-1}\times (0,S]}||\left(\Psi_{\a,\b}(t)\right)^*g_\a(t)-g_\b(t)||_{g_\b(t)}<\e.$$
\end{enumerate}
\item[(iv)] There exists a set $\mathtt{S}_\b$ such that $\mathbb{S}^{n-1}\setminus \mathtt{S}_\b$ is open dense subset of full $(n-1)$-dimensional Hausdorff measure w.r.t. $d_{0,\b}:=\lim_{t\to 0^+}d_{g_{\b}(t)}$, whose existence follows from $(i)$. Furthermore, on $\mathbb{S}^{n-1}\setminus \mathtt{S}_\b$, $\psi_{0,\b}$ is smooth local diffeomorphism satisfying $g_\b(0)=(\psi_{0,\b})^*g_{0,\b}$, and is bi-Lipschitz from $(\mathbb{S}^{n-1},d_{0,\b})$ to $\mathbb{S}^{n-1}$;
\item [(v)] $\psi_{0,(j^{-1},...,j^{-1})}=\mathrm{id}$.
\end{enumerate}
\end{thm}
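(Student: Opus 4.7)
The plan is to argue by induction on $n$, with base case $n=3$ where $g_{0,(\beta_1,\beta_2)}=\beta_1^2(dx_1^2+\beta_2^2\sin^2 x_1\, dx_2^2)$ has only two isolated conical singularities at $x_1=0,\pi$; here Theorem~\ref{thm:RF-existence} already yields the smoothing, and (iii) is essentially a direct comparison because the link $\mathbb{S}^1$ is non-singular. Assuming the conclusion, together with all its stability properties, has been established up to dimension $n-2$, I would first use the inductive link smoothing to replace $g_{0,(\beta_2,\ldots,\beta_{n-1})}$ by $g_{(\beta_2,\ldots,\beta_{n-1})}(\tau)$ for $\tau>0$ and consider the partially regularized metric
\[
g_{0,\b,\tau}:=\beta_1^2\bigl(dx_1^2+\sin^2 x_1\cdot g_{(\beta_2,\ldots,\beta_{n-1})}(\tau)\bigr),
\]
which has isolated conical singularities only at $x_1=0,\pi$, modeled on $C(X_\tau)$ for a smooth $X_\tau=(\mathbb{S}^{n-2},h_\tau)$ with $\mathrm{Rm}(h_\tau)\ge 1$ by the inductive (i). This falls exactly into the setup of Theorem~\ref{thm:RF-existence} with $\e=0$, producing a Ricci flow $g_{\b,\tau}(t)$ satisfying (i) and (ii) with constants uniform in $\tau$.

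The heart of the construction is to arrange these smoothings to depend continuously on $\b$. I would anchor the construction at $\hat\b_0=(j^{-1},\ldots,j^{-1})$, apply Theorem~\ref{thm:RF-existence} there to obtain $g_{\hat\b_0,\tau}(t)$, and propagate to all $\b\in[j^{-1},1]^{n-1}$ via weak stability. For $\b$ in a small $L^\infty$-neighborhood of $\hat\b_0$, Theorem~\ref{thm:weak-stable-RDF} produces a solution $\tilde g_{\b,\tau}(t)$ to \eqref{eqn:RDF} with background $g_{\hat\b_0,\tau}(t)$ and initial data close to $g_{0,\b,\tau}$ (possibly pre-composed with a small gauge change inherited from the inductive stability~(iii) applied to the link); undoing the DeTurck gauge via \eqref{eqn:RDF-back-RF} gives an actual Ricci flow $g_{\b,\tau}(t)$. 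For general $\b$, I would fix a ray $\gamma\subset[j^{-1},1]^{n-1}$ from $\hat\b_0$ to $\b$, subdivide it into finitely many pieces of diameter less than the stability radius $\bar\e_0$ of Theorem~\ref{thm:weak-stable-RDF}, and iterate: at step $i$, the Ricci flow already constructed at $\gamma(t_i)$ serves as the new background for the stability step producing $g_{\gamma(t_{i+1}),\tau}(t)$.

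The limit $\tau\to 0^+$ is then taken using the uniform estimates from Theorem~\ref{thm:RF-existence} together with the quantitative $L^\infty$ control from Theorem~\ref{thm:weak-stable-RDF}. Hamilton's compactness extracts a smooth Ricci flow $g_\b(t)$ on $\mathbb{S}^{n-1}\times (0,S]$ satisfying (i)--(ii); the uniqueness part of Theorem~\ref{thm:unique-general}, applied on the regular locus $\mathbb{S}^{n-1}\setminus\mathtt{S}_\b$, identifies this limit independently of the approximating sequence and provides the bi-Lipschitz homeomorphism $\psi_{0,\b}$ described in (iv). The normalization (v) is automatic since at $\hat\b_0$ no stability transport is required, so the initial gauge is the identity.

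The main obstacle is (iii) when $\a$ and $\b$ are close but do not lie on the same ray from $\hat\b_0$, because the corresponding flows are then built using \emph{different} chains of background Ricci flows and their DeTurck gauges need not align. I would handle this by constructing $\Psi_{\a,\b}(t)$ directly at the $\tau$-level via a Ricci-harmonic map heat flow $\partial_t F=\Delta_{g_{\b,\tau}(t),g_{\a,\tau}(t)}F$, and showing that $\Psi_{\a,\b,\tau}^*g_{\a,\tau}(t)$ solves \eqref{eqn:RDF} with background $g_{\b,\tau}(t)$ so that Theorem~\ref{thm:weak-stable-RDF} furnishes the uniform $L^\infty$ estimate. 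A careful tracking of the cumulative gauge changes accumulated along the two subdivided rays, together with the inductive stability of the link smoothing in $L^\infty$, shows that $F(\cdot,t)$ remains a diffeomorphism with controlled initial value. Passing $\tau\to 0^+$ by Theorem~\ref{thm:unique-general} and the $C^0_{\mathrm{loc}}$ convergence in Theorem~\ref{thm:weak-stable-RDF}, the limiting $\Psi_{\a,\b}(t)$ has boundary value $(\psi_{0,\a})^{-1}\circ\psi_{0,\b}$, and the quantitative estimate in (iii)(b) follows from the linear dependence \eqref{estt} on the initial $L^\infty$ distance, which in turn is controlled by $|\a-\b|$.
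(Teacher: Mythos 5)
Your proposal follows essentially the same route as the paper: partial regularization of the link via the inductive smoothing, anchoring at $\hat\b_0=(j^{-1},\dots,j^{-1})$, propagation along subdivided rays using the weak stability of the Ricci--DeTurck flow (Theorem~\ref{thm:weak-stable-RDF}) together with the DeTurck ODE, passage to the limit $\tau\to0$ via the uniqueness of Theorem~\ref{thm:unique-general}, and construction of $\Psi_{\a,\b}$ by Ricci-harmonic map heat flow with careful tracking of the accumulated gauge changes. The one inaccuracy is the base case: for $n=3$ the links of the cone points are circles, so Theorem~\ref{thm:RF-existence} (whose hypotheses require $\mathrm{Rm}(h_i)\ge 1$ on a sphere of dimension at least two) does not apply, and the paper instead invokes the explicit rotationally symmetric ODE construction of \cite{Lai2024}.
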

\begin{rem}
    When $n\geq 4$, $\Psi_{\a,\b}(t)$ and $g_\b(t)$ will be chosen to be solution to the Ricci-harmonic map heat flows and Ricci flows respectively.
\end{rem}
\begin{proof}
We prove it by induction on $n$. For any $x\in \R^{n-1}$ and $r>0$, we will write $V_r(x):=\{z\in \R^{n-1}: |z-x|\leq r\}$. 
Since the singularity is only conic when $n=3$, Theorem~\ref{thm:const-2} follows directly using ODE construction in \cite{Lai2024}, see also \cite{ChanLee2025}.

For $n\geq 4$, we assume that the conclusion holds on $\mathbb{S}^{n-2}$ and show that the same is true on $\mathbb{S}^{n-1}$. That said, we have constructed one parameter family of metrics 
$\bar g_{(\beta_2,\cdots,\beta_{n-1})}(t),t\in (0,S']$ 
on sphere $\mathbb{S}^{n-2}\times (0,S']$, a corresponding homeomorphism $\bar\psi_{0,(\beta_2,\cdots,\beta_{n-1})}$ and $\mathtt{S}_{(\beta_2,\cdots,\beta_{n-1})}$, satisfying (iii)-(v) in Theorem \ref{thm:const-2}.  We will then show the existence of a family of metrics on $\mathbb{S}^{n-1}\times (0,S']$ and homeomorphisms satisfying (i)-(v) in the same theorem. The quantities with bar are referring those in dimension $n-2$. For example, when $\b=(\b_1,\b_2,...,\b_{n-1})$, then $\bar \b=(\b_2,...,\b_{n-1})\in \R^{n-2}$.

\bigskip

For $\b,\hat\b\in [j^{-1},1]^{n-1}$ where $|\b-\hat\b|$ is small,  denote
\begin{equation}
    \left\{
    \begin{array}{ll}
         g_{0,\b}:=\b_1^2\left(dx_1^2+\sin^2 x_1 \cdot \bar g_{0,(\b_2,...,\b_{n-1})} \right)  \\[1mm]
         g_{0,\b,\tau}:=\b_1^2\left(dx_1^2+\sin^2 x_1 \cdot \bar g_{(\b_2,...,\b_{n-1})}(\tau) \right)\\[1mm]
         g_{0,\b,\hat\b,\tau}:=(F_{\b,\hat\b,\tau})^* g_{0,\b,\tau}
    \end{array}
    \right.
\end{equation}
where $F_{\b,\hat \b,\tau}:\mathbb{S}^{n-1}\to\mathbb{S}^{n-1}$ is given by
\begin{equation}\label{Fdef}
F_{\b,\hat \b,\tau}(x_1,x_2,...,x_{n-1}):=\left(x_1,\bar\Psi_{\b,\hat \b}(\tau)(x_2,...,x_{n-1})\right)
\end{equation}
and $\bar\Psi_{\b,\hat\b}(\tau)$ is the diffeomorphism of $\mathbb{S}^{n-2}$ with $\bar\Psi_{\b,\hat\b}(0)=(\bar\psi_{0,\b})^{-1}\circ \bar\psi_{0,\hat\b}$, which is obtained from induction. For $\tau>0$, $g_{0,\b,\tau}$ is smooth outside the tips, i.e. $\mathtt{Tips}:=\{o,\pi\}\times \mathbb{S}^{n-2}/\sim$ and it is of singularity type in sense of definition~\ref{defn:deca}. Here the beta with  a hat is referring to the gauge we use to compare the Ricci flows.

We note here that as $\tau\to 0$, we only have 
\begin{equation}\label{eqn:initial-gauged-ind}
  g_{0,\b,0}:=\lim_{\tau\to 0} g_{0,\b,\tau}=\b_1^2\left(dx_1^2+\sin^2 x_1 \cdot (\bar\psi_{0,(\b_2,...,\b_{n-1})})^* g_{0,(\b_2,...,\b_{n-1})} \right)
\end{equation}
on $ \mathbb{S}^{n-1}\setminus \left(\mathtt{Tips}\,\bigcup\,  [0,\pi]\times \mathtt{S}_{(\beta_2,\cdots,\beta_{n-1})}/\sim \right)$.

We fix $\hat\b_0:=(j^{-1},...,j^{-1})$, the corner to be our starting point. By considering the metric $g_{0,\hat \b_0,\tau}$ which is of isolated singularity for $\tau>0$, there exists a Ricci flow $g_{\hat\b_0,\tau}(t)$ coming out of $g_{0,\hat\b_0,\tau}$ satisfying (i) and (ii) for some $\a>0$ depending only on $n,j$ by Theorem~\ref{thm:RF-existence}. Furthermore Hamilton's compactness \cite{Hamilton1995-2} allow us to pass $\tau_i\to 0$ and obtain $g_\b(t)$ on $\mathbb{S}^{n-1}\times (0,S]$. 
 The sequence $\tau_i\to 0$ will be fixed from now on. For notation convenience, in what follows $\tau\to 0$ will mean $\tau_i\to 0$.
After identifying $g_{0,\hat\b_0}$ using distance convergence as in Theorem~\ref{thm:RF-existence}, we might assume $g_{\hat \b_0}(t)\to g_{0,\hat \b_0}$ as $t\to 0$ locally smoothly away from $ \mathtt{S}_{\hat \b_0}:=\mathtt{Tips}\,\bigcup\,[0,\pi]\times \mathtt{S}_{(j^{-1},\cdots,j^{-1})}/\sim $. For $\tau>0$, we also denote $\mathtt{S}_{\hat\b_0,\tau}:=\mathtt{Tips}$ where $g_{\hat\b_0,\tau}(0)=g_{0,\hat\b_0,\tau}$ is singular.

\bigskip

We fix the $\a$ from above and Theorem~\ref{thm:RF-existence}, which in turn depends on the non-collapsing among all $g_{0,\b}$, i.e. the lower bound $\mathrm{Vol}(\mathbb{S}^{n-1},g_{0,(j^{-1},...,j^{-1})})$.  We want to propagate the existence of $g_{\hat\b_0,\tau}(t)$ to all $\b\in [j^{-1},1]^{n-1}$. We need an inductive construction algorithm. 
\begin{claim}\label{claim:const-RDF} 
There exists $\delta_0>0$ depending only on $j$ such that the following holds: Suppose for $\hat\b\in [j^{-1},1]^{n-1}$ and $\tau>0$, we have already constructed a Ricci flow $g_{\hat \b,\tau}(t),t\in (0,S]$ coming out of $g_{0,\hat\b,\tau}$ and there exist a homeomorphism $G_{0,\hat\b,\tau}$ and a finite set $\mathtt{S}_{\hat\b,\tau}$ such that 
\begin{enumerate}
\item  $G_{0,\hat \b,\tau}$ is a local diffeomorphism away from $\mathtt{S}_{\hat \b,\tau}$
\item $g_{\hat \b,\tau}(t)$ satisfies the conclusion (i) and (ii), for $\a>0$;
\item $g_{\hat \b,\tau}(t)\to (G_{0,\hat \b,\tau})^*g_{0,\hat \b,\tau}$ in $C^\infty_{loc}(\mathbb{S}^{n-1}\setminus  \mathtt{S}_{\hat\b,\tau})$ as $t\to 0$,
\end{enumerate}
 then for $\b\in V_{\delta_0}(\hat\b)$, there exists a Ricci-DeTurck flow $\hat g_{\b,\hat \b,\tau}(t)$ with respect to $g_{\hat\b,\tau}(t)$ coming out of $g_{0,\b,\tau}$ and a time-dependent diffeomorphism $\Phi_{\b,\hat\b,\tau},t\in (0,S]$ such that $g_{\b,\hat \b,\tau}=(\Phi_{\b,\hat\b,\tau})^*\hat g_{\b,\hat\b,\tau}$ is a solution to the Ricci flow for $t\in (0,S]$ and
\begin{enumerate}
\item $g_{\b,\hat \b,\tau}(t)$ satisfies (i) and (ii) with the same $\a>0$;
\item $\Phi_{\b,\hat\b,\tau}(t)$ is the Ricci-DeTurck ODE solution with respect to $\hat g_{\b,\hat\b,\tau}$ and $g_{\hat \b,\tau}(t)$ such that $\Phi_{\b,\hat\b,\tau}(S)=\mathrm{Id}$;
\item  $ (1-\bar\e_0)^{1/3} g_{\hat\b,\tau}(t)\leq \hat g_{\b,\hat\b,\tau}(t)\leq (1+\bar\e_0)^{1/3} g_{\hat\b,\tau}(t)$ for $t\in (0,S]$ where $\bar\e_0(n,\a)$ is the constant from Theorem~\ref{thm:weak-stable-RDF};
\item $\Phi_{\b,\hat\b,\tau}(t)\to \Phi_{0,\b,\hat \b,\tau}$ as $t\to 0^+$ in $C^\infty_{loc}(\mathbb{S}^{n-1}\setminus \mathtt{S}_{\hat\b,\tau})\cap C^0(\mathbb{S}^{n-1})$;
\item $g_{\b,\hat \b,\tau}(t)\to (\Phi_{0,\b,\hat\b,\tau})^*\circ (G_{0,\hat\b,\tau})^*\circ (F_{\b,\hat\b,\tau})^* g_{0,\b,\tau}$ in $C^\infty_{loc}(\mathbb{S}^{n-1}\setminus  \mathtt{S}_{\b,\hat\b,\tau})$ as $t\to 0$, where 
\begin{equation}
 \mathtt{S}_{\b,\hat\b,\tau}:= \Phi_{0,\b,\hat\b,\tau}^{-1}\left( \mathtt{S}_{\hat\b,\tau}\cup G_{0,\hat\b,\tau}^{-1}(\mathtt{Tips}) \right).
\end{equation}
Here $\mathtt{Tips}$ is referring to the singularity of $F_{\b,\hat\b,\tau}^* g_{0,\b,\tau}$.
\end{enumerate}
\end{claim}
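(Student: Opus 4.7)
The strategy is to apply the weak stability theorem (Theorem~\ref{thm:weak-stable-RDF}) with background Ricci flow $\tilde g(t)=g_{\hat\b,\tau}(t)$, and then invert the gauge via the Ricci--DeTurck ODE~\eqref{eqn:RDF-back-RF} to recover a Ricci flow. The point is that Theorem~\ref{thm:weak-stable-RDF} gives existence and closeness in a single shot, bypassing the need to rerun the polyhedral smoothing construction at each new parameter $\b$.

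The first step is to build the candidate initial datum
\[
\hat g_{0,\b,\hat\b,\tau} := (G_{0,\hat\b,\tau})^{*}(F_{\b,\hat\b,\tau})^{*}g_{0,\b,\tau},
\]
which at $\b=\hat\b$ reduces to $(G_{0,\hat\b,\tau})^{*}g_{0,\hat\b,\tau}$, the $C^{\infty}_{\mathrm{loc}}$-limit of the background flow $g_{\hat\b,\tau}(t)$ as $t\to 0$ outside $\mathtt{S}_{\hat\b,\tau}$. The inductive hypothesis (conclusion (iii) of Theorem~\ref{thm:const-2} in dimension $n-2$) supplies continuity of $\bar\Psi_{\b,\hat\b}(\tau)$ at $\b=\hat\b$, and the warped-product formula gives continuity of $g_{0,\b,\tau}$ in $\b$. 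Therefore, by shrinking $\delta_{0}$, the $L^{\infty}$-distance between $\hat g_{0,\b,\hat\b,\tau}$ and $(G_{0,\hat\b,\tau})^{*}g_{0,\hat\b,\tau}$ can be made arbitrarily small relative to $\bar\e_{0}(n,\a)$. Theorem~\ref{thm:weak-stable-RDF} then produces a smooth Ricci--DeTurck flow $\hat g_{\b,\hat\b,\tau}(t)$ on $(0,S]$ with respect to $g_{\hat\b,\tau}(t)$ satisfying the pinching in (3) after a further shrinkage of $\delta_{0}$. $C^{0}_{\mathrm{loc}}$ convergence at $t=0$ outside the singular locus follows from Theorem~\ref{thm:weak-stable-RDF}(ii), and is then boot-strapped to $C^{\infty}_{\mathrm{loc}}$ via standard interior parabolic estimates for \eqref{eqn:RDF}.

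Next I would invert the gauge by solving the Ricci--DeTurck ODE $\partial_{t}\Phi_{t}=-W(\Phi_{t},t)$ backwards from $\Phi_{\b,\hat\b,\tau}(S)=\mathrm{Id}$, and setting $g_{\b,\hat\b,\tau}(t):=(\Phi_{\b,\hat\b,\tau})^{*}\hat g_{\b,\hat\b,\tau}(t)$; this is automatically a Ricci flow by \eqref{eqn:RDF-back-RF}, which is (2). From Lemma~\ref{lma:RDF-higher-ord} one has $|W|\lesssim|\tilde\nabla(\hat g_{\b,\hat\b,\tau}-g_{\hat\b,\tau})|\lesssim\bar\e_{0}\,t^{-1/2}$, integrable down to $t=0$; analogous bounds on $\tilde\nabla W$ keep $\Phi_{\b,\hat\b,\tau}(t)$ a diffeomorphism and give it a continuous limit $\Phi_{0,\b,\hat\b,\tau}$ at $t=0$ that is smooth on the regular locus, establishing (4). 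Property (5) then follows by composing the smooth convergence of $\hat g_{\b,\hat\b,\tau}(t)$ on the regular part with that of $\Phi_{\b,\hat\b,\tau}(t)$.

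The main obstacle is the improvement to the geometric bounds in (1), since Theorem~\ref{thm:weak-stable-RDF} only provides $L^{\infty}$-closeness. To extract $\Rm\geq 1$, I would observe that $g_{\b,\hat\b,\tau}(t)$ is a smooth Ricci flow on $\mathbb{S}^{n-1}\times(0,S]$ which, by (5), converges smoothly off a finite singular set to a metric with $\Rm\geq 1$ on its smooth part (by Lemma~\ref{lma:Rm-sphere-warp} together with the inductive $\Rm(\bar g_{(\b_{2},\ldots,\b_{n-1})}(\tau))\geq 1$). Then I would replay the heat-kernel argument at the end of the proof of Theorem~\ref{thm:RF-existence}: apply Hamilton's maximum principle to the deficit $\max\{0,1-\mathrm{Rm}\}$, use the Gaussian upper bound from Proposition~\ref{prop:heat-kernel} on $g_{\b,\hat\b,\tau}(t)$, and observe that the singular set has vanishing $(n-1)$-dimensional Hausdorff measure in the initial limit, to conclude $\Rm\geq 1$ for all $t\in(0,S]$. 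The upper bound $|\Rm|\leq\a t^{-1}$ and the injectivity lower bound in (ii) then follow from uniform volume non-collapsing (inherited from the pinching (3) with respect to $g_{\hat\b,\tau}(t)$) together with Perelman's pseudolocality and standard smoothing estimates.
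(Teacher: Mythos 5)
Your overall architecture matches the paper's: pull back the new singular metric by $F_{\b,\hat\b,\tau}$ and $G_{0,\hat\b,\tau}$, invoke Theorem~\ref{thm:weak-stable-RDF} with background $g_{\hat\b,\tau}(t)$, undo the gauge via the Ricci--DeTurck ODE, and then rerun the heat-kernel maximum-principle argument from Theorem~\ref{thm:RF-existence} to restore $\Rm\geq1$. However, there is a genuine gap in the first step. Theorem~\ref{thm:weak-stable-RDF} requires the background flow to be smooth up to $t=0$ (see the remark following it: only then is short-time existence from $L^\infty$ data "transparent"), whereas $g_{\hat\b,\tau}(t)$ is \emph{not} smooth at $t=0$ --- it emerges from the conically singular metric $g_{0,\hat\b,\tau}$ and converges only in $C^\infty_{loc}$ away from $\mathtt{S}_{\hat\b,\tau}$. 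You apply the stability theorem directly to this singular background, which the theorem as proved does not cover. The paper handles this with a second regularization parameter $\sigma$: it glues the smooth slice $g_{\hat\b,\tau}(\sigma)$ into the initial data near $\mathtt{S}_{\hat\b,\tau}\cup G_{0,\hat\b,\tau}^{-1}(\mathtt{Tips})$ via a cutoff at scale $\sigma^{1/4}$ (cf.\ \eqref{eqn:initial-data-2approx}), runs Theorem~\ref{thm:weak-stable-RDF} with respect to the time-shifted smooth background $g_{\hat\b,\tau}(\sigma+t)$, solves the ODE for each $\sigma$, and only then passes $\sigma\to0$ using the uniqueness/stability of Theorem~\ref{thm:unique-general} together with Lemma~\ref{lma:RDF-higher-ord}. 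Without this double limit the existence of $\hat g_{\b,\hat\b,\tau}(t)$ coming out of the singular initial datum is not established.

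A second, smaller omission: you assert that $\Phi_{\b,\hat\b,\tau}(t)$ has a continuous limit $\Phi_{0,\b,\hat\b,\tau}$ but do not verify that this limit is injective. This matters because conclusion (5) defines the new singular set as $\Phi_{0,\b,\hat\b,\tau}^{-1}(\mathtt{S}_{\hat\b,\tau}\cup G_{0,\hat\b,\tau}^{-1}(\mathtt{Tips}))$, and the subsequent heat-kernel argument for $\Rm\geq1$ relies on this set consisting of finitely many isolated points. The paper proves injectivity by comparing the upper bound $d_{\hat g_{\b,\hat\b,\tau}(t)}(\Phi(x,t),\Phi(y,t))\leq C\sqrt t$ (from the ODE speed bound) against the lower bound $d_{g_{\b,\hat\b,\tau}(t)}(x,y)\geq (t/S)^{\e/2}d_{g_{\b,\hat\b,\tau}(S)}(x,y)$ coming from the almost-nonnegative Ricci bound $\Ric\geq-\e t^{-1}$. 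Relatedly, for the $\Rm\geq1$ step, "vanishing $(n-1)$-Hausdorff measure" of the singular set is not the right hypothesis: what is actually used is that the singularities are isolated, that $\Rm_-\leq\e t^{-1}$ near them, and that the resulting integral $s^{(n-1)/2-1-C_n\e}$ tends to $0$, i.e.\ $2C_n\e<n-3$; this is a codimension-versus-blow-up-rate condition, not a measure-zero condition.
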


\begin{proof}[Proof of claim~\ref{claim:const-RDF}]   
Let $\delta_0$ be a constant to be determined. For $\b\in [j^{-1},1]^{n-1}\cap V_{\delta_0}(\hat \b)$, we will construct Ricci flow by regularizing its link as in construction of $g_{\hat\b_0,\tau}(t)$ using $g_{(\beta_2,\cdots,\beta_{n-1})}(t)$. The main difference is in this case, we will rely on construction of Ricci-DeTurck flow from stability.  By induction hypothesis, if $\delta_0$ is small enough, then we have a time-dependent diffeomorphism $\bar\Psi_{\b,\hat \b}$ of $\mathbb{S}^{n-2}$ such that 
\begin{equation}\label{eqn:stability-fromind}
    |(\bar\Psi_{\b,\hat\b}(\tau))^* \bar g_{(\beta_2,\cdots,\beta_{n-1})}(\tau)-\bar g_{(\hat\beta_{2},\cdots,\hat\beta_{n-1})}(\tau)|<\frac12 \bar\e_1
\end{equation}
for all $\tau\in (0,S']$, where $\hat\b=(\hat \b_{1},\hat \b_{2},\cdots,\hat \b_{n-1})$. Here $\bar\e_1$ is small enough such that $(1+\Lambda\bar\e_1)\leq (1+\bar\e_0)^{1/3}$ and $(1-\Lambda\bar\e_1)\geq (1-\bar\e_0)^{1/3}$ where $\Lambda,\bar\e_0$ are from Theorem~\ref{thm:weak-stable-RDF}.

\bigskip

The pull-back metric $g_{0,\b,\hat \b,\tau}:=F_{\b,\hat \b,\tau}^*g_{0,\b,\tau}$ is a $L^\infty$ metric on $\mathbb{S}^{n-1}$ with isolated singularity at $\mathtt{Tips}$ for $\tau>0$ in which
\begin{equation}\label{eqn:metric-equ-b-hatb}
    \left(1-\bar\e_1\right)\cdot g_{0,\hat \b,\tau}\leq g_{0,\b,\hat \b,\tau}\leq \left(1+ \bar\e_1\right)\cdot  g_{0,\hat \b,\tau}
\end{equation}
by \eqref{eqn:stability-fromind}, if we shrink $\delta_0$ further.  Suppose $\mathtt{S}_{\hat\b,\tau}\cup G_{0,\hat\b,\tau}^{-1}(\mathtt{Tips})=\{p_i\}_{i=1}^N$, we define
$$\phi_\sigma(x):=\max_{i=1,\dots, N.}\phi(d_{g_{\hat \b,\tau}(\sigma)}(x,p_i)/\sigma^{1/4})$$
where $\sigma\to 0$ and $\phi$ is a fix smooth non-increasing function such that $\phi\equiv 1$ on $[0,3/2]$ and vanishes outside $[0,2]$. Analogous to the proof of Theorem~\ref{thm:RF-existence}, we consider the $L^\infty$ metric (singularity only due to the regularity of distance function):
\begin{equation}\label{eqn:initial-data-2approx}
g_{0,\b,\hat \b,\tau,\sigma}:=\phi_\sigma\cdot  g_{\hat \b,\tau}(\sigma)+(1-\phi_\sigma)\cdot (G_{0,\hat\b,\tau})^* g_{0,\b,\hat \b,\tau}.
\end{equation}
as $\sigma\to 0$, for $\tau>0$. It can be seen that $g_{0,\b,\hat \b,\tau,\sigma}$ converges in $C^\infty_{\text{loc}}$ to $(G_{0,\hat\b,\tau})^* g_{0,\b,\hat \b,\tau}$ away from $\mathtt{S}_{\hat\b,\tau}\cup G_{0,\hat\b,\tau}^{-1}(\mathtt{Tips})$ as $\sigma\to 0$, fixing $\tau>0$.
\bigskip

As in the proof of Theorem~\ref{thm:RF-existence}, by pseudolocality and \eqref{eqn:metric-equ-b-hatb}  we might assume 
\begin{equation}\label{eqn:lipsc-initial}
    \left(1- \bar\e_1\right) g_{\hat \b,\tau}(\sigma)\leq g_{0,\b,\hat\b,\tau,\sigma}\leq \left(1+\bar\e_1\right) g_{\hat \b,\tau}(\sigma)
\end{equation}
as $\sigma\to 0$, for fixed $\tau>0$. Thus we might apply Theorem~\ref{thm:weak-stable-RDF} to $g_{0,\b,\hat\b,\tau,\sigma}$ to obtain a Ricci-DeTurck flow $\hat g_{\b,\hat\b,\tau,\sigma}(t)$ with respect to $g_{\hat\b,\tau}(\sigma+t)$ starting from $g_{0,\b,\hat\b,\tau,\sigma}$ such that 
\begin{equation}\label{eqn:lipsc}
   \left (1-\Lambda\bar\e_1\right) g_{\hat\b,\tau}(\sigma+t)\leq \hat g_{\b,\hat\b,\tau,\sigma}(t)\leq \left(1+\Lambda\bar\e_1\right) g_{\hat\b,\tau}(\sigma+t)
\end{equation}
on $\mathbb{S}^{n-1}\times (0,S]$.

We construct the canonical Ricci flow by using the Ricci-DeTurck ODE as follows. We let $\Psi_{\b,\hat \b,\tau,\sigma}$ be the time-dependent diffeomorphism given by solving the ODE:
\begin{equation} \label{eqn:RDF-ODE}
 \left\{
 \begin{array}{ll}
 \partial_t \Phi_{\b,\hat \b,\tau,\sigma}(x,t)=-W_{\b,\hat \b,\tau,\sigma}\left(\Phi_{\b,\hat \b,\tau,\sigma}(x,t),t\right);\\[1mm]
  \Phi_{\b,\hat \b,\tau,\sigma}(x,S)=x;\\[1mm]
  (W_{\b,\hat \b,\tau,\sigma})^k= (\hat g_{\b,\hat\b,\tau,\sigma})^{ij}\left[\Gamma_{ij}^k(\hat g_{\b,\hat\b,\tau,\sigma}(t))-\Gamma_{ij}^k(g_{\hat\b,\tau}(\sigma+t))\right]
 \end{array}
 \right.
 \end{equation}
 
Then $ g_{\b,\hat\b,\tau,\sigma}(t):=(\Phi_{\b,\hat \b,\tau,\sigma}(t))^* \hat g_{\b,\hat\b,\tau,\sigma}(t)$ defines a Ricci flow on $\mathbb{S}^{n-1}\times (0,S]$ such that $ g_{\b,\hat\b,\tau,\sigma}(S)=\hat g_{\b,\hat\b,\tau,\sigma}(S)$. 

Thanks to Theorem~\ref{thm:unique-general} and Lemma~\ref{lma:RDF-higher-ord}, we might let $\sigma\to 0$ without taking subsequence on $\hat g_{\b,\hat\b,\tau,\sigma},\Phi_{\b,\hat \b,\tau,\sigma}$ for $t\in (0,S]$ such that the limiting solutions $\hat g_{\b,\hat\b,\tau},\Psi_{\b,\hat \b,\tau}$ exists on $\mathbb{S}^{n-1}\times (0,S]$. This also uniquely determines the limiting Ricci flow  $g_{\b,\hat\b,\tau}:=(\Phi_{\b,\hat \b,\tau})^*\hat g_{\b,\hat\b,\tau}$ with $g_{\b,\hat\b,\tau}(S)=\hat g_{\b,\hat \b,\tau}(S)$.  From \eqref{eqn:lipsc}, it is clear that 
\be\label{eqn:RDF-tau-pict}
        (1-\Lambda\bar\e_1) g_{\hat\b,\tau}(t)\leq \hat g_{\b,\hat\b,\tau}(t)\leq (1+\Lambda\bar\e_1) g_{\hat\b,\tau}(t)
\ee
for $t\in (0,S]$.  By Lemma~\ref{lma:RDF-higher-ord}, we also know $\Phi_{\b,\hat\b,\tau}(t)$ converges to some continuous map $\Phi_{0,\b,\hat\b,\tau}$ as $t\to 0$. By interior estimate, see \cite[Proposition 2.2]{ChuLee2025} for example, $\Phi_{0,\b,\hat\b,\tau}$ is smooth outside $\Phi_{0,\b,\hat\b,\tau}^{-1}\left(\mathtt{S}_{\hat\b,\tau}\cup G_{0,\hat\b,\tau}^{-1}(\mathtt{Tips})\right)$ where the convergence as $t\to 0$ is in local smooth topology. 
 Consequently, $g_{\b,\hat \b,\tau}(t)$ converges in $C^\infty_{\text{loc}}$ to $(\Phi_{0,\b,\hat\b,\tau})^*(G_{0,\hat\b,\tau})^* g_{0,\b,\hat \b,\tau}$ as $t\to 0$ away from $ \Phi_{0,\b,\hat\b,\tau}^{-1}\left(\mathtt{S}_{\hat\b,\tau}\cup G_{0,\hat\b,\tau}^{-1}(\mathtt{Tips})\right)$. Furthermore we have 
\begin{equation}\label{eqn:degree}
    \begin{split}
        &\quad d_{g_{\hat \b,\tau}(s)} \left(\Phi_{\b,\hat \b,\tau}(x,t),\Phi_{\b,\hat \b,\tau}(x,s) \right)\\
        &\leq d_{g_{\hat \b,\tau}(t)} \left(\Phi_{\b,\hat \b,\tau}(x,t),\Phi_{\b,\hat \b,\tau}(x,s) \right)+C_0(n,\a) \sqrt{t}\\
        &\leq \int^t_s |\partial_z \Phi_{\b,\hat \b,\tau}(x,z)|_{g_{\hat\b,\tau}(t)}\,dz+C_0\sqrt{t}\\
        &\leq \int^t_s |\partial_z \Phi_{\b,\hat \b,\tau}(x,z)|_{g_{\hat \b,\tau}(z)}\,dz+C_0\sqrt{t}\leq C_1(n,\a)\sqrt{t},
    \end{split}
\end{equation}
for $0<s<t$. We now claim that $\Phi_{0,\b,\hat\b,\tau}$ is injective so that the new singular set still consists of isolated singularities. Suppose $\Phi_{0,\b,\hat\b,\tau}(x)=\Phi_{0,\b,\hat\b,\tau}(y)$ for some $x,y$, then \eqref{eqn:degree} implies
\begin{equation}
    \begin{split}
&\quad  d_{\hat g_{\b,\hat\b,\tau}(t)} \left( \Phi_{\b,\hat\b,\tau}(x,t),\Phi_{\b,\hat\b,\tau}(y,t) \right) \\
&\leq  (1+\Lambda\bar\e_1)^{1/2}\cdot d_{g_{\hat\b,\tau}(t)} \left( \Phi_{\b,\hat\b,\tau}(x,t),\Phi_{\b,\hat\b,\tau}(y,t) \right)\\ 
      &\leq  (1+\Lambda\bar\e_1)^{1/2} \cdot  \limsup_{s\to 0}d_{g_{\hat\b,\tau}(s)} \left( \Phi_{\b,\hat\b,\tau}(x,t),\Phi_{\b,\hat\b,\tau}(y,t) \right)
      \leq  C_2(n,\a)\sqrt{t}\\
    \end{split}
\end{equation}
while the left hand side can be bounded from below by 
\begin{equation}
    \begin{split}
        d_{\hat g_{\b,\hat\b,\tau}(t)}\left( \Phi_{\b,\hat\b,\tau}(x,t),\Phi_{\b,\hat\b,\tau}(y,t) \right) &=d_{g_{\b,\hat\b,\tau}(t)}\left( x,y\right) \\
        &\geq (t/S)^{\e/2} d_{g_{\b,\hat\b,\tau}(S)}\left( x,y\right)
    \end{split}
\end{equation}
using $\Ric(\hat g_{\b,\hat\b,\tau}(t))\geq -\e t^{-1}$ from  Lemma~\ref{lma:RDF-higher-ord} where $\e\to 0$ as $\bar\e_1\to 0$. If $\bar\e_1$ is small enough, then it forces $x=y$ by letting $t\to 0$, and thus shows  injective. The surjective follows from the fact that $\Phi_{\b,\hat\b,\tau}(t)$ is surjective for all $t\in (0,S]$ and the convergence. This shows that $\Phi_{0,\b,\hat\b,\tau}$ is a homeomorphism.

Finally, we claim that $g_{\b,\hat \b,\tau}(t)$ has $\mathrm{Rm}\geq 1$, which in turn implies (i). This follows from minor modification to the proof of (vi) in Theorem~\ref{thm:RF-existence}. From Lemma~\ref{lma:RDF-higher-ord}, we might assume $\mathrm{Rm}_-\leq \e t^{-1}$ since $ g_{\hat \b,\tau}(t)$ has $\mathrm{Rm}\geq 1$, where $\e\to 0$ as $\delta_0\to 0$. Similar to \eqref{eqn:evo-Rm}, the function $\varphi:=t^{-C_n\e}\mathrm{Rm}_-$ satisfies 
\begin{equation}
\left(\frac{\partial}{\partial t}-\Delta_{g_{\b,\hat\b,\tau}(t)}-R_{g_{\b,\hat\b,\tau}(t)} \right)\varphi\leq 0
\end{equation}
and thus for all $0<s<t$
\begin{equation}
\varphi(x,t)\leq \int_{\mathbb{S}^{n-1}} G(x,t;y,s) \varphi(y,s)\,d\mathrm{vol}_{g_{\b,\hat\b,\tau}(s)}.
\end{equation}

We might argue almost identically as in \eqref{eqn:res-Gr}, using the fact that the singularity is isolated, $n-1\geq 3$, $\Ric(g_{\hat \b,\tau})\geq 0$ so that from \eqref{eqn:RDF-tau-pict} the volume of geodesic ball of $g_{\b,\hat\b,\tau}$ is Euclidean like, we have 
\begin{equation}
\limsup_{s\to 0}\mathbf{I}\leq \sum_{i=1}^m\limsup_{s\to 0}\frac{1}{s^{C_n\e}} \int_{B_{g_{\b,\hat\b,\tau}(s)}(q_i,\Lambda\sqrt{s})}\frac{C }{(t-s)^{n/2}} \mathrm{Rm}_-(y,s)\,d\mathrm{vol}_{g_{\b,\hat\b,\tau}(s)}=0,
\end{equation}
as long as $2C_n\e<n-3$, see also \cite{LeeTam2025}. Here $ \mathtt{S}_{\b,\hat\b,\tau}=\{q_i\}_{i=1}^m$ are the isolated  singularities of $g_{0,\b,\hat\b,\tau}$.  
Since $\mathrm{Rm}(g_{0,\b,\hat\b,\tau})\geq 1$ away from the tips, by Lemma~\ref{lma:Rm-sphere-warp}. The remaining argument is identical, showing that $\mathrm{Rm}\geq 0$. Now we might appeal the proof of (vi) in Theorem~\ref{thm:RF-existence} to show that indeed $\mathrm{Rm}(t)\geq 1$. This also fixes $\delta_0$. 

Since we have shown $\Rm(g_{\b,\hat\b,\tau}(t))\geq 1$, by the same argument as in the proof of (vii) in Theorem~\ref{thm:RF-existence}, we see that the Ricci flow $g_{\b,\hat\b,\tau}(t)$ satisfies (i) and (ii), for the same $\a>0$ depending only on $\mathrm{Vol}(\mathbb{S}^{n-1},g_{0,(j^{-1},...,j^{-1})})$. Furthermore, it follows from \cite[Lemma 3.1]{SimonTopping2021} and $$ d_{\hat g_{\b,\hat\b,\tau}(t)}\left( \Phi_{\b,\hat\b,\tau}(x,t),\Phi_{\b,\hat\b,\tau}(y,t) \right)
   =    d_{g_{\b,\hat\b,\tau}(t)}(x,y)$$ 
   for all $x,y\in \mathbb{S}^{n-1}$ and $t\in (0,S]$, that the limiting continuous map $\Phi_{0,\b,\hat\b,\tau}$ is a bi-H\"older homeomorphism. It then follows from the uniqueness and the stability of the ODE that $\Phi_{0,\b,\hat\b,\tau}$ is indeed a local diffeomorphism outside the finite set $   \mathtt{S}_{\b,\hat\b,\tau}:=\Phi_{0,\b,\hat\b,\tau}^{-1}\left(\mathtt{S}_{\hat\b,\tau}\cup G_{0,\hat\b,\tau}^{-1}(\mathtt{Tips})\right)$.  This completes the proof of Claim \ref{claim:const-RDF}.
\end{proof}

\bigskip

We now use Claim~\ref{claim:const-RDF} to construct Ricci flow coming out of $g_{0,\a}$ for all $\a\in [j^{-1},1]^{n-1}$. We first describe how $g_\a(t)$ is constructed. Define $\mathcal{Z}_k:=[j^{-1},1]^{n-1}\cap V_{k\delta_0}(\hat \b_0)$ which denotes the set of points $k\delta_0$ apart from $\hat\b_0$ in the ``positive orientation".  For any $\a\in [j^{-1},1]^{n-1}$, we let $N_\a$ be the minimum $k$ such that 
$\a\in \mathcal{Z}_{k+1}$ and we let $\hat \a_k:=\hat\b_0+ k \delta_0 \frac{\a-\hat\b_0}{|\a-\hat\b_0|}\in \mathcal{Z}_k$ for $0\leq k\leq N_\a$ so that $\hat\a_k \in V_{\delta_0}(\hat\a_{k-1})$ for $1\leq k\leq N_\a$ and $\hat\a_0=\hat\b_0$. We have already constructed a Ricci flow $g_{0,\tau}(t):=g_{\hat \a_0,\tau}(t)=g_{\hat\b_0,\tau}(t)$ which converges to $g_0(t)$ as $\tau\to 0$.

 If $N_\a=0$, $\a\in \mathcal{Z}_0$ so that we can use stability with reference Ricci flow $g_{0,\tau}(t)$ to find a Ricci-DeTurck flow $\hat g_{\a,\hat\a_0,\tau}(t)$ with respect to $g_{0,\tau}(t)$, a finite set $\mathtt{S}_{\a,\tau}$, a time-dependent diffeomorphism $\Phi_{\a,\hat\a_0,\tau}(t)$ and Ricci flow $g_{\a,\tau}(t):=\left(\Phi_{\a,\hat\a_0,\tau}(t)\right)^* \hat g_{\a,\hat\a_0,\tau}$, for $t\in (0,S]$. If $N_\a\geq 1$, we use induction to find a sequence of reference Ricci flows. 
 
 More precisely,  we use Claim~\ref{claim:const-RDF} by setting $\hat\beta= \hat\a_0$, $G_{0, \hat\a_0,\tau}=\mathrm{Id}$ and $\mathtt{S}_{\hat\a_0,\tau}:=\mathtt{S}_{\hat\beta_0,\tau}$, $g_{\hat \beta, \tau}(t)=g_{0,\tau}(t)=g_{\hat \hat\a_0, \tau}(t)$ to show that there exist a Ricci-DeTurck flow $\hat g_{\hat \a_1,\hat\a_0,\tau}(t)$ with respect to $g_{0,\tau}(t)=g_{\hat \a_0,\tau}(t)$, a finite set $\mathtt{S}_{\hat \a_1,\tau}:=\mathtt{S}_{\hat \a_1,\hat\a_0,\tau}$, and a time-dependent diffeomorphism $\Phi_{\hat\a_1,\hat\a_0,\tau}$ for $t\in (0,S]$ such that $g_{\hat \a_1,\tau}(t)=\left(\Phi_{\hat\a_1,\hat\a_0,\tau}(t)\right)^*\hat g_{\hat \a_1,\hat\a_0,\tau}(t)$ is a Ricci flow for $t\in (0,S]$ and 
\begin{equation}
    (1-\bar\e_0)^{1/3} g_{\hat\a_0,\tau}(t)\leq \hat g_{\hat\a_1,\hat\a_0,\tau}(t)\leq  (1+\bar\e_0)^{1/3} g_{\hat\a_0,\tau}(t)
\end{equation}
for $t\in (0,S]$. Moreover, $\Phi_{\hat\a_1,\hat\a_0,\tau}\to \Phi_{0,1,\tau}$ in $ C^\infty_{loc} (\mathbb{S}^{n-1}\setminus  \mathtt{S}_{\hat\a_1,\hat\a_0,\tau})\cap C^0 (\mathbb{S}^{n-1})$ as $t\to 0$ where $\Phi_{0,1,\tau}$ is homeomorphism on $\mathbb{S}^{n-1}$ and local diffeomorphism on $\mathbb{S}^{n-1}\setminus \mathtt{S}_{\hat \a_1,\hat\a_0,\tau}$. We also have $g_{\hat \a_1,\tau}(t)\to (\Phi_{0,1,\tau})^*\circ (F_{\hat\a_1,\hat\a_0,\tau})^*g_{0,\hat \a_1, \tau}$ in $C^\infty_{loc}(\mathbb{S}^{n-1}\setminus \mathtt{S}_{\hat \a_1,\hat\a_0,\tau})$ as $t\to 0$, where $F_{\hat\a_1,\hat\a_0,\tau}$ is defined as in \eqref{Fdef}. To proceed, we invoke Claim~\ref{claim:const-RDF} again, by setting $\hat\beta= \hat\a_1$, $G_{0, \hat\a_1,\tau}=F_{\hat\a_1,\hat\a_0,\tau}\circ \Phi_{0,1,\tau}$ and $\mathtt{S}_{\hat\a_1,\tau}:=\mathtt{S}_{\hat \a_1,\hat\a_0,\tau}$, to get a Ricci flow $g_{\hat\a_2,\hat\a_1,\tau}(t)$, a Ricci DeTurck flow $\hat g_{\hat\a_2,\hat\a_1,\tau}(t)$,  diffeomorphisms $\Phi_{\hat\a_2,\hat\a_1,\tau}(t)$ converges to a homeomorphism $\Phi_{0,2,\tau}$ as $t\to 0$, and the finite singular set $\mathtt{S}_{\hat \a_2,\hat\a_1,\tau}$, satisfying the corresponding properties in Claim~\ref{claim:const-RDF}.
In the same spirit, we process inductively when $N_\a\geq 1$. We define (finite) sequence of maps and sets inductively as follows: 
\begin{itemize}
\item 
$F_{0,\tau}:=\mathrm{Id}$, $g_{0,k,\tau}:=g_{0,\hat\a_k,\tau}$, $\mathtt{S}_{0,\tau}:=\mathtt{S}_{\hat\a_0,\tau}$; 
\item $F_{k,\tau}:=F_{\hat \a_k,\hat \a_{k-1},\tau}\circ F_{k-1,\tau}$ for $1\leq k\leq N_\a$;
\item $F_{\a,\tau}:=F_{\a,\hat\a_{N_\a},\tau}\circ F_{N_\a-1,\tau}$;
\item $g_{0,\tau}(t):=g_{\hat \a_0,\tau}(t)$.
\end{itemize}

Applying Claim~\ref{claim:const-RDF} inductively, we obtain finite sequences of Ricci flows $g_{k,\tau}(t)$, Ricci-DeTurck flows $\hat g_{k,\tau}(t)$ with respect to $g_{k-1,\tau}(t)$,  time-dependent diffeomorphisms $\Phi_{k,\tau}(t)$ and sets $\mathtt{S}_{k,\tau}$ such that for $1\leq k\leq N_\a$, 
\begin{itemize}
\item $\hat g_{k,\tau}(t):=\hat g_{\hat\a_k,\hat\a_{k-1},\tau}(t)$ is the Ricci-DeTurck flow with respect to $g_{k-1,\tau}(t)$;
\item  $\Phi_{k,\tau}(t):=\Phi_{\hat\a_k,\hat \a_{k-1},\tau}(t)$ is the Ricci-DeTurck ODE solution with respect to $\hat g_{k,\tau}$ and $g_{k-1,\tau}$;
\item $g_{k,\tau}(t)\equiv \left(\Phi_{k,\tau}(t)\right)^*\hat g_{k,\tau}(t)$ is a Ricci flow satisfying (i) and (ii);
\item $(1-\bar\e_0)^{1/3} g_{k-1,\tau}(t)\leq \hat g_{k,\tau}(t)\leq  (1+\bar\e_0)^{1/3} g_{k-1,\tau}(t)$; 
\item  $\mathtt{S}_{k,\tau}:= \mathtt{S}_{\hat\a_k,\hat \a_{k-1},\tau}$;
\item $\Phi_{k,\tau}(t)\to \Phi_{0,k,\tau}$ in $C^\infty_{loc} (\mathbb{S}^{n-1}\setminus \mathtt{S}_{k-1,\tau})\cap C^0 (\mathbb{S}^{n-1})$ as $t\to 0$, {where $\Phi_{0,k,\tau}$ is a homeomorphism on $\mathbb{S}^{n-1}$ and local diffeomorphism on $\mathbb{S}^{n-1}\setminus \mathtt{S}_{k,\tau}$};
\item $g_{k,\tau}(t)\to(\Phi_{0,k,\tau})^*\circ\dots \circ (\Phi_{0,1,\tau})^* \circ (F_{k,\tau})^* g_{0,k,\tau}$ in $C^\infty_{loc} (\mathbb{S}^{n-1}\setminus \mathtt{S}_{k,\tau})$ as $t\to 0$.
\end{itemize} 

And finally we have 
\begin{itemize}
\item $\hat g_{\a,\tau}(t):=\hat g_{\a,\hat\a_{N_\a},\tau}(t)$ is the Ricci-DeTurck flow with respect to $g_{N_\a,\tau}(t)$;
\item $\Phi_{\a,\tau}(t):=\Phi_{\a,\hat \a_{N_\a},\tau}(t)$ is the Ricci-DeTurck ODE solution with respect to $\hat g_{\a,\hat\a_{N_\a},\tau}(t)$ and $g_{N_\a,\tau}(t)$;
\item $g_{\a,\tau}(t)\equiv \left(\Phi_{\a,\tau}(t)\right)^*\hat g_{\a,\tau}(t)$ is a Ricci flow satisfying (i) and (ii);
\item $(1-\bar\e_0)^{1/3} g_{N_\a,\tau}(t)\leq \hat g_{\a,\tau}(t)\leq  (1+\bar\e_0)^{1/3} g_{N_\a,\tau}(t)$; 
\item  $\mathtt{S}_{\a,\tau}:= \mathtt{S}_{\a,\hat \a_{N_\a},\tau}$;
\item $\Phi_{\a,\tau}(t)\to \Phi_{0,\a,\tau}$ in $C^\infty_{loc} (\mathbb{S}^{n-1}\setminus \mathtt{S}_{N_\a,\tau})\cap C^0 (\mathbb{S}^{n-1})$ as $t\to 0$, {where $\Phi_{0,\a,\tau}$ is a homeomorphism on $\mathbb{S}^{n-1}$ and local diffeomorphism on $\mathbb{S}^{n-1}\setminus \mathtt{S}_{\a,\tau}$};
\item $g_{\a,\tau}(t)\to (\Phi_{0,\a,\tau})^*\circ (\Phi_{0,N_\a,\tau})^*\circ\dots \circ (\Phi_{0,1,\tau})^* \circ (F_{\a,\tau})^* g_{0,\a,\tau}$ in $C^\infty_{loc} (\mathbb{S}^{n-1}\setminus \mathtt{S}_{\a,\tau})$ as $t\to 0$.
\end{itemize} 

\bigskip

After passing to further sub-sequence (since there is only finitely many $\hat\a_k$), we define
\begin{itemize}
    \item $g_{\a}(t):=\lim_{\tau_i\to 0}g_{\a,\tau_i}(t)$, $\hat g_\a(t):=\lim_{\tau_i\to 0}\hat g_{\a,\tau_i}(t)$;
    \item $g_{k}(t):=\lim_{\tau_i\to 0}g_{k,\tau_i}(t)$, $\hat g_k(t):=\lim_{\tau_i\to 0}\hat g_{k,\tau_i}(t)$;
    \item $F_{\a,0}:=\lim_{\tau_i\to 0} F_{\a,\tau_i}$
    \item $F_{k,0}:=\lim_{\tau_i\to 0} F_{k,\tau_i}$;
    \item $F_{\a,\hat \a_{N_\a}, 0}:=\lim_{\tau_i\to 0} F_{\a,\hat \a_{N_\a}, \tau_i}$;
    \item $F_{\hat\a_k,\hat\a_{k-1},0}:=\lim_{\tau_i\to 0}F_{\hat\a_k,\hat\a_{k-1},\tau_i}$;
    \item $\Phi_{\a,0}(t):=\lim_{\tau_i\to 0} \Phi_{\a,\tau_i}(t)$
    \item $\Phi_{k,0}(t):=\lim_{\tau_i\to 0} \Phi_{k,\tau_i}(t)$;
    \item $\Phi_{0,k,0}:=\Phi_{k,0}(0)$, $\Phi_{0,\a,0}:=\Phi_{\a,0}(0)$;
    \item $\psi_{0,\hat \b_0}:=\mathrm{id}$;
    \item $\psi_{0,\a}:= \Phi_{0,1,0}\circ \dots \circ \Phi_{0,N_\a,0}\circ \Phi_{0,\a,0}$;
\end{itemize}

Then $g_\a(t)$ and $\psi_{0,\a}$ satisfy (i), (ii) and (v).  Moreover, 
\begin{equation}\label{eqvest}
 (1-\bar\e_0)^{1/3} \cdot \Phi_{\a,0}(t)^* g_{N_\a}(t)\leq g_\a(t)=\Phi_{\a,0}(t)^* \hat g_{\a}(t) \leq (1+\bar\e_0)^{1/3}\cdot \Phi_{\a,0}(t)^* g_{N_\a}(t). 
\end{equation}

The singular set $\mathtt{S}_{\a}$ cannot be defined by taking a limit of $\mathtt{S}_{\a,\tau}$ only, this is because $\mathtt{S}_{\a,\tau}$ only corresponds to the conical singularity with regularized links in which their limit only captures the top stratum in the singularity. But we might define $\mathtt{S}_\a$ by induction, analogous to $\mathtt{S}_{\a,\tau}$, by using the homeomorphism obtained from $\tau_i\to 0$ instead of the intermediate homeomorphism $\Phi_{0,\a,\tau}$, etc. 
More precisely, define
\begin{itemize}
\item $\Sigma_0:=$ singularities of $g_{0,\hat\b_0,0}=$ singularities of $g_{0,\hat\b_0}$;
\item $\Sigma_k:=$ singularities of $(F_{\hat\a_k,\hat\a_{k-1},0})^*g_{0,\hat\a_k,0}$ for $1\le k\le N_\a$;
\item $\Sigma_\a:=$ singularities of $(F_{\a,\hat \a_{N_\a},0})^*g_{0,\a,0}$;
\item $\mathtt{S}_0=\mathtt{S}_{\hat \b_0}=\Sigma_0=$ singularities of $g_{0,\hat\b_0}$;
    \item $\mathtt{S}_1= \Phi_{0,1,0}^{-1}\left(\mathtt{S_0}\cup \Sigma_1\right)$;
    \item $\mathtt{S}_k= \Phi_{0,k,0}^{-1}\left(\mathtt{S}_{k-1}\cup \left(F_{k-1,0}\circ\Phi_{0,1,0}\circ\cdots\circ\Phi_{0,k-1,0}\right)^{-1}\left(\Sigma_k\right)\right)$, for $2\leq k\leq N_\a$;
    \item $\mathtt{S}_\a=\Phi_{0,\a,0}^{-1}\left(\mathtt{S}_{N_\a}\cup \left(F_{N_\a,0}\circ\Phi_{0,1,0}\circ\cdots\circ\Phi_{0,N_\a,0}\right)^{-1}\left(\Sigma_\a\right)\right)$.
    \end{itemize}

\bigskip

Then outside $\mathtt{S}_\a$, $\psi_{0,\a}$ is smooth by local regularity of Ricci-DeTurck flow and satisfies
\begin{equation}
    \begin{split}
        g_\a(0)=(\psi_{0,\a})^* \circ (F_{\a,0})^* g_{0,\a,0}.
    \end{split}
\end{equation}

Recall from \eqref{eqn:initial-gauged-ind} that we have 
\begin{equation}\label{eqn:g_0a0}
    g_{0,\a,0}:=\a_1^2\left(dx_1^2+\sin^2 x_1 \cdot (\bar \psi_{0,(\a_2,...,\a_{n-1})})^* g_{0,(\a_2,...,\a_{n-1})} \right)
\end{equation}
and 
\begin{equation}\label{eqn:F-alp-form}
\begin{split}
   F_{\a,0}(x)=\lim_{\tau\to0}F_{\a,\tau}(x)&=  \lim_{\tau\to0} F_{\a,\hat\a_k,\tau}\circ\dots F_{\hat \a_1,\hat \a_0,\tau}(x)\\
   &=  \lim_{\tau\to0} \left(x_1,  \bar\Psi_{\a,\hat\a_{N_\a}}\circ \dots\bar\Psi_{\hat\a_1,\hat\a_0}(\tau)(x_2,...,x_{n-1}) \right)\\
   &=\left(x_1,  \bar\Psi_{\a,\hat\a_{N_\a}}\circ \dots\bar\Psi_{\hat\a_1,\hat\a_0}(0)(x_2,...,x_{n-1}) \right)\\
   &=\left(x_1,  \bar\psi_{0,(\a_2,...,\a_{n-1})}^{-1}(x_2,...,x_{n-1}) \right)
\end{split}
\end{equation}
thanks to induction hypotheses, so that outside $\mathtt{S}_\a$, we have 
\begin{equation}
\begin{split}
    g_\a(0)&=(\psi_{0,\a})^* \left[ \a_1^2\left(dx_1^2+\sin^2 x_1 \cdot \bar g_{0,(\a_2,...,\a_{n-1})} \right)\right]=(\psi_{0,\a})^* g_{0,\a}.
    \end{split}
\end{equation}

We denote distance $d_{0,\a}:=\lim_{t\to 0^+}d_{g_\a(t)}$ and $d_{0,k}:=\lim_{t\to 0^+} d_{g_k(t)}$ which exist by  \cite[Lemma 3.1]{SimonTopping2021}. We next claim that  $ \mathtt{S}_\a$ is of zero $(n-1)$-Hausdorff measure with respect to distance $d_{0,\a}:=\lim_{t\to 0^+}d_{g_\a(t)}$. By construction, $\Phi_{0,\a,0}$ is a $(1+\bar \e_0)^{1/2}$ bi-Lipschitz map from $(\mathbb{S}^{n-1},d_{0,\a})$  to $(\mathbb{S}^{n-1},d_{0,N_\a})$.
Hence, 
\begin{equation}
    \mathcal{H}^{n-1}_{d_{0,\a}}(\mathtt{S}_\a)\leq    2 \left[\mathcal{H}^{n-1}_{d_{0,N_\a}}(\mathtt{S}_{N_\a})+\mathcal{H}^{n-1}_{d_{0,N_\a}}\left(\left(F_{N_\a,0}\circ\psi_{0,N_\a}\right)^{-1}\left(\Sigma_\a\right) \right)  \right].
\end{equation}

Since each $\Phi_{0,k,0}$ is bi-Lipschitz, by induction it suffices to show that $\mathcal{H}^{n-1}_{d_{0,0}}\left( F_{N_\a,0}^{-1}(\Sigma_\a)\right)=0$. From the precise formula of $F_{N_\a,0}$ from \eqref{eqn:F-alp-form} and its bi-Lipschitz property  from induction hypothesis (iv), it remains to show  $\mathcal{H}^{n-1}_{(F_{N_\a,0})_*d_{0,0}}\left(\Sigma_\a\right)=0$. Using  the warped product structure and \eqref{eqn:F-alp-form}, we have  
\begin{equation}
\begin{split}
&\quad \mathcal{H}^{n-1}_{(F_{N_\a,0})_*d_{0,0}}\left(\Sigma_\a\right)\\
&\leq C\cdot \mathcal{H}^{n-2}_{d_{0,\overline{N_\a}}}\left(\bar{\mathtt{S}}_{\bar{\a}_{N_\a}}\cup \bar{\Psi}_{\bar\a,\bar \a_{N_\a}}^{-1}(\bar{\mathtt{S}}_{\bar \a})\right)+\mathcal{H}^{n-1}_{(F_{N_\a,0})_*d_{0,0}}\left(F^{-1}_{\a, \hat\a_{N_\a},0}(\text{singularities of } g_{0,\a,0})\right)\\
&\leq C\left[ \mathcal{H}^{n-2}_{d_{0,\overline{N_\a}}}\left(\bar{\mathtt{S}}_{\bar{\a}_{N_\a}}\right)+\mathcal{H}^{n-2}_{d_{0,\bar \a}}\left( \bar{\mathtt{S}}_{\bar \a}\right)\right]+C\cdot \mathcal{H}^{n-1}_{d_{0,0}}\left(F^{-1}_{\a,0}(\text{singularities of } g_{0,\a,0})\right),
\end{split}
\end{equation}
where $\bar{\mathtt{S}}_{\bar{\a}}$ denotes the singular set in the earlier dimension with $\bar \a:=(\a_2,...,\a_{n-1})$. By induction hypothesis (iv), the first two terms on the R.H.S. vanish. 

Using \eqref{eqn:F-alp-form} and \eqref{eqn:g_0a0}, the last term in the inequality above can be controlled by 
\begin{equation}
    \begin{split}
        \mathcal{H}^{n-1}_{d_{0,0}}\left(F^{-1}_{\a,0}(\text{singularities of } g_{0,\a,0})\right)&\leq C \cdot   \mathcal{H}^{n-2}_{d_{0,\bar 0}}\left(( {\bar\psi_{0,\bar\a}})(\text{singularities of } (\bar\psi_{0,\bar\a}^* g_{0,\bar\a})\right)\\
        &=C \cdot   \mathcal{H}^{n-2}_{d_{0,\bar 0}}\left({\bar\psi_{0,\bar\a}}( \bar{\mathtt{S}}_{\bar \a})\right)\\
        &\leq C \cdot   \mathcal{H}^{n-2}_{d_{0,\bar \a}}\left(  \bar{\mathtt{S}}_{\bar \a}\right)=0.
    \end{split}
\end{equation}
where $d_{0,\bar 0}$ denotes the distance limit of $\bar g_{(j^{-1},...j^{-1})}(t)$ as $t\to 0$.  In the last inequality, we have used, from the induction assumption (iv), that $\bar\psi_{0,\bar\a}$ is bi-Lipchitz. This proves the claim. By induction on index $k$, this also shows that the limiting Ricci-DeTurck flow, Ricci flow and Ricci-DeTurck ODE are independent of the choice of sub-sequence by the uniquness result in Theorem~\ref{thm:unique-general}.

It remains to construct $\Psi_{\a,\b}$ in (iii). It suffices to construct Ricci-harmonic map heat flow between $g_\a(t)$ and $g_\b(t)$ with good estimates, whenever $|\a-\b|$ is small. We first establish the existence of the Ricci-harmonic map heat flow between $g_\b(t)$ and $g_\a(t)$ for all $\a,\b\in [j^{-1},1]^{n-1}$.

{
\begin{claim}There exists $\delta(n,j)>0$ so that if $|\a-\b|<\delta$ and $\a,\b\in [j^{-1},1]^{n-1}$, then there exists a solution to the Ricci-harmonic map heat flow 
\begin{equation}\label{eqn:harmonicmapheatflow}
    \left\{
    \begin{array}{ll}
        \partial_t \Psi_{\a,\b} =\Delta_{g_\b(t),g_\a(t)}\Psi_{\a,\b} ,\;\;\text{for}\; t\in (0,S];\\[1mm]
         \lim_{t\to 0}\Psi_{\a,\b}(t):=\psi_{0,\a}^{-1}\circ \psi_{0,\b}.
    \end{array}
    \right.
\end{equation}
Moreover, for any $\e>0$,  there exists small positive constant $\delta'(j,n,\varepsilon)>0$ such that if $|\a-\b|<\delta'$, then  $\Psi_{\a,\b}$ satisfies
\begin{equation}
(1-\e)g_\b(t) \leq \Psi_{\a,\b}^*\,g_\a(t)\leq (1+\e) g_\b(t).
\end{equation}
on $(0,S]$.
\end{claim}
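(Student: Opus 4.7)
The plan is to prove existence and the closeness estimate in tandem, by working at the regularized level $\tau>0$ where the Ricci flows $g_{\a,\tau}(t)$ and $g_{\b,\tau}(t)$ are built via the chain construction of Claim~\ref{claim:const-RDF}, and then passing $\tau\to 0$ using Theorem~\ref{thm:unique-general}. The short-time existence of the Ricci-harmonic map heat flow between two smooth Ricci flows is standard (Chen–Zhu for complete non-compact, or direct parabolic theory here since $\mathbb S^{n-1}$ is compact); the real content is (a) that the solution persists up to $t=S$ with a $(1\pm\e)$-pull-back estimate, and (b) that the limit as $\tau\to 0$ attains the prescribed boundary value $\psi_{0,\a}^{-1}\circ\psi_{0,\b}$.

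For the existence and estimate at the $\tau$-level, I would choose, say, $g_{\b,\tau}(t)$ as the target and solve
\begin{equation}
\partial_t \Psi_{\a,\b}^{\tau}=\Delta_{g_{\a,\tau}(t),g_{\b,\tau}(t)}\,\Psi_{\a,\b}^{\tau},\qquad \Psi_{\a,\b}^{\tau}(S)=G_\tau,
\end{equation}
where $G_\tau$ is a carefully chosen gauge at time $S$ (obtained by composing the Ricci-DeTurck ODEs $\Phi_{k,\tau}$ and maps $F_{k,\tau}$ along the two chains from $\hat\b_0$ to $\a$ and to $\b$). Then $(\Psi_{\a,\b}^{\tau})^* g_{\b,\tau}(t)$ is a Ricci-DeTurck flow with respect to $g_{\a,\tau}(t)$. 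By tracing the chain construction, one checks that at time $S$ this pull-back differs from $g_{\a,\tau}(S)$ by $O(|\a-\b|)$ in $L^\infty(g_{\a,\tau}(S))$. Running Theorem~\ref{thm:weak-stable-RDF} backwards in time (equivalently, applying the uniqueness part to compare two Ricci-DeTurck flows starting at time $S$) together with the smoothing estimates of Lemma~\ref{lma:RDF-higher-ord} gives the $(1\pm C(n,\a)|\a-\b|)$-bound on the whole interval $(0,S]$, uniformly in $\tau$. Interior parabolic regularity for $\Psi_{\a,\b}^\tau$ (the Ricci-harmonic map heat flow is strictly parabolic once $|\Psi^*g_\a - g_\b|$ is controlled) permits passing $\tau\to 0$ to produce $\Psi_{\a,\b}(t)$ on $(0,S]$ satisfying the claimed estimate with $\e = C(n,j)\,|\a-\b|$.

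The identification of the initial value $\lim_{t\to 0}\Psi_{\a,\b}(t)=\psi_{0,\a}^{-1}\circ\psi_{0,\b}$ is then a direct computation: by construction $\psi_{0,\a}$ is the composition $\Phi_{0,1,0}\circ\cdots\circ\Phi_{0,N_\a,0}\circ\Phi_{0,\a,0}$ of the limits of the Ricci-DeTurck ODEs along the chain, and similarly for $\psi_{0,\b}$; the composition $G_\tau$ chosen at time $S$ was designed precisely so that when one runs the harmonic map heat flow back to $t=0$ and takes $\tau\to 0$, the unique (by Theorem~\ref{thm:unique-general}) boundary value is the composition of these ODE-limits, which equals $\psi_{0,\a}^{-1}\circ\psi_{0,\b}$ outside the measure-zero singular set and extends to a homeomorphism of $\mathbb S^{n-1}$ by continuity.

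The main obstacle, and the reason $\delta$ must depend on $\e$, is the gauge bookkeeping when $\a$ and $\b$ are connected to $\hat\b_0$ by \emph{different} chains $\{\hat\a_k\}$ and $\{\hat\b_k\}$. In that situation neither $g_{\a,\tau}(t)$ nor $g_{\b,\tau}(t)$ is a direct Ricci-DeTurck perturbation of the other; instead both sit as Ricci-DeTurck flows over a common earlier reference flow $g_{k_0,\tau}(t)$ with $k_0\le \min(N_\a,N_\b)$. The weak stability (Theorem~\ref{thm:weak-stable-RDF}) still controls $|\hat g_{\a,\tau}-\hat g_{\b,\tau}|$ in $L^\infty$ by the sum of $O(|\a-\hat\a_{k_0}|)+O(|\b-\hat\a_{k_0}|)$, but to transfer this into control of $(\Psi_{\a,\b}^\tau)^* g_\a - g_\b$ one must carefully compose the ODE-diffeomorphisms $\Phi_{k,\tau}$ on both chains and invoke the $L^\infty$-continuous dependence of Ricci-DeTurck ODEs on their driving Ricci-DeTurck flow (via Lemma~\ref{lma:RDF-higher-ord} applied to the difference). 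Choosing $\delta$ so that all intermediate ODE compositions stay within the bi-Lipschitz window $(1\pm\bar\e_0)$ ensures the harmonic-map-heat equation remains uniformly parabolic and the whole argument closes; the fact that each chain has at most $O(1/\delta_0)$ steps (and $\delta_0$ is fixed, depending only on $n,j$) keeps the constants harmless.
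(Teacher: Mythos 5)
There is a genuine gap in the time direction of your argument. You prescribe the gauge at the \emph{terminal} time, $\Psi_{\a,\b}^{\tau}(S)=G_\tau$, and then propose to ``run Theorem~\ref{thm:weak-stable-RDF} backwards in time'' to get the $(1\pm\e)$ bound on all of $(0,S]$. The Ricci-harmonic map heat flow is a forward parabolic system: unlike the Ricci-DeTurck ODE \eqref{eqn:RDF-ODE} (which, being an ODE, can legitimately be solved from $t=S$ backwards), it cannot be solved with terminal data. Moreover, the stability estimate in Theorem~\ref{thm:weak-stable-RDF} controls $\sup_{t}\|g(t)-\hat g(t)\|$ by $\limsup_{s\to0}\|g(s)-\hat g(s)\|$, i.e.\ it propagates closeness \emph{forward}; two Ricci-DeTurck flows that are close at $t=S$ need not be close at earlier times, so there is no ``uniqueness part starting at time $S$'' to invoke. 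The closeness $O(|\a-\b|)$ that you want to ``check by tracing the chain'' at $t=S$ is in fact a consequence of the forward stability starting from the explicit comparison of the initial data at $t\to0$ (where $g_{0,\a}$ and $g_{0,\b}$ are comparable by inspection, cf.\ \eqref{eqn:improved-initial}); asserting it at $t=S$ first and deducing the rest backwards is circular.

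The paper's argument runs in the opposite direction and you should restructure accordingly: solve the flow forward on $[t_i,S]$ with $t_i\to0$, taking as initial data at $t=t_i$ the composition of the ODE gauges $\Phi_{\a,0}^{-1}(t_i)\circ\Phi_{\b,0}(t_i)$ (for the base case $\a,\b\in\mathcal Z_0$), respectively $\Phi_{\a,0}^{-1}(t_i)\circ\Psi_{\hat\a_{k+1},\hat\b_{k+1}}(t_i)\circ\Phi_{\b,0}(t_i)$ in the inductive step, where $\Psi_{\hat\a_{k+1},\hat\b_{k+1}}$ is the harmonic map heat flow already constructed between the two reference flows one chain level down. The chained estimates from Claim~\ref{claim:const-RDF} give the rough bound $(1\pm\bar\e_0)$ at $t=t_i$, Theorem~\ref{thm:weak-stable-RDF} propagates it forward to $(1\pm\Lambda\bar\e_0)$ on $[t_i,S]$, and one passes $i\to\infty$. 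The sharp $(1\pm\e)$ bound with $\e\to0$ as $|\a-\b|\to0$ is then obtained \emph{a posteriori}: one identifies $\lim_{t\to0}(\Psi_{\a,\b}(t))^*g_\a(t)-g_\b(t)=(\psi_{0,\b})^*(g_{0,\a}-g_{0,\b})$ away from the measure-zero singular set and applies the singular-initial-data stability of Theorem~\ref{thm:unique-general}. Your diagnosis of the main difficulty (different chains, gauge bookkeeping, composing the $\Phi$'s) is correct, and working at the $\tau$-level before passing to the limit is a viable variant, but the backward-in-time propagation step as written does not work.
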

}
\begin{proof}[Proof of claim]
We prove it by showing the claim  with induction, that for $\a,\b\in \mathcal{Z}_k$  for all $0\leq k\leq N$, where $N$ is the smallest possible nonnegative integer such that
$[j^{-1},1]^{n-1}=\mathcal{Z}_N$ and $\mathcal{Z}_0\subseteq \mathcal{Z}_1\cdots \subseteq \mathcal{Z}_N$. When $k=0$. We let $t_i\to 0^+$ and consider the equation:
\begin{equation}
    \left\{
    \begin{array}{ll}
        \partial_t \Psi_{\a,\b,i} =\Delta_{g_\b(t),g_\a(t)}\Psi_{\a,\b,i} ,\;\;\text{for}\; t\in [t_i,S] \\[1mm]
         \Psi_{\a,\b,i}(t_i)=\Phi_{\a,0}^{-1}(t_i)\circ \Phi_{\b,0}(t_i).
    \end{array}
    \right.
\end{equation}
Since $\mathbb{S}^{n-1}$ is compact and the initial data is smooth, it admits a short-time solution and remains a diffeomorphism. Furthermore, $\left(\Psi_{\a,\b,i}(t)\right)_*g_\b(t)$ is a solution to the Ricci-DeTurck flow. At $t=t_i$, by \eqref{eqvest} with $N_\a=0$,
\begin{equation}
    \begin{split}
       (1-\bar\e_0)  g_{\b}(t_i)
       &\leq (1-\bar\e_0)^{1/3} (\Phi_{\b,0}(t_i))^* g_{\hat \a_0}(t_i)\\
       &\leq  \left(\Phi_{\a,\b,i}(t_i)\right)^* g_\a(t_i)=(\Phi_{\b,0}(t_i))^* \hat g_{\a}(t_i)\\
        &\leq (1+\bar\e_0)^{1/3} (\Phi_{\b,0}(t_i))^* g_{\hat \a_0}(t_i)\\
        &\leq (1+\bar\e_0)  g_{\b}(t_i)
    \end{split}
\end{equation}
so that we might apply Theorem~\ref{thm:weak-stable-RDF} using the fact that the push-forward is also a Ricci-DeTurck flow so that 
\begin{equation}\label{eqn:harmon-map-esti}
    \begin{split}
       (1-\Lambda \bar\e_0)  g_{\b}(t)
       \leq  (\Psi_{\a,\b,i}(t))^* g_{\a}(t)
        \leq (1+\Lambda\bar\e_0)  g_{\b}(t)
    \end{split}
\end{equation}
for all $t\in [t_i,S\wedge T_{max}]$. This implies high order regularity of $\Psi_{\a,\b,i}$ by \cite[Proposition 2.1]{Lee2025} and $T_{max}\geq S$. In particular, we might pass $i\to +\infty$ and obtain a solution $\Psi_{\a,\b}$ to \eqref{eqn:harmonicmapheatflow}. The convergence as $t\to 0$ follows from a similar argument in \eqref{eqn:degree}:
\begin{equation}
    \begin{split}
        d_{g_\a(s)} \left(\Psi_{\a,\b}(x,t),\Psi_{\a,\b}(x,s) \right)&\leq C\sqrt{t}
    \end{split}
\end{equation}
for $0<s<t$. By letting $s\to 0$ using \cite[Lemma 3.1]{SimonTopping2021}, we show the convergence to the initial data as $t\to 0$. Moreover, the convergence is smooth outside $\mathtt{S}_\b\cup \Psi_{\a,\b}(0)^{-1}(\mathtt{S}_\a)$ by interior estimate of Ricci-DeTurck flow.  This proves the existence of solution to \eqref{eqn:harmonicmapheatflow} for the case of $k=0$.

We now want to show that \eqref{eqn:harmon-map-esti} can be improved after taking $i\to +\infty$. Since the initial data is smooth outside $ \mathtt{S}_\b\cup \Psi_{\a,\b}(0)^{-1}(\mathtt{S}_\a)$, and the push-forward of the Ricci-harmonic map heat flow is a solution to the Ricci-DeTurck flow, by Theorem~\ref{thm:unique-general} it suffices to show that the harmonic map heat flow is initially of better estimate. Since outside $\mathtt{S}_\b\cup \Psi_{\a,\b}(0)^{-1}(\mathtt{S}_\a)$, we have 
\begin{equation}\label{eqn:improved-initial}
    \begin{split}
    (\Psi_{\a,\b}(0))^* g_\a(0)-g_\b(0)&=(\psi_{0,\b})^* \left(g_{0,\a}-g_{0,\b}\right).
    \end{split}
\end{equation}
This follows that for any $\e>0$, there exists  $\delta'(j,n,\varepsilon)>0$ such that if $|\a-\b|<\delta'$ and $\a,\b\in \mathcal{Z}_0$, then 
\begin{equation}
(1-\e) g_\b(0)\leq (\Psi_{\a,\b}(0))^* g_\a(0)\leq (1+\e) g_\b(0)
\end{equation}
outside $\mathtt{S}_\b\cup \Psi_{\a,\b}(0)^{-1}(\mathtt{S}_\a)$. 
We remark here that $\mathtt{S}_\b\cup \Psi_{\a,\b}(0)^{-1}(\mathtt{S}_\a)$ is of vanishing $(n-1)$-Hausdorff measure with respect to $d_{0,\b}$, by the same reasoning as showing $\mathcal{H}_{d_{0,\a}}(\mathtt{S}_\a)=0$. Therefore, \begin{equation}
(1-\Lambda\e) g_\b(t)\leq (\Psi_{\a,\b}(t))^* g_\a(t)\leq (1+\Lambda\e) g_\b(t)
\end{equation}
for $t\in (0,S]$ by Theorem~\ref{thm:unique-general}. This proves the case of $k=0$.
\bigskip

We assume the claim is true on $\mathcal{Z}_k$ for $0\leq k\leq N$. We now prove the case when $\a,\b\in \mathcal{Z}_{k+1}$. We might assume $N_\a,N_\b=k+1$. By construction, $g_\a(t)=(\Phi_{\a,0}(t))^*\,\hat g_{\a,\hat\a_{k+1}}(t)$ and $g_\b(t)=(\Phi_{\b,0}(t))^*\,\hat g_{\b,\hat\b_{k+1}}(t)$ where $\hat \a_{k+1},\hat \b_{k+1}\in \mathcal{Z}_k$ so that induction hypothesis implies the existence of $ \Psi_{\hat\a_{k+1},\hat\b_{k+1}}(t)$ on $(0,S]$ since $|\hat\a_{k+1}-\hat\b_{k+1}|\leq |\a-\b|$ where we might assume to be small. 

We let $t_i\to 0^+$ and consider the equation:
\begin{equation}
    \left\{
    \begin{array}{ll}
        \partial_t \Psi_{\a,\b,i} =\Delta_{g_\b(t),g_\a(t)}\Psi_{\a,\b,i} ,\;\;\text{for}\; t\in [t_i,S] \\[1mm]
         \Psi_{\a,\b,i}(t_i)=\Phi_{\a,0}^{-1}(t_i)\circ  \Psi_{\hat\a_{k+1},\hat\b_{k+1}}(t_i)\circ  \Phi_{\b,0}(t_i).
    \end{array}
    \right.
\end{equation}
Thanks to the induction hypothesis, at $t=t_i$ we have 
\begin{equation}
    \begin{split}
    \left(\Psi_{\a,\b,i}(t_i)\right)^* g_\a(t_i)&=(\Phi_{\b,0}(t_i))^*\circ ( \Psi_{\hat\a_{k+1},\hat\b_{k+1}}(t_i))^* \hat g_{\a}(t_i) 
    \\
        &\leq (1+\bar\e_0)^{1/3}\cdot (\Phi_{\b,0}(t_i))^*\circ ( \Psi_{\hat\a_{k+1},\hat\b_{k+1}}(t_i))^* g_{\hat\a_{k+1}}(t_i)\\
       &\leq (1+\bar\e_0)^{2/3}\cdot (\Phi_{\b,0}(t_i))^* g_{\hat\b_{k+1}}(t_i)\\
       &\leq (1+\bar\e_0)  (\Phi_{\b,0}(t_i))^* \hat g_{\b}(t_i)\\
        &= (1+\bar\e_0)  g_{\b}(t_i).
    \end{split}
\end{equation}
The lower bound is similar, so that 
\begin{equation}
   (1-\bar \e_0) g_\b(t_i)\leq   \left(\Psi_{\a,\b,i}(t_i)\right)^* g_\a(t_i)\leq (1+\bar \e_0) g_\b(t_i)
\end{equation}
and hence, Theorem~\ref{thm:weak-stable-RDF} can be applied to obtain a solution to \eqref{eqn:harmonicmapheatflow} on $\mathbb{S}^{n-1}\times (0,S]$, as in the case of $k=0$, with
\begin{equation}\label{eqn:rough-harm}
   (1-\Lambda\bar \e_0) g_\b(t)\leq   \left(\Psi_{\a,\b}(t)\right)^* g_\a(t)\leq (1+\Lambda\bar \e_0) g_\b(t)
\end{equation}
for $t\in (0,S]$. The initial data is given by
\begin{equation}\label{eqn:ini}
\begin{split}
    \lim_{t\to 0}\Psi_{\a,\b}(t)&=\Phi_{0,\a,0}^{-1}\circ  \Psi_{\hat\a_{k+1},\hat\b_{k+1}}(0)\circ  \Phi_{0,\b,0}\\
    &=\Phi_{0,\a,0}^{-1}\circ \psi_{0,\hat\a_{k+1}}^{-1}\circ \psi_{0,\hat\b_{k+1}}\circ  \Phi_{0,\b,0}\\
    &=\psi_{0,\a}^{-1}\circ \psi_{0,\b}.
    \end{split}
\end{equation}
This proves the existence of solution $\Psi_{\a,\b}$ to \eqref{eqn:harmonicmapheatflow} for the case of $k+1$. It remains to show the improved estimate. This follows from the same argument as in the case of $k=0$, using \eqref{eqn:ini} that outside $\mathtt{S}_\b\cup \Psi_{\a,\b}(0)^{-1}(\mathtt{S}_\a)$, \eqref{eqn:improved-initial} holds and hence Theorem~\ref{thm:unique-general} applies to show that for any $\e>0$, there exists $\delta'(n,j,\varepsilon)>0$ such that if  $|\a-\b|<\delta'$ and $\a,\b\in \mathcal{Z}_{k+1}$, then 
\begin{equation}
(1-\Lambda\e) g_\b(t)\leq (\Psi_{\a,\b}(t))^* g_\a(t)\leq (1+\Lambda\e) g_\b(t)
\end{equation}
for $t\in (0,S]$. This proves the case of $k+1$ and hence completes the proof by induction. 
\end{proof}
(iii) follows from the claim, and this finishes the proof of Theorem~\ref{thm:const-2}. 
\end{proof}

\bigskip

We end this subsection by presenting another application of our stability result. 
Using the method in the proof of Theorem~\ref{thm:const-2}, we are able to show that the expander constructed by Deruelle \cite{Deruelle2016} is  stable with respect to $L^\infty$ variation of the links. Quantitatively, we have the following:

\begin{thm}\label{thm:stable-expander}
For any $n\in \mathbb{N}$ and $v>0$, there exists $\delta_0(n,v),\Lambda(n,v)>0$ such that the following holds. Suppose $X_i:=(\mathbb{S}^{n-1},h_i)$ where $i=1,2$ are two smooth manifolds such that 
\begin{enumerate}
    \item[(i)] $\mathrm{Vol}_{h_1}(X_1)\geq v>0$; 
    \item[(ii)] $\mathrm{Rm}(h_i)>1$ for $i=1,2$;
    \item[(iii)] $(1-\e) h_2\leq h_1\leq (1+\e)h_2$ for some $0< \e<\delta_0$.
\end{enumerate}
 Let $(M_i,g_i,f_i,p_i)$ be the unique expanding Ricci solitons coming out of $C(X_i)$, constructed by Deruelle \cite{Deruelle2016} for $i=1,2$, then there exists a diffeomorphism $\Phi:M_1\to M_2$ such that 
\begin{equation}
    (1-\Lambda\e) g_1\leq \Phi^*g_2 \leq (1+\Lambda \e)g_1 ,\;\;\text{on}\; M_1.
\end{equation}
Furthermore for any $k\in \mathbb{N}$, there exists $C_k(n,v)>0$ so that $$\sup_{M_1}|\nabla^{k,g_1}(\Phi^*g_2-g_1)|\leq C_k\e.$$
\end{thm}
\begin{proof}
Since the proof follows from modification of that in claim~\ref{claim:const-RDF}, we only give a sketch. It suffices to show the quantitative stability. To do this, we construct $(M_2,g_2,f_2,p_2)$ using an alternative way. 

We let $g_1(t)$ be the Ricci flow associated to $(M_1,g_1,f_1,p_1)$. By identifying the distance limit of $d_{g_1(t)}$ as $t\to 0$ with $C(X_1)$ using \cite[Lemma 3.1]{SimonTopping2021}, we might assume $M_1\equiv C(X_1)$ where $g_{2,0}:=dr^2+r^2 h_2$ is also a $L^\infty$ metric on $M_1$ such that 
\begin{equation}
    (1-\e) g_{2,0}\leq  dr^2+r^2h_1\leq (1+\e) g_{2,0}
\end{equation}
on $M_1$. We follow the method in the proof of  claim~\ref{claim:const-RDF}. For $\sigma\to 0^+$, we consider the $L^\infty$ metric 
\begin{equation}
    g_{2,0,\sigma}:=\phi_\sigma g_1(\sigma)+ (1-\phi_\sigma) g_{2,0}
\end{equation}
where $\phi_\sigma(x):=\phi( d_{g_1(\sigma)}(x,o_{tips})/\sigma^{1/4})$. 

As $\sigma\to 0$, $g_{2,0,\sigma}\to g_{2,0}$ smoothly outside the tips. Arguing in a similar way as the construction of $\hat g_{\b,\hat \b,\tau,\sigma}$ in the proof of claim~\ref{claim:const-RDF},  this will enable us to apply Theorem~\ref{thm:weak-stable-RDF} (with rescaling) to $g_{2,0,\sigma}$ and obtain a Ricci-DeTurck flow $\hat g_{2,\sigma}(t)$ with respect to $g_1(\sigma+t)$ starting from $g_{2,0,\sigma}$ such that 
\begin{equation}\label{eqn:Lip-sigma}
   \left (1-\Lambda\e\right) g_{1}(\sigma+t)\leq \hat g_{2,\sigma}(t)\leq \left(1+\Lambda\e\right) g_{1}(\sigma+t)
\end{equation}
on $M_1\times (0,+\infty)$, for all $\sigma\to0^+$. By passing $\sigma\to 0$ subsequently and Lemma~\ref{lma:RDF-higher-ord}, we obtain a smooth solution to the Ricci-DeTurck flow $\hat g_2(t)$ with respect to $g_1(t)$ on $M_1\times (0,+\infty)$ satisfying
\begin{equation}\label{eqn:Lip-0}
   \left (1-\Lambda\e\right) g_{1}(t)\leq \hat g_{2}(t)\leq \left(1+\Lambda\e\right) g_{1}(t).
\end{equation}
Moreover  by \cite[Proposition 2.2]{ChuLee2025}, $\hat g_2(t)$ is smooth up to $t=0$ with $\hat g_2(0)=g_{2,0}$ outside the tips.

Now we might use the argument in the proof of claim~\ref{claim:const-RDF} to show that indeed $0< \mathrm{Rm}(\hat g_2(t))\leq \a t^{-1}$ for some $\a(n,v)>0$, using crucially the fact that $\delta_0$ is small and the singularity is isolated.  By considering the Ricci-DeTurck ODE as in \eqref{eqn:RDF-ODE}, we obtain a Ricci flow $\tilde g_2(t)$  coming out of $C(X_2)$ in the pointed Gromov-Hausdorff sense, thanks to \cite[Lemma 3.1]{SimonTopping2021}. By \cite[Theorem 6.1]{ChanLeePeachey2024}, $\tilde g_2(1)$ is a gradient expanding Ricci soliton coming out of $C(X_2)$. Since the link $X_2$ is smooth, by \cite[Theorem 1.3]{Deruelle2016}, the new solution $(M_1,\tilde g_2(1))$ is isometric to the expander $(M_2,g_2)$ constructed by Deruelle. The claimed estimates follow from \eqref{eqn:Lip-0} and Lemma~\ref{lma:RDF-higher-ord}.
\end{proof}

\subsection{Preservation of symmetry group}

We will show that the symmetry group of singular metric  will be preserved under Ricci flow smoothing, see also \cite{Brendle2013, LuTian} for the case of smooth initial data. 
\begin{prop}\label{prop:sym-group}
Suppose $g_0$ is a $L^\infty$ metric on  a compact smooth manifold $M^n$ and $g(t),t\in (0,T]$ is the Ricci flow on $M$ such that 
\begin{enumerate}
\item $|\Rm(g(t))|\leq \a t^{-1}$;
\item $\mathrm{inj}(g(t))\geq \sqrt{\a^{-1}t}$;
\item $\mathrm{Rm}(g(t))\geq 0$;
\item $d_{g(t)}\to d_{0}$ as $t\to 0$, for some distance function $d_0$;
\item $g(t)\to g_0$ in $C^\infty_{loc}(M\setminus \mathcal{S})$ as $t\to 0^+$, for some set $\mathcal{S}$ with upper Minkowski dimension $\leq n-2$ with respect to $d_0$.
\end{enumerate}
If $X_0$ is a (bounded) vector field on $M$  with respect to $g_0$ such that $\mathcal{L}_{X_0}g_0=0$ on $M\setminus \mathcal{S}$, then $\mathcal{L}_{X_0}g(t)=0$ on $M^n\times (0,T]$.
\end{prop}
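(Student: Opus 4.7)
The natural approach is to exploit the diffeomorphism invariance of Ricci flow. Let $\psi_s: M \to M$ denote the one-parameter group of diffeomorphisms generated by $X_0$; since $X_0$ is bounded and smooth off $\mathcal{S}$, each $\psi_s$ is a bi-Lipschitz homeomorphism of $M$ restricting to a smooth isometry of $g_0$ on $M\setminus\mathcal{S}$. Then $\tilde g_s(t) := \psi_s^* g(t)$ is again a Ricci flow on $M \times (0,T]$ satisfying the bounds (1)--(3), and $\tilde g_s(t) \to \psi_s^* g_0 = g_0$ in $C^\infty_{\mathrm{loc}}(M \setminus \psi_s^{-1}(\mathcal{S}))$ as $t \to 0^+$, with $\psi_s^{-1}(\mathcal{S})$ still of upper Minkowski dimension $\leq n-2$. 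The formal $s$-derivative at $s=0$ of the expected identity $\tilde g_s(t) = g(t)$ is precisely $h(t) := \mathcal{L}_{X_0} g(t) = 0$, and my plan is to prove this identity by a direct heat-kernel argument on $|h|^2$.

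Differentiating the Ricci flow equation along $\psi_s$ shows that $h$ satisfies the linearized Ricci equation $\partial_t h = -2\, D\Ric(g(t))[h]$ on $M \times (0,T]$. A Bochner-type computation (analogous to Lemma~\ref{lma:evo-h-L1}) gives
\begin{equation*}
(\partial_t - \Delta_{g(t)}) |h|^2 \leq C_n |\Rm(g(t))| \cdot |h|^2 + \mathrm{div}(\cdots),
\end{equation*}
where the divergence error is controlled by Shi-type interior estimates. Feeding this into the heat-kernel bound of Proposition~\ref{prop:heat-kernel} yields the representation
\begin{equation*}
|h(x,t)|^2 \leq C \limsup_{s \to 0^+} \int_M G(x,t;y,s)\, |h(y,s)|^2 \, d\mathrm{vol}_{g(s)},
\end{equation*}
where $G$ is the heat kernel of $\partial_t - \Delta_{g(t)} - R_{g(t)}$. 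Because $g(s) \to g_0$ in $C^\infty_{\mathrm{loc}}(M\setminus\mathcal{S})$ with $\mathcal{L}_{X_0} g_0 \equiv 0$ there, $|h(y,s)| \to 0$ pointwise on $M\setminus\mathcal{S}$; and the upper Minkowski dimension bound on $\mathcal{S}$ implies $\mathcal{H}^n_{d_0}(\mathcal{S}) = 0$, so the contribution from $\mathcal{S}$ to the heat-kernel integral vanishes. Dominated convergence then forces $|h(x,t)|^2 \leq 0$, i.e.\ $\mathcal{L}_{X_0} g(t) \equiv 0$.

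The main obstacle is the non-strict-parabolicity of the linearized Ricci equation due to diffeomorphism invariance, which manifests as the divergence terms above. In the smooth compact setting Hamilton's classical argument handles this via DeTurck gauge fixing and a standard maximum principle on $|h|^2$; I would adapt this here either by writing out the Bochner computation explicitly to see that the gauge contribution arises as a total divergence which integrates to zero against the heat kernel after integration by parts, or by regularizing $(g_0, X_0)$ through the Gianniotis--Schulze scheme of Theorem~\ref{thm:RF-existence} to a sequence of smooth Ricci flows with smoothed Killing fields, applying the classical symmetry-preservation to each, and passing to the limit using the uniform $L^\infty$ bound on $h$. The codimension-two hypothesis on $\mathcal{S}$ is essential for discarding the singular contribution in the limit, in complete parallel to the uniqueness argument of Theorem~\ref{thm:unique-general}.
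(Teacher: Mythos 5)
There is a genuine gap at the heart of your argument, and it is exactly the point you flag at the end but do not resolve. If you keep the vector field $X_0$ \emph{fixed} and set $h(t)=\mathcal L_{X_0}g(t)$, then indeed $\partial_t h=-2\,D\Ric(g(t))[h]$, but this operator is only degenerate parabolic: $-2\,D\Ric(g)[h]=\Delta_L h+\mathcal L_{V(h)}g$, where the gauge term $\mathcal L_{V(h)}g$ contains \emph{second} derivatives of $h$. Consequently $(\partial_t-\Delta)|h|^2$ is not bounded by $C|\Rm|\,|h|^2$ plus a harmless divergence: the "divergence error" carries $\tilde\nabla h$ and $\tilde\nabla^2 h$ and is of the same order as the Laplacian itself, so it cannot be absorbed by Shi-type estimates, nor does it "integrate to zero against the heat kernel" (integrating by parts produces $\nabla G$ against terms that are not small, with an extra $(t-s)^{-1/2}$ singularity). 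Your alternative fix — regularizing $(g_0,X_0)$ via the Gianniotis--Schulze scheme with "smoothed Killing fields" — is circular here: the regularization does not produce metrics for which the smoothed field is Killing, and the whole point of this section of the paper is that one cannot take symmetry preservation under such smoothings for granted. There is also a preliminary issue: $X_0$ is only bounded and Killing off $\mathcal S$, so $\mathcal L_{X_0}g(t)$ is not even defined across $\mathcal S$ a priori, and a merely bounded measurable field need not generate the flow $\psi_s$ you invoke.

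The missing idea, which is what the paper does, is Brendle's trick: do not freeze $X_0$, but evolve it by the coupled heat equation $\partial_t X=\Delta_{g(t)}X+\Ric(X)$ starting from $X_0$ at times $t_i\to 0$. After uniform $L^\infty$ and Bernstein gradient bounds one passes to a limit field $X(t)$, and the crucial point is that $\eta(t):=\mathcal L_{X(t)}g(t)$ then satisfies the \emph{strictly parabolic} Lichnerowicz heat equation $\partial_t\eta=\Delta_L\eta$ with no gauge term. One then applies the maximum principle to the extremal eigenvalues of $\eta$ (which satisfy $(\partial_t-\Delta)\varphi\le R\varphi$ in the barrier sense), uses $\varphi\le Ct^{-1/2}$ together with the codimension-two Minkowski bound on $\mathcal S$ to discard the singular contribution in the heat-kernel representation — this last step is the part of your outline that is correct in spirit — and concludes $\eta\equiv 0$. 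Only at the very end does one recover $\partial_t X=\mathrm{div}(\eta)-\tfrac12\nabla\tr\eta=0$, hence $X(t)\equiv X_0$ and $\mathcal L_{X_0}g(t)=0$. Without this reformulation your heat-kernel inequality for $|h|^2$ is not available.
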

\begin{proof}
Let $g(t)$ be the Ricci flow on $M^n\times (0,T]$. Let $t_i\to 0^+$ and we consider the following evolution of vector field:
\begin{equation}
\left\{
\begin{array}{ll}
\displaystyle\frac{\partial}{\partial t}X_i=\Delta_{g(t)} X_i +\Ric(X_i);\\[3mm]
X_i(t_i)=X_0
\end{array}
\right.
\end{equation}
on $M^n\times [t_i,T]$. Since it is a linear PDE, its solution exists up to $t=T$. We first claim that $X_i$ sub-converges to some time-evolving vector field on $(0,T]$. It suffices to show the $L^\infty$ estimate of  $X_i$, the smooth convergence then follows from the parabolic Schauder estimates.
\begin{claim}\label{claim:X-bdd}
There is a constant $C_0>0$ such that for all $i\in \mathbb{N}$, $|X_i|\leq C_0$ on $[t_i,T]$.
\end{claim}
\begin{proof}
We compute the evolution equation of $|X_i|$:
\begin{equation}
\begin{split}
 \heat |X_i|^2=-2|\nabla X_i|^2\leq 0.
\end{split}
\end{equation}
In particular, the maximum principle shows that $|X_i|_{g(t)}\leq \sup_{M} |X_0|_{g(t_i)}$ on $[t_i,T]$. Since the Ricci flow is non-increasing using $\Ric\geq 0$, it follows from the construction that $g(t_i)\leq Cg_0$ almost everywhere. Since $g_0$ is a $L^\infty$ metric on $M^n$, the conclusion follows.
\end{proof}

On the other hand, by the Ricci identity we have
\begin{equation}
\begin{split}
\heat |\nabla X|^2&\leq -2|\nabla^2 X|^2+C_n|\Rm| |\nabla X|^2+C_n|X||\nabla \Rm||\nabla X|.
\end{split}
\end{equation}

By the standard Bernstein trick, curvature estimates and Claim~\ref{claim:X-bdd}, it follows that there exists $C_1>0$ such that  $|\nabla X_i|\leq C_1 (t-t_i)^{-1/2}$ for all $t\in [t_i,T]$. Using this, we obtain by letting $t_i\to 0$ a vector field $X$ on $M^n\times (0,T]$ such that $\partial_t X=\Delta_{g(t)} X+\Ric(X)$ and $X(t)\to X_0$ on $M\setminus \mathcal{S}$ as $t\to 0$. In particular, the Lie derivative $\eta:=\mathcal{L}_X g$  satisfies: $
\partial_t \eta=\Delta_L \eta $
where $\Delta_L$ is the Lichnerowicz Laplacian with respect to $g(t)$, for instances see the proof of \cite[Theorem 4.1]{Brendle2013}. 

We apply a trick in \cite[Lemma 3.2]{ChanLeePeachey2024} that if $\varphi$ denotes either the negative part of the lowest  eigenvalues $\eta_-$ or the positive part of the highest  eigenvalues  $\eta_+$, then it satisfies $\heat \varphi\leq R\cdot \varphi$ in the sense of barrier. Particularly, we have 
\begin{equation}
\varphi(x,t)\leq \int_{M^n} G(x,t;y,s) \, \varphi(y,s)\,d\mathrm{vol}_{g(s),y}
\end{equation}
for $0<s<t\leq T$. Here $G(x,t;y,s)$ is the heat kernel with respect to $\partial_t-\Delta_{g(t)}-R_{g(t)}$. Furthermore, $\varphi(0)=0$ on $M\setminus \mathcal{S}$ and $\varphi\leq Ct^{-1/2}$ on $(0,T]$ since $\mathcal{L}_{X_0}g_0=0$ and $|\eta|\leq C_n |\nabla X|$. 

By assumption on the Minkowski dimension, we can apply the same argument as the derivation of $\mathrm{Rm}(g(t))\geq 0$ in the proof of Theorem~\ref{thm:RF-existence} to show that $\varphi(t)\equiv 0$ for $t\in (0,T]$. That said $\eta(t)=\mathcal{L}_{X(t)} g(t)\equiv 0$. Since
\begin{equation}
\mathrm{div}(\eta)-\frac12 \nabla \tr \eta=\Delta X+\Ric(X),
\end{equation}
we see that $\partial_t X=0$ and thus $X(t)\equiv X_0$ and $\mathcal{L}_{X_0}g(t)\equiv 0$ on $(0,T]$. This completes the proof of Proposition \ref{prop:sym-group}.
\end{proof}

\begin{proof}[Proof of Theorem~\ref{thm: links}]
By Theorem~\ref{thm:const-2}, the Ricci flow smoothing of $g_{0,\b}$ exists. By \cite[Theorem 1.6]{DeruelleSchulzeSimon2022}, we might assume $g(t)$ converges to $g_{0,\b}$ as $t\to 0$, outside a set of upper Minkowski dimension $\leq n-2$. By Proposition~\ref{prop:sym-group}, the Ricci flow smoothing preserved the symmetry from the initial singular metrics. By choosing $t$ sufficiently small, the conclusion follows from Hamilton-Perelman distance distortion, for instances see \cite[Lemma 3.1]{SimonTopping2021}. The family $g_\b(t)$ is continuous by (iii) and Lemma~\ref{lma:RDF-higher-ord}, using the fact that the push-forward of Ricci flow by the Ricci-harmonic map heat flow is a solution to the Ricci-DeTurck flow for $n-1\geq 3$. For $n-1=2$, the higher order stability follows from the explicit ODE construction in \cite{Lai2024}. 
\end{proof}

\section{Steady solitons with prescribed eigenvalues}

In this section, we prove our main Theorem. This immediately implies Theorem \ref{thm:non-collapsed}.

We define the convex hulls which we will frequently use in the proof.
 First, let
    $$\Omega:=\{(\beta_1,\cdots,\beta_{n-1})\in[0,1]^{n-1}:\beta_i=0 \textnormal{ for some }i=1,\cdots,n-1\}.$$
For any $k=0,\cdots,n-2$ and $1\le i_1<\cdots<i_{k+1}\le n-1$, we define 
    \begin{align}\label{Omega-k}
            \Omega_{k}(i_1,\cdots,i_{k+1}):=\{\beta\in\Omega:\beta_i=1\textnormal{ if }i\neq i_j,j=1,\cdots,k+1\}.
    \end{align}
In particular,  $\Omega_{0}(k)=(1,\cdots,1,\overset{k}{0},1,\cdots,1)$ for any $k=1,\cdots,n-1$.
We can view $\Omega$ as an $(n-2)$-dimensional convex hull spanned by vertices $\{\Omega_0(k)\}_{k=1}^{n-1}$, and the $k$-faces are given by $\Omega_{k}(i_1,\cdots,i_{k+1})$.
We also have
\begin{equation*} \label{eqn:partial-Omega-k}
\begin{split}
\partial\Omega_{k}(i_1,\cdots,i_{k+1})&={\Omega_{k-1}(\hat{i_1},i_2,\cdots,i_{k+1})\cup\cdots\cup\Omega_{k-1}(i_1,i_2,\cdots,\hat{i_{k+1}})},
\end{split}
\end{equation*}
where $\hat{i_j}$ means skipping $i_j$.

Next, we let 
\[\Delta^*:=\{(\lambda_1,\cdots,\lambda_{n-1})\in[0,1]^{n-1}: \lambda_1+\lambda_2+\cdots+2\lambda_{n-1}=1\}.\]
For any $k\ge0$, and $1\le i_1<\cdots<i_{k+1}\le n-1$, we define $i_{k+2}=n$ by convention, and let
\[\Delta_{k}^*(i_1,\cdots,i_{k+1}):=\{(\lambda_1,\cdots,\lambda_{n-1})\in\Delta^*:\lambda_{i_{j}}=\cdots=\lambda_{i_{j+1}-1},\,j=1,\cdots,k+1\},\]
if $i_1=1$; {and if $i_1>1$ we furthermore assume $\lambda_1=\cdots=\lambda_{i_1-1}=0$.} 

Now we consider 
$$\Delta:=\{ (\lambda_1,\cdots,\lambda_{n-1})\in \Delta^*: \lambda_1\leq\cdots\leq\lambda_{n-1}\}.$$
Then it is easy to see $\Delta$ is an $(n-2)$-dimensional convex hull spanned by the $(n-1)$ vertices 
$$\Delta_0(k) :=\left\{ (0,\cdots,0,\overset{k}{\tfrac{1}{n+1-k}},\cdots,\tfrac{1}{n+1-k})\right\},$$ 
and the $k$-faces are given by 
$$\Delta_{k}(i_1,\cdots,i_{k+1}):=\Delta_{k}^*(i_1,\cdots,i_{k+1})\cap \Delta.$$ We also have
\begin{equation} \label{eqn:partial-Omega-k-1}
\begin{split}
        \partial\Delta_{k}(i_1,\cdots,i_{k+1})&={\Delta_{k-1}(\hat{i_1},i_2,\cdots,i_{k+1})\cup\cdots\cup\Delta_{k-1}(i_1,i_2,\cdots,\hat{i_{k+1}})}.
\end{split}
\end{equation}

\begin{figure}[h]
\centering
\begin{minipage}[b]{0.45\textwidth}
    \centering
       \begin{tikzpicture}[line cap=round, line join=round, >=stealth, thick, scale=2]\label{f:Omega}

\newcommand{\pt}[4]{%
  \pgfmathsetmacro{\X}{-0.5*#1 + 1.0*#2} 
  \pgfmathsetmacro{\Y}{-0.3*#1 + 1.2*#3} 
  \coordinate (#4) at (\X,\Y);
}

\pt{0}{0}{0}{O}
\pt{1}{0}{0}{X}
\pt{0}{1}{0}{Y}
\pt{0}{0}{1}{Z}
\pt{1}{1}{0}{XY}
\pt{1}{0}{1}{XZ}
\pt{0}{1}{1}{YZ}
\pt{1}{1}{1}{XYZ}

\fill[gray!25,opacity=0.7] (O) -- (X) -- (XY) -- (Y) -- cycle;   
\fill[gray!50,opacity=0.7] (O) -- (X) -- (XZ) -- (Z) -- cycle;   
\fill[gray!70,opacity=0.7] (O) -- (Y) -- (YZ) -- (Z) -- cycle;   

\draw (X) -- (XY) -- (Y) -- (YZ) -- (Z) -- (XZ) -- (XYZ) -- (XY);
\draw (YZ) -- (XYZ);
\draw (XZ) -- (X);

\draw[dashed] (O) -- (X);
\draw[dashed] (O) -- (Y);
\draw[dashed] (O) -- (Z);

\draw[line width=1.8pt] (X) -- (XZ);   
\draw[line width=1.8pt] (Z) -- (XZ);  
\draw[line width=1.8pt] (Y) -- (YZ);   
\draw[line width=1.8pt] (Z) -- (YZ);  
\draw[line width=1.8pt] (X) -- (XY);   
\draw[line width=1.8pt] (Y) -- (XY);

\draw[->] (X) -- ++(-0.5/1.2,-0.3/1.2) node[below left] {$x$};  
\draw[->] (Y) -- ++(0.5,0)     node[right] {$y$};       
\draw[->] (Z) -- ++(0,0.5)   node[above] {$z$};       

\node[circle,fill,inner sep=1.8pt] at (XZ) {};
\node[circle,fill,inner sep=1.8pt] at (YZ) {};
\node[circle,fill,inner sep=1.8pt] at (XY) {};

\node at ($(XZ)+(-0.15,0)$) {$B$};
\node at ($(YZ)+(0.15,0.05)$) {$A$};
\node at ($(XY)+(0.1,-0.12)$) {$C$};
    \end{tikzpicture}

    \caption{}\label{fig:Omega}
\end{minipage}%
\hfill
\begin{minipage}[b]{0.45\textwidth}
    \centering
      \begin{tikzpicture}[line cap=round, line join=round, >=stealth, thick, scale=2]

\newcommand{\pt}[4]{%
  \pgfmathsetmacro{\X}{-0.5*#1 + 1.0*#2} 
  \pgfmathsetmacro{\Y}{-0.3*#1 + 1.2*#3} 
  \coordinate (#4) at (\X,\Y);
}

\pt{0}{0}{0}{O}
\pt{1}{0}{0}{X}
\pt{0}{1}{0}{Y}
\pt{0}{0}{1/2}{Z}
\pt{1/2}{1/2}{0}{XY}
\pt{1/3}{0}{1/3}{XZ}
\pt{0}{1/3}{1/3}{YZ}
\pt{1/4}{1/4}{1/4}{XYZ}

\fill[gray!60,opacity=0.7] (XYZ) -- (YZ) -- (Z) -- cycle;   

\draw (X) -- (XY) -- (Y) -- (YZ) -- (Z) -- (XZ) -- (XYZ) -- (XY);
\draw (YZ) -- (XYZ);
\draw (XZ) -- (X);
\draw (XYZ) -- (X);
\draw (XYZ) -- (Y);
\draw (XYZ) -- (Z);

\draw[dashed] (O) -- (X);
\draw[dashed] (O) -- (Y);
\draw[dashed] (O) -- (Z);

\draw[line width=1.8pt] (X) -- (YZ);   
\draw[line width=1.8pt] (Z) -- (XY);  
\draw[line width=1.8pt] (Y) -- (Z);   

\draw[->] (X) -- ++(-0.5/1.2,-0.3/1.2) node[below left] {$x$};  
\draw[->] (Y) -- ++(0.5,0)     node[right] {$y$};       
\draw[->] (Z) -- ++(0,0.5)   node[above] {$z$};       

\node[circle,fill,inner sep=1.8pt] at (Z) {};
\node[circle,fill,inner sep=1.8pt] at (YZ) {};
\node[circle,fill,inner sep=1.8pt] at (XYZ) {};

\node at ($(XYZ)+(-0.05,-0.15)$) {$A$};
\node at ($(YZ)+(0.15,0.05)$) {$B$};
\node at ($(Z)+(0.1,0.1)$) {$C$};

\end{tikzpicture}
         \caption{}\label{fig:Delta}
\end{minipage}
\end{figure}

Figure \ref{fig:Omega} illustrates $\Omega$ when $n=4$, where $A=\Omega_0(1)$, $B=\Omega_0(2)$, $C=\Omega_0(3)$, the three segments connecting $AB,AC,BC$ are $\Omega_1(1,2),\Omega_1(1,3),\Omega_1(2,3)$, and the union of the three shaded faces on the coordinates planes is $\Omega_2(1,2,3) \equiv\Omega$. 
We can identify $\Omega$ as the triangle spanned by $A,B,C$ under a face-preserving diffeomorphism.

Figure \ref{fig:Delta} illustrates $\Delta$ when $n=4$, where $\Delta^*$ is the triangle spanned by the three points on the coordinates, $\Delta$ is the shaded triangle spanned by $A=\Delta_0(1),B=\Delta_0(2),C=\Delta_0(3)$, and $\Delta_1^*(1,2),\Delta_1^*(2,3),\Delta_1^*(1,3)$ are the three thicken edges.
In the proof of Theorem \ref{thm:soliton-polyhedron}, the vertices $A,B,C$ corresponds to the three steady solitons: 4D Bryant soliton, $\mathbb R\times\textnormal{3D Bryant soliton}$, and $\mathbb R^2\times\textnormal{2D Cigar soliton}$.

Now we prove Theorem \ref{thm:soliton-polyhedron}.

\begin{proof}[Proof of Theorem \ref{thm:soliton-polyhedron}]

    By Theorem \ref{thm: links}\eqref{i:Rm ge 1}, for any $\mathbf x\in\Omega$, the metric $\mathcal S_j(\mathbf x)$ on $\mathbb{S}^{n-1}$ satisfies $\Rm\geq 1$. 
    So by \cite{Deruelle2016} we can define a smooth map $\mathcal E_j$ from $\Omega$ to the space of expanding gradient solitons with $\Rm\geq 0$ and $R=1$ at the critical point, so that $\mathcal E_j(\mathbf x)$ is the unique expanding soliton on $\mathbb R^n$ asymptotic to the metric cone over $\mathcal S_j(\mathbf x)$. 
Since the expanding soliton $\mathcal E_j(\mathbf x)$ is $O(2)$-symmetric by Theorem \ref{thm: links}\eqref{i:symmetry}, it follows that the eigenvalues of the Ricci curvature at the critical point are $\lambda_1,\cdots,\lambda_{n-1}=\lambda_n$, and $(\lambda_1,\cdots,\lambda_{n-1})\in\Delta^*$. 
This induces a smooth map $$\Lambda_j^*:\Omega\to\Delta^*$$ by composing $\mathcal E_j|_\Omega$ with the smooth map from the expanding solitons to the eigenvalue vector $(\lambda_1,\cdots,\lambda_{n-1})\in\Delta^*$.
 
\begin{claim}\label{claim:phi_j}
Let $\Sigma=\Delta^*_{n-3}(2,\cdots,n-1)$.
There exist $C>0$, a sequence of $\eps_j\to0$ and continuous maps $\Phi_j:\Delta^*\to \Delta^*$
such that
\begin{enumerate}
\item\label{two} $\Phi_j=\textnormal{id}$ on {$\Delta^*\setminus U(\Sigma,\eps_j)$,}
\item\label{three} $\Phi_j\big( U(\Sigma,\eps_j/C)\big)\subset \Sigma$, $\Phi_j|_{\Sigma}=\textnormal{id}$, and $|\Phi_j-\textnormal{id}|_{C^0}\le C\eps_j$,
\item\label{Omega-to-Delta*} $\Phi_j\big(\Delta^*_{k}(i_1,\cdots,i_{k+1})\big)\subseteq \Delta^*_{k}(i_1,\cdots,i_{k+1})$, for any $k$ and $i_1,\cdots,i_{k+1}$,
\end{enumerate}
where $U(V,r)$ denotes the $r$-neighborhood of a subset $V$ {in $\Delta^*$}.
\end{claim}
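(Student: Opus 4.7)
The plan is to construct $\Phi_j$ by a single explicit formula on $\Delta^*$ that simultaneously (a) retracts a small neighborhood of $\Sigma=\{\lambda_1=0\}\cap\Delta^*$ onto $\Sigma$ and (b) preserves every stratum $\Delta^*_k(i_1,\ldots,i_{k+1})$. Since these strata are cut out purely by equalities $\lambda_i=\lambda_k$ and vanishings $\lambda_i=0$, the key design requirement is that the formula respect those two types of relations.

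First I would fix a continuous non-increasing cutoff $s\colon[0,1]\to[0,1]$ with $s\equiv 1$ on $[0,\eps_j/C_1]$ and $s\equiv 0$ on $[\eps_j,1]$, and then set
\begin{equation*}
\mu_i(\mathbf x):=\lambda_i-s(\lambda_1)\min(\lambda_1,\lambda_i), \qquad \Phi_j(\mathbf x):=\frac{(\mu_1,\ldots,\mu_{n-1})}{\mu_1+\cdots+\mu_{n-2}+2\mu_{n-1}}.
\end{equation*}
A direct computation gives $Z(\mathbf x):=\mu_1+\cdots+\mu_{n-2}+2\mu_{n-1}=1-s(\lambda_1)\sigma(\mathbf x)$ with $\sigma(\mathbf x)\leq n\lambda_1\leq n\eps_j$, so $Z\geq 1/2$ for $\eps_j$ small; together with $\mu_i\geq(1-s)\lambda_i\geq 0$, this makes $\Phi_j$ a continuous self-map of $\Delta^*$. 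The crucial structural property of the formula is that the assignment $\lambda_i\mapsto\mu_i$ preserves both $\lambda_i=0$ (since $\min(\lambda_1,0)=0$) and $\lambda_i=\lambda_k$ (since the expression depends on $\lambda_i$ only through the pair $(\lambda_i,\min(\lambda_1,\lambda_i))$, which coincides with $(\lambda_k,\min(\lambda_1,\lambda_k))$ whenever $\lambda_i=\lambda_k$); division by the scalar $Z$ preserves both as well.

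The remaining checks are routine. On $\{\lambda_1\geq\eps_j\}$ the cutoff forces $s=0$, so $\Phi_j=\textnormal{id}$; since $d(\mathbf x,\Sigma)$ is comparable to $\lambda_1$ up to a dimensional constant, this yields property (1) after absorbing that constant into $\eps_j$. On $\{\lambda_1\leq\eps_j/C_1\}$ the cutoff forces $\mu_1=0$, so $\Phi_j(\mathbf x)\in\Sigma$; the identity on $\Sigma$ is immediate from $\lambda_1=0$; and the bounds $|\mu_i-\lambda_i|\leq s\lambda_1\leq\eps_j$ and $|Z-1|\leq n\eps_j$ combine to give the $C^0$ bound in property (2). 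Property (3) then follows automatically from the structural observation above, because every $\Delta^*_k(i_1,\ldots,i_{k+1})$ is defined purely by zero and equality conditions on the coordinates.

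The main obstacle, and the reason a simpler formula will not suffice, is the following tension: near $\Sigma$ one must push $\lambda_1$ to zero, yet on a face such as $\Delta^*_{n-3}(1,3,\ldots,n-1)=\{\lambda_1=\lambda_2\}$ the equality forces $\lambda_2$ to be pushed to zero simultaneously, while on the generic stratum with $\lambda_1<\lambda_2$ one must leave $\lambda_2$ essentially unchanged. A naive rescaling $(\lambda_1,\lambda_2,\ldots)\mapsto((1-s)\lambda_1,\lambda_2,\ldots)$ breaks the equality face, while a uniform ``subtract $\lambda_1$ from every coordinate'' recipe breaks the generic one. The choice of $\min(\lambda_1,\lambda_i)$ interpolates between these two behaviors precisely along the equality locus and is the one non-obvious ingredient of the construction; once it is in place, everything else reduces to elementary verification.
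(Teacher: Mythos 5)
Your proof is correct, and it takes a genuinely different route from the paper's. The paper proves the claim abstractly: it triangulates $\Delta^*$ so that every subsimplex lies in a single stratum $\Delta^*_k(i_1,\dots,i_{k+1})$, and then builds $\Phi_j$ inductively over the skeleta, extending the map from the boundary of each simplex to its interior; stratum-preservation is then automatic from the choice of triangulation. Your construction instead produces $\Phi_j$ by one closed formula, $\mu_i=\lambda_i-s(\lambda_1)\min(\lambda_1,\lambda_i)$ followed by renormalization, and derives all three properties by direct computation. The decisive observation --- that every stratum is cut out by relations of only two types, $\lambda_a=0$ and $\lambda_a=\lambda_b$, and that the assignment $\lambda_i\mapsto\mu_i$ together with division by the positive scalar $Z$ preserves both types --- is exactly what replaces the paper's skeleton induction, and the $\min(\lambda_1,\lambda_i)$ term is precisely what reconciles collapsing $\lambda_1$ with respecting the equality faces such as $\lambda_1=\lambda_2$. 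The supporting estimates check out: $\mu_i\ge 0$, $Z\ge 1-n\eps_j$, $|\mu_i-\lambda_i|\le\eps_j$, and the two-sided comparison $\lambda_1\le d(\cdot,\Sigma)\le C\lambda_1$ inside the convex set $\Delta^*$, so properties (1) and (2) hold with a uniform dimensional constant $C$ after the harmless rescaling of $\eps_j$ that you note. What your approach buys is an explicit, easily verifiable map; what the paper's approach buys is generality, since the skeleton-extension argument would retract onto any union of strata of a simplicial complex where no global coordinate formula is available.
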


\begin{proof}
We omit the index $j$ for convenience, and let $\eps>0$ denote a generic small constant.
First, we observe the following fact: Given any $\ell$-simplex $\sigma$, and a subsimplex $\sigma_0$ of dimension smaller than $\ell$, assume there is a continuous map 
$\phi:\partial\sigma\to \partial\sigma$ such that $\phi|_{\sigma_0}=\mathrm{id}$ and satisfies assertion (1)(2) when replacing $\Sigma$ by $\sigma_0$.
Then we can find a continuous map $\tilde\phi:\sigma\to\sigma$ such that $\tilde\phi|_{\partial \sigma}=\phi$, and satisfies the same properties with (1)(2) with $\Sigma$ replaced by $\sigma$.

First, we take a triangularization of $\Delta^*$, so that each subsimplex of this triangularization is contained in some $\Delta^*_k(i_0,\cdots,i_{k+1})$.
We call the union of all $k$-dimensional subsimplices a $k$-skeleton $\mathsf{S}_k$.
In the following we define a continuous map $\phi_k$ on $\mathsf{S}_k$ by induction, which satisfies assertion (1)(2) with $\Sigma$ replaced by $\Sigma\cap\mathsf S_{k}$.

First, for $k=0$, let $\phi_0=\textnormal{id}$ on $\mathsf{S}_0$.
Next, assume we have constructed the desired map $\phi_k$ on $\mathsf{S}_k$. Then take a minimal $(k+1)$-subsimplex $\sigma$. First, if $\sigma\subset \Sigma$, we define $\phi_{k+1}=\textnormal{id}$ on $\sigma$, then it is clear $\phi_{k+1}|_{\partial\sigma}=\phi_k$. Next, assume $\sigma$ is not in $\Sigma$. Then
by the observation at the beginning, we can extend $\phi_k|_{\partial\sigma}$ to $\phi_{k+1}$ on $\sigma$ with $\phi_{k+1}|_{\partial\sigma}=\phi_k|_{\partial\sigma}$, and it satisfies assertion (1)(2) when replacing $\Sigma$ by $\sigma\cap \Sigma$ (note that this is 
a subsimplex of $\sigma$ of dimension at most $k$).
Repeating this process to every minimal $(k+1)$-subsimplex $\sigma$, using the fact that extended maps on different $\sigma$ agree with each other on their intersection, we can glue  together the maps $\phi_{k+1}$ defined on all minimal subsimplices to get a continuous function $\phi_{k+1}$ on $\mathsf S_{k+1}$, which satisfies (1)(2) with $\Sigma$ replaced by $\Sigma\cap\mathsf S_{k+1}$.
By induction, we obtain a map $\phi=\phi_{n-2}$ defined on the entire simplex $\Delta^*=\mathsf S_{n-2}$.
Since $\phi$ preserves each $k$-subsimplex which is contained in some $\Delta_k^*(i_1,i_2,\dots, i_{k+1})$ in the triangularization, it also satisfies $(3)$. This proves the claim.

\end{proof}

\begin{claim}\label{claim:r}
There exists a smooth contraction map $r:\Delta^*\ri\Delta$ satisfying:
\begin{enumerate}
\item\label{i:r_first} $r(\mathbf x)=\mathbf x$ for any $\mathbf x\in\Delta$.
\item\label{contraction-partial} $r\left(\Delta^*_{k}(i_1,\cdots,i_{k+1})\setminus\Delta_{k}(i_1,\cdots,i_{k+1})\right)\subseteq\partial \Delta_{k}(i_1,\cdots,i_{k+1})$.
\end{enumerate}
\end{claim}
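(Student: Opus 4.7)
The plan is to work in barycentric-type coordinates $t=(t_1,\dots,t_{n-1})$ adapted to $\Delta$, which make both polytopes transparent. Setting $t_1=n\lambda_1$ and $t_i=(n+1-i)(\lambda_i-\lambda_{i-1})$ for $2\le i\le n-1$, the defining constraint $\lambda_1+\cdots+\lambda_{n-2}+2\lambda_{n-1}=1$ becomes $\sum_i t_i=1$, with inverse $\lambda_i=\sum_{k\le i}t_k/(n+1-k)$. Under this change of variables, $\Delta$ is precisely the standard simplex $\{t_i\ge 0,\ \sum_i t_i=1\}$, the vertex $\Delta_0(k)$ is the $k$-th standard basis vector, a face $\Delta_k^*(i_1,\dots,i_{k+1})$ corresponds to $\Delta^*\cap\{t_j=0 \text{ for all } j\notin I\}$ with $I:=\{i_1,\dots,i_{k+1}\}$, and $\Delta_k(i_1,\dots,i_{k+1})$ is its portion with $t_i\ge 0$ for $i\in I$.

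With this identification, I would define the retraction by truncation and renormalization,
\[
r(t):=\frac{1}{T(t)}\bigl(t_1^+,\dots,t_{n-1}^+\bigr),\qquad t_i^+:=\max(t_i,0),\qquad T(t):=\sum_j t_j^+.
\]
Because $t_i^+\ge t_i$ componentwise, $T(t)\ge \sum_j t_j=1>0$, so $r$ is well-defined, continuous, and maps $\Delta^*$ into $\Delta$. Property (\ref{i:r_first}) is immediate: on $\Delta$ all $t_i^+=t_i$ and $T=1$, so $r(t)=t$. For property (\ref{contraction-partial}), suppose $\mathbf{x}\in\Delta_k^*(i_1,\dots,i_{k+1})\setminus\Delta_k(i_1,\dots,i_{k+1})$; then $t_j=0$ for $j\notin I$ and at least one $t_{i_*}<0$ with $i_*\in I$. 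Hence $r(t)_j=0$ for $j\notin I$ \emph{and} $r(t)_{i_*}=0$, so $r(t)$ lies in a strictly smaller sub-face of $\Delta_k(i_1,\dots,i_{k+1})$, that is, in $\partial\Delta_k(i_1,\dots,i_{k+1})$ by the description \eqref{eqn:partial-Omega-k-1}.

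To obtain the smoothness asserted in the claim, I would replace $s\mapsto s^+$ by a non-negative $C^\infty$ truncation $\phi_\varepsilon$ that vanishes on $(-\infty,-\varepsilon]$ and equals $s$ for $s\ge 0$, and set $\widetilde r(t):=(\phi_\varepsilon(t_1),\dots,\phi_\varepsilon(t_{n-1}))/\sum_j\phi_\varepsilon(t_j)$. The same verification yields (\ref{i:r_first}) and (\ref{contraction-partial}), and $\widetilde r$ is $C^\infty$ on the interior of each stratum of $\Delta^*$ and continuous across strata. A variant is to proceed by induction on the dimension of the faces, using the induction hypothesis to build $r$ on the $(n-3)$-skeleton of $\Delta^*$ and extending to the top face via a collar/partition-of-unity argument that preserves the collapsing behavior.

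The main obstacle is the tension between strict identity on all of $\Delta$ and honest global $C^1$ regularity across $\partial\Delta$: any $C^1$ extension of the identity map from $\Delta$ would carry some open neighborhood of a boundary face $\partial\Delta$ into itself, directly contradicting property (\ref{contraction-partial}), which demands that this neighborhood collapse into the lower-dimensional set $\partial\Delta$. The resolution is to interpret smoothness in the natural manifold-with-corners/stratified sense, i.e.\ smooth on the relative interior of each face and continuous across faces; this is exactly the regularity delivered by the truncation construction above, and it is all that is needed for the subsequent degree-theoretic surjectivity argument in the proof of Theorem~\ref{thm:soliton-polyhedron}.
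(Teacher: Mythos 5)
Your construction is correct but genuinely different from the paper's. The paper works directly in the $\lambda$-coordinates and defines $r$ by an iterative repair: scanning the blocks in order, whenever the monotonicity $\mu_\ell\le\mu_{\ell+1}$ fails it overwrites $\mu_{\ell+1}$ by $\mu_\ell$ and rescales the first $\ell+1$ blocks by a factor $\nu^{-1}$ chosen to restore the linear constraint, iterating until the point lands in $\Delta_k(i_1,\dots,i_{k+1})$. Your route instead linearizes the problem: the substitution $t_1=n\lambda_1$, $t_i=(n+1-i)(\lambda_i-\lambda_{i-1})$ does turn the constraint into $\sum_i t_i=1$, identifies $\Delta$ with the standard simplex and each $\Delta_k^*(I)$ with $\Delta^*\cap\{t_j=0,\ j\notin I\}$, after which $r(t)=(t_1^+,\dots,t_{n-1}^+)/\sum_j t_j^+$ (well-defined since $\sum_j t_j^+\ge\sum_j t_j=1$) visibly satisfies (1) and (2): a negative coordinate $t_{i_*}$ gets killed, forcing the image into the facet $\Delta_{k-1}(I\setminus\{i_*\})\subset\partial\Delta_k(I)$. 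Your verification is complete where the paper's is only sketched ("it is not hard to see"), and the coordinate change makes the face-preservation structurally obvious; what the paper's version buys is that it never leaves the eigenvalue coordinates in which $\Delta^*$ and $\Delta$ were defined.

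Two small caveats. First, the intermediate smoothing via $\phi_\varepsilon$ does not work as written: a function that is non-negative, equals $s$ for $s\ge0$, and vanishes for $s\le-\varepsilon$ cannot be $C^1$ at $0$ (it would need derivative $1$ from the right but $0$ from the left at an interior minimum), and any genuine mollification either destroys (1) near $\{t_i=0\}\cap\Delta$ or destroys (2) near $\{t_{i_*}=0^-\}$. Second, your closing observation resolving this is correct and worth keeping: no $C^1$ map can be the identity on $\Delta$ and simultaneously collapse $\Delta_k^*\setminus\Delta_k$ into the lower-dimensional set $\partial\Delta_k$, since its differential would be the identity along $\partial\Delta\cap\mathrm{int}\,\Delta^*$ and hence a local diffeomorphism there. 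The paper's own $r$ is likewise only continuous (piecewise smooth), and continuity is all that the degree-theoretic Lemma A.1 and the proof of Theorem 1.1 require, so the word "smooth" in the claim should be read in the stratified sense you describe. With the $\phi_\varepsilon$ paragraph deleted and the truncation map kept as the final answer, your argument is a valid, self-contained replacement for the paper's proof.
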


\begin{proof}[Proof of Claim \ref{claim:r}]
For any $w\in \Delta^*_k(i_1,\cdots,i_{k+1})$,
we define $r(w)$ by induction: First, assume for some $\ell=1,\cdots,k$ we obtain 
    $$\mathbf x_\ell=(0,\cdots,\overset{i_1}{\mu_1},\cdots,\overset{i_2}{\mu_2},\cdots,\overset{i_{k+1}}{\mu_{k+1}},\cdots)\in\Delta^*_k(i_1,\cdots,i_{k+1})$$ 
with $0<\mu_1\leq\cdots\leq\mu_\ell$. 
    If $\mu_{\ell}\leq\mu_{\ell+1}$, we take $\mathbf x_{\ell+1}=\mathbf x_\ell$. Otherwise, we take
    $\nu>0$ so that $(i_2-1)\tfrac{\mu_1}{\nu}+\cdots+(i_{\ell+2}-i_\ell)\tfrac{\mu_\ell}{\nu}=\sum_{j=1}^{\ell+1}(i_{j+1}-i_j)\mu_j$, and let
    \[\mathbf x_{\ell+1}=(\cdots,\overset{i_1}{\tfrac{\mu_1}{\nu}},\cdots,\overset{i_\ell}{\tfrac{\mu_\ell}{\nu}},\cdots,\overset{i_{\ell+1}}{\tfrac{\mu_{\ell}}{\nu}},\cdots,\overset{i_{\ell+2}}{\mu_{\ell+2}},\cdots,\overset{i_{k+1}}{\mu_{k+1}},\cdots).\]
    Then the choice of $\nu$ guarantees that $\mathbf x_{\ell+1}\in \Delta^*_k(i_1,\cdots,i_{k+1})$, and the first $(i_{\ell+2}-1)$ entries are non-decreasing.
Repeating this by induction we obtain $\mathbf x_{k+1}\in \Delta_k(i_1,\cdots,i_{k+1})$. 
    It is not hard to see $r$ satisfies all assertions.

\end{proof}

Consider the map $$\Lambda_j=r\circ  \Phi_j\circ\Lambda_j^*:\Omega\ri\Delta.$$ 

Note that $\Lambda_j^*\big(\Omega_{k}(i_1,\cdots,i_{k+1})\big)\subset \Delta^*_{k}(i_1,\cdots,i_{k+1})$ when $i_1=1$, and $\Lambda_j^*\big(\Omega_{n-3}(2,\cdots,n-1)\big)\subset U(\Sigma,\eps_j/2)$, by Claim \ref{claim:phi_j} we see
$$\Phi_j\circ\Lambda_j^*\big(\Omega_{k}(i_1,\cdots,i_{k+1})\big)\subseteq \Delta^*_{k}(i_1,\cdots,i_{k+1})$$ for any $k$ and $i_1,\cdots,i_{k+1}$.
Then it follows from $r|_{\Delta}=\textnormal{id}$ that
\begin{align}\label{eq:Omega}
\Lambda_j(\Omega_{k}(i_1,\cdots,i_{k+1}))\subset \Delta_{k}(i_1,\cdots,i_{k+1}).
\end{align}
Omitting the same indices $i_1,\cdots,i_{k+1}$ in $\Omega_{k}(i_1,\cdots,i_{k+1})$, $\Delta^*_{k}(i_1,\cdots,i_{k+1})$, $\Delta_{k}(i_1,\cdots,i_{k+1})$ for simplicity, then
we claim 
\begin{equation} \label{Delta-Delta*}
\Lambda_j(\Omega_{k})\cap(\Delta_{k}\setminus\partial\Delta_{k})\subset \Phi_j\circ\Lambda_j^*(\Omega_{k}).
\end{equation}
To see this, assume $\mathbf y=r( \Phi_j\circ\Lambda_j^*(\mathbf x))\in \Delta_{k}\setminus\partial\Delta_{k}$ for some $\mathbf x\in \Omega_k$. Then it suffices to show $\mathbf y= \Phi_j\circ\Lambda_j^*(\mathbf x)$. Suppose not, since $r|_{\Delta}=\textnormal{id}$ we must have $\Phi_j\circ\Lambda_j^*(\mathbf x)\notin \Delta$ and thus $ \Phi_j\circ\Lambda_j^*(\mathbf x)\in \Delta^*_{k}\setminus\Delta_{k}$. By \eqref{contraction-partial}  in 
Claim \ref{claim:r}, this implies
$\mathbf y\in \partial \Delta_{k}$, a contradiction.

Since by Claim \ref{claim:phi_j} we have $\Phi_j\circ\Lambda_j^*(\Omega_{0}(k))=\Delta_{0}^*(k)=\Delta_{0}(k)$, it follows by $r|_{\Delta}=\textnormal{id}$ that
    \begin{align*}\label{vertex-surjective2}
        \Lambda_j(\Omega_{0}(k))=\Delta_{0}(k),
    \end{align*}
and by \eqref{eq:Omega} the map $\Lambda_j:\Omega\to \Delta$ preserves all $k$-faces.
 Thus, we can apply Lemma \ref{surjective} and deduce that $\Lambda_j$ is surjection on any $\Delta_{k}(i_1,\cdots,i_{k+1})$:
    \begin{align*}
        &\Lambda_j(\Omega_{k}(i_1,\cdots,i_{k+1}))= \Delta_{k}(i_1,\cdots,i_{k+1}).
          \end{align*}
Thus \eqref{Delta-Delta*} implies $\Delta_k\setminus\partial\Delta_k\subset \Phi_j\circ\Lambda_j^*(\Omega_k)$.
So by induction on $k$ {using \eqref{eqn:partial-Omega-k-1}}, it is easy to see $$\Delta_k\subset \Phi_j\circ\Lambda_j^*(\Omega_k).$$ 

Next, by \eqref{two} and \eqref{three} in Claim \ref{claim:phi_j} we can see
\[\Delta_k\setminus U(\Sigma,\eps_j)\subset \big(\Phi_j\circ \Lambda_j^*(\Omega_k)\big)\setminus U(\Sigma,\eps_j) \subset \Lambda_j^*(\Omega_k).\]
Since $\eps_j\to0$, this implies
\[\Delta_k\subset \overline{\cup_{j=1}^{\infty}\Lambda_j^*(\Omega_k)}.\]
So for any $\mathbf y\in \Delta_k$, for all sufficiently large $j$ we can find $\mathbf x_j\in \Omega_k$
 such that $\mathbf y=\lim_{j\to\infty}\Lambda_j^*(\mathbf x_j)$.
By the same argument as in \cite[Lemma 2.3]{Lai2024}, after passing to a subsequence, the expanding solitons $\mathcal E_j(\mathbf x_j)$ converge to a steady gradient soliton with $\Rm\ge0$ and $R=1$ at the critical point as $j\to\infty$. Moreover, the eigenvalues of the steady soliton at the critical point are given by the vector $\mathbf y$. 
This proves the main theorem.
\end{proof}

\begin{rem}\label{eig}When $n=5$, by Berger’s holonomy classification and a result of Deng-Zhu \cite{DengZhu2020}, one may choose $\lambda_1>0$ (i.e. $\Ric>0$) so that the 2-parameter family of $O(3)$-symmetric non-collapsed steady solitons in Theorem \ref{thm:non-collapsed} has positive curvature operator. More generally, Theorem \ref{thm:soliton-polyhedron} provides a wealth of new examples of positively curved steady solitons in higher dimensions. For instances,  
we can consider the examples from Theorem \ref{thm:soliton-polyhedron} with $0<\lambda_1<\lambda_2<\dots<\lambda_k<\lambda_{k+1}=\cdots=\lambda_n$ and the following symmetry
\[
\underbrace{O(1)\times\cdots\times O(1)}_\text{$k$ times}\times O(n-k),
\]
where $n\ge k+2$. With such a choice of $\lambda_i$, it follows from the symmetry that there exists a new $2$-parameter family of $O(1)\times O(1)\times O(n-2)$ symmetric steady solitons with $\Rm>0$ and 3 distinct eigenvalues of $\Ric$ at the critical point of $f$, for each $n\ge 4$. Similarly by \cite{Lai2022_O(2)}, Theorem \ref{thm:soliton-polyhedron} also gives a new $3$-parameter family of $O(1)\times O(1)\times O(1)\times O(n-3)$ symmetric steady solitons with $\Rm>0$ and $4$ distinct eigenvalues of $\Ric$ at the critical point of $f$, for each $n\ge 5$. Moreover, the $O(1)\times O(1)\times O(n-2)$ and $O(1)\times O(1)\times O(1)\times O(n-3)$ symmetric examples are noncollapsed when $n\ge 5$ and $n\ge 6$ respectively.
\end{rem}

\appendix

\section{Maps preserving cells}

For any $n$,
we denote by $\Delta_n(A_0,\cdots, A_n)$ be the interior of the convex hull spanned by $(k+1)$ linearly independent points $A_0,\cdots,A_n$.
For any for any $0\leq i_0< \cdots< i_k\leq n$ and all $k=0,\cdots,n$, we say 
$\Delta_k(A_{i_0},\cdots,A_{i_k})$ is a $k$-face of $\Delta_n(A_0,\cdots, A_n)$.
In the next lemma, we show that if a continuous map on $\Delta_n$ maps each face to itself, then it must be surjective restricted on every face.

\begin{lma}\label{surjective}
Let $A_0,A_1,\cdots,A_n\in \mathbb{R}^n$, let $A_0=\mathbf{0}$, and $A_i=(0,\cdots,\overset{i}{1},\cdots,0)$, for each $i=1,\cdots,n$. 
Suppose $f:\Delta_n(A_{0},\cdots,A_{n})\rightarrow\Delta_n(A_{0},\cdots,A_{n})$ is a continuous map, and for any $0\leq i_0< \cdots< i_k\leq n$ and all $k=0,\cdots,n$ that
    $$f(\Delta_k(A_{i_0},\cdots,A_{i_k}))\subset\Delta_k(A_{i_0},\cdots,A_{i_k}).$$
   Then $\textnormal{deg}\, f|_{\partial \Delta_k(A_{i_0},\cdots,A_{i_k})}=1$, and 
    $$f(\Delta_k(A_{i_0},\cdots,A_{i_k}))=\Delta_k(A_{i_0},\cdots,A_{i_k}).$$
\end{lma}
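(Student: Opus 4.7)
The plan is to induct on the face dimension $k$, proving simultaneously that $f|_D$ is surjective and that $\deg(f|_{\partial D}) = 1$ for every $k$-face $D = \Delta_k(A_{i_0}, \ldots, A_{i_k})$. The base case $k = 0$ is immediate since a $0$-face is a single vertex that $f$ must fix by hypothesis; for $k = 1$ the inductive claim reduces to the fact that a continuous self-map of a closed edge fixing its two endpoints is onto.

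The main geometric device is the straight-line homotopy
\begin{equation*}
    H(x, t) := (1 - t)\, x + t\, f(x), \qquad (x, t) \in \Delta_n(A_0,\ldots,A_n) \times [0, 1].
\end{equation*}
If $x$ lies in a face $D$, then $f(x) \in D$ by hypothesis, and since $D$ is convex the entire segment from $x$ to $f(x)$ stays in $D$. Thus $H$ preserves every face. Restricting $H$ to a given $k$-face $D$ with $k \ge 1$ produces a continuous homotopy through self-maps of $D$ from $\mathrm{id}_D$ to $f|_D$, and its restriction to $\partial D \cong S^{k-1}$ is a homotopy of continuous self-maps of that sphere. Here one uses the inductive hypothesis, which guarantees $f$ is surjective onto each $(k-1)$-face of $\partial D$ and hence $f(\partial D) = \partial D$, so the restriction makes sense. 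Consequently $\deg(f|_{\partial D}) = \deg(\mathrm{id}_{S^{k-1}}) = 1$.

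Surjectivity of $f|_D$ then follows from a standard extension-obstruction argument. Suppose some $y \in D$ were not in $f(D)$. If $y \in \partial D$, it would lie in a lower-dimensional face and already be hit by the inductive hypothesis, so we may assume $y \in \mathrm{int}(D)$. Composing $f|_D \colon D \to D \setminus \{y\}$ with a deformation retraction $r \colon D \setminus \{y\} \to \partial D$ (which is the identity on $\partial D$) produces a continuous extension of $f|_{\partial D}$ to the ball $D$, forcing $\deg(f|_{\partial D}) = 0$ and contradicting the previous paragraph. Applying the same argument to the top face $D = \Delta_n(A_0, \ldots, A_n)$ completes the proof.

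The proof is essentially formal, and I expect no real obstacle beyond some book-keeping: the induction must be arranged so that surjectivity on every $k$-face is established before the degree-one claim is invoked on any larger face that has it in its boundary. The only genuine topological input is that a map $S^{k-1} \to S^{k-1}$ which extends over the disk has degree zero, together with the standard deformation retraction of a punctured disk onto its boundary sphere.
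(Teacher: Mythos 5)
Your proof is correct, and it takes a genuinely different route from the paper's. The paper establishes $\deg f|_{\partial D}=1$ purely homologically: it bootstraps from the inductive hypothesis on a $(k-1)$-face $A$ via the long exact sequence of the pair $(A,\partial A)$ and excision ($H_k(X,\overline U)\cong H_k(A,\partial A)$) to conclude that $f_*$ is an isomorphism on $H_k(\partial D)$. You instead exploit the convexity of each open face directly: the straight-line homotopy $H(x,t)=(1-t)x+tf(x)$ preserves every open face (hence preserves $\partial D$), so $f|_{\partial D}\simeq \mathrm{id}_{\partial D}$ as self-maps of $S^{k-1}$ and the degree is $1$ immediately. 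This is more elementary and in fact makes the induction nearly superfluous for the degree claim — it holds on every face at once; induction is only needed to organize surjectivity on the boundary strata. (Your remark that the inductive hypothesis is needed "so the restriction makes sense" is unnecessary: the homotopy restricts to $\partial D$ because $f(\partial D)\subset\partial D$ by hypothesis and each open face is convex; surjectivity of $f|_{\partial D}$ plays no role in defining the degree.) The trade-off is that the paper's homological argument would survive in settings where the faces are cells without a convex structure, whereas yours is tied to the affine geometry of the simplex — which is all that is needed here. The final obstruction step (a degree-one map of $S^{k-1}$ cannot extend over the ball into the punctured ball) is common to both arguments.
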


\begin{proof}
    We prove this Lemma by induction on $k$. 
    First, it holds trivially for $k=0$. Next, assume this holds for some $k\ge0$.
    Next, let $\Delta_{k+1}$ be an arbitrary $(k+1)$-face, and denote $X=\partial \Delta_{k+1}$. Let $A$ be some $k$-face of $X$, and let $U=X\setminus A$.

Since $f_*:H_{k-1}(\partial A)\to H_{k-1}(\partial A)$ is an isomorphism by the inductive assumption,
it follows by the following long exact relative homology sequence of $(A,\partial A)$, which commute with $f_*$, that $f_*:H_{k}(A,\partial A)\to H_{k}(A,\partial A)$ is an isomorphism.
\[\cdots \longrightarrow 0\longrightarrow H_{k}(A)=\mathbb Z \overset{\cong}{\longrightarrow} H_{k}(A,\partial A)=\mathbb Z\longrightarrow 0\longrightarrow\cdots\]
Note $H_k(X,\overline U)=H_k(A,\partial A)$ by the excision lemma, together with the long exact sequence which commute with $f_*$,
\[\cdots \longrightarrow 0\longrightarrow H_{k}(X)=\mathbb Z \overset{\cong}{\longrightarrow} H_{k}(X,\overline U)=\mathbb Z\longrightarrow 0\longrightarrow\cdots,\]
 we see $f_*:H_{k}(X)\to H_{k}(X)$ is an isomorphism. So $\textnormal{deg} \, f|_{X}=1$, and in particular $f: \Delta_{k+1}\to \Delta_{k+1}$ is surjective.
This proves the lemma by induction.
\end{proof}

\section{Warped product metric and curvature}

\begin{lma}\label{lma:Rm-sphere-warp}
Let $m\ge 1$ and $\beta\in(0,1]$.
Suppose $(N,h)$ is an $m$-dimensional Riemannian manifold (not necessarily complete) such that either one of the following holds
\begin{itemize}
    \item $m=1$ and $h=\b_2^2dx_2^2$ for some constant $\b_2>0$;
    \item $m\ge 2$ and with $\Rm(h)\ge 1$.
\end{itemize}
Let $$g=\b^2(dx^2+\sin^2(x)h),$$ where $x\in [0,\pi]$.
Then $\Rm(g)\ge \b^{-2}$ for $x\in (0,\pi)$.
\end{lma}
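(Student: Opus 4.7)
The statement is a direct computation of the curvature operator of a warped product, and the key point is to reduce to the parameter $\beta$ via a convenient rescaling. First, set $t = \beta x \in (0,\beta\pi)$ and write $g = dt^2 + f(t)^2 h$ with warping function $f(t) = \beta \sin(t/\beta)$. Then
\[
f'(t) = \cos(t/\beta), \qquad f''(t) = -\beta^{-2} f(t), \qquad 1 - (f')^2 = \sin^2(t/\beta) = \beta^{-2} f^2.
\]
These are the only facts about $f$ that will be used, and both identities produce the constant $\beta^{-2}$.

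Next, I would choose the standard orthonormal frame adapted to the warped product: $T = \partial_t$ together with $\tilde X_i = f^{-1} e_i$, where $\{e_i\}_{i=1}^m$ is a local $h$-orthonormal frame on $N$. The standard warped-product formulas (see, e.g., O'Neill) give
\[
R^g(T,\tilde X_i,T,\tilde X_j) = -\frac{f''}{f}\,\delta_{ij},\quad R^g(T,\tilde X_i,\tilde X_j,\tilde X_k) = 0,\quad R^g(\tilde X_i,\tilde X_j,\tilde X_k,\tilde X_l) = f^{-2} R^h_{ijkl} - \Bigl(\frac{f'}{f}\Bigr)^2 (\delta_{ik}\delta_{jl}-\delta_{il}\delta_{jk}).
\]
The vanishing of the mixed component means that, with respect to the orthogonal decomposition $\Lambda^2 T_{(t,p)}M = \bigl(T\wedge T_pN\bigr)\oplus \Lambda^2 T_pN$, the curvature operator $\mathrm{Rm}(g)$ is block-diagonal.

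With this in hand, there are just two blocks to bound from below by $\beta^{-2}$. The radial block acts on $\operatorname{span}\{T\wedge \tilde X_i\}$ by $-(f''/f)\,\mathrm{Id} = \beta^{-2}\,\mathrm{Id}$ by the first identity for $f$. The tangential block acts on $\operatorname{span}\{\tilde X_i\wedge \tilde X_j\}$ by
\[
f^{-2} R^h - (f'/f)^2 \,\mathrm{Id} \;\ge\; f^{-2}\,\mathrm{Id} - (f'/f)^2\,\mathrm{Id} \;=\; \frac{1-(f')^2}{f^2}\,\mathrm{Id} \;=\; \beta^{-2}\,\mathrm{Id},
\]
where the inequality uses $\Rm(h)\ge \mathrm{Id}$ on $\Lambda^2 N$ (and the final equality uses the second identity for $f$). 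Combining the two blocks gives $\mathrm{Rm}(g)\ge \beta^{-2}$, as desired.

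Finally, in the base case $m=1$ the tangential block is absent because $\Lambda^2 N = 0$, so only the radial bound is needed and the hypothesis on $\Rm(h)$ is not used; the curvature operator of the two-dimensional warped product reduces to its Gauss curvature $-f''/f = \beta^{-2}$. I do not anticipate any genuine obstacle here: the whole argument is a bookkeeping exercise, and the only place where some care is needed is verifying that the two simple identities for the specific $f(t) = \beta\sin(t/\beta)$ produce exactly the constant $\beta^{-2}$ in both blocks simultaneously, which is precisely what forces $\beta\le 1$ to be compatible with $\mathrm{Rm}(h)\ge 1$.
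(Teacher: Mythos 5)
Your proof is correct and follows essentially the same route as the paper: the same substitution $r=\beta x$ turning the metric into $dr^2+\beta^2\sin^2(r/\beta)h$, the same block decomposition of the curvature operator from the warped-product formulas, and the same two identities yielding $\beta^{-2}$ in both the radial and tangential blocks. (Only your closing remark is slightly off: $\beta\le 1$ is not needed anywhere in the computation of $\Rm(g)\ge\beta^{-2}$; it only serves to guarantee $\beta^{-2}\ge1$ where the lemma is applied.)
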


\begin{proof}
When $m=1$, upon a reparametrization, the metric can be written as 
\[
dr^2+\b^2\b_2^2\sin^2\left(\frac{r}{\b}\right)dx_2^2.
\]
It has constant Gauss curvature given by $\b^{-2}\ge 1$. Thus the lemma is true when $m=1$. Suppose $(N,h)$ is an $m$-dimensional Riemannian manifold (not necessarily complete) with $\Rm(h)\ge 1$ and $m\ge 2$. Let $g=\b^2(dx^2+\sin^2(x)h)$, where $\b\in (0,1]$ and $x\in [0,\pi]$. Using the change of variables $r=\b x$, $g$ can be rewritten as
\[
g=dr^2+\b^2\sin^2\left(\frac{r}{\b}\right)h.
\]
Let $\{e_i\}_{i=0}^{m}$ be a local orthonormal frame on $(0,\pi)\times N$ with respect to $g$ such that $e_0=\partial_r$. Denote its dual frame by $\{e_i^*\}_{i=0}^{m}$. By the curvature formula of warped product metric \cite[Appendix A]{Li2012}, for $i,j,k,l \ge 1$, 
\begin{eqnarray*}
    \Rm(e_0^*\,\wedge e_j^*, e_k^*\,\wedge e_l^* )&=&0,\\
    \Rm(e_0^*\,\wedge e_j^*, e_0^*\,\wedge e_k^* )&=&\frac{1}{\beta^2}\delta_{jk}\left(\csc^2\left(\frac{r}{\b}\right)-\cot^2\left(\frac{r}{\b}\right)\right)=\frac{1}{\beta^2}\delta_{jk},\\
    \Rm(e_i^*\,\wedge e_j^*, e_k^*\,\wedge e_l^* )&=&\frac{1}{\beta^6}\csc^6\left(\frac{r}{\b}\right)\Rm(h)(e_i^*\,\wedge e_j^*, e_k^*\,\wedge e_l^*)\\
    &&-\frac{1}{\beta^2}\cot^2\left(\frac{r}{\b}\right)(\delta_{ik}\delta_{jl}-\delta_{il}\delta_{jk}).
\end{eqnarray*}
Together with our assumption $\Rm(h)\ge 1$, we have $\Rm(g)\ge \beta^{-2}\ge 1$.
\end{proof}


\begin{thebibliography}{100}


\bibitem{ApostolovCifarelli2025}Apostolov, V.; Cifarelli, C., {\sl Hamiltonian 2-forms and new explicit Calabi-Yau metrics and gradient steady K\"ahler-Ricci solitons on $\mathbb{C}^n$}. J. Differential Geom. 130 (2025), no. 3, 517--570.

\bibitem{BamlerCabezasWilking2019}Bamler, R. H.; Cabezas-Rivas, E.; Wilking, B., {\sl The Ricci flow under almost non-negative curvature conditions}. Invent. Math. 217 (2019), no. 1, 95--126. 



\bibitem{Brendle2013}Brendle, S., {\sl Rotational symmetry of self-similar solutions to the Ricci flow}, Invent. Math. 194 (2013), 731–764.

\bibitem{Bryant2005} Bryant, R. , {\sl Ricci flow solitons in dimension three with SO(3)-symmetries}. 2005. available at https://services.math.duke.edu/$\sim$bryant/3DRotSymRicciSolitons.pdf.


\bibitem{Burkhardt2019}Burkhardt-Guim, P., {\sl Pointwise lower scalar curvature bounds for $C^0$ metrics via regularizing Ricci flow}. Geom. Funct. Anal. 29 (2019), no. 6, 1703--1772. 

\bibitem{Cao1996} Cao., H.-D. , {\sl Existence of gradient K\"ahler-Ricci solitons}. Elliptic and parabolic methods in geometry (Minneapolis, MN, 1994), pages 1–16. A K Peters, Wellesley, MA, 1996.

\bibitem{CarronRose2021}Carron, G.; Rose, C. , {\sl Geometric and spectral estimates based on spectral Ricci curvature assumptions.} J. Reine Angew. Math., 772:121–145, 2021.

\bibitem{ChanConlonLai2024} Chan, P.-Y.,  Conlon, R. J.; Lai, Y., {\sl A family of Kähler flying wing steady Ricci solitons}, arXiv:2403.04089.

\bibitem{ChanLee2023} Chan, P.-Y.; Lee, M.-C., {\sl Gap Theorem on Riemannian manifolds using Ricci flow}, Adv. Math. 482 (2025), part C, Paper No. 110625. 

\bibitem{ChanLeePeachey2024} Chan, P.-Y.; Lee, M.-C.; Peachey, T. L., {\sl Expanding Ricci solitons coming out of weakly PIC1 metric cones}, arXiv:2404.12755



\bibitem{ChanLee2025} Chan, P.-Y.; Lee, M.-C., {\sl On the weakly conical expanding gradient Ricci solitons}, arXiv:2503.12416


\bibitem{ChauTamYu2011}Chau, A.; L.-F. Tam; C. Yu., {\sl Pseudolocality for the Ricci flow and applications}. Canad. J. Math., 63(1):55–85, 2011.



\bibitem{ChenZhu2006}Chen, B.-L.; Zhu, X.-P., {\sl Uniqueness of the Ricci flow on complete noncompact manifolds}. J. Differential Geom. 74 (2006), no. 1, 119–154.


\bibitem{Chen2009}Chen, B.-L., {\sl Strong uniqueness of the Ricci flow}, J. Differential Geom. 82 (2009), 363-382.




\bibitem{ChowBookII}Chow, B; Chu, S.-C.; Glickenstein, D.; Guenther, C.; Isenberg, J.; Ivey, T.; Knopf, D.; Lu, P.; Luo, F.; Ni, L., {\sl Ricci flow: Techniques and Applications: Part II: Analytic aspects.} Mathematical Surveys and Monographs, 144 A.M.S. 2008.


\bibitem{ChuLee2025}Chu, J.; Lee, M.-C., {\sl Ricci-DeTurck flow from rough metrics and applications to scalar curvature problems}. J. Funct. Anal. 289 (2025), no. 2, Paper No. 110916, 41 pp.

\bibitem{Colding1997}Colding, T. H., {\sl Ricci curvature and volume convergence}. Ann. of Math. (2),
145(3):477–501, 1997.

\bibitem{DengZhu2020}Deng, Y.; Zhu, X., {\sl  Rigidity of $\kappa$-noncollapsed steady K\"ahler-Ricci solitons}. Math. Ann. 377 (2020), no. 1-2, 847–861.

\bibitem{Deruelle2016} Deruelle, A., {\sl Smoothing out positively curved metric cones by Ricci expanders}. Geom. Funct. Anal. 26 (2016), no. 1, 188–249.


\bibitem{DeruelleLamm2017}Deruelle, A.; Lamm, T., {\sl Weak stability of Ricci expanders with positive curvature operator}. Math. Z. 286 (2017), no. 3-4, 951–985.

\bibitem{DeruelleSchulzeSimon2022}Deruelle, A.; Schulze, F.; Simon, M., {\sl On the regularity of Ricci flows coming out of metric spaces}. J. Eur. Math. Soc. (JEMS) 24 (2022), no. 7, 2233--2277. 


\bibitem{GianniotisSchulze2018}Gianniotis, P.; Schulze, F., {\sl Ricci flow from spaces with isolated conical singularities}. Geom. Topol. 22 (2018), no. 7, 3925--3977. 


\bibitem{Hamilton1986}Hamilton, R. S., {\sl Four-manifolds with positive curvature operator}. J. Differential Geom. 24 (1986), no. 2, 153--179. 


\bibitem{Hamilton1988} Hamilton, R. S., {\sl The Ricci flow on surfaces}. Contemporary Mathematics, 71:237–261, 1988.

\bibitem{Hamilton1995} Hamilton, R. S., {\sl  The formation of singularities in the Ricci flow}. International Press, Cambridge, MA, 1995, 7–136.




\bibitem{Hamilton1995-2}Hamilton R.-S., {\sl A compactness property for solutions of the Ricci flow}, American J.
Math. 117 (1995) 545–572.


\bibitem{Haslhofer2024}Haslhofer, R., {\sl On $\kappa$-solutions and canonical neighborhoods in 4d Ricci flow}. J. Reine Angew. Math. 811 (2024), 257--265. 

\bibitem{HoffmanIlmanenMartinWhite2019}Hoffman, D.; Ilmanen, T.; Mart\'n, F.; White, B., {\sl Graphical translators for mean curvature flow}. Calc. Var. Partial Differential Equations 58 (2019), no. 4, Paper No. 117, 29 pp


\bibitem{KochLamm2012}Koch, H.; Lamm, T., {\sl Geometric flows with rough initial data.} Asian J. Math. 16 (2012),
no. 2, 209–235.



\bibitem{Lai2024}Lai, Y., {\sl A family of 3D steady gradient solitons that are flying wings}. J. Differential Geom. 126 (2024), no. 1, 297--328. 

\bibitem{lai20253d}Lai, Y., {\sl 3D flying wings for any asymptotic cones}.  J. Differential Geom. 130 (2025), no. 3, 677--695.

\bibitem{Lai2022_O(2)}Lai, Y., {\sl O(2)-symmetry of 3D steady gradient Ricci solitons}, Geom. Topol. 29 (2025), no. 2, 687--789. 

\bibitem{LavoyerPeachey2025}Lavoyer, L.; Peachey, L. T.,  {\sl Steady Gradient Ricci Solitons with $O(p)\times O(n-p)$ Symmetry}, arXiv:2509.25081. 



\bibitem{LeeTam2022} Lee, M.-C.; Tam, L.-F., {\sl Some local maximum principles along Ricci flows}. Canad. J. Math. 74 (2022), no. 2, 329--348.


\bibitem{Lee2024}Lee, M.-C.,  {\sl Ricci flow under Kato-type curvature lower bound}. J. Geom. Anal. 34 (2024), no. 3, Paper No. 71, 22 pp.


\bibitem{Lee2025} Lee, M.-C.,  {\sl Uniqueness of Ricci flow with scaling invariant estimates}, arXiv:2503.20292


\bibitem{LeeTam2025}Lee, M.-C.; Tam, L.-F., {\sl Continuous metrics and a conjecture of Schoen.} Trans. Amer. Math. Soc. 378 (2025), no. 3, 1531--1550.

\bibitem{Li2012} Li, P., {\sl Geometric analysis}. Cambridge Studies in Advanced Mathematics, 134. Cambridge University Press, Cambridge, 2012.

\bibitem{LuTian}  Lu, P.; Tian, G., {\sl Uniqueness of standard solutions in the work of Perelman.} Preprint
http://math.berkeley.edu/~lott/ricciflow/StanUniqWork2.pdf.

\bibitem{MantegazzaMascellaniUraltsev}Mantegazza, C.; Mascellani, G.; Uraltsev, G., {\sl On the distributional Hessian of the
distance function}. Pacific J. Math. 270 (2014), no. 1, 151–166.




\bibitem{Perelman2002}Perelman, G., {\sl The entropy formula for the Ricci flow and its geometric applications}, arXiv:math.DG/0211159

\bibitem{Petrunin2008}Petrunin, A. M., {\sl An upper bound for the curvature integral}. (Russian) ; translated from Algebra i Analiz 20 (2008), no. 2, 134–148 St. Petersburg Math. J. 20 (2009), no.
2, 255–265


\bibitem{Shi1989} Shi. W.-X., {\sl Ricci deformation of the metric on complete noncompact riemannian manifolds}. J. Differential Geom., 30(2):303--394, 1989.

\bibitem{Simon2002}Simon, M., {\sl Deformation of $C^0$ Riemannian metrics in the direction of their Ricci curvature}, Comm. Anal. Geom. 10 (2002), no. 5, 1033–1074.


\bibitem{SimonTopping2021}Simon, M.; Topping, P., {\sl Local mollification of Riemannian metrics using Ricci flow, and Ricci limit spaces}. Geom. Topol. 25 (2021), no. 2, 913–948.


\bibitem{SimonToppingJDG}Simon, M.; Topping, P. M., {\sl Local control on the geometry in 3D Ricci flow}. J. Differential Geom. 122 (2022), no. 3, 467--518.


\bibitem{Zhang2009}Zhang, Z.-H., {\sl On the completeness of gradient Ricci solitons}, Proc. Amer. Math. Soc. 137
(2009), no. 8, 2755–2759. 

\end{thebibliography}
\end{document}